\newlength{\hoffsettmp}
\newlength{\voffsettmp}
\theoremstyle{plain}
\newtheorem{theorem}{Theorem}
\newtheorem{cor}[theorem]{Corollary}
\newtheorem{prop}[theorem]{Proposition}
\theoremstyle{definition}
\newtheorem{definition}[theorem]{Definition}
\newtheorem{example}[theorem]{Example}
\theoremstyle{definition}
\newtheorem*{remark}{Remark}
\newtheorem*{notation}{Notation}
\newtheorem*{ack}{Acknowledgements}
\newcommand{\lrangle}[1]{\langle #1 \rangle}
\newcommand{\pair}[1]{( #1 )}
\newcommand{\res}{\upharpoonright}
\newcommand{\dg}[1]{\mathbf{#1}}
\newcommand{\fr}{\mbox{}^{\smallfrown}}
\newcommand{\lsup}{\otimes}
\newcommand{\linf}{\oplus}
\newcommand{\binf}{\bigoplus}
\newcommand{\nn}{\mathbb{N}}
\newcommand{\tie}{\triangledown}
\newcommand{\btie}{\bigtriangledown}
\newcommand{\htie}{\blacktriangledown}
\newcommand{\bhtie}{\mbox{\Large{$\blacktriangledown$}}}
\newcommand{\lcm}{\tie_{\omega}}
\newcommand{\blcm}{\btie_{\omega}\mbox{}}
\newcommand{\cls}{\tie_\infty}
\newcommand{\bcls}{\btie_\infty\mbox{}}
\newcommand{\hjump}[1]{\htie(#1)}
\newcommand{\cmeet}{\binf\mbox{}^{\longrightarrow}}
\newcommand{\ntie}{\fr}
\newcommand{\shft}{\mbox{}\leftharpoonup}
\newcommand{\concat}{\bigsqcap}
\newcommand{\lrceil}[1]{{#1}^{\bullet}}
\newcommand{\bhk}[1]{\llbracket #1 \rrbracket}
\newcommand{\bigbhk}[1]{\left\llbracket #1 \right\rrbracket}
\title[Inside the Muchnik Degrees I]{Inside the Muchnik Degrees I:\\Discontinuity, Learnability and Constructivism}
\author{K.~Higuchi}
\address{Department of Mathematics and Informatics, Chiba University, 1-33 Yayoi-cho, Inage, Chiba, Japan}
\email{khiguchi@g.math.s.chiba-u.ac.jp}
\author{T.~Kihara}
\address{School of Information Science, Japan Advanced Institute of Science and Technology, Nomi 923-1292, Japan}
\email{kihara@jaist.ac.jp}
\keywords{computable analysis, limit computable mathematics, identification in the limit, Medvedev degree, Weihrauch degree}
\subjclass[2010]{03D30, 03D78, 03F60, 03E15, 03B55, 68Q32}
\begin{document}



\begin{abstract}
Every computable function has to be continuous.
To develop computability theory of discontinuous functions, we study low levels of the arithmetical hierarchy of nonuniformly computable functions on Baire space.
First, we classify nonuniformly computable functions on Baire space from the viewpoint of learning theory and piecewise computability.
For instance, we show that mind-change-bounded-learnability is equivalent to finite $(\Pi^0_1)_2$-piecewise computability (where $(\Pi^0_1)_2$ denotes the difference of two $\Pi^0_1$ sets), error-bounded-learnability is equivalent to finite $\Delta^0_2$-piecewise computability, and learnability is equivalent to countable $\Pi^0_1$-piecewise computability (equivalently, countable $\Sigma^0_2$-piecewise computability).
Second, we introduce disjunction-like operations such as the coproduct based on BHK-like interpretations, and then, we see that these operations induce Galois connections between the Medvedev degree structure and associated Medvedev/Muchnik-like degree structures.
Finally, we interpret these results in the context of the Weihrauch degrees and Wadge-like games.
\end{abstract}

\maketitle

\section{Summary}

\subsection{Introduction}

Imagine the floor function, a real function that takes the integer part of an input.
Although it seems easy to draw a rough graph of the floor function, it is {\em not} computable with respect to the standard real number representation \cite{Wei}, because computability automatically induces topological continuity.
One way to study the floor function in computability theory is to ``{\em computabilize}'' it by changing the representation/topology of the real space (see, for instance, \cite{YT}).
However, it is also important to enhance our knowledge of the noncomputability/discontinuity level of such seemingly computable functions without changing representation/topology.
Our main objective is to study low levels of the arithmetical/Baire hierarchy of functions on Baire space from the viewpoint of approximate computability/continuity and piecewise computability/continuity.

We postulate that a {\em nearly computable} function shall be, at the very least, {\em nonuniformly computable}, where a function $f$ is said to be nonuniformly computable if for every input $x$, there exists an algorithm $\Psi_x$ that computes $f(x)$ using $x$ as an oracle, where we do not require the map $x\mapsto\Psi_x$ to be computable.
The notion of nonuniform computability naturally arises in Computable Analysis \cite{BG,Zie4}.
However, of course, most nonuniformly computable discontinuous functions are far from being computable.
Then, what type of discontinuous functions are recognized as being nearly computable?
A nearly computable/continuous function has to be approximated using computable/continuous functions.
For instance, a Baire function appears to be {\em dynamically approximated} by a sequence of continuous functions and a piecewise continuous ($\sigma$-continuous) function appears to be {\em statically approximated} by countably many continuous functions.

There have been many challenges \cite{BP,Zie1,Zie2,Zie3,Zie4,YMT,YT} in developing computability theory of (nonuniformly computable) discontinuous functions using the notion of {\em learnability} (dynamical-approximation) and {\em piecewise computability} (statical-approximation).
Indeed, one can show the equivalence of effective learnability and $\Pi^0_1$-piecewise computability: the class of functions that are computable with finitely many mind changes is exactly the class of functions that are decomposable into countably many computable functions with $\Pi^0_1$ domains.
In this paper, we introduce various concepts of dynamic-approximability, and then, we characterize these concepts as static-approximability.

Now, we focus our attention on the concepts lying between (uniform) computability and nonuniform computability.
In 1950-60th, Medvedev \cite{Med} and Muchnik \cite{Muc} introduced the degree structure induced by uniform and nonuniform computability to formulate semantics for the intuitionistic propositional calculus based on Kolmogorov's idea of interpreting each proposition as a problem.
The degree structure induced by the Medvedev (Muchnik) reduction forms a Brouwer algebra (the dual of a Heyting algebra), where the (intuitionistic) disjunction is interpreted as the coproduct of subsets of Baire space.

Our objective is to reveal the hidden relationship between the hierarchy of nonuniformly computable functions and the hierarchy of disjunction operations.
When a certain suitable disjunction-like operation such as the coproduct is introduced, we will see that one can recover the associated degree structure from the disjunction operation.
As a consequence, we may understand the noncomputability feature of functions by observing the degree-theoretic behavior of associated disjunction operations.
This phenomenon can be explained by using the terminology of Galois connections or adjoint functors.
For instance, one can introduce a disjunction operation on Baire space using the limit-BHK interpretation of {\em Limit Computable Mathematics} \cite{Hay} (abbreviated as {\sf LCM}), a type of constructive mathematics based on Learning Theory, whose positive arithmetical fragment is characterized as Heyting arithmetic with the recursive $\omega$-rule and the $\Sigma^0_1$ law of excluded middle \cite{BY,TB}.
Then, the ``limit-BHK disjunction'' includes all the information about the reducibility notion induced by learnable functions on Baire space.

Furthermore, in this paper, we introduce more complicated disjunction-like operations using BHK-like interpretations represented as ``dynamic proof models'' or ``nested models''.
For instance, a dynamic disjunction along a well-founded tree realizes the concept of learnability with ordinal-bounded mind changes, and a dynamic disjunction along an ill-founded tree realizes the concept of decomposability into countably many computable functions along a $\Sigma^0_2$ formula.

We also interpret these results in the context of the Weihrauch degrees and Wadge-like games.
We introduce a partial interpretation of nonconstructive principles including ${\sf LLPO}$ and ${\sf LPO}$ in the Weihrauch degrees and characterize the noncomputability/discontinuity level of nearly computable functions using these principles.

\subsection{Results}

In section $2$, we introduce the notion of $(\alpha,\beta|\gamma)$-computability for partial functions on $\nn^\nn$, for each ordinal $\alpha,\beta,\gamma\leq\omega$.
Then, the notion of $(\alpha,\beta|\gamma)$-computability induces just seven classes closed under composition.
\index{$[\mathfrak{C}_T]^\alpha_\beta$}%
\index{$[\mathfrak{C}_T]^\alpha_{\beta\mid\gamma}$}%
\begin{itemize}
\item
$[\mathfrak{C}_T]^1_1$ denotes the set of all partial computable functions on $\nn^\nn$.
\item
$[\mathfrak{C}_T]^1_{<\omega}$ denotes the set of all partial functions on $\nn^\nn$ learnable with bounded mind changes.
\item
$[\mathfrak{C}_T]^1_{\omega|<\omega}$ denotes the set of all partial functions on $\nn^\nn$ learnable with bounded errors.
\item
$[\mathfrak{C}_T]^1_{\omega}$ denotes the set of all partial learnable functions on $\nn^\nn$.
\item 
$[\mathfrak{C}_T]^{<\omega}_{1}$ denotes the set of all partial $k$-wise computable functions on $\nn^\nn$ for some $k\in\nn$.
\item 
$[\mathfrak{C}_T]^{<\omega}_{\omega}$ denotes the set of all partial functions on $\nn^\nn$ learnable by a team.
\item 
$[\mathfrak{C}_T]^\omega_1$ denotes the set of all partial nonuniformly computable functions on $\nn^\nn$ (i.e., all functions $f$ satisfying $f(x)\leq_Tx$ for any $x\in{\rm dom}(f)$).
\end{itemize}

We will see that the following inclusions hold.
\begin{center}\footnotesize
\begin{tabular}{ccccc}
 &\rotatebox[origin=r]{30}{$\subset$}&$[\mathfrak{C}_T]^{<\omega}_1$&\rotatebox{-30}{$\subset$}& \\
$[\mathfrak{C}_T]^1_1\;\subset\;[\mathfrak{C}_T]^1_{<\omega}\;\subset\;[\mathfrak{C}_T]^1_{\omega|<\omega}$& & & & $[\mathfrak{C}_T]^{<\omega}_\omega\;\subset\;[\mathfrak{C}_T]^\omega_1$\\
 &\rotatebox[origin=r]{-30}{$\subset$}&$[\mathfrak{C}_T]^1_\omega$&\rotatebox{30}{$\subset$}& 
\end{tabular}
\end{center}
These notions are characterized as the following piecewise computability notions, respectively.
\index{${\rm dec}^X_x[\Gamma]$}%

\begin{itemize}
\item 
${\rm dec}^1_{\rm p}[-]$ also denotes the set of all partial computable functions on $\nn^\nn$.
\item 
${\rm dec}^{<\omega}_{\rm d}[\Pi^0_1]$ denotes the set of all partial functions on $\nn^\nn$ that are decomposable into finitely many partial computable functions with $(\Pi^0_1)_2$ domains, where a $(\Pi^0_1)_2$ set is the difference of two $\Pi^0_1$ sets.
\item 
${\rm dec}^{<\omega}_{\rm p}[\Delta^0_2]$ denotes the set of all partial functions on $\nn^\nn$ that are decomposable into finitely many partial computable functions with $\Delta^0_2$ domains.
\item 
${\rm dec}^{\omega}_{\rm p}[\Pi^0_1]$ denotes the set of all partial functions on $\nn^\nn$ that are decomposable into countably many partial computable functions with $\Pi^0_1$ domains.
\item 
${\rm dec}^{<\omega}_{\rm p}[-]$ denotes the set of all partial functions on $\nn^\nn$ that are decomposable into finitely many partial computable functions.
\item 
${\rm dec}^{<\omega}_{\rm p}{\rm dec}^{\omega}_{\rm p}[\Pi^0_1]$ denotes the set of all partial functions on $\nn^\nn$ that are decomposable into finitely many partial $\Pi^0_1$-piecewise computable functions.
\item 
${\rm dec}^{\omega}_{\rm p}[-]$ denotes the set of all partial functions on $\nn^\nn$ that are decomposable into countably many partial computable functions.
\end{itemize}

\begin{center}\footnotesize
\begin{tabular}{ccccc}
 &\rotatebox[origin=r]{30}{$\subset$}&${\rm dec}^{<\omega}_{\rm p}[-]$&\rotatebox{-30}{$\subset$}& \\
${\rm dec}^1_{\rm p}[-]\;\subset\;{\rm dec}^{<\omega}_{\rm d}[\Pi^0_1]\;\subset\;{\rm dec}^{<\omega}_{\rm p}[\Delta^0_2]$& & & & ${\rm dec}^{<\omega}_{\rm p}{\rm dec}^{\omega}_{\rm p}[\Pi^0_1]\;\subset\;{\rm dec}^{\omega}_{\rm p}[-]$\\
 &\rotatebox[origin=r]{-30}{$\subset$}&${\rm dec}^{\omega}_{\rm p}[\Pi^0_1]$&\rotatebox{30}{$\subset$}& 
\end{tabular}
\end{center}

In Section $3$, we formalize the disjunction operations.
Medvedev interpreted the intuitionistic disjunction as the coproduct (direct sum) $\oplus:\mathcal{P}(\nn^\nn)\times\mathcal{P}(\nn^\nn)\to\mathcal{P}(\nn^\nn)$.
We will introduce the following disjunction operations $\bhk{\cdot\vee\cdot}^*_{*}:\mathcal{P}(\nn^\nn)\times\mathcal{P}(\nn^\nn)\to\mathcal{P}(\nn^\nn)$:
\index{$\bhk{\cdot\vee\cdot}^*_{*}$}%
\begin{itemize}
\item 
$\bhk{\cdot\vee\cdot}^3_{\sf LCM[n]}$ is the disjunction operation on $\mathcal{P}(\nn^\nn)$ induced by the backtrack BHK-interpretation with mind-changes $<n$.
\item 
$\bhk{\cdot\vee\cdot}^2_{\sf LCM}$ is the disjunction operation on $\mathcal{P}(\nn^\nn)$ induced by the two-tape BHK-interpretation with finitely many mind-changes.
\item 
$\bhk{\cdot\vee\cdot}^3_{\sf LCM}$ is the disjunction operation on $\mathcal{P}(\nn^\nn)$ induced by the backtrack BHK-interpretation with finitely many mind-changes.
\item 
$\bhk{\cdot\vee\cdot}^2_{\sf CL}$ is the disjunction operation on $\mathcal{P}(\nn^\nn)$ induced by the two-tape BHK-interpretation permitting unbounded mind-changes.
\end{itemize}
Then, the direct sum $\oplus$ is characterized as the {\sf LCM} disjunction without mind-changes $\bhk{\cdot\vee\cdot}^3_{\sf LCM[1]}$.
In section $5$, we also introduce more complicated disjunction operations, which will play key roles in Part II.

In section $4$, we study the interaction between the disjunction operations and the learnable/piecewise computable functions.
We will construct new operations by iterating the disjunction operations introduced in Section $3$ in the following way:

\begin{center}\footnotesize
\begin{tabular}{ccccc}
 &\rotatebox[origin=r]{30}{$\geq$}&$\bigoplus_{m\in\nn}\bhk{\bigvee^{(m)}P}^{2}_{{\sf CL}}$&\rotatebox{-30}{$\geq$}& \\
$P\;\geq\;\bigoplus_{m\in\nn}\bhk{P\vee P}^{3}_{{\sf LCM}[m]}\;\geq\;\bigoplus_{m\in\nn}\bhk{\bigvee^{(m)}P}^{2}_{{\sf LCM}}$& & & & $\bigoplus_{m\in\nn}\bhk{\bigvee^{(m)}\bhk{P\vee P}^{3}_{{\sf LCM}}}^{2}_{{\sf CL}}\;\geq\;\bigcup_{m\in\nn}\bhk{\bigvee^{(m)}P}^{2}_{{\sf CL}}$\\
 &\rotatebox[origin=r]{-30}{$\geq$}&$\bhk{P\vee P}^{3}_{{\sf LCM}}$&\rotatebox{30}{$\geq$}& 
\end{tabular}
\end{center}

Every such operation induces a functor from the associated Medvedev/Muchnik-like degree structure to the Medvedev degree structure.
The main result is that every such functor is left adjoint to the canonical map from the Medvedev degree structure onto the associated degree structure.

In section $6$, we will see that how our classes of nonuniformly computable functions relate to the arithmetical hierarchy of non-intuitionistic principles such as {\em the law of excluded middle} ({\sf LEM}), {\em the lessor limited principle of omniscience} or {\em de Morgan's law} ({\sf LLPO}), and {\em the double negation elimination} ({\sf DNE}).
The arithmetical hierarchy of non-intuitionistic principles is illustrated as follows:

\begin{center}\small
\begin{tabular}{ccccccccccc}
& & & & &\rotatebox[origin=r]{30}{---}&${\Sigma^0_2\text{\sf -LLPO}}$&\rotatebox{-30}{---}& & & \\
${\sf HA}$&---&${\Sigma^0_1\text{\sf -LEM}}$&---&${\Delta^0_2\text{\sf -LEM}}$& & & & ${\Sigma^0_2\text{\sf -LEM}}$ &---&${\Sigma^0_3\text{\sf -DNE}}$\\
& & & & &\rotatebox[origin=r]{-30}{---}&${\Sigma^0_2\text{\sf -DNE}}$&\rotatebox{30}{---}& & & 
\end{tabular}
\end{center}

Here, $\Gamma$-{\sf LEM} represents the sentence $\varphi\vee\neg\varphi$ for $\Gamma$-sentences $\varphi$; $\Gamma$-{\sf LLPO} represents the sentence $\neg(\varphi\wedge\psi)\rightarrow\neg\varphi\vee\neg\psi$ for $\Gamma$-sentences $\varphi,\psi$; and $\Gamma$-{\sf DNE} represents the sentence $\neg\neg\varphi\rightarrow\varphi$ for $\Gamma$-sentences $\varphi$.
We interpret these principles as partial multi-valued functions on $\nn^\nn$, and then we characterize our notions of nonuniform computability by using these principles in the context of the Weihrauch degrees.
We also characterize our notions by Wadge-like games.

\subsection{Notations and Conventions}\label{subsec:1:notation}

For any sets $X$ and $Y$, we say that {\em $f$ is a function from $X$ to $Y$} (written $f:X\to Y$) if the domain ${\rm dom}(f)$ of $f$ includes $X$, and the range ${\rm range}(f)$ of $f$ is included in $Y$.
We also use the notation $f:\subseteq X\to Y$ to denote that $f$ is a partial function from $X$ to $Y$, i.e., the domain ${\rm dom}(f)$ of $f$ is included in $X$, and the range ${\rm rng}(f)$ of $f$ is also included in $Y$.

\index{$\sigma^-$}\index{tree!immediate predecessor in}\index{$[\sigma]$}\index{tree}\index{$[T]$}%
\index{tree!extendible in}\index{$T^{ext}$}\index{tree!leaf of}\index{tree!dead end of}%
For basic terminology in Computability Theory, see Soare \cite{Soa}.
For $\sigma\in\nn^{<\nn}$, we let $|\sigma|$ denote the length of $\sigma$.
For $\sigma\in\nn^{<\nn}$ and $f\in\nn^{<\nn}\cup\nn^\nn$, we say that $\sigma$ is {\em an initial segment} of $f$ (denoted by $\sigma\subset f$) if $\sigma(n)=f(n)$ for each $n<|\sigma|$.
Moreover, $f\res n$ denotes the unique initial segment of $f$ of length $n$.
let $\sigma^-$ denote an immediate predecessor node of $\sigma$, i.e. $\sigma^-=\sigma\res (|\sigma|-1)$.
We also define $[\sigma]=\{f\in \nn^\nn:f\supset\sigma\}$.
A {\em tree} is a subset of $\nn^{<\nn}$ closed under taking initial segments.
For any tree $T\subseteq \nn^{<\nn}$, we also let $[T]$ be the set of all infinite paths of $T$, i.e., $f$ belongs to $[T]$ if $f\res n$ belongs to $T$ for each $n\in\nn$. 
A node $\sigma\in T$ is {\em extendible} if $[T]\cap[\sigma]\not=\emptyset$.
Let $T^{ext}$ denote the set of all extendible nodes of $T$.
We say that $\sigma\in T$ is {\em a leaf} or {\em a dead end} if there is no $\tau\in T$ with $\tau\supsetneq\sigma$.

\index{concatenation}\index{$\fr$}\index{$\bigsqcap$}%
For any set $X$, the tree $X^{<\nn}$ of finite words on $X$ forms a monoid under concatenation $\fr$.
Here {\em the concatenation of $\sigma$ and $\tau$} is defined by $(\sigma\fr \tau)(n)=\sigma(n)$ for $n<|\sigma|$ and $(\sigma\fr\tau)(|\sigma|+n)=\tau(n)$ for $n<|\tau|$.
We use symbols $\fr$ and $\bigsqcap$ for the operation on this monoid, where $\bigsqcap_{i\leq n}\sigma_i$ denotes $\sigma_0\fr\sigma_1\fr\dots\fr\sigma_n$.
To avoid confusion, the symbols $\times$ and $\prod$ are only used for a product of sets.
We often consider the following three left monoid actions of $X^{<\nn}$:
The first one is the set $X^\nn$ of infinite words on $X$ with an operation $\fr:X^{<\nn}\times X^\nn\to X^\nn$; $(\sigma\fr f)(n)=\sigma(n)$ for $n<|\sigma|$ and $(\sigma\fr f)(|\sigma|+n)=f(n)$ for $n\in\nn$.
The second one is the set $\mathcal{T}(X)$ of subtrees $T\subseteq X^{<\nn}$ with an operation $\fr:X^{<\nn}\times\mathcal{T}(X)\to\mathcal{T}(X)$; $\sigma\fr T=\{\sigma\fr\tau:\tau\in T\}$.
The third one is the power set $\mathcal{P}(X^\nn)$ of $X^{\nn}$ with an operation $\fr:X^{<\nn}\times\mathcal{P}(X^\nn)\to\mathcal{P}(X^\nn)$; $\sigma\fr P=\{\sigma\fr f:f\in P\}$.

\index{Pi01 set@$\Pi^0_1$ set}\index{Pi01 set@$\Pi^0_1$ set!effective enumeration of@effective enumeration of}\index{Pi01 set@$\Pi^0_1$ set!corresponding tree of@corresponding tree of}\index{$\Phi(\sigma;n)$}%
\index{Pi01 set@$\Pi^0_1$ set!computable sequence of}\index{Pi01 set@$\Pi^0_1$ set!uniform sequence of}%
\index{Pi01 set@$\Pi^0_1$ set!special@special}\index{$f\oplus g$}\index{$P\linf Q$}\index{$P\lsup Q$}%
We say that a set $P\subseteq\nn^\nn$ is $\Pi^0_1$ if there is a computable relation $R$ such that $P=\{f\in\nn^\nn:(\forall n)R(n,f)\}$ holds.
Equivalently, $P=[T_P]$ for some computable tree $T_P\subseteq\nn^{<\nn}$.
Let $\{\Phi_e\}_{e\in\nn}$ be an effective enumeration of all Turing functionals (all partial computable functions\footnote{In some context, a function $\Phi$ is sometimes called partial computable if it can be extended to some $\Phi_e$. In this paper, however, we do not need to distinguish our definition as being different from this definition.}) on $\nn^\nn$.
Then the $e$-th $\Pi^0_1$ subset of $2^\nn$ is defined by $P_e=\{f\in 2^\nn:\Phi_e(f;0)\uparrow\}$.
Note that $\{P_e\}_{e\in\nn}$ is an effective enumeration of all $\Pi^0_1$ subsets of Cantor space $2^\nn$.
If (an index $e$ of) a $\Pi^0_1$ set $P_e\subseteq 2^\nn$ is given, then $T_e=\{\sigma\in 2^{<\nn}:\Phi_e(\sigma;0)\uparrow\}$ is called {\em the corresponding tree for $P_e$}.
Here $\Phi(\sigma;n)$ for $\sigma\in\nn^{<\nn}$ and $n\in\nn$ denotes the computation of $\Phi$ with an oracle $\sigma$, an input $n$, and step $|\sigma|$.
Whenever a $\Pi^0_1$ set $P$ is given, we assume that an index $e$ of $P$ is also given.
If $P\subseteq 2^\nn$ is $\Pi^0_1$, then the corresponding tree $T_P\subseteq 2^{<\nn}$ of $P$ is computable, and $[T_P]=P$.
Moreover, the set $L_P$ of all leaves of the computable tree $T_P$ is also computable.
We also say that a sequence of $\{P_i\}_{i\in I}$ of $\Pi^0_1$ subsets of a space $X$ is {\em computable} or {\em uniform} if the set $\{(i,f)\in I\times X:f\in P_i\}$ is again a $\Pi^0_1$ subset of the product space $I\times X$.
A set $P\subseteq\nn^\nn$ is {\em special} if $P$ is nonempty and $P$ has no computable member.
For $f,g\in\nn^\nn$, $f\oplus g$ is defined by $(f\oplus g)(2n)=f(n)$ and $(f\oplus g)(2n+1)=g(n)$ for each $n\in\nn$.
For $P,Q\subseteq\nn^\nn$, put $P\linf Q=(\lrangle{0}\fr P)\cup(\lrangle{1}\fr Q)$ and $P\lsup Q=\{f\oplus g:f\in P\;\&\;g\in Q\}$.

\tableofcontents

\section{Nonuniformly Computable Discontinuous Functions}

\subsection{Piecewise Computable Functions}

Our main objective in the paper is to study the intermediate notion of {\em (uniform) computability} and {\em nonuniform computability}.
The concept of nonuniform computability can be rephrased as {\em countable computability}, i.e., partial functions that are decomposable into countably many computable functions.
One can expect that the class of nonuniformly computable functions is classified on the basis of the least cardinality and least complexity of the decomposition (see also Pauly \cite{Pauly10}).
For instance, if a partial function $\Gamma:\subseteq\nn^\nn\to\nn^\nn$ is decomposable into $k$ many computable functions, we say that it is {\em $k$-wise computable} or {\em $(k,1)$-computable}, and if $\Gamma$ is decomposable into countably many (finitely many, resp.) computable functions with uniformly $\Lambda$-definable domains, we say that it is {\em countable (finite, resp.) $\Lambda$-piecewise computable}, where $\Lambda$ is a lightface pointclass.

An important subclass of the piecewise computable functions consists of partial functions that are identifiable in the limit (\cite{Gol}).
The relationship between the computability with {\em trial-and-error} (limit computability or effective learnability) and the subhierarchy of the level $\Delta^0_2$ has been common knowledge among recursion theorists since the last fifty years or so (see also Shoenfield \cite{Shoen59}, Gold \cite{Gol}, Putnam \cite{Putnam65}, and Ershov \cite{Ershov68}).
A basic observation (see Theorem \ref{thm:5:red-eq-dis}) regarding the concept of type-two learnability (see also de Brecht-Yamamoto \cite{Brecht-Yamamoto,Bre-Yama}) is that a partial function on $\nn^\nn$ is $\Pi^0_1$-piecewise computable if and only if it is identifiable in the limit or learnable in the following sense:
a partial function $\Gamma:\subseteq\nn^\nn\to\nn^\nn$ will be called {\em learnable} or {\em $(1,\omega)$-computable} if there is a computable function $\Psi:\subseteq\nn^{<\nn}\to\nn$ such that $\Phi_{\lim_{n\to\infty}\Psi(f\res n)}(f)=\Gamma(f)$ for every $f\in{\rm dom}(\Gamma)$, where recall that $\{\Phi_e\}_{e\in\nn}$ is a fixed enumeration of all partial computable functions.
Such a $\Psi$ is called a {\em learner}.
\index{learner}%

\index{learner!dominate}%
We say that partial function $\widehat{\Psi}:\subseteq\nn^{<\nn}\to\nn$ dominates $\Psi:\subseteq\nn^{<\nn}\to\nn$ as a learner if $\lim_s\widehat{\Psi}(f\res s)$ converges to $\lim_s\Psi(f\res s)$ whenever $\lim_s\Psi(f\res s)$ converges.
We say that $\{\Psi_e\}_{e\in\nn}$ enumerates all learners if every partial function $\Psi:\subseteq\nn^{<\nn}\to\nn$ is dominated by some $\Psi_e$ as a learner.
To get a nice enumeration of all learners, we first check the following proposition.

\begin{prop}\label{prop:1-2:learn-trick}
There is an effective enumeration $\{\Psi_e\}_{e\in\nn}$ of all learners that consists of total functions $\Psi_e:\nn^{<\nn}\to\nn$.
\end{prop}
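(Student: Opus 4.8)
The plan is to enumerate candidate learners and then modify each to make it total while preserving its limiting behavior on every oracle. First I would fix an effective enumeration $\{\Theta_e\}_{e\in\nn}$ of all partial computable functions $\Theta_e:\subseteq\nn^{<\nn}\to\nn$ (this is just the restriction of the fixed enumeration of Turing functionals to inputs from $\nn^{<\nn}$ coded as natural numbers). The issue is that a given $\Theta_e$ may diverge on some $\sigma\in\nn^{<\nn}$, and then $\lim_s\Theta_e(f\res s)$ may fail to be defined even when intuitively the learner should "keep its last guess." So I would define $\Psi_e(\sigma)$ by a stage-by-stage dovetailed search: run the computations $\Theta_e(\tau)$ for all $\tau\subseteq\sigma$ for $|\sigma|$ steps; let $\Psi_e(\sigma)$ be $\Theta_e(\tau)$ where $\tau$ is the \emph{longest} initial segment of $\sigma$ on which this computation has halted within $|\sigma|$ steps, and set $\Psi_e(\sigma)=0$ (or any fixed default) if no such $\tau$ exists. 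This $\Psi_e$ is clearly total and the map $e\mapsto\Psi_e$ is effective.

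Next I would verify the domination property. Suppose $\Psi:\subseteq\nn^{<\nn}\to\nn$ is an arbitrary learner and $f\in\nn^\nn$ is such that $\lim_s\Psi(f\res s)=m$ converges. Pick $e$ with $\Theta_e=\Psi$. I claim $\lim_s\Psi_e(f\res s)=m$ as well. Since the limit along $f$ converges to $m$, there is $N$ such that $\Psi(f\res s)=m$ for all $s\geq N$; in particular $\Psi(f\res N)\!\downarrow$, say halting in $t_0$ steps. Then for every $s\geq\max\{N,t_0,?\}$ large enough that the dovetailing at stage $s$ has looked far enough, the longest $\tau\subseteq f\res s$ on which $\Theta_e(\tau)$ halts within $s$ steps will be some $f\res k$ with $k\geq N$ — because $f\res N$ itself converges within $t_0\leq s$ steps once $s\geq t_0$ and $s\geq N$, so the longest halting initial segment has length at least $N$ — and hence $\Psi_e(f\res s)=\Theta_e(f\res k)=\Psi(f\res k)=m$ since $k\geq N$. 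Thus the limit exists and equals $m$, so $\Psi_e$ dominates $\Psi$ as a learner. (The only subtlety is that the "longest halting initial segment" of $f\res s$ might momentarily be some short $\tau$ with large halting time while a longer one has not yet been discovered; but once $s$ exceeds the halting time of $f\res N$, any initial segment of length $<N$ is no longer the longest halting one, because $f\res N$ beats it.)

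I expect the main obstacle to be this last point: making precise that the limit of $\Psi_e$ along $f$ is not disturbed by the behavior of $\Theta_e$ on initial segments of $f$ strictly longer than those on which $\Psi$ has already stabilized — i.e., ruling out that $\Theta_e$ halts on some very long $f\res k$ ($k>N$) with a \emph{different} value. But this cannot happen: $\Theta_e=\Psi$ and $\Psi(f\res k)=m$ for all $k\geq N$, so whenever $\Theta_e$ does halt on such a $\tau=f\res k$ it outputs exactly $m$. Hence every "update" that $\Psi_e$ performs after stage $\max\{N,t_0\}$ replaces the guess $m$ by the guess $m$, and the limit is $m$. The remaining details (coding $\nn^{<\nn}$ into $\nn$, the precise dovetailing, totality) are routine, so I would state them briefly and move on.
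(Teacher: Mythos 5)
Your proposal is correct and follows essentially the same route as the paper: both define $\Psi_e(\sigma)$ as the value of the underlying partial function on the longest initial segment of $\sigma$ that halts within $|\sigma|$ steps, and both verify domination by noting that past the stabilization point every halting initial segment of length $\geq N$ outputs the limit value $m$. The only cosmetic difference is how the ``nothing has halted yet'' case is handled (you use a fixed default $0$; the paper instead forces a value $k$ on the empty string and indexes the learners by pairs $\langle e,k\rangle$), which is immaterial for the domination property.
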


\begin{proof}\upshape
For the $e$-th partial computable function $\Phi_e:\subseteq\nn^{<\nn}\to\nn$ and an index $k$, we effectively define a total computable function $\Psi_{\lrangle{e,k}}:\nn^{<\nn}\to\nn$ that dominates $\Phi_e$ as a learner.
We define $\Phi$ by $\Phi(\lrangle{})=k$ and $\Phi(\sigma)=\Phi_e(\sigma)$ for all nonempty string $\sigma$. 
Given $\sigma\in\nn^{<\nn}$, put $\sigma^*=\max\{\tau\subseteq\sigma:\Phi(\tau)[|\sigma|]\downarrow\}$.
Then define $\Psi_{\lrangle{e,k}}(\sigma)=\Phi(\sigma^*)$ for every $\sigma\in\nn^{<\nn}$.
If $\lim_s\Phi_e(f\res s)$ converges then clearly $\lim_s\Psi_{\lrangle{e,k}}(f\res s)$ also converges to the same value.
Hence, $\Psi_{\lrangle{e,k}}$ dominates $\Phi_e$.
\end{proof}

The set $\{\Psi_e\}_{e\in\nn}$ in Proposition \ref{prop:1-2:learn-trick} is referred as {\em the effective enumeration of all learners}, and $\Psi_e$ is called {\em the $e$-th learner}.
\index{learner!effective enumeration of}\index{learner!$e$-th learner}%

\begin{remark}
We urge the reader not to confuse the notions $\Psi(\sigma)$ and $\Phi(\sigma)$ for a learner $\Psi$ and a computable function $\Phi$ (on $\nn^\nn$).
In the former case, $\Psi(\sigma)$ simply denotes the output (the inference) of the learner $\Psi$ based on the current input $\sigma$.
In the latter case, however, we use $\sigma$ as an initial segment of some oracle information, and so really $\Phi(\sigma)$ denotes a string $\lrangle{\Phi(\sigma;0),\Phi(\sigma;1),\Phi(\sigma;2),\dots}$.
\index{$\Phi(\sigma)$}%
\end{remark}

\begin{notation}
Let $\Psi:\nn^{<\nn}\to\nn$ be a learner.
For any string $\sigma\in\nn^{<\nn}$, the set of {\em mind-change locations of the learner $\Psi$ on the informant $\sigma$} (denoted by ${\tt mcl}_\Psi(\sigma)$) is defined by
\index{learner!mind-change location of}\index{${\tt mcl}_\Psi$}%
\[{\tt mcl}_\Psi(\sigma)=\{n<|\sigma|:\Psi(\sigma\res n+1)\not=\Psi(\sigma\res n)\}.\]
We also define ${\tt mcl}_\Psi(f)=\bigcup_{n\in\nn}{\tt mcl}_\Psi(f\res n)$ for any $f\in\nn^{\nn}$.
Then, $\#{\tt mcl}_\Psi(f)$ denotes the {\em number of times that the learner $\Psi$ changes her/his mind on the informant $f$}.
\index{$\#{\tt mcl}_\Psi$}%
Moreover, the set of {\em indices predicted by the learner $\Psi$ on the informant $\sigma$} (denoted by ${\tt indx}_\Psi(\sigma)$) is defined by
\[{\tt indx}_\Psi(\sigma)=\{\Psi(\sigma\res n):n\leq|\sigma|\}.\]
\index{learner!indices predicted by}\index{${\tt indx}_\Psi(\sigma)$}%
We also define ${\tt indx}_\Psi(f)=\bigcup_{n\in\nn}{\tt indx}_\Psi(f\res n)$ for any $f\in\nn^{\nn}$.
\index{${\tt indx}_\Psi$}%
\end{notation}

We now introduce various subclasses of nonuniformly computable functions on $\nn^\nn$ based on Learning Theory.

\begin{definition}\label{def:1-2:nonunif_bas}
Let $D$ be a subset of Baire space $\nn^\nn$, and $\alpha,\beta,\gamma\leq\omega$ be ordinals.
\index{computable!$(\alpha,\beta\mid\gamma)$-computable}%
A function $\Gamma:D\to\nn^\nn$ is {\em $(\alpha,\beta|\gamma)$-computable} if there is a set $I\subseteq\nn$ of cardinality $\alpha$ such that, for any $g\in D$, there is an index $e\in I$ satisfying the following three conditions.
\begin{enumerate}
\item (Learnability)
$\lim_n\Psi_e(g\res n)$ converges, and $\Phi_{\lim_n\Psi_e(g\res n)}(g)=\Gamma(g)$.
\item 
\index{computable!$(\alpha,\beta\mid\gamma)$-computable!mind-change condition of}%
(Mind-Change Condition) $\#{\tt mcl}_{\Psi_e}(g)=\#\{n\in\nn:\Psi_e(g\res n+1)\not=\Psi_e(g\res n)\}<\beta$.
\item 
\index{computable!$(\alpha,\beta\mid\gamma)$-computable!error condition of}%
(Error Condition) $\#{\tt indx}_{\Psi_e}(g)=\#\{\Psi_e(g\res n):n\in\nn\}\leq\gamma$.
\end{enumerate}
\end{definition}

If $\gamma=\omega$, then we simply say that $\Gamma$ is {\em $(\alpha,\beta)$-computable} for $(\alpha,\beta|\gamma)$-computable function $\Gamma$.
\index{computable!$(\alpha,\beta)$-computable}%
Let $[\mathfrak{C}_T]^\alpha_{\beta}$ (resp.\ $[\mathfrak{C}_T]^\alpha_{\beta|\gamma}$) denote the set of all $(\alpha,\beta)$-computable (resp.\ $(\alpha,\beta|\gamma)$-computable) functions.
\index{$[\mathfrak{C}_T]^\alpha_{\beta}$}\index{$[\mathfrak{C}_T]^\alpha_{\beta\mid\gamma}$}%
Hereafter, the symbol $<\omega$ will be used in referring to ``some natural number $n$''.
For instance, $\Gamma$ is said to be $(<\omega,2|<\omega)$-computable if there are $a,c\in\nn$ such that it is $(a,2|c)$-computable.

\begin{remark}
Some of $(\alpha,\beta|\gamma)$-computability notions are related to learnability notions:
\index{learnable!with bounded mind-changes}\index{learnable!with bounded errors}%
\index{learnable}\index{computable!$k$-wise}\index{learnable!by a team}%
Every $(1,<\omega)$-computable function is {\em learnable with bounded mind-changes}; every $(1,\omega|<\omega)$-computable function is {\em learnable with bounded errors}; every $(1,\omega)$-computable function is {\em learnable}; every $(<\omega,1)$-computable function is {\em $k$-wise computable}; and every $(<\omega,\omega)$-computable function is {\em team-learnable}.
The concept of learnability in the context of real number computation has been studied by several researchers including Chadzelek-Hotz \cite{ChHo}, Ziegler \cite{Zie2,Zie3}, and de Brecht-Yamamoto \cite{Brecht-Yamamoto, Bre-Yama}.
The notion of mind-change is also related to the level of discontinuity studied by several researchers, for instance, Hertling \cite{Hertling96}, and Hemmerling \cite{Hemmerling08}.
See also Section \ref{section:gdd_wellfounded} for more information on the relationship between the notion of mind-changes and the level of discontinuity.
The notion of $k$-wise computability has been also studied by, for example, Pauly \cite{Pauly10} and Ziegler \cite{Zie4}.
\end{remark}

\begin{table}\caption{Seven Classes of Nonuniformly Computable Functions}\label{rtable}%
\begin{center}
\begin{tabular}{cc}\toprule
$[\mathfrak{C}_T]^1_1$ & (Uniformly) computable \\
$[\mathfrak{C}_T]^1_{<\omega}$ & Learnable with bounded mind changes \\
$[\mathfrak{C}_T]^1_{\omega|<\omega}$ & Learnable with bounded errors \\
$[\mathfrak{C}_T]^1_\omega$ & Learnable \\
$[\mathfrak{C}_T]^{<\omega}_1$ & $k$-wise computable for some $k\in\omega$\\
$[\mathfrak{C}_T]^{<\omega}_\omega$ & Learnable by a team \\
$[\mathfrak{C}_T]^\omega_1$ & Nonuniformly computable \\
\bottomrule
\end{tabular}
\end{center}
\end{table}

\begin{table}\label{table}\small
\begin{center}
\begin{tabular}{ccccccccccc}
 & & & & &\rotatebox[origin=r]{30}{$\subseteq$}&$[\mathfrak{C}_T]^{<\omega}_1=[\mathfrak{C}_T]^{<\omega}_{\omega|<\omega}$&\rotatebox{-30}{$\subseteq$}& & & \\
$[\mathfrak{C}_T]^1_1$&$\subseteq$&$[\mathfrak{C}_T]^1_{<\omega}$&$\subseteq$&$[\mathfrak{C}_T]^1_{\omega|<\omega}$& & & & $[\mathfrak{C}_T]^{<\omega}_{\omega}$ &$\subseteq$&$[\mathfrak{C}_T]^\omega_1=[\mathfrak{C}_T]^\omega_\omega$\\
 & & & & &\rotatebox[origin=r]{-30}{$\subseteq$}&$[\mathfrak{C}_T]^1_{\omega}$&\rotatebox{30}{$\subseteq$}& & & 
\end{tabular}
\end{center}
\caption{Seven monoids of nonuniformly computable functions}%
\end{table}

We first mention the topological interpretation of the learnability.
For a sequence $\{\sigma_n\}_{n\in\nn}\in(\nn^{<\nn})^{\nn}$ of strings, $\lim_n\sigma_n$ is defined by $(\lim_n\sigma_n)(m)=\lim_n(\sigma_n(m))$.
\index{$\lim_n\sigma_n$}%
If $\lim_n\sigma_n:\nn\to\nn$ is total, say $\lim_n\sigma_n=h\in\nn^\nn$, then we say that {\em $\lim_n\sigma_n\in\nn^\nn$ converges to $h$}.

\begin{prop}\label{prop:1-2:characterization}
Fix an ordinal $\alpha\leq\omega$.
A partial function $\Gamma:\subseteq\nn^\nn\to\nn^\nn$ is $(1,\alpha)$-computable if and only if there is a total computable function $\psi:\nn^{<\nn}\to\nn^{<\nn}$ such that $\lim_n\psi(g\res n)$ converges to $\Gamma(g)$, and $\#\{n\in\nn:\psi(g\res n+1)\not\supseteq\psi(g\res n)\}<\alpha$, for any $g\in{\rm dom}(\Gamma)$.
\end{prop}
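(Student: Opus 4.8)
The plan is to prove the two implications separately; in both directions the real content is an explicit translation between \emph{identifying a program} for $\Gamma(g)$ in the limit, in the sense of Definition~\ref{def:1-2:nonunif_bas}, and \emph{approximating the value} $\Gamma(g)$ itself, set up so that each mind change of the program-learner corresponds to a non-monotone revision of the value-approximation and conversely. Matching these up exactly is what keeps the ordinal bound $<\alpha$ intact.

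For ($\Rightarrow$), suppose $\Gamma$ is $(1,\alpha)$-computable via the learner $\Psi=\Psi_e$, which is total by Proposition~\ref{prop:1-2:learn-trick}. I would set $\psi(\sigma):=\langle\Phi_j(\sigma;0),\dots,\Phi_j(\sigma;\ell_\sigma-1)\rangle$, where $j=\Psi(\sigma)$ and $\ell_\sigma$ is largest with $\Phi_j(\sigma;i)$ converging within $|\sigma|$ steps for every $i<\ell_\sigma$; since $\Psi$ is total and this is a bounded search, $\psi:\nn^{<\nn}\to\nn^{<\nn}$ is total computable. The one lemma to isolate is: if $\Psi(\sigma)=\Psi(\sigma\fr\langle m\rangle)$, then $\psi(\sigma)\subseteq\psi(\sigma\fr\langle m\rangle)$ --- a computation halting with oracle $\sigma$ in at most $|\sigma|$ steps still halts with the same value when the oracle is extended and one extra step is allowed, and truncating at the first non-convergent coordinate makes the two strings compatible. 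Granting this, along any $g$ each $n$ with $\psi(g\res n+1)\not\supseteq\psi(g\res n)$ is charged to a genuine mind change of $\Psi$, so the number of such $n$ is at most $\#{\tt mcl}_\Psi(g)<\alpha$. Finally, for $g\in{\rm dom}(\Gamma)$, once $\Psi$ has stabilized to $j^{*}$ with $\Phi_{j^{*}}(g)=\Gamma(g)$ total, each coordinate of $\Gamma(g)$ converges in finitely many steps from a finite part of $g$, so $\psi(g\res n)(m)=\Gamma(g)(m)$ for all large $n$; hence $\lim_n\psi(g\res n)$ converges to $\Gamma(g)$.

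For ($\Leftarrow$), given $\psi$ as in the statement, I would for each $\ell\in\nn$ introduce the Turing functional $\Theta_\ell$ which, on oracle $h$ and input $m$, searches for the least $k\geq\ell$ with $|\psi(h\res k)|>m$ and outputs $\psi(h\res k)(m)$, and fix by the $s$-$m$-$n$ theorem a computable map $\ell\mapsto e_\ell$ with $\Phi_{e_\ell}=\Theta_\ell$. The learner is $\Psi(\sigma)=e_{\ell(\sigma)}$, where $\ell(\sigma)$ is one more than the last position $n<|\sigma|$ with $\psi(\sigma\res n+1)\not\supseteq\psi(\sigma\res n)$, or $\ell(\sigma)=0$ if there is no such $n$. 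A monotone step of $\psi$ leaves $\ell(\sigma)$ unchanged, so $\Psi$ changes its mind only at non-monotone positions of $\psi$; thus $\#{\tt mcl}_\Psi(g)\leq\#\{n:\psi(g\res n+1)\not\supseteq\psi(g\res n)\}<\alpha$. Since $\alpha\leq\omega$, for $g\in{\rm dom}(\Gamma)$ there are only finitely many non-monotone positions; if $n^{*}$ is the last one (with $n^{*}=-1$ if there is none), then $\Psi$ stabilizes to $e_{n^{*}+1}$, and for all $k\geq n^{*}+1$ the strings $\psi(g\res k)$ are $\subseteq$-nested with union the coordinatewise limit $\lim_n\psi(g\res n)=\Gamma(g)$, whence $\Phi_{e_{n^{*}+1}}(g)=\Theta_{n^{*}+1}(g)=\Gamma(g)$. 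One must still note that Definition~\ref{def:1-2:nonunif_bas} requires a witness from the fixed enumeration $\{\Psi_e\}$: the total computable $\Psi$ just built equals $\Phi_e$ for some $e$, and the ``convergence-delayed'' $\Psi_{\langle e,k\rangle}$ produced in Proposition~\ref{prop:1-2:learn-trick} (with $k=\Psi(\langle\rangle)$) has the same limit and no more mind changes along any $g$, so it still witnesses $(1,\alpha)$-computability.

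The routine parts --- totality and computability of $\psi$ and $\Psi$, the use of $s$-$m$-$n$, and the limit computations --- I would not spell out. The step that needs genuine care is the bookkeeping of mind changes against non-monotone revisions: it must be preserved with no off-by-one, so that $\alpha=1$ really recovers $[\mathfrak{C}_T]^1_1$ and the strict inequality $<\alpha$ is matched on the nose. The two easily-missed points are the ones flagged above --- truncating $\psi(\sigma)$ at the first non-convergent coordinate of $\Phi_j(\sigma)$ in ($\Rightarrow$), without which monotonicity under a fixed index can fail, and descending from an arbitrary total computable learner to one in the canonical enumeration in ($\Leftarrow$) --- while the degenerate cases $\alpha=0$ (empty domain) and $\alpha=\omega$ (``finitely many'') fall out of the same argument and should merely be noted.
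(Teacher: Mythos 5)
Your proposal is correct and follows essentially the same route as the paper: in the forward direction you set $\psi(\sigma)=\Phi_{\Psi(\sigma)}(\sigma)$ (suitably truncated) and charge non-monotone revisions to mind changes via use-monotonicity, and in the backward direction you build a learner that changes its hypothesis exactly at the non-monotone positions of $\psi$ and guesses a program resuming the approximation from the current position, which is the paper's $\Phi_{e(\sigma)}$ construction in different indexing. Your two flagged refinements (truncating at the first non-convergent coordinate, and passing to the canonical enumeration via Proposition~\ref{prop:1-2:learn-trick}) are points the paper leaves implicit, not deviations.
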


\begin{proof}\upshape
Assume that $\Gamma$ is $(1,\alpha)$-computable via a learner $\Psi$.
We put $\psi(\sigma)=\Phi_{\Psi(\sigma)}(\sigma)$ for each $\sigma\in\nn^{<\nn}$.
Then the condition $\#{\tt mcl}_\Psi(g)<\alpha$ implies $\#\{n\in\nn:\psi(g\res n+1)\not\supseteq\psi(g\res n)\}<\alpha$, for any $g\in{\rm dom}(\Gamma)$.
Because if $\Psi(g\res n+1)=\Psi(g\res n)$, then $\psi(g\res n)=\Phi_{\Psi(g\res n)}(g\res n)\subseteq\Phi_{\Psi(g\res n)}(g\res n+1)=\psi(g\res n+1)$.
Thus, clearly, $\lim_n\psi(g\res n)$ converges to $\Phi_{\lim_n\Psi(g\res n)}(g)=\Gamma(g)$.

Assume that $\Gamma(g)=\lim_n\psi(g\res n)$ for any $g\in{\rm dom}(\Gamma)$ for some $\psi$ satisfying the condition in Proposition \ref{prop:1-2:characterization}.
We define a computable function $\Phi_{e(\sigma)}:\nn^\nn\to\nn^\nn$ for each $\sigma\in\nn^{<\nn}$.
For any $g\in\nn^\nn$, put $\Phi_{e(\sigma)}(g;n)=\psi(g\res s)(n)$ for each $n\in\nn$, where $s\geq|\sigma|$ is the least number such that $\psi(g\res s)(n)$ is defined.
Clearly, $\Phi_{e(\sigma)}$ is partial computable, and indeed, we can compute an index $e(\sigma)$ of $\Phi_{e(\sigma)}$ uniformly in $\sigma\in\nn^{<\nn}$.
Then, we define a learner $\Psi$ inductively.
Put $\Psi(\lrangle{})=e(\lrangle{})$.
Fix $\sigma\in\nn^{<\nn}$, and assume that $\Psi(\sigma^-)$ has already been defined.
If $\psi(\sigma)\supseteq\psi(\sigma^-)$, then set $\Psi(\sigma)=\Psi(\sigma^-)$.
If $\psi(\sigma)\not\supseteq\psi(\sigma^-)$, then set $\Psi(\sigma)=e(\sigma)$.
Clearly, the condition $\#\{n\in\nn:\psi(g\res n+1)\not\supseteq\psi(g\res n)\}<\alpha$ implies $\#{\tt mcl}_\Psi(g)<\alpha$, for any $g\in{\rm dom}(\Gamma)$.
In particular, $\lim_n\Psi(g\res n)$ converges to some index $e(\sigma)$ for any $g\in{\rm dom}(\Gamma)$.
Hence, $\Phi_{\lim_n\Psi(g\res n)}(g)=\bigcup_{n\geq|\sigma|}\psi(g\res n)=\lim_{n\in\nn}\psi(g\res n)=\Gamma(g)$, since $\{\psi(g\res n)\}_{n\geq|\sigma|}$ is an increasing sequence of strings.
\end{proof}

\begin{cor}[de Brecht-Yamamoto \cite{Brecht-Yamamoto}]
A partial function $\Gamma:\subseteq\nn^\nn\to\nn^\nn$ is $(1,\omega)$-computable if and only if there is a computable sequence $\{\Gamma_n\}_{n\in\nn}$ of partial computable functions which converges pointwise to $\Gamma$ on ${\rm dom}(\Gamma)$ with respect to the discrete topology on $\nn^\nn$.
\end{cor}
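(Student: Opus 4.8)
The plan is to derive both directions from Proposition~\ref{prop:1-2:characterization} with $\alpha=\omega$, which reduces $(1,\omega)$-computability of $\Gamma$ to the existence of a total computable $\psi:\nn^{<\nn}\to\nn^{<\nn}$ such that, along every $g\in{\rm dom}(\Gamma)$, $\lim_n\psi(g\res n)=\Gamma(g)$ while $\psi(g\res n+1)\not\supseteq\psi(g\res n)$ for only finitely many $n$ (call these the \emph{resets}). The forward direction will then be an easy reformatting; the reverse direction will require a stabilization argument, since the naive learner that, given indices for the $\Gamma_n$, guesses the $n$-th index at stage $n$ produces a non-convergent sequence of guesses, so one must instead amalgamate the $\Gamma_n$ into a single limit of convergent approximations.

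For ($\Rightarrow$), take $\psi$ as provided by Proposition~\ref{prop:1-2:characterization} and set $\Gamma_n(g)(k)=\psi(g\res m)(k)$ for the least $m\ge n$ with $|\psi(g\res m)|>k$, leaving $\Gamma_n(g)(k)$ undefined if there is no such $m$. Each $\Gamma_n$ is a partial computable function of $g$, uniformly in $n$. Given $g\in{\rm dom}(\Gamma)$, choose $N$ past the last reset of $\psi$ along $g$; then $\{\psi(g\res m)\}_{m\ge N}$ is an increasing sequence of strings with union $\Gamma(g)$, so $\Gamma_n(g)=\Gamma(g)$ for every $n\ge N$, that is, $\{\Gamma_n\}$ converges pointwise to $\Gamma$ on ${\rm dom}(\Gamma)$ in the discrete topology. (Alternatively, from a learner $\Psi$ one may simply set $\Gamma_n(g)=\Phi_{\Psi(g\res n)}(g)$.)

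For ($\Leftarrow$), fix indices $e_n$ for $\Gamma_n$ computable in $n$ (so $\Phi_{e_n}=\Gamma_n$). On input $g\res s$, for each $i\le s$ let $\tau^{(s)}_i=\Phi_{e_i}(g\res s)$ be the string output within $s$ steps, let $m_s$ be the least $m\le s$ such that $\{\tau^{(s)}_i:m\le i\le s\}$ is linearly ordered by $\subseteq$ (some such $m$ exists, e.g.\ $m=s$), and put $\psi(g\res s)=\bigcup_{m_s\le i\le s}\tau^{(s)}_i$; this $\psi$ is total computable. To verify the hypotheses of Proposition~\ref{prop:1-2:characterization}, fix $g\in{\rm dom}(\Gamma)$ and pick $N$ with $\Gamma_n(g)=\Gamma(g)$ for all $n\ge N$. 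Each $\tau^{(s)}_i$ with $i\ge N$ is an initial segment of the total function $\Gamma(g)$, so $m=N$ is admissible at every stage $s\ge N$, hence $m_s\le N$; and since the $\tau^{(s)}_i$ only grow with $s$ while incomparability is preserved under extension, an inadmissible $m$ stays inadmissible, so $m_s$ stabilizes at some $m^*\le N$. Admissibility of $m^*$ at all large stages, together with $\tau^{(s)}_N\to\Gamma(g)$, forces every $\tau^{(s)}_i$ with $i\ge m^*$ to be an initial segment of $\Gamma(g)$, since otherwise a disagreement with the ever-lengthening $\tau^{(s)}_N$ would eventually break the chain. Hence $\tau^{(s)}_N\subseteq\psi(g\res s)\subseteq\Gamma(g)$ for all large $s$, so $\lim_s\psi(g\res s)=\Gamma(g)$, and $\psi(g\res s+1)\supseteq\psi(g\res s)$ for all large $s$, so $\psi$ has only finitely many resets along $g$; Proposition~\ref{prop:1-2:characterization} then gives $(1,\omega)$-computability.

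The hard part will be this stabilization argument. One cannot simply take $\psi(g\res s)=\tau^{(s)}_{m_s}$, because $\Gamma_{m^*}(g)$ may be a proper partial restriction of $\Gamma(g)$; forming the union over all $i\in[m_s,s]$ repairs this by importing the long, correct approximations coming from the indices $i\ge N$. The remaining points---that $\psi$ is total computable, that $m_s$ is well-defined, and the routine ``$\lim$'' and ``reset'' bookkeeping---are straightforward.
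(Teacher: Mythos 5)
Your proof is correct and follows the same route as the paper, which simply cites Proposition \ref{prop:1-2:characterization}; you have filled in the details that the paper leaves implicit. In particular, your chain-stabilization argument for the reverse direction (taking the union over the maximal tail $[m_s,s]$ on which the stage-$s$ approximations form a chain, rather than a single $\tau^{(s)}_{m_s}$, to guard against $\Gamma_{m^*}(g)$ being a proper partial function) is sound and correctly supplies the nontrivial content of the implication.
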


\begin{proof}\upshape
By Proposition \ref{prop:1-2:characterization}.
\end{proof}

\subsection{Seven Classes of Nonuniformly Computable Functions}

We first check several basic properties of $(\alpha,\beta|\gamma)$-computability to show the following theorem stating that the classes obtained from Definition \ref{def:1-2:nonunif_bas} closed under composition are exactly the classes listed in Table \ref{rtable}.

\begin{theorem}\label{thm:main:first1-2}
$\{[\mathfrak{C}_T]^{\alpha}_{\beta|\gamma}:\alpha,\beta,\gamma\in\nn\cup\{<\omega,\omega\}\}$ contains just seven monoids, $[\mathfrak{C}_T]^1_1$, $[\mathfrak{C}_T]^1_{<\omega}$, $[\mathfrak{C}_T]^1_{\omega|<\omega}$, $[\mathfrak{C}_T]^{<\omega}_1$, $[\mathfrak{C}_T]^1_\omega$, $[\mathfrak{C}_T]^{<\omega}_\omega$, and $[\mathfrak{C}_T]^\omega_1$.
\end{theorem}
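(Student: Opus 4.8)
The plan is to organize the argument around three pillars: (i) closure of each candidate class under composition, (ii) collapse of the many $(\alpha,\beta\mid\gamma)$-parameters onto only these seven classes, and (iii) strictness of the inclusions, so that no two of the seven coincide. For (ii), the first observation is that the error parameter $\gamma$ only matters in the regime where both $\alpha$ and $\beta$ are finite-or-small: once $\beta=\omega$ and $\alpha=1$, the bound $\#{\tt indx}_{\Psi_e}(g)\le\gamma$ for finite $\gamma$ forces $[\mathfrak{C}_T]^1_{\omega\mid<\omega}$ to be a genuinely distinct class, while for $\alpha<\omega$, $\beta=\omega$, one shows $[\mathfrak{C}_T]^{<\omega}_{\omega\mid<\omega}=[\mathfrak{C}_T]^{<\omega}_1$ (a function learnable by a finite team with each learner making boundedly many distinct guesses can be converted into a finitely-wise computable function, since finitely many learners with $\le c$ indices each use a total of $\le ac$ computable functions, and the learnability requirement collapses to ``one of these finitely many functions is correct on each input''). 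Similarly $[\mathfrak{C}_T]^{\omega}_1=[\mathfrak{C}_T]^\omega_\omega$ because an arbitrary-cardinality team is just nonuniform computability. The cases $\beta=1$ (no mind changes) reduce $(\alpha,1\mid\gamma)$-computability to $(\alpha,1)$-computability, i.e.\ $k$-wise computability, regardless of $\gamma$, since with no mind change the learner outputs a single index, so $\#{\tt indx}=1$ automatically; and $(1,1\mid\gamma)$ is just uniform computability. Between $\beta=1$ and $\beta=\omega$, the finite values $2\le\beta<\omega$ all collapse to $[\mathfrak{C}_T]^1_{<\omega}$ for $\alpha=1$ (an easy padding/re-indexing argument), and for $\alpha<\omega$ one checks $(<\omega,<\omega)$-computability again lands inside $[\mathfrak{C}_T]^{<\omega}_1$.

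Next, for (i), I would prove closure under composition for each of the seven classes. For $[\mathfrak{C}_T]^1_1$ this is classical. For $[\mathfrak{C}_T]^1_{<\omega}$, $[\mathfrak{C}_T]^1_{\omega\mid<\omega}$ and $[\mathfrak{C}_T]^1_\omega$ I would use the learner-composition trick: given learners $\Psi,\Psi'$ witnessing $(1,\beta)$- and $(1,\beta')$-computability of $\Gamma,\Gamma'$, build a learner for $\Gamma'\circ\Gamma$ that runs $\Psi$ on the input to guess an index $e$ for $\Gamma$, simultaneously feeds the (current best approximation to the) output $\Phi_e$-string into $\Psi'$, and recomputes from scratch each time $\Psi$ changes its mind. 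The number of mind changes of the composite is bounded by (mind changes of $\Psi$) plus (sum over runs of the mind changes of $\Psi'$), which stays $<\omega$ when both bounds are $<\omega$; the index count behaves analogously, giving closure of $[\mathfrak{C}_T]^1_{\omega\mid<\omega}$; and with both bounds $\omega$ one gets closure of $[\mathfrak{C}_T]^1_\omega$. For $[\mathfrak{C}_T]^{<\omega}_1$ (finitely-wise computable) and $[\mathfrak{C}_T]^{<\omega}_\omega$ (team-learnable) closure is a product-of-finite-teams argument: compose each branch of the first team with each branch of the second. For $[\mathfrak{C}_T]^\omega_1$ closure is immediate from $f\le_T x$, $g\le_T f$ implying $g\le_T x$. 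Proposition \ref{prop:1-2:characterization} is the tool that makes the mind-change bookkeeping in the composition clean, by replacing learner-with-indices by monotone-string-approximation with a count of non-extensions.

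Finally, for (iii), I would exhibit separating examples: a function that is $k$-wise computable but not $(k-1)$-wise (to separate the $[\mathfrak{C}_T]^{<\omega}_1$ level from below), a learnable function requiring unboundedly many mind changes (separating $[\mathfrak{C}_T]^1_\omega$ from $[\mathfrak{C}_T]^1_{<\omega}$), a bounded-error-learnable function not $k$-wise computable for any $k$ (separating the top of the ``bottom chain'' from the $k$-wise level — here one uses that bounded-error learning allows the single correct index to be revisited, something a finite team cannot simulate), a team-learnable function that is neither finitely-wise computable nor $\Pi^0_1$-piecewise-in-the-simple-way, and a nonuniformly computable function that is not team-learnable. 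These examples can mostly be assembled on $2^\nn$ or $\nn^\nn$ using the $\Pi^0_1$-set machinery and the direct-sum operations $\oplus$, $\otimes$ from the Notations subsection, and several are essentially folklore (e.g.\ the jump operator restricted appropriately for the nonuniform-versus-learnable gap).

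The main obstacle I expect is part (ii), specifically nailing down exactly which $(\alpha,\beta\mid\gamma)$ with $\alpha$ and $\gamma$ both finite (and $\beta$ ranging over $1$, finite, $\omega$) collapse to which of the seven classes — in particular proving $[\mathfrak{C}_T]^{<\omega}_{\omega\mid<\omega}=[\mathfrak{C}_T]^{<\omega}_1$ requires carefully turning a finite team of bounded-error learners into a finite list of plain computable functions, and one has to be sure the learnability clause (which references $\lim_n\Psi_e(g\res n)$ converging) does not secretly add power. The bookkeeping on composite mind-change counts for the $[\mathfrak{C}_T]^1_{\omega\mid<\omega}$ closure is also delicate, since errors can in principle re-accumulate across restarts, and one must argue the error count of the composite remains finite (not merely the mind-change count).
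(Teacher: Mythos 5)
There is a genuine gap, and it sits exactly where you flagged your ``main obstacle''. The theorem quantifies over $\alpha,\beta,\gamma\in\nn\cup\{<\omega,\omega\}$, so the classes indexed by specific numerals --- $[\mathfrak{C}_T]^1_2$, $[\mathfrak{C}_T]^1_3$, $[\mathfrak{C}_T]^2_1$, $[\mathfrak{C}_T]^1_{\omega|2}$, $[\mathfrak{C}_T]^2_\omega$, and so on --- are members of the family and must be accounted for in the count. Your claim that ``the finite values $2\le\beta<\omega$ all collapse to $[\mathfrak{C}_T]^1_{<\omega}$ \dots\ (an easy padding/re-indexing argument)'' is false: $[\mathfrak{C}_T]^1_{n+1}$ is the class of functions learnable with at most $n$ mind changes, and a function genuinely requiring $n$ mind changes lies in $[\mathfrak{C}_T]^1_{n+1}\setminus[\mathfrak{C}_T]^1_{n}$, so no re-indexing makes $[\mathfrak{C}_T]^1_2$ equal to $[\mathfrak{C}_T]^1_{<\omega}$. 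What the paper actually proves (Proposition \ref{prop:1-4:monoid2}) is a \emph{generation} statement: every $(1,n+1)$-computable function factors as a $(1,2)$-computable function composed with a $(1,n)$-computable one, whence the smallest monoid containing $[\mathfrak{C}_T]^1_2$ is all of $[\mathfrak{C}_T]^1_{<\omega}$; combined with the composition bound of Proposition \ref{prop:1-3:monoid}, this shows that any finite-numeral-indexed member of the family either fails to be a monoid or coincides with one of the seven. These nontrivial factorizations (and their analogues for $[\mathfrak{C}_T]^1_{\omega|2}$, $[\mathfrak{C}_T]^2_1$, $[\mathfrak{C}_T]^2_\omega$) are the missing idea; without them the count ``seven'' is not established, since a priori $[\mathfrak{C}_T]^1_3$, say, could be an eighth monoid.

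A second, smaller problem is an internal inconsistency in your part (iii): you propose as a separating example ``a bounded-error-learnable function not $k$-wise computable for any $k$''. No such function exists --- as you yourself use in part (ii), every $(1,\omega|n)$-computable function is $(n,1)$-computable (Proposition \ref{prop:1-2:func2}; a finite team with one member per predicted index \emph{does} simulate bounded-error learning), so $[\mathfrak{C}_T]^1_{\omega|<\omega}\subseteq[\mathfrak{C}_T]^{<\omega}_1$ and any separation must go the other way. Apart from these two points, your outline of the collapses for the $\{1,<\omega,\omega\}$-indexed classes and of closure under composition matches the paper's route (Propositions \ref{prop:1-2:func}, \ref{prop:1-2:func2}, \ref{prop:1-3:monoid}); note also that the paper's own proof of this theorem does not exhibit strictness of the inclusions among the seven, so that portion of your part (iii) is work beyond what the paper records here.
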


\begin{prop}\label{prop:1-2:func}
Let $\Gamma$ be a partial function on Baire space $\nn^\nn$.
\begin{enumerate}
\item If $\Gamma$ is $(\alpha_0,\beta_0|\gamma_0)$-computable, $\alpha_0\leq\alpha_1$, $\beta_0\leq\beta_1$, and $\gamma_0\leq\gamma_1$, then $\Gamma$ is $(\alpha_1,\beta_1|\gamma_1)$-computable.
\item $\Gamma$ is $(\alpha,1)$-computable if and only if $\Gamma$ is $(\alpha,\beta|1)$-computable.
\item $\Gamma$ is $(\alpha,\beta)$-computable if and only if $\Gamma$ is $(\alpha,\beta|\beta)$-computable.
\item $\Gamma$ is $(1,1)$-computable if and only if $\Gamma$ is computable.
\item $\Gamma$ is $(\omega,1)$-computable if and only if $\Gamma$ is $(\omega,\omega)$-computable if and only if $\Gamma$ is nonuniformly computable, i.e., $\Gamma(g)\leq_Tg$ for any $g\in{\rm dom}(\Gamma)$, where recall that $\leq_T$ denotes the Turing reducibility. 
\end{enumerate}
\end{prop}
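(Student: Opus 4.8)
The plan is to prove the five items in the order stated, using item~(1) as a bookkeeping lemma for the rest. For (1): if a set $I$ witnesses $(\alpha_0,\beta_0|\gamma_0)$-computability of $\Gamma$, then, $\nn$ being infinite, $I$ extends to a superset $I'$ of cardinality $\alpha_1$; the clause ``for every $g$ there is $e\in I$ satisfying (1)--(3)'' persists verbatim with the old $e$, and enlarging the thresholds from $\beta_0,\gamma_0$ to $\beta_1,\gamma_1$ only relaxes the Mind-Change and Error conditions. (If $\alpha_0=0$ then ${\rm dom}(\Gamma)=\emptyset$ and everything is trivial.)

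For (2) and (3) I would argue both directions with the \emph{same} witnessing set and the same learners, using two elementary observations about a learner $\Psi$ and an informant $g\in\nn^\nn$: (a) $\#{\tt mcl}_\Psi(g)=0$ exactly when $\Psi(g\res n)=\Psi(\lrangle{})$ for all $n$, and then $\#{\tt indx}_\Psi(g)=1$; and (b) in general $\#{\tt indx}_\Psi(g)\le\#{\tt mcl}_\Psi(g)+1$, since $n\mapsto\Psi(g\res n)$ is constant on the blocks determined by the mind-change locations. Observation (a) gives (2): a witness to $(\alpha,1|\gamma)$-computability automatically has $\#{\tt indx}\le 1$, and a witness to $(\alpha,\beta|1)$-computability automatically has $\#{\tt mcl}=0<1$, so in both cases the same $I$ witnesses $(\alpha,1|1)$-computability, and then (1) supplies the remaining slack. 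Observation (b) gives the nontrivial direction of (3): if $I$ witnesses $(\alpha,\beta)$-computability then for each $g$ the chosen $e$ has $\#{\tt mcl}_{\Psi_e}(g)<\beta\le\omega$, hence finite, so $\#{\tt indx}_{\Psi_e}(g)\le\#{\tt mcl}_{\Psi_e}(g)+1\le\beta$ (separating the cases $\beta<\omega$ and $\beta=\omega$), i.e.\ $I$ already witnesses $(\alpha,\beta|\beta)$-computability; the converse is (1). The ``$<\omega$'' shorthand cases then follow formally.

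For (4) and the substantive part of (5) I would use \emph{constant learners}. If $\Gamma$ is $(1,1)$-computable via $I=\{e\}$, then by (a) the value $\Psi_e(g\res n)$ equals the fixed index $c:=\Psi_e(\lrangle{})$ (independent of $g$) for every $g\in{\rm dom}(\Gamma)$, so $\Phi_c$ computes $\Gamma$; conversely, given $\Gamma=\Phi_c\res{\rm dom}(\Gamma)$, feeding the construction behind Proposition \ref{prop:1-2:learn-trick} a nowhere-defined partial function together with the parameter $k=c$ produces a learner in the fixed enumeration that outputs $c$ on every string, and taking $I$ to be the singleton containing its index witnesses $(1,1)$-computability. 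For (5): ``$(\omega,\omega)$-computable $\Rightarrow$ nonuniformly computable'' uses only condition (1), which hands us, for each $g$, an index $j$ with $\Phi_j(g)=\Gamma(g)$, whence $\Gamma(g)\le_T g$; for ``nonuniformly computable $\Rightarrow(\omega,1)$-computable'' take $I$ to consist of the countably many, pairwise distinct indices of the learners constantly outputting $j$, for $j\in\nn$, and note that for $g\in{\rm dom}(\Gamma)$ and any $j$ with $\Phi_j(g)=\Gamma(g)$ that learner converges to $j$ with no mind change and a single predicted index; finally ``$(\omega,1)\Rightarrow(\omega,\omega)$'' is (1).

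The only delicate point is the invocation of constant learners: Proposition \ref{prop:1-2:learn-trick} a priori guarantees only that the fixed enumeration \emph{dominates} every learner, not that it reproduces its mind-change pattern, so one has to look inside its construction to see that a nowhere-defined input does yield the honestly constant learner (modulo the irrelevant behaviour at $\lrangle{}$). Beyond that the argument is routine: observation (b) and the monotonicity in (1) carry everything else.
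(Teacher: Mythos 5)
Your proposal is correct and follows essentially the same route as the paper: item (1) by monotonicity, items (2)–(3) via the inequality $\#{\tt indx}_\Psi(g)\leq\#{\tt mcl}_\Psi(g)+1$, and items (4)–(5) via constant learners in one direction and reading off the limit index in the other. Your extra care in checking that the fixed enumeration of Proposition \ref{prop:1-2:learn-trick} really contains honestly constant learners (by feeding it a nowhere-defined function with parameter $k=c$) is a legitimate refinement of a step the paper takes for granted, but it does not change the argument.
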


\begin{proof}\upshape
The items (1) and (2) easily follow from the definitions.
The item (3) follows from $\#{\tt indx}_{\Psi}(g)-1\leq\#{\tt mcl}_\Psi(g)$.

(4) If $\Gamma$ is computable via $\Phi_e$, then $\Gamma$ is $(1,1)$-computable via the singleton $\{i(e)\}$, where $\Psi_{i(e)}(\sigma)=e$ for any $\sigma\in\nn^{<\nn}$.
Assume that $\Gamma$ is $(1,1)$-computable via a singleton $\{e\}$.
Then $\Psi_e(\sigma)=\Psi_e(\lrangle{})$ for any $\sigma$ extendible to an element of ${\rm dom}(\Gamma)$, since $\#{\tt mcl}_{\Psi_e}(g)=0$ for any $g\in{\rm dom}(\Gamma)$.
Therefore, $\Gamma$ is computable via $\Phi_{\Psi_e(\lrangle{})}$.

(5) If $\Gamma$ is nonuniformly computable, then $\Gamma$ is $(\omega,1)$-computable via $\{i(e)\}_{e\in\nn}$, where $\Psi_{i(e)}(\sigma)=e$ for any $\sigma\in\nn^{<\nn}$.
Assume that $\Gamma$ is $(\omega,\omega)$-computable via $I$.
For any $g\in{\rm dom}(\Gamma)$, there is $e\in I$ such that $\lim_n\Psi_e(g\res n)$ converges to some value $p\in\nn$, and $\Phi_p(g)=\Gamma(g)$.
Thus, $\Gamma(g)\leq_Tg$ via $\Phi_p$.
\end{proof}

\begin{prop}\label{prop:1-2:func2}
For each $m,n\in\nn$, every $(m,\omega|n)$-computable function is $(m\cdot n,1)$-computable.
\end{prop}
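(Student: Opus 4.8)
The plan is to route through the equivalent description of $(m\cdot n,1)$-computability as decomposability into $m\cdot n$ partial computable functions, and to produce such a decomposition from an $(m,\omega|n)$-witness by ``waiting until the correct index has been named''. First I would fix a witness $I=\{e_1,\dots,e_m\}$ for $(m,\omega|n)$-computability: every $g\in{\rm dom}(\Gamma)$ is \emph{handled} by some $e_j\in I$, meaning that $\lim_s\Psi_{e_j}(g\res s)$ converges to an index $p$ with $\Phi_p(g)=\Gamma(g)$ and $\#{\tt indx}_{\Psi_{e_j}}(g)\leq n$ (the mind-change clause with bound $\omega$ is automatic once the limit exists). For a fixed $j$ and a $g$ handled by $e_j$, list the elements of ${\tt indx}_{\Psi_{e_j}}(g)$ in the order in which they are first output by $\Psi_{e_j}$ along $g$, say $r_1(g),r_2(g),\dots,r_{t(g)}(g)$ with $t(g)=\#{\tt indx}_{\Psi_{e_j}}(g)\leq n$; then the limiting index $p$ equals $r_k(g)$ for some $k\leq t(g)\leq n$. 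Hence ${\rm dom}(\Gamma)$ is covered by the $m\cdot n$ sets
\[C_{j,k}=\{g\in{\rm dom}(\Gamma): e_j \text{ handles } g \text{ and } \lim_s\Psi_{e_j}(g\res s)=r_k(g)\}\qquad(1\leq j\leq m,\ 1\leq k\leq n).\]

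The second step is to exhibit, uniformly in $(j,k)$, a single partial computable function $\Theta_{j,k}$ with $\Theta_{j,k}\res C_{j,k}=\Gamma\res C_{j,k}$. On oracle $g$, $\Theta_{j,k}$ simulates the total computable learner $\Psi_{e_j}$ on $g\res 0,g\res 1,\dots$, maintaining the list of distinct output values in order of first appearance; as soon as this list has length at least $k$, it reads off its $k$-th entry $r$ and from then on simply computes $\Phi_r(g)$. If $g\in C_{j,k}$, the list reaches length $t(g)\geq k$ at a finite stage, its $k$-th entry equals $r_k(g)=p$, and so $\Theta_{j,k}(g)=\Phi_p(g)=\Gamma(g)$. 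Thus $\Gamma$ is covered by the $m\cdot n$ computable functions $\Theta_{j,k}\res C_{j,k}$ ($j\leq m$, $k\leq n$), i.e.\ $\Gamma$ is $(m\cdot n)$-wise computable. Finally, exactly as in the proof of Proposition \ref{prop:1-2:func}(4), each $\Theta_{j,k}$ is matched by a constant learner drawn from the fixed enumeration $\{\Psi_e\}_{e\in\nn}$ of Proposition \ref{prop:1-2:learn-trick}; the set of the resulting learner-indices (padded to exactly $m\cdot n$ elements by harmless dummy learners if necessary) witnesses that $\Gamma$ is $(m\cdot n,1)$-computable, i.e.\ $\Gamma\in[\mathfrak{C}_T]^{m\cdot n}_1$.

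I do not expect a genuine obstacle here; the one point deserving attention is the termination of $\Theta_{j,k}$ on $C_{j,k}$, and this is precisely what dictates stratifying each $e_j$ into the $n$ pieces $C_{j,1},\dots,C_{j,n}$ rather than attempting one functional per $e_j$: membership of $g$ in $C_{j,k}$ forces at least $k$ distinct indices to be named along $g$, so the ``$k$-th named index'' is well defined and coincides with the limiting index. The rest is bookkeeping — counting the pieces as $m$ choices of $j$ times $n$ choices of $k$, and checking that the functionals $\Theta_{j,k}$ and the associated constant learners are obtained effectively from $I$.
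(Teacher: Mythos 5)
Your proposal is correct and follows essentially the same route as the paper's proof: the paper likewise enumerates the indices of ${\tt indx}_{\Psi^e}(g)$ in order of first appearance as $d^e_0(g),d^e_1(g),\dots$ and defines the $m\cdot n$ functionals $\Phi^e_k(g)=\Phi_{d^e_k(g)}(g)$, which is exactly your $\Theta_{j,k}$ waiting for the $k$-th named index. The only difference is that you spell out more explicitly the final bookkeeping step of converting the $(m\cdot n)$-wise decomposition into constant learners, which the paper leaves implicit.
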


\begin{proof}\upshape
Assume that $\Gamma:D\to\nn^\nn$ is $(m,\omega|n)$-computable with $m$-learners $\{\Psi^e\}_{e<m}$ with $n$-errors.
Now, we define an algorithm $\Phi^e_k$ for any $e<m$ and $k<n$, and we ensure the following property:
\[(\forall g\in D)(\exists e<m)(k<n)\;\Phi^e_k(g)=\Gamma(g).\]
The algorithm $\Phi^e_k$ proceeds as follows for $g$.
Recall that ${\tt indx}_{\Psi^e}(g)$ represents the set of all indices occurring in hypothesis of the learner $\Psi^e$.
We have an effective enumeration $d^e_0(g),d^e_1(g),\dots$ of all indices contained in ${\tt indx}_{\Psi^e}(g)$ uniformly in $g$.
Then, we set $\Phi^e_k(g)=\Phi_{d^e_k(g)}(g)$ if $d^e_k(g)$ is defined.
For any $g\in D$, there is $e<m$ such that $\lim_s\Psi^e(g\res s)$ converges to some correct computation $d$ of $\Gamma(g)$, i.e., $\Phi_d(g)=\Gamma(g)$.
Since $\#{\tt indx}_{\Psi^e}(g)<n$, we have $d^e_k(g)=d$ for some $k<n$.
Thus, for any $g\in D$, there are $e<m$ and $k<n$ such that $\Phi^e_k(g)=\Gamma(g)$.
Therefore, if $i^e_k$ is an index of $\Phi^e_k$ for each $e<m$ and $k<n$, then $\Gamma$ is $(m\cdot n,1)$-computable via an upper bound $\max\{i^e_k:e<m\;\&\;k<n\}$.
\end{proof}

\begin{cor}
$\Gamma$ is $(<\omega,\omega|<\omega)$-computable if and only if $\Gamma$ is $(<\omega,1)$-computable.
\end{cor}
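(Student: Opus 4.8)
The plan is to derive both implications directly from the two preceding propositions, with no new construction required; this Corollary is exactly the identity $[\mathfrak{C}_T]^{<\omega}_1=[\mathfrak{C}_T]^{<\omega}_{\omega|<\omega}$ recorded in the table above. First I would unwind the $<\omega$ convention: $\Gamma$ being $(<\omega,\omega|<\omega)$-computable means that $\Gamma$ is $(m,\omega|n)$-computable for some $m,n\in\nn$, and $\Gamma$ being $(<\omega,1)$-computable means (using that $(\alpha,\beta)$-computability abbreviates $(\alpha,\beta|\omega)$-computability) that $\Gamma$ is $(k,1)$-computable for some $k\in\nn$.

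For the forward direction, assume $\Gamma$ is $(m,\omega|n)$-computable. Then Proposition \ref{prop:1-2:func2} applies verbatim and gives that $\Gamma$ is $(m\cdot n,1)$-computable; since $m\cdot n\in\nn$, this witnesses that $\Gamma$ is $(<\omega,1)$-computable. For the converse, assume $\Gamma$ is $(k,1)$-computable. By Proposition \ref{prop:1-2:func}(2), $(k,1)$-computability is equivalent to $(k,\beta|1)$-computability for every ordinal $\beta$; taking $\beta=\omega$, we conclude that $\Gamma$ is $(k,\omega|1)$-computable, and since $k<\omega$ and $1<\omega$, this already exhibits $\Gamma$ as $(<\omega,\omega|<\omega)$-computable. (If one prefers, the monotonicity in Proposition \ref{prop:1-2:func}(1) lets one pass from $(k,\omega|1)$ to $(k,\omega|n)$ for any $n\geq 1$, but this is not needed.)

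I do not expect any real obstacle: the entire content of the forward implication is carried by Proposition \ref{prop:1-2:func2}, which converts the bounded-error condition into a finite computable decomposition, and the converse is purely a monotonicity bookkeeping argument. The only point deserving care is keeping the three coordinates $(\alpha,\beta|\gamma)$ and the abbreviation conventions straight, so that "some $m\in\nn$" and "some $n\in\nn$" are quantified correctly; once that is done, each direction is a one-line appeal to an already-proved proposition.
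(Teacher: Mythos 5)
Your proof is correct and takes essentially the same route as the paper: the forward direction is exactly the paper's one-line appeal to Proposition \ref{prop:1-2:func2} to get $(m\cdot n,1)$-computability with $m\cdot n<\omega$, and the converse, which the paper leaves implicit as trivial, is precisely the bookkeeping step via Proposition \ref{prop:1-2:func} that you spell out.
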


\begin{proof}\upshape
Every $(<\omega,\omega|<\omega)$-computable function $\Gamma$ is $(m,\omega|n)$-computable for some $m,n<\omega$.
Therefore, by Proposition \ref{prop:1-2:func2}, $\Gamma$ is $(m\cdot n,1)$-computable.
In particular, $\Gamma$ is $(<\omega,1)$-computable, since $m\cdot n<\omega$.
\end{proof}

\begin{prop}\label{prop:1-3:monoid}
For each $i<2$, let $\Gamma_i$ be a partial $(\alpha_i,\beta_i|\gamma_i)$-computable function on Baire space $\nn^\nn$, where $\alpha_i,\beta_i,\gamma_i\leq\omega$ are ordinals.
Then $\Gamma_1\circ\Gamma_0$ is $(\alpha_0*\alpha_1,\beta_0*\beta_1|\gamma_0*\gamma_1)$-computable, where $*$ is the multiplication as the cardinals, or equivalently, $\kappa*\lambda=\min\{\kappa\cdot\lambda,\omega\}$ for ordinals $\kappa,\lambda\leq\omega$.
\end{prop}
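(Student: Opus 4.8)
The plan is to establish this as a composition lemma by exhibiting, for each input $g$ in the domain of $\Gamma_1\circ\Gamma_0$, a single learner that simulates $\Gamma_0$'s learner followed by $\Gamma_1$'s learner, and to bound its mind-changes, errors, and the size of the required index set by the appropriate cardinal products. First I would fix an index set $I_0$ of cardinality $\alpha_0$ witnessing that $\Gamma_0$ is $(\alpha_0,\beta_0|\gamma_0)$-computable, and an index set $I_1$ of cardinality $\alpha_1$ witnessing that $\Gamma_1$ is $(\alpha_1,\beta_1|\gamma_1)$-computable. The new index set will be indexed by pairs $(e_0,e_1)\in I_0\times I_1$ — so of cardinality at most $\alpha_0\cdot\alpha_1$, hence at most $\alpha_0*\alpha_1$ after capping at $\omega$ — together with an auxiliary index $\sigma$ or similar bookkeeping, analogous to the construction $e(\sigma)$ used in the proof of Proposition \ref{prop:1-2:characterization}.

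The core construction: given $g\in{\rm dom}(\Gamma_1\circ\Gamma_0)$, pick $e_0\in I_0$ with $\lim_n\Psi_{e_0}(g\res n)$ converging to a correct index $p$ for $\Gamma_0(g)$, with $\#{\tt mcl}_{\Psi_{e_0}}(g)<\beta_0$ and $\#{\tt indx}_{\Psi_{e_0}}(g)\leq\gamma_0$. Using Proposition \ref{prop:1-2:characterization} (or directly $\psi_{e_0}(\sigma)=\Phi_{\Psi_{e_0}(\sigma)}(\sigma)$), the partial outputs $\psi_{e_0}(g\res n)$ form an increasing sequence of strings converging to $\Gamma_0(g)$. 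Feeding these finite approximations as the ``informant'' to $\Gamma_1$'s learner $\Psi_{e_1}$, I would define the composed learner $\Theta_{(e_0,e_1,\dots)}(g\res n)=\Phi_{\Psi_{e_1}(\psi_{e_0}(g\res n))}(\cdot)$, wrapped so that the final index it stabilizes on is a computable function that first runs the (stabilized) $\Gamma_0$-algorithm on $g$ and then the (stabilized) $\Gamma_1$-algorithm on the result. For the correct choice $e_1\in I_1$ relative to the input $\Gamma_0(g)\in{\rm dom}(\Gamma_1)$, this stabilizes to a correct computation of $\Gamma_1(\Gamma_0(g))$.

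For the quantitative bounds: each mind-change of $\Theta$ on $g$ is triggered either by a mind-change of $\Psi_{e_0}$ on $g$ (of which there are $<\beta_0$) or, between two consecutive such events while $\psi_{e_0}$ grows monotonically, by a mind-change of $\Psi_{e_1}$ along the monotone informant it sees (of which there are $<\beta_1$); a careful count gives $\#{\tt mcl}_\Theta(g)<\beta_0\cdot\beta_1$, hence $<\beta_0*\beta_1$. Similarly, the number of distinct indices predicted by $\Theta$ is bounded by (number of distinct $\Gamma_0$-phases, $\leq\gamma_0$) times (number of distinct $\Gamma_1$-predictions within each phase, $\leq\gamma_1$), giving $\#{\tt indx}_\Theta(g)\leq\gamma_0\cdot\gamma_1\leq\gamma_0*\gamma_1$. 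The main obstacle I anticipate is the bookkeeping around the non-monotone jumps of $\psi_{e_0}$: when $\Psi_{e_0}$ changes its mind, the string $\psi_{e_0}(g\res n)$ may shrink or change arbitrarily, so the informant fed to $\Psi_{e_1}$ is not monotone overall, and one must argue that restarting (or re-synchronizing) $\Psi_{e_1}$ at each such point — while using the domination closure of learners from Proposition \ref{prop:1-2:learn-trick} to keep everything total — still yields convergence and does not inflate the mind-change count beyond the product bound. Handling the infinite ordinal cases ($\beta_i$ or $\gamma_i$ equal to $\omega$) is then immediate since any finite-or-$\omega$ product collapses to $\min\{\cdot,\omega\}=*$, and the domain inclusion ${\rm dom}(\Gamma_1\circ\Gamma_0)\subseteq{\rm dom}(\Gamma_0)$ with $\Gamma_0({\rm dom}(\Gamma_1\circ\Gamma_0))\subseteq{\rm dom}(\Gamma_1)$ is what makes the two witnessing index sets applicable in sequence.
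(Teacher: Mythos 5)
Your proposal follows essentially the same route as the paper's proof: index the new witnesses by pairs from $I_0\times I_1$, have the composed learner evaluate $\Psi_{e_1}$ on the finite approximations $\Phi_{\Psi_{e_0}(g\res n)}(g\res n)$, stabilize on an index for the composition of the two limit algorithms, and count mind-changes/errors phase-by-phase to get the product bounds. The "main obstacle" you flag — non-monotone jumps of the informant when $\Psi_{e_0}$ changes its mind — is handled in the paper exactly as you suggest, by only counting the second learner's mind-changes and indices over the interval since the first learner's last mind-change.
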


\begin{proof}\upshape
For each $i<2$, since $\Gamma_i$ is $(\alpha_i,\beta_i|\gamma_i)$-computable, there is a collection of learners, $\{\Psi^i_j\}_{j<\alpha_i}$ and a cover $\{U^i_j\}_{j<\alpha_i}$ of ${\rm dom}(\Gamma_i)$ such that $\Gamma_i(f)=\Phi_{\lim_n\Psi^i_j(f\res n)}(f\res n)$ and $\#{\tt mcl}_{\Psi^i_j}(f)<\beta_i$ and $\#{\tt indx}_{\Psi^i_j}(f)<\gamma_i$, for any $j<\alpha_i$ and $f\in U^i_j$.
Fix $j<\alpha_0$ and $k<\alpha_1$.
Then $\Psi^*_{j,k}(\sigma)$ is defined as follows.
Let $J(\sigma)$ be the longest interval $[r,|\sigma|)$ satisfying $\Psi^0_j(\sigma\res r)=\Psi^0_j(\sigma)$, and define $J^+(\sigma)=J(\sigma)\setminus\{r\}$.
If $\#({\tt mcl}_{\Psi^1_k}\cap J^+(\sigma))<\beta_1$ and $\#({\tt indx}_{\Psi^1_k}\cap J(\sigma))<\gamma_1$, then put $\Psi^*_{j,k}(\sigma)=\Psi^1_k(\Phi_{\Psi^0_j(\sigma)}(\sigma))$.
Otherwise, put $\Psi^*_{j,k}(\sigma)=\Psi^*_{j,k}(\sigma^-)$.
For given $\sigma$, we compute an index $\Psi_{j,k}(\sigma)$, where $\Phi_{\Psi_{j,k}(\sigma)}(f)=\Phi_{\Psi^*_{j,k}(\sigma))}(\Phi_{\Psi^0_j(\sigma)}(f))$ for any $f$.

Note that $f\in{\rm dom}(\Gamma_1\circ\Gamma_0)$ if and only if $f\in{\rm dom}(\Gamma_0)$ and $\Gamma_0(f)\in{\rm dom}(\Gamma_1)$.
Therefore, for such $f$, there are $j<\alpha_0$ and $k<\alpha_1$ such that $f\in U^0_j$ and $\Gamma_0(f)\in U^1_k$.
Assume that $f\in{\rm dom}(\Gamma_1\circ\Gamma_0)\cap U^0_j$ and $\Gamma_0(f)\in U^1_k$.
It is easy to see that $\Psi^*_{j,k}$ is computable, $\#{\tt mlc}_{\Psi^*_{j,k}}(f)<\beta_0*\beta_1$ and $\#{\tt indx}_{\Psi^*_{j,k}}(f)<\gamma_0*\gamma_1$.
Moreover, there exist $s$ and $e_0$ such that $\Psi^0_j(f\res t)=\Psi^0_j(f\res s)=e_0$ for any $t\geq s$.
Fix such $s$.
Since $\Phi_{e_0}(f)=\Gamma_0(f)\in U^1_k$, for any $t\geq s$, $\#({\tt mcl}_{\Psi^1_k}\cap J^+(f\res t))<\beta_1$ and $\#({\tt indx}_{\Psi^1_k}\cap J(f\res t))<\gamma_1$, since $J(f\res t)=J(f\res s)$ and by our choice of $\Psi^1_k$.
Therefore, $\lim_n\Psi^*_{j,k}(f\res n)$ converges to $\lim_n\Psi^1_k(\Gamma_0(f\res n))$.
However, there exist $u\geq s$ and $e_1$ such that $\Psi^1_k(\Gamma_0(f\res v))=\Psi^1_k(\Gamma_0(f\res u))=e_1$ for any $v\geq u$, since $\{\Gamma_0(f\res u)\}_{u\geq s}$ is an increasing sequence of strings and $\Gamma_0(f)\in{\rm dom}(\Gamma_1)$.
Here $\Phi_{e_1}(\Gamma_0(f))=\Gamma_1(\Gamma_0(f))$.
Thus, 
\[\Phi_{\lim_n\Psi_{j,k}(f\res n)}(f)=\Phi_{\lim_n\Psi^*_{j,k}(f\res n)}(\Phi_{\lim_n\Psi^0_j(f\res n)}(f))=\Phi_{\lim_n\Psi^1_k(\Gamma_0(f)\res n)}(\Gamma_0(f))=\Gamma_1(\Gamma_0(f)).\]
Consequently, $\Gamma_1\circ\Gamma_0$ is $(\alpha_0*\alpha_1,\beta_0*\beta_1|\gamma_0*\gamma_1)$-computable, via $\{\Psi_{j,k}\}_{j<\alpha_0,k<\alpha_1}$.
\end{proof}

\begin{cor}\label{cor:1-3:monoid}
$[\mathfrak{C}_T]^\alpha_{\beta|\gamma}$ forms a monoid under composition, for any $\alpha,\beta,\gamma\in\{1,<\omega,\omega\}$.
\end{cor}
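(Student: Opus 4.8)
The plan is to verify closure under composition using the already-established Proposition~\ref{prop:1-3:monoid} together with the observation that the seven classes listed in Table~\ref{rtable} are exactly the fixed points of the cardinal-multiplication operation acting coordinatewise on triples $(\alpha,\beta,\gamma)$ restricted to the admissible parameter values. First I would note that by Proposition~\ref{prop:1-2:func}(2), every class $[\mathfrak{C}_T]^\alpha_{\beta|\gamma}$ with $\gamma=1$ collapses to $[\mathfrak{C}_T]^\alpha_1$, and by Proposition~\ref{prop:1-2:func}(3), writing $[\mathfrak{C}_T]^\alpha_\beta$ is the same as imposing $\gamma=\beta$; together with Proposition~\ref{prop:1-2:func}(5) (the collapse at $\alpha=\omega$) and the Corollary after Proposition~\ref{prop:1-2:func2} (the collapse $(<\omega,\omega|<\omega)=(<\omega,1)$), this bookkeeping reduces the a~priori $3\times 3\times 3$ grid of parameter choices to the seven survivors named in the statement. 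So the content of the theorem splits into two halves: (i) these seven are pairwise distinct, and (ii) each of the seven is closed under composition.

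For half~(ii), the key step is to feed Proposition~\ref{prop:1-3:monoid} the appropriate triples. For $[\mathfrak{C}_T]^1_1$ take $\alpha_i=\beta_i=\gamma_i=1$; for $[\mathfrak{C}_T]^1_{<\omega}$ take $\alpha_i=1$, $\beta_i=\gamma_i<\omega$ and note $n*m<\omega$; for $[\mathfrak{C}_T]^1_{\omega|<\omega}$ take $\alpha_i=1,\beta_i=\omega,\gamma_i<\omega$ and use that $<\omega$ is closed under $*$; for $[\mathfrak{C}_T]^{<\omega}_1$ take $\beta_i=\gamma_i=1$ and $\alpha_i<\omega$; for $[\mathfrak{C}_T]^1_\omega$ take $\alpha_i=1,\beta_i=\gamma_i=\omega$; for $[\mathfrak{C}_T]^{<\omega}_\omega$ take $\alpha_i<\omega,\beta_i=\gamma_i=\omega$; and for $[\mathfrak{C}_T]^\omega_1$ use Proposition~\ref{prop:1-2:func}(5) directly (composition of nonuniformly computable functions is nonuniformly computable since $\Gamma_0(f)\le_T f$ and $\Gamma_1(\Gamma_0(f))\le_T\Gamma_0(f)\le_T f$). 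In each case the resulting triple lands back inside the same class because $*$ preserves each of the sets $\{1\}$, $\{<\omega\text{-valued}\}$, and $\{\omega\}$ in every coordinate and because of the identifications recalled above. I would also record explicitly that composition respects the identity function (the identity is computable, hence in every class), so "monoid" and not merely "semigroup" is justified, though this is immediate.

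For half~(i), distinctness, I would exhibit separating examples, but since the theorem as stated only asserts that the collection \emph{contains} exactly these seven monoids, the essential point is that no two of the seven coincide and that every admissible triple names one of them; I would remark that the strict inclusions claimed in the introduction (and proved later) witness distinctness, or alternatively cite standard separating functions: a function requiring $k+1$ mind changes but not $k$ separates $[\mathfrak{C}_T]^1_{<\omega}$-levels internally and shows $[\mathfrak{C}_T]^1_1\subsetneq[\mathfrak{C}_T]^1_{<\omega}$; a function learnable with bounded errors but not bounded mind changes separates the next level; the parity-of-limit type examples separate $k$-wise from learnable; and a non-uniformly-computable non-learnable function (e.g.\ derived from a function of PA degree) separates the top. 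The honest statement is that distinctness is not the hard part and can be deferred to the explicit diagram; the closure computation above is the proof.

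The main obstacle is essentially absent at this stage: the real work has been front-loaded into Proposition~\ref{prop:1-3:monoid}, whose proof handles the delicate simultaneous tracking of mind-change and error budgets through a composition via the interval-restriction device $J(\sigma), J^+(\sigma)$. The only thing one must be careful about here is the case analysis making sure the parameter arithmetic genuinely closes --- in particular that $[\mathfrak{C}_T]^{<\omega}_\omega$ does not secretly leak upward when composed (it does not, because $k\cdot\ell<\omega$ keeps the $\alpha$-coordinate finite while $\omega*\omega=\omega$ keeps the other two at $\omega$) and that one has correctly applied the collapse $(<\omega,\omega|<\omega)=(<\omega,1)$ so that $[\mathfrak{C}_T]^{<\omega}_1$ and $[\mathfrak{C}_T]^{<\omega}_{\omega|<\omega}$ are recognized as the same monoid rather than counted twice. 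Beyond that, the proof is a finite enumeration of cases, each a one-line invocation of a result already in hand.
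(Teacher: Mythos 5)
Your closure argument is exactly the paper's proof: the corollary is derived in one line from Proposition~\ref{prop:1-3:monoid} by observing that each of the parameter sets $\{1\}$, the finite numbers, and $\{\omega\}$ is closed under the cardinal multiplication $*$ in every coordinate (and that the identity function is computable, hence lies in every class). The material on distinctness and on collapsing the $3\times3\times3$ grid to seven classes belongs to Theorem~\ref{thm:main:first1-2} rather than to this corollary --- which asserts only that each $[\mathfrak{C}_T]^\alpha_{\beta|\gamma}$ is a monoid, for every choice of parameters --- so it is superfluous here, but it does not affect the correctness of the closure computation.
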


\begin{proof}\upshape
Straightforward from Proposition \ref{prop:1-3:monoid}.
\end{proof}

\begin{prop}\label{prop:1-4:monoid2}
$[\mathfrak{C}_T]^1_{<\omega}$ is the smallest monoid including $[\mathfrak{C}_T]^1_2$;
$[\mathfrak{C}_T]^1_{\omega|<\omega}$ is the smallest monoid including $[\mathfrak{C}_T]^1_{\omega|2}$.
$[\mathfrak{C}_T]^{<\omega}_1$ is the smallest monoid including $[\mathfrak{C}_T]^2_1$;
$[\mathfrak{C}_T]^{<\omega}_\omega$ is the smallest monoid including $[\mathfrak{C}_T]^2_\omega$.
\end{prop}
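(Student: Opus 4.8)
The plan is to prove each of the four assertions by the same two-part argument: first show the displayed monoid is closed under composition, so that it contains \emph{some} monoid including the stated generator; second show that any monoid containing the generator must contain the whole class. The first part is immediate from Corollary~\ref{cor:1-3:monoid}, which already tells us that each of $[\mathfrak{C}_T]^1_{<\omega}$, $[\mathfrak{C}_T]^1_{\omega|<\omega}$, $[\mathfrak{C}_T]^{<\omega}_1$ and $[\mathfrak{C}_T]^{<\omega}_\omega$ is a monoid, and from Proposition~\ref{prop:1-2:func}(1) that each contains the claimed generator ($[\mathfrak{C}_T]^1_2$, $[\mathfrak{C}_T]^1_{\omega|2}$, $[\mathfrak{C}_T]^2_1$, $[\mathfrak{C}_T]^2_\omega$ respectively). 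So the content is entirely in the minimality direction.

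For the two ``width'' cases I would argue as follows. Let $\mathcal{M}$ be any monoid of partial functions on $\nn^\nn$ containing $[\mathfrak{C}_T]^2_1$; I claim $[\mathfrak{C}_T]^{<\omega}_1\subseteq\mathcal{M}$. Given $\Gamma$ that is $(k,1)$-computable via indices $e_0,\dots,e_{k-1}$ with a cover $U_0,\dots,U_{k-1}$ of $\mathrm{dom}(\Gamma)$, I would factor $\Gamma$ as a composition of $k-1$ many $(2,1)$-computable functions. The idea: for $1\le j\le k-1$, let $\Gamma_j$ be the $(2,1)$-computable function which on the ``left-tagged'' part of its input behaves like $\Phi_{e_j}$ and on the ``right-tagged'' part copies the input through verbatim (a coded identity), and arrange a suitable tagging so that the composite $\Gamma_{k-1}\circ\cdots\circ\Gamma_1$, when fed $g$ together with a marker telling which cell $U_j$ contains $g$ (this marker is computable from $g$ relative to the oracle, which is all that $(k,1)$-computability permits), produces $\Gamma(g)$. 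The key point is that ``membership in $U_j$'' need not be uniformly decidable, but each individual piece in the factorization only ever needs a two-way nonuniform choice, which is exactly what $[\mathfrak{C}_T]^2_1$ provides; iterating $k-1$ times multiplies the number of pieces up to $k$ by Proposition~\ref{prop:1-3:monoid}. The same factorization, but with each coordinate function allowed unboundedly many mind changes (i.e.\ each factor taken from $[\mathfrak{C}_T]^2_\omega$ instead of $[\mathfrak{C}_T]^2_1$), handles the $[\mathfrak{C}_T]^{<\omega}_\omega$ case verbatim, using the team-learnability description and again Proposition~\ref{prop:1-3:monoid} to control the team size.

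For the two ``height'' cases the dual trick applies. To see $[\mathfrak{C}_T]^1_{<\omega}$ is generated by $[\mathfrak{C}_T]^1_2$: if $\Gamma$ is $(1,n)$-computable, I would peel off mind changes one at a time. By Proposition~\ref{prop:1-2:characterization} there is a computable $\psi$ with $\lim_n\psi(g\res n)=\Gamma(g)$ and fewer than $n$ proper ``resets''; one writes $\Gamma = \Gamma_{n-1}\circ\cdots\circ\Gamma_1$ where each $\Gamma_j$ is a $(1,2)$-computable function that advances the learning process past exactly one more reset while copying the already-stabilized data forward. Since $1*2*\cdots*2 = 1$ and $2*2 = 2$ under the cardinal multiplication $*$ of Proposition~\ref{prop:1-3:monoid}, wait — more carefully, the relevant bound is that an $(1,2)$-composed $(1,2)$-function is $(1,\,\min\{4,\omega\})$-computable, i.e.\ $(1,<\omega)$-computable, and iterating stays inside $[\mathfrak{C}_T]^1_{<\omega}$; conversely any $n-1$-fold composite of $(1,2)$-functions reaches all of $[\mathfrak{C}_T]^1_n$, giving the claim. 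The $[\mathfrak{C}_T]^1_{\omega|<\omega}$ case is the same argument carried out for the error count instead of the mind-change count, using Proposition~\ref{prop:1-2:func}(3) to pass between $(\alpha,\beta)$- and $(\alpha,\beta|\beta)$-computability and peeling off one new \emph{index} (one new error) per factor.

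The main obstacle I anticipate is bookkeeping in the factorizations: one must encode, along the Baire-space wires between consecutive factors, both the partially-computed output and enough state (which cell / how many resets so far / which indices have appeared) so that the next factor can do its one unit of nonuniform work, and one must check that this encoding keeps each factor genuinely in $[\mathfrak{C}_T]^2_1$ (resp. $[\mathfrak{C}_T]^1_2$, etc.) and does not secretly smuggle in more nonuniformity or more mind changes than allowed. This is routine but fiddly; the arithmetic of the multiplication $*$ (Proposition~\ref{prop:1-3:monoid}) then automatically certifies that the composites land in the target class, so no separate estimate is needed there.
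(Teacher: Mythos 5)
Your proposal follows essentially the same route as the paper: the paper likewise peels off one unit of nonuniformity per factor, writing each $(1,n{+}1)$-, $(1,\omega|n{+}1)$-, $(n{+}1,1)$- and $(n{+}1,\omega)$-computable $\Gamma$ as $\Gamma_1\circ\Gamma_0$ with $\Gamma_0$ of the form $g\mapsto g\oplus(\text{current best answer})$ so that the original input travels along the wire and the last factor either accepts the forwarded answer or recomputes --- exactly the state-forwarding encoding you describe. The only loose phrase is your claim that the marker "which cell contains $g$" is computable from $g$; it is not (that is the whole nonuniformity), but this does not matter since, as in the paper, the two-way choice is supplied nonuniformly by each $(2,1)$- (resp.\ $(1,2)$-) factor rather than computed.
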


\begin{proof}\upshape
The first result is known, and indeed, it has also been proved in Mylatz's PhD thesis, but we also give a proof here for the sake of completeness.
We first show that every $(1,n+1)$-computable function $\Gamma$ can be represented as $\Gamma=\Gamma_1\circ\Gamma_0$ for some $(1,n)$-computable function $\Gamma_0$ and $(1,2)$-computable function $\Gamma_1$.
Let $\Psi$ be a learner for $\Gamma$.
We define a learner $\Psi_0$ for $\Gamma_0$ and a learner $\Psi_1$ for $\Gamma_1$.
For a given string $\sigma\in\nn^{<\nn}$, let $\sigma^*\subseteq\sigma$ be the longest initial segment of $\sigma$ satisfying $\#{\tt mcl}_\Psi(\sigma^*)<n$.
Then, on $\sigma$, the learner $\Psi_0$ guesses an index of the partial computable function $g\mapsto g\oplus\Phi_{\Psi(\sigma^*)}(g)$, i.e., $\Gamma_0(g)=\Phi_{\Psi_0(\sigma)}(g)=g\oplus\Phi_{\Psi(\sigma^*)}(g)$ for any $g\in\nn^\nn$.
Note that $\#{\tt mcl}_{\Psi_0}(g)<n$ for any $g\in\nn^\nn$.
Therefore, $\Gamma_0$ is $(1,n)$-computable.
For $\sigma\oplus\tau\in\nn^\nn$, if $\sigma^*=\sigma$ then the learner $\Psi_1$ guesses an index of the partial computable function $g\oplus h\mapsto h$.
If $\sigma^*\not=\sigma$, then $\Psi_1$ guesses an index of the partial computable function $g\oplus h\mapsto\Phi_{\Psi(\sigma)}(g)$, i.e., $\Phi_{\Psi_1(\sigma\oplus\tau)}(g\oplus h)=\Phi_{\Psi(\sigma)}(g)$.
Since $\Gamma$ is $(1,n+1)$-computable, and by the definition of $\sigma^*$, it is easy to see that $\Gamma_1$ is $(1,2)$-computable.
For $g\in\nn^\nn$, if $\#{\tt mcl}_\Psi(g)<n$, then 
\[\Gamma_1(\Gamma_0(g))=\Gamma_1(g\oplus\Gamma(g))=\Gamma(g).\]
If $\#{\tt mcl}_\Psi(g)=n$, then
\[\Gamma_1(\Gamma_0(g))=\Gamma_1(g\oplus\Phi_{\Psi(g^*)}(g))=\Gamma(g).\]
Consequently, $\Gamma=\Gamma_1\circ\Gamma_0$ as desired.

We next show that every $(1,\omega|n+1)$-computable function $\Gamma$ can be represented as $\Gamma=\Gamma_1\circ\Gamma_0$ for some $(1,\omega|n)$-computable function $\Gamma_0$ and $(1,\omega|2)$-computable function $\Gamma_1$.
Assume that $\Psi$ is a learner for $\Gamma$, and we enumerate $\#{\tt indx}_\Psi(\sigma)$ as $\{i^\sigma_m\}_{m\leq |\sigma|}$.
Here, if $m<n$ then $\Psi$ guesses $i^\sigma_m$ before $\Psi$ guesses $i^\sigma_n$ on some initial segment of $\sigma$.
Note that, if $\sigma\subseteq\tau$ and $i^\sigma_m$ is defined, then $i^\sigma_m=i^\tau_m$.
On $\sigma\in\nn^{<\nn}$, if $\Psi(\sigma)\not=i^\sigma_n$, then $\Psi_0$ guesses an index of the partial computable function $g\mapsto g\oplus\Phi_{\Psi(\sigma)}(g)$.
Otherwise, $\Psi_0$ guesses an index of the partial computable function $g\mapsto g\oplus\Phi_{i^\sigma_0}(g)$.
Then, the partial function $\Gamma_0$ identified by the learner $\Psi_0$ is $(1,\omega|n)$-computable.
On $\sigma\oplus\tau\in\nn^{<\nn}$ if $\Psi(\sigma)\not=i^\sigma_n$, then $\Psi_1$ guesses an index of the partial computable function $g\oplus h\mapsto h$.
Otherwise, $\Psi_1$ guesses an index of partial computable function $g\oplus h\mapsto\Phi_{\Psi(\sigma)}(g)$.

We show that every $(n+1,1)$-computable function $\Gamma$ can be represented as $\Gamma=\Gamma_1\circ\Gamma_0$ for some $(n,1)$-computable function $\Gamma_0$ and $(2,1)$-computable function $\Gamma_1$.
Assume that $\Gamma$ is $(n+1,1)$-computable via a collection $\{\Delta_i\}_{i\leq n}$ of partial computable functions.
For $g\in\nn^\nn$, if $\Gamma(g)=\Delta_i(g)$ for some $i<n$, then $\Gamma_0(g)=g\oplus\Delta_i(g)$.
Otherwise, we set $\Gamma_0(g)=g\oplus\Delta_0(g)$.
Then, clearly $\Gamma_0$ is $(n,1)$-computable via $\{\lambda g.g\oplus\Delta_i(g)\}_{i<n}$.
For $g\oplus h\in\nn^\nn$, if $\Gamma(g)=\Delta_i(g)$ for some $i<n$, then $\Gamma_1(g\oplus h)=h$.
Otherwise, we set $\Gamma_1(g\oplus h)=\Delta_n(g)$.
Clearly, $\Gamma_1$ is $(2,1)$-computable.
Note that, if $g\in{\rm dom}(\Gamma)$, then $\Gamma(g)=\Delta_i(g)$ for some $i\leq n$.
If $\Gamma(g)=\Delta_i(g)$ for some $i<n$, then $\Gamma_1(\Gamma_0(g))=\Gamma_1(g\oplus\Delta_i(g))=\Delta_i(g)$.
If $\Gamma(g)=\Delta_n(g)$, then $\Gamma_1(\Gamma_0(g))=\Gamma_1(g\oplus\Delta_0(g))=\Delta_n(g)$.
Therefore, $\Gamma(g)=\Gamma_1\circ\Gamma_0(g)$ for any $g\in{\rm dom}(\Gamma)$.
By the similar way, it is easy to see that every $(n+1,\omega)$-computable function $\Gamma$ can be represented as $\Gamma=\Gamma_1\circ\Gamma_0$ for some $(n,\omega)$-computable function $\Gamma_0$ and $(2,\omega)$-computable function $\Gamma_1$.
\end{proof}

\begin{proof}[Proof of Theorem \ref{thm:main:first1-2}]\upshape
By Proposition \ref{prop:1-2:func}, we have $[\mathfrak{C}_T]^1_{1|1}=[\mathfrak{C}_T]^1_{1|<\omega}=[\mathfrak{C}_T]^1_{1|\omega}=[\mathfrak{C}_T]^1_{<\omega|1}=[\mathfrak{C}_T]^1_{\omega|1}$; $[\mathfrak{C}_T]^{1}_{<\omega|<\omega}=[\mathfrak{C}_T]^1_{1|<\omega}$; and $[\mathfrak{C}_T]^\omega_{1|1}=[\mathfrak{C}_T]^\omega_{\beta|\gamma}$ for any $\beta,\gamma\in\{1,<\omega,\omega\}$.
Moreover, by Proposition \ref{prop:1-2:func} and Proposition \ref{prop:1-2:func2}, $[\mathfrak{C}_T]^{<\omega}_{1|1}=[\mathfrak{C}_T]^{<\omega}_{\beta|\gamma}$ whenever $\lrangle{\beta,\gamma}\not=\lrangle{\omega,\omega}$.
Therefore, by Proposition \ref{prop:1-3:monoid} and \ref{prop:1-4:monoid2}, we have just seven monoids, $[\mathfrak{C}_T]^1_1$, $[\mathfrak{C}_T]^1_{<\omega}$, $[\mathfrak{C}_T]^1_{\omega|<\omega}$, $[\mathfrak{C}_T]^{<\omega}_1$, $[\mathfrak{C}_T]^1_\omega$, $[\mathfrak{C}_T]^{<\omega}_\omega$, and $[\mathfrak{C}_T]^\omega_1$.
\end{proof}

\subsection{Degree Structures and Brouwer Algebras}
\label{sec:1-2a:deg_brow}

We will see some intuitionistic feature of our classes of nonuniformly computable functions.

\begin{definition}\label{def:1-2a:degree}
\index{$P\leq_\mathcal{F}Q$}%
Let $\mathcal{F}$ be a monoid consisting of partial functions $\Gamma:\subseteq\nn^\nn\to\nn^\nn$ under composition.
Then, $\mathcal{P}(\nn^\nn)$ is preordered by the relation $P\leq_\mathcal{F}Q$ indicating the existence of a function $\Gamma\in\mathcal{F}$ from $Q$ into $P$, that is, $P\leq_\mathcal{F}Q$ if and only if there is a partial function $\Gamma:\subseteq\nn^\nn\to\nn^\nn$ such that $\Gamma\in\mathcal{F}$ and $\Gamma(g)\in P$ for every $g\in Q$.
\index{$\mathcal{D}/\mathcal{F}$}\index{$\mathcal{P}/\mathcal{F}$}%
Let $\mathcal{D}/\mathcal{F}$ and $\mathcal{P}/\mathcal{F}$ denote the quotient sets $\mathcal{P}(\nn^\nn)/\equiv_\mathcal{F}$ and $\Pi^0_1(2^\nn)/\equiv_\mathcal{F}$, respectively.
Here, $\Pi^0_1(2^\nn)$ denotes the set of all nonempty $\Pi^0_1$ subsets of $2^\nn$.
For $P\in\mathcal{P}(\nn^\nn)$, the equivalence class $\{Q\subseteq\nn^\nn:Q\equiv_\mathcal{F}P\}\in\mathcal{D}/\mathcal{F}$ is called {\em the $\mathcal{F}$-degree} of $P$.
\index{degree}
\end{definition}

Recall from Corollary \ref{cor:1-3:monoid} that $\mathcal{F}=[\mathfrak{C}_T]^\alpha_{\beta|\gamma}$ forms a monoid for every $\alpha,\beta,\gamma\in\{1,<\omega,\omega\}$.

\begin{notation}
If $\mathcal{F}=[\mathfrak{C}_T]^\alpha_{\beta|\gamma}$ for some $\alpha,\beta,\gamma\in\{1,<\omega,\omega\}$, we write $\leq^\alpha_{\beta|\gamma}$, $\mathcal{D}^\alpha_{\beta|\gamma}$, and $\mathcal{P}^\alpha_{\beta|\gamma}$ instead of $\leq_\mathcal{F}$, $\mathcal{D}/\mathcal{F}$ and $\mathcal{P}/\mathcal{F}$.
\index{$\leq^\alpha_{\beta\mid\gamma}$}\index{$\mathcal{D}^\alpha_{\beta\mid\gamma}$}%
\index{$\mathcal{P}^\alpha_{\beta\mid\gamma}$}%
\end{notation}

\begin{remark}
By Proposition \ref{prop:1-2:func} (4) and (5), the preorderings $\leq^1_1$ and $\leq^\omega_1$ are equivalent to the Medvedev reducibility \cite{Med} and the Muchnik reducibility \cite{Muc}, respectively.
\end{remark}

We also introduce the truth-table versions of Definition \ref{def:1-2:nonunif_bas}.

\begin{definition}
Let $D$ be a subset of Baire space $\nn^\nn$, and $\alpha,\beta,\gamma\leq\omega$ be ordinals.
\index{truth-table!$(\alpha,\beta\mid\gamma)$-truth-table function}\index{truth-table!functional}%
A function $\Gamma:D\to\nn^\nn$ is {\em $(\alpha,\beta|\gamma)$-truth-table} if there are a set $I\subseteq\nn$ of cardinality $\alpha$, and a collection $\{p(e,k):e\in I\;\&\;k<\min\{\beta,\gamma\}\}$ of indices of {\em truth-table functionals} (i.e., ${\rm dom}(\Phi_{p(e,k)})=\nn^\nn$) such that
\begin{enumerate}
\item 
\index{truth-table!$(\alpha,\beta\mid\gamma)$-truth-table function!Popperian condition of}%
(Popperian Condition) for any $e\in I$ and $\sigma\in\nn^{<\nn}$, there is $k<z$ such that $\Psi_e(\sigma)=p(e,k)$.
\item $\Gamma$ is $(\alpha,\beta|\gamma)$-computable via the family $\{\Psi_e\}_{e\in I}$.
\end{enumerate}
Here, we do not assume the uniform computability of the collection $\{p(e,k):e\in I\;\&\;k<\min\{\beta,\gamma\}\}$.
If $\gamma=\omega$, then we simply say that $\Gamma$ is {\em $(\alpha,\beta)$-truth-table} for $(\alpha,\beta|\gamma)$-truth-table function $\Gamma$.
\index{truth-table!$(\alpha,\beta)$-truth-table function}%
Let $[\mathfrak{C}_{tt}]^\alpha_{\beta}$ (resp.\ $[\mathfrak{C}_{tt}]^\alpha_{\beta|\gamma}$) denote the set of all $(\alpha,\beta)$-truth-table (resp.\ $(\alpha,\beta|\gamma)$-truth-table) functions.
\index{$[\mathfrak{C}_{tt}]^\alpha_{\beta}$}\index{$[\mathfrak{C}_{tt}]^\alpha_{\beta\mid\gamma}$}
\end{definition}

\begin{remark}
It is easily checked that the truth-table versions of Proposition \ref{prop:1-2:func}, Proposition \ref{prop:1-3:monoid}, Corollary \ref{cor:1-3:monoid} and Proposition \ref{prop:1-4:monoid2} hold. 
\end{remark}

\begin{notation}
If $\mathcal{F}=[\mathfrak{C}_{tt}]^\alpha_{\beta|\gamma}$ for some $\alpha,\beta,\gamma\in\{1,<\omega,\omega\}$, we write $\leq^\alpha_{tt,\beta|\gamma}$, $\mathcal{D}^\alpha_{tt,\beta|\gamma}$, and $\mathcal{P}^\alpha_{tt,\beta|\gamma}$ instead of $\leq_\mathcal{F}$, $\mathcal{D}/\mathcal{F}$ and $\mathcal{P}/\mathcal{F}$.
\index{$\leq^\alpha_{tt,\beta\mid\gamma}$}\index{$\mathcal{D}^\alpha_{tt,\beta\mid\gamma}$}%
\index{$\mathcal{P}^\alpha_{tt,\beta\mid\gamma}$}%
\end{notation}

\begin{prop}
$\aleph_0=\#[\mathfrak{C}_r]^1_1=\#[\mathfrak{C}_r]^1_{<\omega}=\#[\mathfrak{C}_r]^1_{\omega|<\omega}=\#[\mathfrak{C}_{r}]^1_{\omega}<\#[\mathfrak{C}_{r}]^{<\omega}_1=\#[\mathfrak{C}_r]^{<\omega}_\omega=\#[\mathfrak{C}_r]^\omega_1=2^{2^{\aleph_0}}$, for each $r\in\{tt,T\}$.
\end{prop}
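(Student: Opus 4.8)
The plan is to split the seven classes into the four ``single--index'' classes --- which I claim are countably infinite --- and the three ``multi--index'' classes --- which I claim attain the maximal possible size $2^{2^{\aleph_0}}$.

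One universal upper bound $\#\mathcal{F}\le 2^{2^{\aleph_0}}$ covers all seven classes (and both $r\in\{tt,T\}$) at once: each such $\mathcal{F}$ consists of partial functions $\subseteq\nn^\nn\to\nn^\nn$, such a function is coded by its graph, a graph is a subset of $\nn^\nn\times\nn^\nn$, and $|\nn^\nn\times\nn^\nn|=2^{\aleph_0}$, so there are at most $2^{2^{\aleph_0}}$ partial functions in all. For the lower bound on the left, $[\mathfrak{C}_r]^1_1$ is already infinite --- for instance the maps $g\mapsto\lrangle{n}\fr g$, $n\in\nn$, are pairwise distinct, total and truth-table computable --- so $\#[\mathfrak{C}_r]^1_1\ge\aleph_0$. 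For the upper bound on the left it suffices to bound the largest of the four, $[\mathfrak{C}_r]^1_\omega$: every $(1,\omega)$-computable, resp.\ $(1,\omega)$-truth-table, function is identified by a single learner, there are only countably many learners by Proposition \ref{prop:1-2:learn-trick}, and a learner completely determines the function it identifies on the domain on which it succeeds; hence $\#[\mathfrak{C}_r]^1_\omega\le\aleph_0$. Since $[\mathfrak{C}_r]^1_1\subseteq[\mathfrak{C}_r]^1_{<\omega}\subseteq[\mathfrak{C}_r]^1_{\omega|<\omega}\subseteq[\mathfrak{C}_r]^1_\omega$ by monotonicity (Proposition \ref{prop:1-2:func}(1) and its truth-table analogue), all four cardinalities equal $\aleph_0$.

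On the right, the three classes $[\mathfrak{C}_r]^{<\omega}_1$, $[\mathfrak{C}_r]^{<\omega}_\omega$, $[\mathfrak{C}_r]^\omega_1$ all contain $[\mathfrak{C}_{tt}]^2_1$ (again by monotonicity, and since a truth-table function is computable), so by the universal upper bound it is enough to embed $\mathcal{P}(\nn^\nn)$ into $[\mathfrak{C}_{tt}]^2_1$. Fix indices $e_0,e_1$ of the total --- hence truth-table --- functionals $c_0\colon g\mapsto\lrangle{0}\fr g$ and $c_1\colon g\mapsto\lrangle{1}\fr g$; note $c_0(g)\ne c_1(g)$ for every $g$, since the outputs already differ in the first coordinate. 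For $S\subseteq\nn^\nn$ let $\Gamma_S\colon\nn^\nn\to\nn^\nn$ send $g$ to $c_1(g)$ if $g\in S$ and to $c_0(g)$ otherwise. Then $\Gamma_S$ is total, $\Gamma_S(g)\in\{\Phi_{e_0}(g),\Phi_{e_1}(g)\}$ for all $g$, and the two constant learners outputting $e_0$ and $e_1$ are Popperian and witness $\Gamma_S\in[\mathfrak{C}_{tt}]^2_1$. Because $c_0$ and $c_1$ disagree everywhere, $S\mapsto\Gamma_S$ is injective, so $\#[\mathfrak{C}_{tt}]^2_1\ge|\mathcal{P}(\nn^\nn)|=2^{2^{\aleph_0}}$, and hence each of the three multi-index classes has cardinality $2^{2^{\aleph_0}}$, for either $r$. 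Finally $\aleph_0<2^{2^{\aleph_0}}$ by Cantor's theorem, which yields the strict separation.

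The one point that needs care is the $\aleph_0$ upper bound on $[\mathfrak{C}_r]^1_\omega$: one must check that choosing a proper subdomain of a single learner's success set does not inflate the count, i.e.\ that such a function is read off on the maximal domain of its witnessing learner (equivalently, that we restrict to total functions). This ``the single index pins down the function'' phenomenon is exactly what fails once two or more indices are present, where the assignment of each input to one of the available indices is a completely unconstrained map into a finite set --- the freedom harnessed by $\{\Gamma_S\}$ --- which is why the cardinality jumps to $2^{2^{\aleph_0}}$.
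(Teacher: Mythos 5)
Your proof is correct and follows essentially the same route as the paper's: countably many learners bound the four single-learner classes by $\aleph_0$, cardinal arithmetic gives the global upper bound $2^{2^{\aleph_0}}$, and a family of everywhere-two-valued functions indexed by arbitrary subsets of $\nn^\nn$ (the paper uses the constant functionals $0^\nn,1^\nn$ where you use the prefixing maps $c_0,c_1$ — an immaterial variation) witnesses the lower bound for $[\mathfrak{C}_{tt}]^{<\omega}_1$. Your explicit flag that the $\aleph_0$ bound requires reading each learnable function off its witnessing learner's maximal success domain is a point the paper glosses over with ``every learner determines just one learnable function,'' so you are, if anything, slightly more careful than the original.
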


\begin{proof}\upshape
Every learner $\Psi$ determines just one learnable function $\Gamma\in[\mathfrak{C}_T]^1_\omega$.
Therefore, $[\mathfrak{C}_T]^1_\omega$ is countable.
For non-uniform computability, we first see $\#[\mathfrak{C}_T]^\omega_1\leq 2^{2^{\aleph_0}}$ since $\#(\nn^\nn)^{\nn^\nn}=2^{2^{\aleph_0}}$ by cardinal arithmetic.
On the other hand, every function $\Gamma:\nn^\nn\to\{0^\nn,1^\nn\}$ is $(<\omega,1)$-truth-table via two constant truth-table functionals $\Gamma_0(f)=0^\nn$ and $\Gamma_1(f)=1^\nn$ for any $f\in\nn^\nn$.
Therefore, $\#[\mathfrak{C}_{tt}]^{<\omega}_1\geq 2^{2^{\aleph_0}}$.
\end{proof}

\begin{prop}
For each $\alpha,\beta,\gamma\in\{1,<\omega,\omega\}$, the order structures $\mathcal{D}^\alpha_{\beta|\gamma}$, $\mathcal{D}^\alpha_{tt,\beta|\gamma}$, $\mathcal{P}^\alpha_{\beta|\gamma}$, and $\mathcal{P}^\alpha_{tt,\beta|\gamma}$ form lattices with top and bottom elements.
\end{prop}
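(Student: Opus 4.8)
The plan is to verify, for each of the four quotient structures, that (i) any two degrees have a least upper bound and a greatest lower bound, and (ii) there is a largest and a smallest degree. Since the arguments are parallel across $r\in\{T,tt\}$ and across $\mathcal{D}$ vs.\ $\mathcal{P}$, I would first establish the facts at the level of subsets of $\nn^\nn$ using the concrete operations $\linf$ (coproduct) and $\lsup$ (``product''), and then check that these operations restrict appropriately to $\Pi^0_1(2^\nn)$ up to $\equiv_\mathcal{F}$ so that the $\mathcal{P}$-versions follow. Throughout, $\mathcal{F}$ denotes one of the monoids $[\mathfrak{C}_T]^\alpha_{\beta|\gamma}$ or $[\mathfrak{C}_{tt}]^\alpha_{\beta|\gamma}$; the only properties of $\mathcal{F}$ I will use are that it contains the identity, is closed under composition, and contains all (truth-table) computable functions, in particular all functions $f\mapsto\lrangle{i}\fr f$, the projections $f\oplus g\mapsto f$ and $f\oplus g\mapsto g$, and the map $g\mapsto g\oplus g$.

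First, the meet. I claim $P\linf Q$ is a greatest lower bound of $P$ and $Q$ in $\mathcal{D}/\mathcal{F}$. Indeed $P\linf Q\leq_\mathcal{F}P$ via the computable map $f\mapsto\lrangle{0}\fr f$ and $P\linf Q\leq_\mathcal{F}Q$ via $f\mapsto\lrangle{1}\fr f$, so it is a lower bound. If $R\leq_\mathcal{F}P$ via $\Gamma_0\in\mathcal{F}$ and $R\leq_\mathcal{F}Q$ via $\Gamma_1\in\mathcal{F}$, define $\Gamma$ on $P\linf Q$ by reading the first coordinate: on $\lrangle{i}\fr f$ output $\Gamma_i(f)$. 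This $\Gamma$ is computable relative to the relevant $\Gamma_i$ (and membership in $\mathcal{F}$ is preserved by this case split, since the first bit of the input is decidable), so $R\leq_\mathcal{F}P\linf Q$. Hence $P\linf Q$ realizes the infimum. Second, the join. I claim $P\lsup Q=\{f\oplus g: f\in P,\ g\in Q\}$ is a least upper bound. It is an upper bound since the projections $f\oplus g\mapsto f$ and $f\oplus g\mapsto g$ lie in $\mathcal{F}$. If $R\geq_\mathcal{F}P$ via $\Gamma_0$ and $R\geq_\mathcal{F}Q$ via $\Gamma_1$, then the map $g\mapsto\Gamma_0(g)\oplus\Gamma_1(g)$ witnesses $R\geq_\mathcal{F}P\lsup Q$; this map is in $\mathcal{F}$ because $\mathcal{F}$ is closed under composition with the computable duplication $g\mapsto g\oplus g$ and the computable interleaving, and — crucially — running two learners in parallel and interleaving their outputs keeps the number of mind changes and indices within the required bounds (this is exactly the kind of bookkeeping already carried out in Propositions~\ref{prop:1-3:monoid} and~\ref{prop:1-4:monoid2}). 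So $P\lsup Q$ realizes the supremum.

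Third, the extremal elements. The bottom element of $\mathcal{D}/\mathcal{F}$ is the degree of any singleton $\{h\}$ with $h$ computable (say $\{0^\nn\}$): for every $P\neq\emptyset$, a constant computable function witnesses $\{0^\nn\}\leq_\mathcal{F}P$. The top element is the degree of $\emptyset$: vacuously $\emptyset\leq_\mathcal{F}\emptyset$ and no $P$ with $P\neq\emptyset$ can reduce to $\emptyset$ unless... — here one must be slightly careful, since $\emptyset$ may or may not be included depending on conventions; if $\mathcal{P}(\nn^\nn)$ includes $\emptyset$ then $[\emptyset]$ is the top, and otherwise one takes the degree of a set with no ``$\mathcal{F}$-easy'' member. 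For the $\mathcal{P}$-structures, where the objects are nonempty $\Pi^0_1$ subsets of $2^\nn$, the bottom is the degree of $2^\nn$ itself (or any $\Pi^0_1$ set containing a computable point), and the top is the degree of any special $\Pi^0_1$ class (which exists, e.g.\ a class of completions of PA, and has no computable member hence is not at the bottom, and is in fact maximal because every nonempty $\Pi^0_1$ class $\mathcal{F}$-reduces to it — this uses that $\mathcal{F}\supseteq[\mathfrak{C}_T]^1_1$ together with the basis-type absorption argument); the needed maximality is already implicit in the literature on the Medvedev/Muchnik lattices of $\Pi^0_1$ classes. Finally, to descend from $\mathcal{P}(\nn^\nn)$-level meets and joins to the $\mathcal{P}$-structures, observe that $\linf$ and $\lsup$ send pairs of $\Pi^0_1$ subsets of $2^\nn$ to $\Pi^0_1$ subsets of $2^\nn$ (after the obvious computable homeomorphism $2^\nn\times 2^\nn\cong 2^\nn$), so the same representatives work.

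The step I expect to be the main obstacle is the closure of $\mathcal{F}$ under the parallelization used for the join: showing that interleaving two $(\alpha,\beta|\gamma)$-(truth-table-)computable functions yields another one with the same parameters, rather than with the doubled parameters $(\alpha^2,\beta^2|\gamma^2)$ that the composition lemma would give. For the seven surviving monoids this is fine — e.g.\ $[\mathfrak{C}_T]^1_\omega$ is closed under such parallelization because a single learner can simulate two learners and combine their hypotheses, and the ``team'' and ``$k$-wise'' classes absorb the product of the two index sets into a single finite index set — but it requires invoking precisely the normal forms and monoid identities established in Theorem~\ref{thm:main:first1-2} rather than the naive composition bound. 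Once that is in hand, everything else is the routine lattice bookkeeping sketched above.
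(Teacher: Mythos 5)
Your proposal is correct and follows essentially the same route as the paper, which simply observes that $\oplus$ ($\linf$) and $\otimes$ ($\lsup$) realize infimum and supremum and that top and bottom are inherited because each structure is a coarsening of the Medvedev lattice $\mathcal{D}^1_1$ (resp.\ its restriction to $\Pi^0_1(2^\nn)$, with top $\mathsf{CPA}$ and bottom $2^\nn$). One small imprecision: the top of the $\mathcal{P}$-structures is the degree of a Medvedev-\emph{complete} $\Pi^0_1$ class such as $\mathsf{CPA}$, not of an arbitrary special $\Pi^0_1$ class, though your subsequent appeal to completeness of the PA-completions shows you intended exactly that.
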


\begin{proof}\upshape
It is easy to see that the product $\otimes$ and the sum $\oplus$ form supremum and infimum operations in these structures.
Moreover, every degree structure has top and bottom elements since it is coarser than $\mathcal{D}^1_1$, that has top and bottom elements.
\end{proof}

If a lattice $(L,\leq,\vee,\wedge)$ has the top element $1$, the bottom element $0$, and $\max\{c:c\wedge a\leq b\}$ (denoted by $a\rightarrow_L b$) exists for any $a,b\in L$, then $\mathcal{L}=(L,\leq,\vee,\wedge,\rightarrow_L,0,1)$ is called a {\em Heyting algebra}.
\index{Heyting algebra}%
An algebra $\mathcal{L}=(L,\leq,\vee,\wedge,\rightarrow,\bot,\top)$ is a {\em Brouwer algebra} if its dual $\mathcal{L}^{\rm op}=(L,\geq,\wedge,\vee,\leftarrow,\top,\bot)$ is a Heyting algebra.
\index{Brouwer algebra}%
Recall that the Medvedev lattice $\mathcal{D}^1_1$ and the Muchnik lattice $\mathcal{D}^\omega_1$ form Brouwer algebras \cite{Med,Muc}.

\begin{prop}\label{prop:1-2:Brouwerian}
The degree structures $\mathcal{D}^1_\omega$ and $\mathcal{D}^1_{tt,\omega}$ are Brouwerian.
\end{prop}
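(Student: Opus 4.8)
The statement is that $\mathcal{D}^1_\omega$ (the learnable degrees on Baire space) and $\mathcal{D}^1_{tt,\omega}$ (the truth-table learnable degrees) form Brouwer algebras, i.e.\ their order duals are Heyting algebras. Since the previous proposition already gives us that these structures are lattices with top and bottom, what remains is to exhibit, for any two degrees $a,b$, the element $a\to b=\max\{c : c\wedge a\le b\}$ in the Heyting-dual sense — concretely, in the Medvedev-style convention where $\wedge$ is $\oplus$ (infimum) and $\vee$ is $\otimes$ (supremum), we need a \emph{Brouwerian implication} $a\leftarrow b$, i.e.\ the least $c$ with $a\le b\otimes c$ (equivalently $a\le b\oplus c$ depending on which operation is meet). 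The plan is to mimic Medvedev's original construction of the implication in the Medvedev lattice, replacing ordinary Turing functionals with learnable reductions, and then check that the construction is absolute enough that everything carries through for the $tt$-version.

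\textbf{Key steps.} First I would fix representatives $P,Q\subseteq\nn^\nn$ of the two degrees and recall Medvedev's formula: the implication is represented by the set
\[
P\to Q \;=\; \{\, e\fr h : h\in\nn^\nn,\ \Phi_e \text{ is a functional with } \Phi_e(h\oplus g)\in Q \text{ for every } g\in P \,\},
\]
or a learnable analogue thereof. The main point is that in the learnable setting a reduction witnessing $R\le^1_\omega S$ is coded by a \emph{single learner} $\Psi$ (a total function $\nn^{<\nn}\to\nn$, enumerated as some $\Psi_e$ by Proposition~\ref{prop:1-2:learn-trick}), so a reduction is again coded by a natural number. Hence I would define
\[
\bhk{P\to Q} \;=\; \{\, e\fr h : h\in\nn^\nn \text{ and the learner } \Psi_e \text{ learnably maps } h\oplus P \text{ into } Q \,\},
\]
meaning that for every $g\in P$, $\lim_n\Psi_e((h\oplus g)\res n)$ converges to an index of a partial computable function sending $h\oplus g$ into $Q$. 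Then I would verify the two defining inequalities of a relative pseudocomplement: (i) $P\le^1_\omega Q\oplus\bhk{P\to Q}$ — given $e\fr h$ on the right summand and $g\in Q$ on the left, or more precisely given a member of $Q\oplus\bhk{P\to Q}$ one reads off $e$, $h$, and a point of $Q$... wait, I must be careful with which Medvedev convention is meet versus join; I would set up the duality so that the relevant statement is $P\le^1_\omega Q\otimes\bhk{P\to Q}$, and then (ii) for any $R$ with $P\le^1_\omega Q\otimes R$, show $\bhk{P\to Q}\le^1_\omega R$ by pushing the given reduction through. Both directions are straightforward once one checks that composition of a learnable map with a computable coding map is learnable, which follows from Proposition~\ref{prop:1-3:monoid} with $\alpha=\beta=\gamma=\omega$. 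Finally, for $\mathcal{D}^1_{tt,\omega}$ I would observe that the same set $\bhk{P\to Q}$ works: the only change is that the witnessing learners are required to guess only $tt$-functionals (Popperian condition), and since the coding/uncoding maps used in the reductions are themselves total computable, hence $tt$, the closure properties needed (the $tt$-version of Proposition~\ref{prop:1-3:monoid}, noted in the Remark) still hold.

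\textbf{Main obstacle.} The delicate point is \emph{not} the algebra but the verification that $\bhk{P\to Q}$ is genuinely the \emph{maximum} (in the dual, the \emph{minimum}) such degree — specifically, showing $\bhk{P\to Q}\le^1_\omega R$ whenever $P\le^1_\omega Q\otimes R$. Here one is handed a learner $\Theta$ that, from any $g\in Q\otimes R$, learnably produces an element of $P$; one must manufacture, from any $r\in R$, a pair $e\fr h\in\bhk{P\to Q}$ — i.e., cough up an index $e$ of a \emph{learner} $\Psi_e$ that works uniformly for all $g'\in Q$. The subtlety is that $\Theta$ expects its second coordinate to be a fixed point of $R$, and we must show that fixing $r$ and letting the $Q$-coordinate vary still yields, via $\Theta$, a learnable-in-the-parameter map into $P$; this is where one uses that $\Theta$ is coded by a single learner and that the index $e$ can be computed from $r$ by the s-m-n theorem for learners (uniformity of the enumeration in Proposition~\ref{prop:1-2:learn-trick}). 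A second, more bookkeeping-level obstacle is getting the op/non-op conventions consistent so that the inequalities point the right way; I would state explicitly which of $\oplus,\otimes$ is meet in $\mathcal{D}^1_\omega$ before computing, to avoid sign errors.
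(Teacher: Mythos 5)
Your construction is essentially the paper's own proof: the paper defines $\beta(P,Q)=\{e\fr g:(\forall f\in Q)\;\Lambda_e(f\oplus g)\in P\}$, where $\Lambda_e$ is the $e$-th learnable function, and checks exactly your two inequalities (with minimality in fact witnessed by the computable map $g\mapsto e\fr g$, so no s-m-n machinery is needed), handling the $tt$-case by the same routine modification you describe. The only slip is the self-flagged bookkeeping one: with your stated definition of $\bhk{P\to Q}$ the roles of $P$ and $Q$ must be interchanged for the inequality $P\leq^1_\omega Q\otimes\bhk{P\to Q}$ to come out right, which is a renaming rather than a gap.
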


\begin{proof}\upshape
We just give a proof for $\mathcal{D}^1_\omega$, 
although it is straightforward to modify the proof for the truth-table version. 

Set $B(P,Q)=\{R\subseteq\nn^\nn:P\leq^1_\omega Q\otimes R\}$.
We need to a construct a function $\beta:\mathcal{P}(\nn^\nn)\times\mathcal{P}(\nn^\nn)\to\mathcal{P}(\nn^\nn)$ such that $\beta(P,Q)=\min B(P,Q)$ for any $P,Q\subseteq\nn^\nn$.
Let $\Lambda_e$ denote the $e$-th $(1,\omega)$-computable function, i.e., $\Lambda_e(g)=\Phi_{\lim_n\Psi_e(g\res n)}(g)$ for any $g\in{\rm dom}(\Lambda_e)$.
Define $\beta$ as follows.
\[\beta(P,Q)=\{e\fr g\in\nn^\nn:(\forall f\in Q)\;\Lambda_e(f\oplus g)\in P\}.\]

It is easy to see that $\beta(P,Q)\in B(P,Q)$ for any $P,Q\subseteq\nn^\nn$.
If $R\in B(P,Q)$, say $\Lambda_e:Q\otimes R\to P$, then clearly $e\fr g\in\beta(P,Q)$ for any $g\in R$.
Thus, $\beta(P,Q)\leq^1_1R$. 
\end{proof}

In contrast, we will show in Part II that {\em neither $\mathcal{D}^1_{<\omega}$, nor $\mathcal{D}^1_{\omega|<\omega}$, nor $\mathcal{D}^{<\omega}_1$, nor $\mathcal{D}^{<\omega}_{\omega}$} form Brouwer algebras.
In the meantime, the following modifications of $\mathcal{D}^1_{<\omega}$, $\mathcal{D}^1_{\omega|<\omega}$, $\mathcal{D}^{<\omega}_1$, and $\mathcal{D}^{<\omega}_{\omega}$ look more natural than our original definitions, from the viewpoint of constructive mathematics.
Indeed, in Proposition \ref{prop:1-2:comgen}, we will see that these modifications form Brouwer algebras.

\begin{definition}\label{def:1-2:comgen}
\index{${\tt eff}$}%
Let $D$ be a subset of Baire space $\nn^\nn$, and $\alpha,\beta,\gamma\leq\omega$ be ordinals, or ${\tt eff}$.
We generalize the $(\alpha,\beta|\gamma)$-computability as follows.
If $\alpha={\tt eff}$, then we revise the word ``for any $g\in D$, there is $e\in I$'' to the word ``there is a partial computable function $B_0:\subseteq\nn^\nn\to\nn$ such that, for any $g\in D$, there is $e<B_0(g)$''.
If $\beta={\tt eff}$, then we revise the mind change condition as $\#{\tt mcl}_{\Psi_e}(g)<B_1(g)$, where $B_1$ is a partial computable function from $\nn^\nn$ to $\nn$.
If $\gamma={\tt eff}$, then we revise the error condition as $\#{\tt indx}_{\Psi_e}(g)<B_2(g)$, where $B_2$ is a partial computable function from $\nn^\nn$ to $\nn$.
For new notions, $\leq^\alpha_{\beta|\gamma}$, $\mathcal{D}^\alpha_{\beta|\gamma}$, and $\mathcal{P}^\alpha_{\beta|\gamma}$ are also defined as the usual way.
\end{definition}

\begin{prop}\label{prop:1-2a:eff_compact}
Suppose that, if $\tau={\tt eff}$, then let $\tau^*$ mean the symbol $<\omega$, and otherwise, set $\tau^*=\tau$.
Then, every $(\alpha,\beta|\gamma)$-computable function with a compact domain is $(\alpha^*,\beta^*|\gamma^*)$-computable.
\end{prop}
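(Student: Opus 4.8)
The plan is to show that whenever one of $\alpha,\beta,\gamma$ equals ${\tt eff}$, the partial computable bounding function $B_i$ can be replaced by a single natural number, using compactness of the domain $D$. First I would recall the setup: if $\Gamma:D\to\nn^\nn$ is $(\alpha,\beta|\gamma)$-computable in the generalized sense of Definition \ref{def:1-2:comgen}, then for each position where the symbol is ${\tt eff}$ we have a partial computable function $B_i:\subseteq\nn^\nn\to\nn$ witnessing, respectively, the bound on the index searched ($e<B_0(g)$), the number of mind changes ($\#{\tt mcl}_{\Psi_e}(g)<B_1(g)$), or the number of distinct indices ($\#{\tt indx}_{\Psi_e}(g)<B_2(g)$). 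In each case $B_i$ is total on $D\subseteq{\rm dom}(B_i)$, and I want a uniform finite bound over all of $D$.

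The key step is the standard compactness-of-the-domain argument. Since $B_i$ is partial computable and defined on every $g\in D$, for each $g\in D$ there is a finite initial segment $\sigma\subset g$ with $B_i(\sigma)\!\downarrow$ in $|\sigma|$ steps (using the convention $\Phi(\sigma;n)$ from Section \ref{subsec:1:notation}); the cylinders $[\sigma]$ for such $\sigma$ form an open cover of $D$. By compactness of $D$ finitely many suffice, say $[\sigma_0],\dots,[\sigma_m]$ cover $D$, and then $N=\max\{B_i(\sigma_j):j\le m\}+1$ is a uniform upper bound: for every $g\in D$ we have $B_i(g)=B_i(\sigma_j)\le N$ for some $j$ with $g\in[\sigma_j]$ (using that a partial computable function's value at $g$ agrees with its value at any initial segment on which it has already halted). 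Replacing the partial computable bound $B_i(g)$ by the constant $N$ turns the ${\tt eff}$-condition at that coordinate into the corresponding ``$<\omega$'' condition, i.e. a bound by some fixed natural number, which is exactly $\tau^*=\;<\omega$. One then does this independently for each coordinate that carries an ${\tt eff}$, which is legitimate because the three conditions (the index search bound, the mind-change bound, the error bound) can be handled one at a time without disturbing the others; after at most three applications we obtain that $\Gamma$ is $(\alpha^*,\beta^*|\gamma^*)$-computable.

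One small point to address carefully is the index-search coordinate $\alpha={\tt eff}$: there the condition is ``there is a partial computable $B_0$ such that for any $g\in D$ there is $e<B_0(g)$ with the learnability/mind-change/error conditions holding'', so after bounding $B_0(g)\le N$ uniformly we get a fixed finite set $I=\{0,1,\dots,N-1\}$ of indices, which is precisely what $(<\omega,\cdot\,|\,\cdot)$-computability requires. For $\beta,\gamma={\tt eff}$ the learner family $\{\Psi_e\}_{e\in I}$ is already fixed; we only shrink the numerical bounds on $\#{\tt mcl}$ and $\#{\tt indx}$, so nothing else changes.

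The main obstacle, and the only place where any real content enters, is making sure the open cover argument is applied to the correct object: we need $D\subseteq{\rm dom}(B_i)$ and we need $B_i$'s halting on initial segments to generate the cover — this is exactly the place where compactness of $D$ (rather than of $\nn^\nn$, which is false) is used, and where the footnoted convention that $\Phi(\sigma;n)$ means the step-$|\sigma|$ computation makes the cover genuinely open. Beyond that the proof is routine bookkeeping over which of the three positions carry ${\tt eff}$.
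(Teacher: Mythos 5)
Your argument is correct and is essentially the paper's own proof: the paper observes that by continuity of the partial computable bound $B_i$ the sets $B_i^{-1}(\{e\})$, $e\in\nn$, form an open cover of $D$, and compactness yields a finite subcover and hence a uniform finite bound; your cylinder/use-principle formulation is just this same continuity-plus-compactness argument spelled out in more detail.
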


\begin{proof}
By continuity of $B_0$, $B_1$, and $B_2$ in Definition \ref{def:1-2:comgen}, $\{B_i^{-1}(\{e\})\}_{e\in\nn}$ for each $i<3$ is an open cover of $D$.
Hence, by compactness of $D$, we have the desired condition.
\end{proof}

\begin{cor}
$\mathcal{P}^1_{\tt eff}=\mathcal{P}^1_{<\omega}$; $\mathcal{P}^1_{\omega|\tt eff}=\mathcal{P}^1_{\omega|<\omega}$; $\mathcal{P}^{\tt eff}_1=\mathcal{P}^{<\omega}_1$; and $\mathcal{P}^{\tt eff}_\omega=\mathcal{P}^{<\omega}_\omega$.
\qed
\end{cor}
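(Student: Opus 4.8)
The plan is to reduce all four equalities to one observation: on the class $\Pi^0_1(2^\nn)$ of nonempty $\Pi^0_1$ subsets of $2^\nn$, each preordering whose parameter list contains ${\tt eff}$ coincides, as a relation, with the preordering obtained by replacing that ${\tt eff}$ by $<\omega$. Granting this, the displayed identities of quotient order structures are immediate, since $\mathcal{P}^{\alpha}_{\beta|\gamma}$ is by definition $\Pi^0_1(2^\nn)$ modulo the corresponding equivalence. The feature that makes everything work is that every $P\in\Pi^0_1(2^\nn)$ is a closed, hence compact, subset of Cantor space, so that Proposition \ref{prop:1-2a:eff_compact} becomes applicable.

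For one inclusion I would simply remark that a constant map with value $n\in\nn$ is a total computable function, hence a legitimate bounding function $B_i$ in Definition \ref{def:1-2:comgen}; therefore every fixed-finite-parameter version of $(\alpha,\beta|\gamma)$-computability is a special case of the corresponding ${\tt eff}$-version, and consequently $\leq^1_{<\omega}$ implies $\leq^1_{\tt eff}$, $\leq^1_{\omega|<\omega}$ implies $\leq^1_{\omega|{\tt eff}}$, $\leq^{<\omega}_1$ implies $\leq^{\tt eff}_1$, and $\leq^{<\omega}_\omega$ implies $\leq^{\tt eff}_\omega$, already on all of $\mathcal{P}(\nn^\nn)$. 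For the converse, suppose $P\leq Q$ holds in one of the ${\tt eff}$-preorderings with $P,Q\in\Pi^0_1(2^\nn)$, witnessed by a function $\Gamma$ with $\Gamma(g)\in P$ for every $g\in Q$. Restricting $\Gamma$ to the domain $Q$ preserves every clause of the generalized definition (learnability, mind-change, error, and the requirement on the witnessing $B_i$, which stays defined on $Q$), so $\Gamma\res Q$ is an ${\tt eff}$-type $(\alpha,\beta|\gamma)$-computable function whose domain $Q$ is compact. Proposition \ref{prop:1-2a:eff_compact} then upgrades it to an $(\alpha^*,\beta^*|\gamma^*)$-computable function; as only the ${\tt eff}$-slot is altered (to $<\omega$) while slots equal to $1$ or $\omega$ are unchanged, this is precisely membership in the desired $<\omega$-class, and $\Gamma\res Q$ still sends $Q$ into $P$, so $P\leq Q$ in the corresponding $<\omega$-preordering. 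Running this for $(\alpha,\beta,\gamma)=(1,{\tt eff},\omega)$, $(1,\omega,{\tt eff})$, $({\tt eff},1,\omega)$, $({\tt eff},\omega,\omega)$ yields the four equalities.

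The only step that warrants a moment's attention is the passage from $\Gamma$ to $\Gamma\res Q$ inside Definition \ref{def:1-2:comgen}, namely that the partial computable bounds $B_0,B_1,B_2$ remain witnesses after the domain shrinks; this is immediate, since the hypotheses on $B_i$ are only asked on the (now smaller) domain. Beyond that, the argument is pure bookkeeping: the trivial inclusion above combined with a direct citation of Proposition \ref{prop:1-2a:eff_compact} and the compactness of $\Pi^0_1$ subsets of $2^\nn$. I therefore do not expect any genuine obstacle.
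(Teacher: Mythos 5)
Your proof is correct and follows the same route the paper intends: the corollary is stated with an immediate \qed precisely because nonempty $\Pi^0_1$ subsets of $2^\nn$ are compact, so Proposition \ref{prop:1-2a:eff_compact} converts each ${\tt eff}$-witness into a $<\omega$-witness, while the reverse inclusion is the trivial observation that constant bounding functions are computable. Your extra care about restricting $\Gamma$ to the compact domain $Q$ and about which slot gets altered is exactly the bookkeeping the paper leaves implicit.
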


That is to say, for $\Pi^0_1$ subsets of Cantor space $2^\nn$, no new reducibility notion is constructed from Definition \ref{def:1-2:comgen}.
However, from the perspective of intuitionistic caluculus, our new notions in Definition \ref{def:1-2:comgen} have nice features.

\begin{prop}\label{prop:1-2:comgen}
$\mathcal{D}^1_{\tt eff}$, $\mathcal{D}^1_{\omega|{\tt eff}}$, $\mathcal{D}^{\tt eff}_1$, and $\mathcal{D}^{\tt eff}_{\omega}$ are Brouwerian.
\end{prop}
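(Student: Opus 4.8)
The plan is to show, for each of the four monoids $\mathcal{F}\in\{[\mathfrak{C}_T]^1_{\tt eff},[\mathfrak{C}_T]^1_{\omega|{\tt eff}},[\mathfrak{C}_T]^{\tt eff}_1,[\mathfrak{C}_T]^{\tt eff}_\omega\}$, that the set $B(P,Q)=\{R\subseteq\nn^\nn:P\leq_{\mathcal{F}}Q\lsup R\}$ has a least element under $\leq_{\mathcal{F}}$. Since $\lsup$ is the join and $\linf$ the meet in $\mathcal{D}^\alpha_{\beta|\gamma}$, exhibiting $\min B(P,Q)$ for all $P,Q$ is exactly what it takes for $\mathcal{D}^\alpha_{\beta|\gamma}$ to be Brouwerian. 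The construction is a uniform variant of the one in Proposition~\ref{prop:1-2:Brouwerian}, and the point that makes it run — and that explains why the same construction breaks down for $\mathcal{D}^1_{<\omega}$, $\mathcal{D}^{<\omega}_1$ and their kin — is that in each of these four classes a witness for a reduction $P\leq_{\mathcal{F}}Q\lsup R$ is coded by a \emph{single} natural number.

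Concretely, for $[\mathfrak{C}_T]^1_{\tt eff}$ a witness is a pair $\lrangle{e,i}$ with $e$ an index of a learner and $i$ an index of the partial computable mind-change bound; for $[\mathfrak{C}_T]^1_{\omega|{\tt eff}}$ the same, with $i$ indexing the error bound; and for $[\mathfrak{C}_T]^{\tt eff}_1$ and $[\mathfrak{C}_T]^{\tt eff}_\omega$ a witness is just an index $i$ of the partial computable bound $B_0$ on the admissible program indices, the families $\{\Phi_e\}_{e\in\nn}$ resp.\ $\{\Lambda_e\}_{e\in\nn}$ being fixed. Accordingly I would put, for $[\mathfrak{C}_T]^1_{\tt eff}$,
\[
\beta(P,Q)=\{\lrangle{e,i}\fr g\in\nn^\nn:(\forall f\in Q)\;[\,\Lambda_e(f\oplus g)\in P\ \text{and}\ \#{\tt mcl}_{\Psi_e}(f\oplus g)<\Phi_i(f\oplus g)\,]\},
\]
and analogously for the other three: replace the mind-change clause by $\#{\tt indx}_{\Psi_e}(f\oplus g)<\Phi_i(f\oplus g)$; or drop $e$ from the prefix and replace the bracketed clause by $(\exists e<\Phi_i(f\oplus g))\,\Phi_e(f\oplus g)\in P$; or by $(\exists e<\Phi_i(f\oplus g))\,\Lambda_e(f\oplus g)\in P$. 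That $\beta(P,Q)\in B(P,Q)$ is witnessed by the reduction that, on oracle $f\oplus(\lrangle{e,i}\fr g)\in Q\lsup\beta(P,Q)$, reads the code off the second coordinate, reconstructs the oracle $f\oplus g$, and simulates $\Psi_e$ against it (resp.\ runs the fixed uniformly computable family against it); its admissible bound is ``decode $i$, reconstruct $f\oplus g$, output $\Phi_i(f\oplus g)$'', which is partial computable, while the mind-change/error profile of the simulation equals that of $\Psi_e$ on $f\oplus g$. Minimality is the easy direction: if $R\in B(P,Q)$ is witnessed by the code $c$, then $r\mapsto c\fr r$ is a computable map sending $R$ into $\beta(P,Q)$, so $\beta(P,Q)\leq^1_1 R$, hence $\beta(P,Q)\leq_{\mathcal{F}}R$ because $[\mathfrak{C}_T]^1_1\subseteq\mathcal{F}$.

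The main thing to check carefully is that the ``decode then simulate'' reduction genuinely lands back in $\mathcal{F}$: its bound on mind changes / errors / searched program indices must vary with the oracle, and this is legitimate precisely because ${\tt eff}$ permits an arbitrary partial computable function of the oracle in place of a fixed finite bound — which makes transparent why the construction does not apply to the non-${\tt eff}$ classes. I would isolate this as a one-paragraph lemma: prepending a fixed string and extracting a fixed coordinate from an interleaving are computable, and precomposing a learner (resp.\ a uniformly computable family of functions, resp.\ a partial computable function) with such an operation again yields a learner with the same mind-change and error counts (resp.\ a uniformly computable family, resp.\ a partial computable function). With that in hand, everything else is a routine unwinding of Definition~\ref{def:1-2:comgen} parallel to Proposition~\ref{prop:1-2:Brouwerian}; the degenerate cases ($Q=\emptyset$, where $\beta(P,Q)=\nn^\nn$ has a computable member and matches $\min B(P,\emptyset)=\nn^\nn$, and $P=\nn^\nn$) need only a line.
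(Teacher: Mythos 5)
Your proposal is correct and follows essentially the same route as the paper's proof: you define $\beta(P,Q)$ as the set of sequences obtained by prepending a code (learner index together with an index for the partial computable bound, or just the bound index) to elements $g$ such that the coded reduction sends $Q\otimes\{g\}$ into $P$ with the ${\tt eff}$-clause satisfied, verify membership in $B(P,Q)$ by decode-and-simulate, and get minimality from the map $r\mapsto c\fr r$. The extra remarks on why the argument needs ${\tt eff}$ and on degenerate cases are fine but not needed beyond what the paper records.
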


\begin{proof}
Fix $\alpha,\beta,\gamma\in\{1,<\omega,{\tt eff},\omega\}$, and set $B(P,Q)=\{R\subseteq\nn^\nn:P\leq^\alpha_{\beta|\gamma}Q\otimes R\}$.
We need to construct a function $\beta:\mathcal{P}(\nn^\nn)\times\mathcal{P}(\nn^\nn)\to\mathcal{P}(\nn^\nn)$ such that $\beta(P,Q)=\min B(P,Q)$ for any $P,Q\subseteq\nn^\nn$.
Let $\Lambda_e$ denote the $e$-th $(1,\omega)$-computable function, and $\Theta_e$ be the $e$-th partial computable function from $\nn^\nn$ to $\nn$.
Put ${\rm change}_e(g)=\#\{n\in\nn:\Lambda_e(g\res n+1)\not=\Lambda_e(g\res n)\}$, and ${\rm error}_e(g)=\#\{\Lambda_e(g\res n):n\in\nn\}$.
Then,
\[
\beta(P,Q)=
\begin{cases}
\{(e,d)\fr g:(\forall f\in Q)\;\Lambda_e(f\oplus g)\in P\;\&\;\#{\tt mcl}_{\Lambda_e}(f\oplus g)<\Theta_d(f\oplus g)\},&\\
\hfill\mbox{if }(\alpha,\beta,\gamma)=(1,{\tt eff},\omega),&\\
\{(e,d)\fr g:(\forall f\in Q)\;\Lambda_e(f\oplus g)\in P\;\&\;\#{\tt indx}_{\Lambda_e}(f\oplus g)<\Theta_d(f\oplus g)\},&\\
\hfill\mbox{if }(\alpha,\beta,\gamma)=(1,\omega,{\tt eff}),&\\
\{d\fr g:(\forall f\in Q)(\exists e<\Theta(f\oplus g))\;\Phi_e(f\oplus g)\in P\},&\\
\hfill\mbox{if }(\alpha,\beta,\gamma)=({\tt eff},1,\omega),&\\
\{d\fr g:(\forall f\in Q)(\exists e<\Theta(f\oplus g))\;\Lambda_e(f\oplus g)\in P\},&\\
\hfill\mbox{if }(\alpha,\beta,\gamma)=({\tt eff},\omega,\omega),&
\end{cases}
\]
It is easy to see that $\beta(P,Q)\in B(P,Q)$ for any $P,Q\subseteq\nn^\nn$.
For the minimality, if $R\in B(P,Q)$, we have suitable $d$ and $e$ such that $(d,e)\fr g\in\beta(P,Q)$ for any $g\in R$.
Thus, $\beta(P,Q)\leq^1_1R$.
\end{proof}

\begin{remark}
Unfortunately, neither $\mathcal{P}^1_{\tt eff}$ nor $\mathcal{P}^1_{\omega|{\tt eff}}$ nor $\mathcal{P}^{\tt eff}_1$ nor $\mathcal{P}^{\tt eff}_{\omega}$ form Brouwer algebra (see Part II).
\end{remark}

\subsection{Falsifiable Problems and Total Functions}


In Part II, we will mainly pay attention to the behavior of nonuniform computability on {\em $\Pi^0_1$ subsets of Cantor space $2^\nn$}.
Such a restriction has an interesting feature by thinking of $\Pi^0_1$ sets as {\em falsifiable mass problems}.
\index{falsifiable mass problem}%
Consider a learner $\Psi$ identifies a $(1,\omega)$-computable function $\Gamma:Q\to P$.
On an observation $\sigma\in\nn^{<\nn}$ with $[\sigma]\cap Q\not=\emptyset$, a learner $\Psi$ conjectures that $e$ is a correct algorithm computing a solution of $P$ from $\sigma$, that is, $\Phi_{\Psi(\sigma)}(f)=\Phi_e(f)\in P$ for any future observation $f\in Q\cap[\sigma]$.
If $Q$ is $\Pi^0_1$, Proposition \ref{prop:1-2:collapse-tt} (3) suggests that we may assume that $e$ is an index of a total computable function.
Then, the learner $\Psi$ can find mistakes of his hypothesis on $P$ whenever $P$ is also a $\Pi^0_1$ subset of the Baire space $\nn^\nn$.
Therefore, restricting to $\Pi^0_1$ subsets is expected to be an analogy of {\em Popperian learning}.
In this context, the usual Popperian learning on total computable functions could be regarded as a learning process on $\Pi^0_1$ singletons.
We first see that, if we restrict our attention to $\Pi^0_1$ sets, then some reducibility notions collapse.

\begin{prop}\label{prop:1-2:collapse-tt}
Let $P$ be a $\Pi^0_1$ subset of $\nn^\nn$, and $X$ be any subset of $\nn^\nn$.
\begin{enumerate}
\item $X\leq^1_{tt,1}P$ if and only if $X\leq^1_1P$.
\item $X\leq^1_{tt,<\omega}P$ if and only if $X\leq^1_{<\omega}P$.
\item $X\leq^1_{tt,\omega}P$ if and only if $X\leq^1_{\omega}P$.
\item $P\leq^{1}_{tt,<\omega}X$ if and only if $P\leq^{<\omega}_{tt,\omega|<\omega}X$.
\item $P\leq^{1}_{tt,\omega}X$ if and only if $P\leq^{<\omega}_{tt,\omega}X$.
\end{enumerate}
\end{prop}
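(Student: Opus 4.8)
The plan is to prove both implications in each item; in every case one direction is immediate and the other is where the $\Pi^0_1$‑ness of $P$ enters. In (1)--(3) the easy implication is from truth-table to Turing reducibility, since every truth-table functional is in particular a Turing functional; in (4) and (5) the easy implication ($P\leq^1_{tt,<\omega}X\Rightarrow P\leq^{<\omega}_{tt,\omega|<\omega}X$, resp.\ $P\leq^1_{tt,\omega}X\Rightarrow P\leq^{<\omega}_{tt,\omega}X$) is the inclusion $[\mathfrak{C}_{tt}]^1_{<\omega}\subseteq[\mathfrak{C}_{tt}]^{<\omega}_1=[\mathfrak{C}_{tt}]^{<\omega}_{\omega|<\omega}$ (resp.\ $[\mathfrak{C}_{tt}]^1_\omega\subseteq[\mathfrak{C}_{tt}]^{<\omega}_\omega$) coming from the truth-table analogues of Theorem~\ref{thm:main:first1-2}. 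The single engine behind every nontrivial direction is the corresponding computable tree $T_P$ with $[T_P]=P$: whenever a computation along an oracle $g$ might diverge, we watch in parallel for ``$g\res s\notin T_P$'' and, if it occurs, output $0$ forever; on $g\in P=[T_P]$ this test never fires, so the padded functional agrees with the original there while being total on all of $\nn^\nn$, i.e.\ a truth-table functional.

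For (1): if $\Gamma:P\to X$ is computable via a Turing functional $\Phi$, the above padding of $\Phi$ is a single truth-table functional that equals $\Gamma$ on $P$, so $X\leq^1_{tt,1}P$. For (3), fix a total learner $\Psi$ for $\Gamma$ (Proposition~\ref{prop:1-2:learn-trick}). For a string $\tau$, writing $e=\Psi(\tau)$, let $\Phi_\tau$ be the functional that on oracle $g$ outputs $0^\nn$ unless $g\res|\tau|=\tau$, and otherwise searches for the least $s\geq|\tau|$ with ($g\res s\notin T_P$), or ($\Psi(g\res s)\neq e$), or ($\Phi_e(g\res s;n)\downarrow$), outputting $0$ in the first two cases and $\Phi_e(g\res s;n)$ in the third. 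Then $\Phi_\tau$ is total: on an oracle not through $\tau$, or leaving $T_P$, or on which $\Psi$ abandons $e$ after $\tau$, a ``garbage'' clause fires; on the remaining $g\in P\cap[\tau]$ we have $\Psi(g\res s)=e$ for all $s\geq|\tau|$, so $\lim_s\Psi(g\res s)=e$ and $\Phi_e(g)=\Gamma(g)$ is total. Now set $\widehat\Psi(\lrangle{})=$ an index of $\Phi_{\lrangle{}}$ and $\widehat\Psi(\sigma)=\widehat\Psi(\sigma^-)$ if $\Psi(\sigma)=\Psi(\sigma^-)$, otherwise $\widehat\Psi(\sigma)=$ an index of $\Phi_\sigma$. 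This is a computable learner with the same mind-change locations as $\Psi$ and with all hypotheses truth-table functionals, and for $g\in P$, if $\tau^\ast$ is the greatest mind-change location of $\Psi$ on $g$ (or $\lrangle{}$) then $\widehat\Psi$ converges on $g$ to $\Phi_{\tau^\ast}$ with $\Phi_{\tau^\ast}(g)=\Phi_{\lim_s\Psi(g\res s)}(g)=\Gamma(g)\in X$; only countably many $\Phi_\tau$ occur, so this is the required $(1,\omega)$-truth-table reduction.

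Item (2) runs the same construction from a learner with $<n$ mind changes on $P$, so $\widehat\Psi$ also has $<n$ mind changes on $P$; the extra demand of a $(1,m)$-truth-table witness is that \emph{finitely many} functionals be used, and arranging this is the main obstacle. Naively padding hypotheses is not enough, since a functional meant to carry ``the $i$-th hypothesis'' must nonetheless be total on oracles along which $\Psi$ makes fewer than $i$ mind changes --- a $\Pi^0_1$ event that no truth-table functional can certify. The fix I would use is to first pass to the normal form of Proposition~\ref{prop:1-2:characterization} (equivalently, the finite $(\Pi^0_1)_2$-piecewise presentation of a $(1,n)$-computable function) and to freeze $\Psi$ on nodes outside $T_P$, so that along nodes of $T_P$ only boundedly many candidate hypotheses are ever registered; the mistake-detection mechanism of the previous paragraph then converts these into finitely many truth-table functionals, yielding $X\leq^1_{tt,<\omega}P$.

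For (4) and (5), unfold $P\leq^{<\omega}_{tt,\omega|<\omega}X$ (resp.\ $P\leq^{<\omega}_{tt,\omega}X$) into finitely many (resp.\ countably many) truth-table functionals $\Phi_{p_0},\Phi_{p_1},\dots$ with the property that for every $x\in X$ at least one $\Phi_{p_i}$ maps $x$ into $P=[T_P]$. Let $\widehat\Psi(x\res s)$ be the first $p_i$ (with $i<s$ in the countable case) for which the finite string $\Phi_{p_i}(x\res s)$ still lies in $T_P$, and an index of $0^\nn$ if there is none. Since $\Phi_{p_i}(x\res s)$ increases to $\Phi_{p_i}(x)$ and $T_P$ is a tree, a $p_i$ with $\Phi_{p_i}(x)\notin[T_P]$ is eventually and permanently disqualified while one with $\Phi_{p_i}(x)\in[T_P]$ never is; hence $\widehat\Psi$ is monotone in $s$ and converges on each $x\in X$ to the least $p^\ast$ with $\Phi_{p^\ast}(x)\in[T_P]=P$. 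In the finite case this uses $m$ functionals and $<m$ mind changes, giving $P\leq^1_{tt,<\omega}X$; in the countable case the mind changes are unbounded but still only countably many functionals occur, giving $P\leq^1_{tt,\omega}X$.
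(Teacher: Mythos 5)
Your treatment of items (1), (3), (4) and (5) is correct and is essentially the paper's own argument: for (1)--(3) you totalize each hypothesis by racing its computation against the $\Sigma^0_1$ event ``the oracle leaves $T_P$'' (and, for (3), ``$\Psi$ abandons the hypothesis''), and for (4)--(5) you exploit the fact that, since the given hypotheses are truth-table and $P$ is $\Pi^0_1$, refutation of the current hypothesis is a $\Sigma^0_1$ event, so a single learner can move monotonically through the candidate functionals. Two small points there: in (5) you must say where the enumeration $p_0,p_1,\dots$ comes from, since the definition of $(\alpha,\beta|\gamma)$-truth-table explicitly does \emph{not} make the collection $\{p(e,k)\}$ uniformly computable; but the learners $\Psi_i$ are total computable, so one may enumerate $\{\Psi_i(\sigma):i<n,\ \sigma\in\nn^{<\nn}\}$ instead, which is exactly what the paper's proof does by cycling through the ``challenges'' $\Psi_i(g\res s)$ and discarding each one as soon as it is refuted.

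The genuine problem is item (2). You correctly isolate the only point beyond (3) --- keeping the hypothesis set finite --- but your fix does not achieve what you claim. Freezing $\Psi$ on nodes outside $T_P$ does not bound the number of hypotheses registered on $T_P$: the learner may take infinitely many distinct values on the nodes of $T_P$ (distinct values on distinct branches already suffice), and along non-extendible branches of $T_P$ it may even exceed the mind-change bound, since that bound is only guaranteed for $g\in P=[T_P]$, not for arbitrary nodes of $T_P$. So after your modification the new learner still ranges over infinitely many functionals $\Phi_\tau$, one per mind-change node, exactly as in your proof of (3); passing to the $(\Pi^0_1)_2$-piecewise normal form does not help either, because the functional assigned to the $m$-th layer must be total on oracles in $P$ that never leave the $(m-1)$-th layer, and there it has nothing total to compute. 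For what it is worth, the paper proves (2) and (3) by one and the same construction (its Popperian learner $\Psi^*(\sigma)$ is likewise a fresh truth-table index for each $\sigma\in T_P$), and verifies only that every hypothesis is a truth-table index and that the mind-change count on $P$ is preserved; under that reading of the Popperian condition your (3)-construction already yields (2) verbatim and your extra machinery is unnecessary, while under the literal reading (at most $\min\{\beta,\gamma\}$ indices over all of $\nn^{<\nn}$) your sketch as written does not close the gap.
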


\begin{proof}\upshape
(1) See Simpson \cite{Sim}.

(2,3) Assume that $X\leq^1_\omega P$ via a learner $\Psi$.
From $\Psi$, we construct a Popperian learner $\Psi^*:\nn^{<\nn}\to\nn$, i.e., $\Psi(\sigma)$ is an index of truth-table functional for each $\sigma\in\nn^{<\nn}$.
We may assume that $\Psi(\sigma)$ is defined, by Proposition \ref{prop:1-2:learn-trick}.
Let $T_P$ be the corresponding computable tree for $P$.
If $\sigma\not\in T_P$, then $\Psi^*(\sigma)$ returns an index of the constant function $f\mapsto 0^\nn$.
If $\sigma\in T_P$, then let $\Psi^*(\sigma)$ be an index of the following computation procedure.
Given $f\in\nn^\nn$, at stage $s\in\nn$, if $\sigma\not\subset f$, then returns $0^\nn$.
If $f\res s\in T_P$ extends $\sigma$, and $\Psi(f\res t)=\sigma$ for any $|\sigma|\leq t\leq s$, then simulate the computation of $\Phi_{\Psi(\sigma)}(f\res s)$.
Otherwise, for the least such stage $s$, returns $\Phi_{\Psi(\sigma)}(f\res s-1)\fr 0^\nn$.
Clearly, $\Phi_{\Psi^*(\sigma)}(f)$ defines an element of $\nn^\nn$, for any $f\in\nn^\nn$.
Moreover, $\Psi^*$ agrees with $\Psi$ on $P$, i.e., $\Phi_{\lim_n\Psi^*(f\res n)}(f)=\Phi_{\lim_n\Psi(f\res n)}(f)$ for any $f\in P$.

(4,5)
Assume that $P\leq^{<\omega}_{tt,\omega|<\omega}X$ via $n$ Popperian learners, $\{\Psi_i\}_{i<n}$.
Given $g\in X$, on the first challenge, our leaner $\Delta$ guesses that $\Psi_0(g\res 0)$ is a correct algorithm.
As each $\Psi_i$ is Popperian, and $P$ is $\Pi^0_1$, the predicate $\Phi_{\Psi_0(g\res 0)}(g)\in P$ is $\Pi^0_1$.
Therefore, whenever $\Phi_{\Psi_0(g\res 0)}(g)\in P$ is incorrect, the learner $\Delta$ is able to understand that his guess is refuted.
If it happens, the learner goes to the next challenge.
On the $(ns+i)$-th challenge, $\Delta$ guesses that $\Psi_i(g\res s)$ is correct.
By continuing this procedure, eventually $\Delta$ learns a collect algorithm to solve the problem $P$.
Note that, if an $(n,b,c)$-computable function exists from $X$ to $P$, then the learning procedure of $\Delta$ is stabilized before the $(nc)$-th challenge starts, i.e., $\Delta$ determines a $(1,nc)$-truth-table computable function.
\end{proof}

\begin{cor}
$\mathcal{P}^1_{tt,1}=\mathcal{P}^1_1$; $\mathcal{P}^1_{tt,<\omega}=\mathcal{P}^1_{tt,\omega|<\omega}=\mathcal{P}^{<\omega}_{tt,\omega|<\omega}=\mathcal{P}^1_{<\omega}$;
and $\mathcal{P}^1_{tt,\omega}=\mathcal{P}^{<\omega}_{tt,\omega}=\mathcal{P}^1_\omega$.
Hence, $\{\mathcal{P}^\alpha_{\beta|\gamma}, \mathcal{P}^\alpha_{tt, \beta|\gamma}:\alpha,\beta,\gamma\in\{1,<\omega,\omega\}\}$ consists of at most nine lattices: $\mathcal{P}^1_1$, $\mathcal{P}^{<\omega}_{tt,1}$, $\mathcal{P}^1_{<\omega}$, $\mathcal{P}^1_{\omega|<\omega}$, $\mathcal{P}^{<\omega}_1$, $\mathcal{P}^1_{\omega}$, $\mathcal{P}^{<\omega}_\omega$, $\mathcal{P}^\omega_{tt,1}$, and $\mathcal{P}^\omega_1$.\qed
\end{cor}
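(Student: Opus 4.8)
The plan is to read every claimed identity directly off Proposition~\ref{prop:1-2:collapse-tt}, using the single observation that each structure $\mathcal{P}^\alpha_{\beta|\gamma}$ (resp.\ $\mathcal{P}^\alpha_{tt,\beta|\gamma}$) is the quotient of the \emph{same} underlying set $\Pi^0_1(2^\nn)$ by the corresponding preorder, and that a $\Pi^0_1$ subset of $2^\nn$ is in particular a $\Pi^0_1$ subset of $\nn^\nn$. Consequently, for any two elements of $\Pi^0_1(2^\nn)$ the hypothesis of Proposition~\ref{prop:1-2:collapse-tt} is met in whichever position is needed, so that proposition tells us exactly which of the relevant preorders coincide on $\Pi^0_1(2^\nn)$; and coinciding preorders give literally the same quotient lattice.

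For the displayed identities, items (1)--(3) of Proposition~\ref{prop:1-2:collapse-tt} give $\mathcal{P}^1_{tt,1}=\mathcal{P}^1_1$, $\mathcal{P}^1_{tt,<\omega}=\mathcal{P}^1_{<\omega}$, and $\mathcal{P}^1_{tt,\omega}=\mathcal{P}^1_\omega$, while items (4) and (5) give $\mathcal{P}^1_{tt,<\omega}=\mathcal{P}^{<\omega}_{tt,\omega|<\omega}$ and $\mathcal{P}^1_{tt,\omega}=\mathcal{P}^{<\omega}_{tt,\omega}$. The only term not yet reached, $\mathcal{P}^1_{tt,\omega|<\omega}$, is handled by a squeeze: the monoid inclusions $[\mathfrak{C}_{tt}]^1_{<\omega}\subseteq[\mathfrak{C}_{tt}]^1_{\omega|<\omega}\subseteq[\mathfrak{C}_{tt}]^{<\omega}_{\omega|<\omega}$ (the truth-table form of Proposition~\ref{prop:1-2:func}(1)) yield the relational inclusions $\leq^1_{tt,<\omega}\ \subseteq\ \leq^1_{tt,\omega|<\omega}\ \subseteq\ \leq^{<\omega}_{tt,\omega|<\omega}$, and since the two outer relations already coincide on $\Pi^0_1(2^\nn)$ by item (4), the inner one coincides with them as well, whence $\mathcal{P}^1_{tt,\omega|<\omega}=\mathcal{P}^1_{tt,<\omega}$. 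Chaining these with $\mathcal{P}^1_{tt,<\omega}=\mathcal{P}^1_{<\omega}$ and $\mathcal{P}^1_{tt,\omega}=\mathcal{P}^1_\omega$ yields the two asserted chains verbatim.

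For the concluding count, recall that the truth-table analogues of Propositions~\ref{prop:1-2:func}, \ref{prop:1-3:monoid}, and \ref{prop:1-4:monoid2} hold, so the proof of Theorem~\ref{thm:main:first1-2} applies verbatim in the truth-table setting: $\{[\mathfrak{C}_{tt}]^\alpha_{\beta|\gamma}:\alpha,\beta,\gamma\in\{1,<\omega,\omega\}\}$ also collapses to seven monoids. Hence $\{\mathcal{P}^\alpha_{\beta|\gamma}\}$ has at most the seven members $\mathcal{P}^1_1,\mathcal{P}^1_{<\omega},\mathcal{P}^1_{\omega|<\omega},\mathcal{P}^{<\omega}_1,\mathcal{P}^1_\omega,\mathcal{P}^{<\omega}_\omega,\mathcal{P}^\omega_1$, and $\{\mathcal{P}^\alpha_{tt,\beta|\gamma}\}$ has at most the seven members $\mathcal{P}^1_{tt,1},\mathcal{P}^1_{tt,<\omega},\mathcal{P}^1_{tt,\omega|<\omega},\mathcal{P}^{<\omega}_{tt,1},\mathcal{P}^1_{tt,\omega},\mathcal{P}^{<\omega}_{tt,\omega},\mathcal{P}^\omega_{tt,1}$; by the identities just proved, five of the latter seven coincide with members of the former list, leaving only $\mathcal{P}^{<\omega}_{tt,1}$ and $\mathcal{P}^\omega_{tt,1}$ as possibly new, so the combined family has at most $7+2=9$ members, namely those listed. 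The argument is essentially bookkeeping; the only points calling for care are the squeeze for $\mathcal{P}^1_{tt,\omega|<\omega}$ and keeping straight that each asserted equality of degree structures is an equality of preorders on the common domain $\Pi^0_1(2^\nn)$, after which the passage to quotients is automatic.
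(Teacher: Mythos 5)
Your proof is correct and follows the route the paper intends: the corollary is stated with an immediate \qed precisely because it is meant to be read off Proposition~\ref{prop:1-2:collapse-tt} exactly as you describe, with the squeeze for $\mathcal{P}^1_{tt,\omega|<\omega}$ being the one step that genuinely needs supplying. The only loose point is your appeal to a verbatim truth-table version of Theorem~\ref{thm:main:first1-2}, whose proof cites Proposition~\ref{prop:1-2:func2} --- not among the propositions whose truth-table analogues the paper's remark asserts --- but this is harmless for an ``at most'' bound: the only extra candidates it could produce, $\mathcal{P}^{<\omega}_{tt,<\omega}$ and $\mathcal{P}^{<\omega}_{tt,\omega|<\omega}$, are sandwiched between $\leq^1_{tt,<\omega}$ and $\leq^{<\omega}_{tt,\omega|<\omega}$ and so collapse to $\mathcal{P}^1_{<\omega}$ by the same squeeze you already use.
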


One can interpreted $\leq^1_1$ ($\leq^1_\omega$, resp.) as computable reducibility with no (finitely many, resp.) mind-changes.
We see how $\leq^1_\omega$ behaves like a dynamical-approximation procedure.

\begin{prop}\label{lem:weak}
For any $\Pi^0_1$ set $P\subseteq\nn^\nn$ and any set $Q\subseteq \nn^\nn$, $P\leq^\omega_1 Q$ if and only if
\[(\exists\Psi)(\forall f\in Q)\;\Phi_{\liminf_n\Psi(f\res n)}(f)\in P.\]
Here $\Psi$ ranges over all learners (i.e., computable functions from $\nn^{<\nn}$ to $\nn$).
\end{prop}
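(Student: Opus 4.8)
The plan is to prove the two implications separately; the reverse one is immediate, while the forward one needs a \emph{progress-driven} learner whose $\liminf$ is forced to land on a correct index, and this is where the hypothesis that $P$ is $\Pi^0_1$ is really used.

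For the direction $(\Leftarrow)$, suppose $\Psi$ witnesses the right-hand side. Then $f\mapsto\liminf_n\Psi(f\res n)$ assigns to each $f\in Q$ an index $e(f)\in\nn$ with $\Phi_{e(f)}(f)\in P$, so $\Phi_{e(f)}(f)$ is a member of $P$ computable from $f$; hence $P\leq^\omega_1 Q$ by Proposition \ref{prop:1-2:func}(5).

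For $(\Rightarrow)$, fix a computable tree $T_P\subseteq\nn^{<\nn}$ with $[T_P]=P$. Since $P\leq^\omega_1 Q$, each $f\in Q$ has some $e$ with $\Phi_e(f)\in P$; let $e^*(f)$ be the least such. I will build a total computable learner $\Psi\colon\nn^{<\nn}\to\nn$ with $\liminf_n\Psi(f\res n)=e^*(f)$ for every $f\in Q$. Two bookkeeping data, both computable by recursion on $|\sigma|$: say $e\le|\sigma|$ is \emph{dead at $\sigma$} if the (finite, $|\sigma|$-step) computation $\Phi_e(\sigma)$ has already produced a string leaving $T_P$ (this is monotone under extending $\sigma$); and let ${\rm len}_e(\sigma)$ be the length of the longest initial segment of $\nn$ on which $\Phi_e(\sigma)$ has converged (also monotone in $\sigma$). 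Along a fixed $f$, each index $e$ falls into exactly one class: eventually dead (so $\Phi_e(f)\notin P$); never dead with ${\rm len}_e(f\res s)$ eventually constant ($\Phi_e(f)$ partial but on $T_P$, so again $\Phi_e(f)\notin P$); or never dead with ${\rm len}_e(f\res s)\to\infty$, which happens exactly when $\Phi_e(f)\in P$. Thus $e^*(f)$ is the least index of the third class. The learner keeps a record $r_e$ per index, initialized to $0$; at $\sigma$ (processing the prefixes of $\sigma$ in order) put $E(\sigma)=\{e\le|\sigma|:e\text{ not dead at }\sigma,\ {\rm len}_e(\sigma)>r_e\}$, and if $E(\sigma)\neq\emptyset$ output $\Psi(\sigma)=\min E(\sigma)$ and reset $r_{\min E(\sigma)}:={\rm len}_{\min E(\sigma)}(\sigma)$, while if $E(\sigma)=\emptyset$ repeat the previous output. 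Verification is in two parts. First, every $e<e^*(f)$ is output only finitely often: a dead one is never output once recognized dead; a never-dead, eventually-stalled $e$ is output only when ${\rm len}_e$ strictly exceeds $r_e$, and then $r_e$ is raised to ${\rm len}_e(\sigma)$, so since ${\rm len}_e$ is bounded along $f$ this occurs only boundedly often; consequently no index below $e^*(f)$ can lie in $E(f\res s)$ for infinitely many $s$ (the least one that did would be output infinitely often, a contradiction), so $\min E(f\res s)\ge e^*(f)$ and all repeats are $\ge e^*(f)$ for large $s$. Second, $e^*(f)$ is output infinitely often: otherwise its record stabilizes while ${\rm len}_{e^*(f)}(f\res s)\to\infty$, so $e^*(f)\in E(f\res s)$ with no smaller member for all large $s$, forcing it to be output. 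Together these give $\liminf_n\Psi(f\res n)=e^*(f)$ and $\Phi_{e^*(f)}(f)\in P$, as required.

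The main obstacle is the \emph{stalled} middle class: a computation $\Phi_e(f)$ that is partial yet never leaves $T_P$ can never be conclusively refuted, so a learner required to \emph{converge} could be trapped on such an $e$ below $e^*(f)$. Two features rescue the argument: $\liminf$ permits abandoning (and even finitely revisiting) wrong indices, and $P$ being $\Pi^0_1$ collapses ``$\Phi_e(f)\notin P$'' to the single phenomenon ``partial after staying on the tree'', which the bounded-progress record mechanism controls. Tuning that mechanism so that it simultaneously starves every small wrong index and still feeds $e^*(f)$ infinitely often is the delicate point of the proof.
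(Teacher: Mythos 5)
Your proof is correct and takes essentially the same approach as the paper's: the nontrivial direction is handled by a least-index learner that only re-selects an index $e$ when $\Phi_e$ has made verified new progress along $T_P$ beyond a per-index counter, so that refuted or stalled indices are chosen only finitely often while the least correct index is chosen infinitely often, forcing the $\liminf$ onto it. The only differences from the paper's construction (outputting $|\sigma|$ rather than repeating the previous guess when no index qualifies, and resetting the record to the current length rather than incrementing by one) are cosmetic.
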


\begin{proof}\upshape
The ``only if'' part is obvious.
For the ``if'' part, we will inductively define $\Psi(\sigma)$ and $l(\sigma,e)$ for each $\sigma\in\nn^{<\nn}$ and $e\in\nn$.
Let $T_P$ denote the corresponding tree for $P$.
First, put $\Psi(\lrangle{})=0$ and $l(\lrangle{},e)=0$ for each $e$.
Now assume that, for any $\tau\in\nn^{<\nn}$ with $|\tau|<|\sigma|$, we have already defined $\Psi(\tau)$, and $l(\tau,e)$ for each $e\in\nn$.
Then, we define $\Psi(\sigma)$ and $l(\sigma,e)$ for each $e$ as follows:
\begin{align*}
\Psi(\sigma)&=
\begin{cases}
\mu e<|\sigma|\;[\Phi_e(\sigma)\res (l(\sigma^-,e)+1)\in T_P]&\mbox{ if such }e\mbox{ exists,}\\
|\sigma|&\mbox{ otherwise}
\end{cases}
\\
l(\sigma,e)&=
\begin{cases}
l(\sigma^-,e)+1&\mbox{ if }e=\Psi(\sigma),\\
l(\sigma^-,e)&\mbox{ otherwise.}
\end{cases}
\end{align*}
By our assumption $P\leq^\omega_1 Q$, $\liminf_n\Psi(f\res n)$ exists for all $f\in P$.
Thus, the desired condition $\Phi_{\liminf_n\Psi(f\res n)}(f)\in Q$ holds.
\end{proof}

\begin{remark}
Recall that a subset of $2^\nn$ is $\Pi^0_1$ if and only if it is the set of all infinite paths through a computable subtree of $2^{<\nn}$.
Thus, in our model of inductive inference, each learner tries to learn a program for an infinite branch of $T$ from a given infinite branch of another tree $T^*$.
Another model of {\em branch learning} has been studied by Kummer-Ott \cite{KuOt}, and Ott-Stephan \cite{OtSt} in which each learner tries to learn a program for an infinite {\em computable} branch of $T$ from the global information about $T$.
They pointed out that the concept of branch learning is equivalent to learning winning strategies for closed computable Gale-Stewart games, since the class of $\Pi^0_1$ subsets of $2^\nn$ correspond exactly to the class of winning strategies for such games (see also Cenzer-Remmel \cite{CR}).
Case-Ott-Sharma-Stephan \cite{CaOtShSt} explains the concept of branch learning by using a temperature controller.
In their model, each learner tries to learn a program for an infinite computable branch of $T$ from the global information about $T$ {\em with an additional information about one infinite branch of $T$}, i.e., the leaner may watch a human {\em master}.
A $k$-wise variation for branch learning called {\em weak $k$-search problem} has been studied by Kaufmann-Kummer \cite{KaKu}.
\end{remark}

\subsection{Learnability versus Piecewise Computability}

Now we characterize our classes of nonuniformly computable functions using the concept of piecewise computability. 

\begin{definition}
For a class $\Lambda$ of subsets of Baire space $\nn^\nn$, we say that a collection $\{Q_i\}_{i\in I}$ is uniformly $\Lambda$ if the set $\{(i,f)\in I\times\nn^\nn:f\in Q_i\}$ belongs to $\Lambda$.
\index{uniform!$\Lambda$ collection}%
A partition or a cover $\{Q_i\}_{i\in I}$ of $Q$ is (uniformly) $\Lambda$ if there is a (uniform) $\Lambda$ collection $\{Q_i^*\}_{i\in I}$ such that $Q_i=Q\cap Q_i^*$ for any $i\in I$.
\index{uniform!$\Lambda$ partition}%
\index{uniform!$\Lambda$ cover}%
We say that $\{Q_i\}_{i\in I}$ is {\em a (uniform) $\Lambda$ layer of $Q$} if there is a uniform $\Lambda$ collection $\{Q^*_i\}_{i\in I}$ such that $Q^*_i\subseteq Q^*_{i+1}$ for each $i\in I$, $\{Q^*_i\}_{i\in I}$ covers $Q$, and $Q_i=Q\cap Q^*_i$.
\index{layer}\index{uniform!$\Lambda$ layer}%
We also say that $\{Q_i\}_{i\in I}$ is {\em a (uniform) $\Lambda$ $d$-layer of $Q$} if there is a (uniform) $\Lambda$ layer $\{Q^*_i\}_{i\in I}$ of $Q$ such that $Q_i=Q^*_i\setminus Q^*_{i-1}$ for any $i\in I$, where $Q^*_{-1}=\emptyset$.
\index{layer!$d$-layer}\index{uniform!$\Lambda$ $d$-layer}%
\end{definition}

\begin{remark}
The terminology ``{\em layer}'' comes from the concept of {\em layerwise computability} in algorithmic randomness theory (see Hoyrup-Rojas \cite{HoyrupR09}).
\index{layer!layerwise computability}%
\end{remark}

\begin{definition}\label{def:1-2a:piecewise-computability}
Let $\mathcal{F}$ be a class of partial functions on $\nn^\nn$.
\index{${\rm dec}^{X}_{x}[\Lambda]\mathcal{F}$}%
For $X\in\omega\cup\{<\omega,\omega\}$ and $x\in\{{\rm p},{\rm c},{\rm d}\}$, a partial functions $\Gamma:\subseteq\nn^\nn\to\nn^\nn$ is of class ${\rm dec}^{X}_{x}[\Lambda]\mathcal{F}$ if there is a uniform $\Lambda$ partition (if $x={\rm p}$), uniform cover (if $x={\rm c}$) or uniform $d$-layer (if $x={\rm d}$), $\{Q_i\}_{i\in I}$, of ${\rm dom}(\Gamma)$ such that $\Gamma\res Q_i$ is contained in $\mathcal{F}$ uniformly in $i\in I$, where $I=X$ if $X\in\omega\cup\{\omega\}$ and $I\in\omega$ if $X=<\omega$.
If $\mathcal{F}$ is the class of all partial computable functions, we simply write ${\rm dec}^{X}_{x}[\Lambda]$ instead of ${\rm dec}^{X}_{x}[\Lambda]\mathcal{F}$.
\index{${\rm dec}^{X}_{x}[\Lambda]$}%
Moreover, if $\Lambda$ is the class of all subsets of Baire space, then we write ${\rm dec}^{X}_{x}[-]$ and ${\rm dec}^{X}_{x}\mathcal{F}$ instead of ${\rm dec}^{X}_{x}[\Lambda]$ and ${\rm dec}^{X}_{x}[\Lambda]\mathcal{F}$, respectively.
\index{${\rm dec}^{X}_{x}[-]$}\index{${\rm dec}^{X}_{x}\mathcal{F}$}%
If we does not assume uniformity in the definition, we say that $\Gamma$ is of $\underline{\rm dec}^{X}_{x}[\Lambda]\mathcal{F}$.
\index{$\underline{\rm dec}^{X}_{x}[\Lambda]\mathcal{F}$}%
\end{definition}

If $\Lambda\in\{\Sigma^0_n,\Pi^0_n,\Delta^0_n\}_{n\in\nn}$, for every $X\in\{<\omega,\omega\}$, we have ${\rm dec}^{X}_{\rm p}[\Lambda]\subseteq{\rm dec}^{X}_{\rm c}[\Lambda]\subseteq{\rm dec}^{X}_{\rm d}[\Lambda]\subseteq{\rm dec}^{X}_{\rm p}[(\Lambda)_2]$.
Here a set is $(\Lambda)_2$ if it is the difference of two $\Lambda$ sets.
Note that ${\rm dec}^\omega_p[\Pi^0_n]={\rm dec}^\omega_c[\Sigma^0_{n+1}]$ holds for every $n\in\nn$.
Our seven concepts of nonuniform computability listed in Table \ref{rtable} can be characterized as classes of piecewise computable functions.

\begin{theorem}\label{thm:5:red-eq-dis}
Let $k$ be any finite number.
\begin{enumerate}
\item $[\mathfrak{C}_T]^1_k={\rm dec}^{k}_{\rm d}[\Pi^0_1]$.
\item $[\mathfrak{C}_T]^1_{\omega|k}={\rm dec}^{k}_{x}[\Delta^0_2]={\rm dec}^{k}_{\rm c}[\Sigma^0_2]$ for any $x\in\{{\rm p},{\rm c},{\rm d}\}$.
\item $[\mathfrak{C}_T]^1_{\omega}={\rm dec}^{\omega}_{x}[\Pi^0_1]={\rm dec}^{\omega}_{x}[\Delta^0_2]={\rm dec}^{\omega}_{\rm c}[\Sigma^0_2]$ for any $x\in\{{\rm p},{\rm c},{\rm d}\}$.
\item $[\mathfrak{C}_T]^{k}_{1}={\rm dec}^{k}_{\rm x}[-]$ for any $x\in\{{\rm p},{\rm c},{\rm d}\}$.
\item $[\mathfrak{C}_T]^{k}_{\omega}={\rm dec}^{k}_y{\rm dec}^{\omega}_{x}[\Pi^0_1]={\rm dec}^{k}_y{\rm dec}^{\omega}_{x}[\Delta^0_2]={\rm dec}^{k}_y{\rm dec}^{\omega}_{\rm c}[\Sigma^0_2]$ for any $x,y\in\{{\rm p},{\rm c},{\rm d}\}$.
\item $[\mathfrak{C}_T]^{\omega}_{1}={\rm dec}^{\omega}_{\rm x}[-]$ for any $x\in\{{\rm p},{\rm c},{\rm d}\}$.
\end{enumerate}
\end{theorem}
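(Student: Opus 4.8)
The plan is to prove each of the six equivalences by exhibiting, in each direction, exactly the data the other side requires, organising the work so that (4) and (6) come first (they are essentially immediate), then (1) and (3) (which share one mechanism), and finally (2) and (5). For (4) and (6) the key observation is that with $\Lambda$ the class of all subsets the pointclass constraint vanishes, so ${\rm dec}^k_{\rm p}[-]$, ${\rm dec}^k_{\rm c}[-]$ and ${\rm dec}^k_{\rm d}[-]$ coincide: a $d$-layer is a partition, a partition a cover, and any cover $\{Q_i\}$ refines to a $d$-layer $Q^*_i=\bigcup_{j\le i}Q_j$ without changing the restrictions; the same holds with $\omega$ in place of $k$. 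If $\Gamma$ is $(k,1)$-computable via $\Phi_{e_0},\dots,\Phi_{e_{k-1}}$, set $Q_i=\{g\in{\rm dom}(\Gamma):\Phi_{e_i}(g)=\Gamma(g)$ and $\Phi_{e_j}(g)\ne\Gamma(g)$ for $j<i\}$; this is a partition with $\Gamma\res Q_i=\Phi_{e_i}\res Q_i$ uniformly computable, so $\Gamma\in{\rm dec}^k_{\rm p}[-]$, and conversely reading the $e_i$ off any such decomposition recovers $(k,1)$-computability. Item (6) is the same argument with $\omega$ in place of $k$, using Proposition~\ref{prop:1-2:func}(5) to identify $(\omega,1)$-computability with nonuniform computability and enumerating all Turing functionals as the countable family.

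For (1) and (3) the engine is Proposition~\ref{prop:1-2:characterization}: a $(1,\alpha)$-computable $\Gamma$ is carried by a total computable $\psi:\nn^{<\nn}\to\nn^{<\nn}$ with $\lim_n\psi(g\res n)=\Gamma(g)$ and fewer than $\alpha$ non-monotone steps $n$ with $\psi(g\res n+1)\not\supseteq\psi(g\res n)$. Put $Q^*_i=\{g:$ at most $i$ non-monotone steps occur along $g\}$; each $Q^*_i$ is $\Pi^0_1$ uniformly, they increase, and they cover ${\rm dom}(\Gamma)$ ($i<k$ for $\alpha=k$, $i\in\nn$ for $\alpha=\omega$). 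On the $d$-layer piece $Q_i=Q^*_i\setminus Q^*_{i-1}$ the tail of $\psi$ after the $i$-th non-monotone step is monotone with union $\Gamma(g)$, so $\Gamma\res Q_i$ is computed by outputting $\psi(g\res n)$ at the first $n$ past that step at which the requested coordinate converges; this is partial computable uniformly in $i$, giving $\Gamma\in{\rm dec}^k_{\rm d}[\Pi^0_1]$, resp.\ ${\rm dec}^\omega_{\rm d}[\Pi^0_1]$. For the converse, from a $\Pi^0_1$ $d$-layer (resp.\ countable $\Pi^0_1$ partition) with computable trees $T_i$ and restrictions $\Phi_{e_i}$, build a learner whose hypothesis at stage $n$ is $e_i$ for the least $i$ with $g\res n\in T_i$: since each $T_i$ is a tree this index is non-decreasing in $n$, hence the learner makes at most as many mind changes as the true index of $g$ ($<k$, resp.\ finitely many) and stabilises at the correct $e_i$. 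Finally one folds in the routine pointclass facts that at the $\omega$-level ${\rm dec}^\omega_x[\Pi^0_1]={\rm dec}^\omega_x[\Delta^0_2]={\rm dec}^\omega_{\rm c}[\Sigma^0_2]$ for all $x\in\{{\rm p},{\rm c},{\rm d}\}$ (a $\Sigma^0_2$ set is an increasing countable union of $\Pi^0_1$ sets, a finite union of $\Pi^0_1$ sets is $\Pi^0_1$, and $(\Pi^0_1)_2$ pieces split into countably many $\Pi^0_1$ pieces), turning the $d$-layer form of (3) into all the stated forms.

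For (2), the inclusion ${\rm dec}^k_{\rm c}[\Sigma^0_2]\subseteq[\mathfrak{C}_T]^1_{\omega|k}$ is clean: write each of the $k$ cover sets as $\bigcup_s C^j_s$ with $C^j_s$ $\Pi^0_1$ and increasing in $s$, and run a learner whose hypothesis at stage $t$ is a fixed index for the restriction function $\Theta_j$ attached to the $j$ occurring in the lexicographically least pair $(s,j)$ with $s\le t$ and $g\res t$ still in the tree for $C^j_s$; this uses only the $k$ fixed indices $\Theta_0,\dots,\Theta_{k-1}$ (at most $k$ errors) and converges to $\Theta_j$ for the piece containing $g$. In the other direction a learner with at most $k$ distinct hypotheses settles on one of them, so partitioning ${\rm dom}(\Gamma)$ by which of its $\le k$ distinct hypotheses is the limit yields $k$ pieces on each of which $\Gamma(g)=\Phi_{d(g)}(g)$ for the partially computable ``$d(g)=$ that hypothesis'', hence computable; membership in each piece is decidable in the limit along ${\rm dom}(\Gamma)$, which upgrades the cover to a $\Delta^0_2$ partition. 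For (5), given $\Gamma$ that is $(k,\omega)$-computable via learners $\Psi_0,\dots,\Psi_{k-1}$, partition ${\rm dom}(\Gamma)$ by the least $j$ whose learner succeeds; $\Gamma$ restricted to the $j$-th piece is $(1,\omega)$-computable via $\Psi_j$, hence lies in ${\rm dec}^\omega_x[\Pi^0_1]$ by item (3), so $\Gamma\in{\rm dec}^k_{\rm p}{\rm dec}^\omega_x[\Pi^0_1]$; conversely the learners for the $k$ pieces furnished by item (3) form a $k$-member team. The remaining choices of $x,y\in\{{\rm p},{\rm c},{\rm d}\}$ in (2) and (5) are matched by the same pointclass normal-form lemmas used in (3).

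The main obstacle is precisely this pointclass bookkeeping, and it is sharpest in item (2): unlike the $\omega$-level, a bounded $\Sigma^0_2$ cover cannot in general be split into $\Pi^0_1$ pieces without risking more than $k$ pieces, so the honest content of (2) is the normal form ${\rm dec}^k_{\rm p}[\Delta^0_2]={\rm dec}^k_{\rm c}[\Sigma^0_2]$ — i.e.\ converting the $\le k$-hypothesis learner into a genuine uniform $\Delta^0_2$ partition rather than merely a $\Sigma^0_2$ cover — which needs care, exploiting that the piece ``the learner's limit is its $i$-th hypothesis'' is $\Delta^0_2$ along the domain together with the finiteness of the relevant search forced by the bound $k$. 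A secondary point not to be skipped in the forward directions of (1), (3) and (5) is the verification that $\Gamma$ restricted to each piece is genuinely computable, not merely limit-computable; this is exactly what the monotone-tail-of-$\psi$ device secures.
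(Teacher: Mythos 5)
Your proof is correct and follows essentially the same route as the paper's: mind-change-count layers give the $\Pi^0_1$ $d$-layers in (1) and (3), order-of-occurrence reindexing of the learner's hypotheses gives the $\Delta^0_2$ pieces in (2), least-index-consistent-with-the-tree learners handle the converse directions, and (4)--(6) reduce to the definitions and to composing (3) with (4). The only cosmetic differences are that you route (1)/(3) through Proposition \ref{prop:1-2:characterization} instead of working with ${\tt mcl}_\Psi$ directly, and that your lexicographically-least-pair learner for ${\rm dec}^{k}_{\rm c}[\Sigma^0_2]\subseteq[\mathfrak{C}_T]^1_{\omega|k}$ is a slightly more careful variant of the paper's $\Psi(\sigma)=e(\min(\{m:R(m,\sigma)\}\cup\{k-1\}))$.
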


\begin{proof}\upshape
(1) Let $\Psi:\nn^{<\nn}\to\nn$ be a learner witnessing $\Gamma\in[\mathfrak{C}_T]^1_k$.
Then for each $m<k$, let ${\tt mc}_\Psi(\leq m)$ denote the set of all $g\in\nn^\nn$ such that $\#{\tt mcl}_\Psi(g)\leq m$.
The sets ${\tt mc}_\Psi(<m)$ and ${\tt mc}_\Psi(=m)$ are also defined by the same manner.
Then, it is easy to check that ${\tt mc}_\Psi(\leq m)$ and ${\tt mc}_\Psi(<m)$ are $\Pi^0_1$.
For each $m<k$, consider the following computable procedure $\Phi_{e(m)}$: given $g\in{\tt mc}_\Psi(=m)$, look for the least $n\in\nn$ such that $[g\res n]$ is included in the open set ${\tt mc}_\Psi(\geq m)$, and then return $\Phi_{\Psi(g\res n)}(g)$.
It is not hard to see that $\Gamma$ is decomposable into $k$ many computable functions $\{\Phi_{e(m)}\}_{m<k}$ with $\Pi^0_1$ $d$-layered domains $\{{\tt mc}_\Psi(=m)\}_{m<k}$.

Conversely, assume that $\Gamma\in{\rm dec}^{k}_{\rm d}[\Pi^0_1]$ is given.
Then, $\Gamma$ is decomposed into computable functions $\{\Phi_{e(m)}\}_{m<k}$ with $d$-layered domains $\{Q_m\setminus Q_{m-1}\}_{m<k}$, where $\{Q_m\}_{m< k}$ computable increasing sequence $\{Q_m\}_{m< k}$ of $\Pi^0_1$ sets with $Q_{-1}=\emptyset$.
For each $\sigma\in\nn^{<\nn}$, we compute the least $i(\sigma)$ such that $\sigma\in T_{Q_{i(\sigma)}}$, i.e., $\sigma\in T_{Q_{i(\sigma)}}\setminus T_{Q_{i(\sigma)-1}}$.
Then, on $\sigma\in\nn^{<\nn}$, the learner $\Psi$ guesses $\Psi(\sigma)=e(i(\sigma))$.
By our assumption, for any $g\in{\rm dom}(\Gamma)$, we have $g\in Q_i$ for some $i\in\nn$.
Then, $\lim_n\Psi(g\res n)$ converges to the least $e(i)$ such that $g\in Q_i$.
Again, by our assumption, we have $\Phi_{\lim_n\Psi(g\res n)}(g)=\Phi_{e(i)}(g)=\Gamma(g)$ for any $g\in {\rm dom}(\Gamma)\cap(Q_i\setminus Q_{i-1})$.
Therefore, we have $\Gamma\in[\mathfrak{C}_T]^1_k$.

\medskip

(2) Let $\Psi:\nn^{<\nn}\to\nn$ be a learner witnessing $\Gamma\in[\mathfrak{C}_T]^1_{\omega|k}$.
We define ${\tt reindex}_\Psi:\nn^{<\nn}\to\nn$ reindexing $\Psi(\sigma)$ in order of occurrence.
Put ${\tt reindex}_\Psi(\lrangle{})=0$.
Fix $\sigma\in\nn^{<\nn}$, and assume that ${\tt reindex}_\Psi(\tau)$ has been already defined for each $\tau\subsetneq\sigma$.
If $\Psi(\sigma)=\Psi(\tau)$ for some $\tau\subsetneq\sigma$, then we set ${\tt reindex}_\Psi(\sigma)={\tt reindex}_\Psi(\tau)$ for such $\tau$.
If there is no such $\tau$, then we set ${\tt reindex}_\Psi(\sigma)=\max\{{\tt reindex}_\Psi(\tau):\tau\subsetneq\sigma\}+1$.
Our assumption $\Gamma\in[\mathfrak{C}_T]^1_{\omega|k}$ implies that for every $g\in{\rm dom}(\Gamma)$, ${\tt reindex}_\Psi(g)=\lim_n{\tt reindex}_\Psi(g\res n)$ converges to a value less than $k$.
Hence, $R_m=\{g\in\nn^{<\nn}:\lim_n{\tt reindex}_\Psi(g\res n)=m\}$ is $\Delta^0_2$ in ${\rm dom}(\Gamma)$ uniformly in $m<k$.
For each $m<k$, consider the following computable procedure $\Phi_{e(m)}$: given $g\in R_m$, look for the least $n\in\nn$ such that ${\tt reindex}_\Psi(g\res n)=m$, and then return $\Phi_{\Psi(g\res n)}(g)$.
It is not hard to see that $\Gamma$ is decomposable into $k$ many computable functions $\{\Phi_{e(m)}\}_{m<k}$ with $\Delta^0_2$ domains $\{R_m\}_{m<k}$.

Conversely, assume that $\Gamma\in{\rm dec}^{k}_{\rm c}[\Sigma^0_2]$ is given.
Then, $\Gamma$ is decomposed into computable functions $\{\Phi_{e(m)}\}_{m<k}$ with $\Sigma^0_2$ domains $\{Q_m\}_{m<k}$.
Then, there is a computable relation $R\subseteq\nn\times\nn^{<\nn}$ such that $Q_m=\{g\in{\rm dom}(\Gamma):(\exists s)(\forall t>s)\;R(m,g\res t)\}$ for every $m\in\nn$.
We set $\Psi(\sigma)=e(\min(\{m:R(m,\sigma)\}\cup\{k-1\}))$.
Since ${\rm dom}(\Gamma)$ is covered by $\{Q_m\}_{m<k}$, for any $g\in{\rm dom}(\Gamma)$,  $\lim_n\Psi(g\res n)$ converges to some value $e(m)$, where $g\in Q_m$.
Moreover, the definition of $\Psi$ ensures that $\#\{\Psi(\sigma):\sigma\in\nn^{<\nn}\}\leq k$.
Therefore, we have $\Gamma\in[\mathfrak{C}_T]^1_{\omega|k}$.

\medskip

(3)
It is straightforward to show the $[\mathfrak{C}_T]^1_{\omega}={\rm dec}^\omega_d[\Pi^0_1]$ by the similar argument used in proof of (1).
Here, we note that ${\rm dec}^\omega_p[\Pi^0_1]={\rm dec}^\omega_c[\Sigma^0_2]$ as mentioned above.

\medskip

(4) It is obvious from the definition.

\medskip

(5) Combine (3) and (4).

\medskip

(6) It is obvious from the definition.
\end{proof}

\begin{table}\caption{Seven Classes of Nonuniformly Computable Functions}\label{stable}%
\begin{center}
\begin{tabular}{ccc}\toprule
$[\mathfrak{C}_T]^1_{<\omega}$ & ${\rm dec}^{<\omega}_{\rm d}[\Pi^0_1]$ & finite $(\Pi^0_1)_2$-piecewise computable \\
$[\mathfrak{C}_T]^1_{\omega|<\omega}$ & ${\rm dec}^{<\omega}_{\rm p}[\Delta^0_2]$ & finite $\Delta^0_2$-piecewise computable \\
$[\mathfrak{C}_T]^1_\omega$ & ${\rm dec}^{\omega}_{\rm p}[\Pi^0_1]$ & $\Pi^0_1$-piecewise computable \\
$[\mathfrak{C}_T]^{<\omega}_1$ & ${\rm dec}^{<\omega}_{\rm p}[-]$ & finite piecewise computable \\
$[\mathfrak{C}_T]^{<\omega}_\omega$ & ${\rm dec}^{<\omega}_{\rm p}{\rm dec}^{\omega}_{\rm p}[\Pi^0_1]$ & finite piecewise $\Pi^0_1$-piecewise computable \\
$[\mathfrak{C}_T]^\omega_1$ & ${\rm dec}^{\omega}_{\rm p}[-]$ & countably computable \\
\bottomrule
\end{tabular}
\end{center}
\end{table}

\begin{prop}\label{prop:5:refref1}
Let $P$ and $Q$ be subsets of $\nn^\nn$, where $P$ is $\Pi^0_n$ for $n\geq 2$.
Let $k$ be any finite number.
\begin{enumerate}
\item There is $\Gamma:Q\to P$ with $\Gamma\in[\mathfrak{C}_{T}]^{k}_1$ if and only if there is $\Gamma:Q\to P$ with $\Gamma\in{\rm dec}^k_{\rm d}[\Pi^0_n]$.
\item There is $\Gamma:Q\to P$ with $\Gamma\in[\mathfrak{C}_{T}]^{<\omega}_{\omega}$ if and only if there is $\Gamma:Q\to P$ with $\Gamma\in{\rm dec}^{<\omega}_{\rm d}[\Pi^0_n]{\rm dec}_{\rm p}^{\omega}[\Pi^0_1]$.
\item There is $\Gamma:Q\to P$ with $\Gamma\in[\mathfrak{C}_{T}]^{\omega}_1$ if and only if there is $\Gamma:Q\to P$ with $\Gamma\in{\rm dec}^\omega_{\rm d}[\Pi^0_n]$.
\end{enumerate}
Hence, $\mathcal{P}^{<\omega}_{1}=\mathcal{P}/{\rm dec}^{<\omega}_{\rm d}[\Pi^0_2]$, $\mathcal{P}^{<\omega}_{\omega}=\mathcal{P}/{\rm dec}^{<\omega}_{\rm d}[\Pi^0_2]{\rm dec}_{\rm p}^{\omega}[\Pi^0_1]$, and $\mathcal{P}^{\omega}_{1}=\mathcal{P}/{\rm dec}^\omega_{\rm d}[\Pi^0_2]$.
Here, recall from Definition \ref{def:1-2a:degree} that $\mathcal{P}/\mathcal{F}$ denotes the $\mathcal{F}$-degree structure of nonempty $\Pi^0_1$ subsets of Cantor space.
\end{prop}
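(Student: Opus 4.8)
The plan is to establish, in each of (1)--(3), the left-to-right implication directly, while the right-to-left implication comes for free: a $\Pi^0_n$ $d$-layer is a fortiori a $d$-layer over the pointclass of \emph{all} subsets, and over that pointclass $d$-layers and partitions are interchangeable (a partition $\{Q_i\}$ corresponds to the layer $\{\bigcup_{j\le i}Q_j\}$), so ${\rm dec}^k_{\rm d}[\Pi^0_n]\subseteq{\rm dec}^k_{\rm p}[-]$, ${\rm dec}^\omega_{\rm d}[\Pi^0_n]\subseteq{\rm dec}^\omega_{\rm p}[-]$, and ${\rm dec}^{<\omega}_{\rm d}[\Pi^0_n]{\rm dec}^\omega_{\rm p}[\Pi^0_1]\subseteq{\rm dec}^{<\omega}_{\rm p}{\rm dec}^\omega_{\rm p}[\Pi^0_1]$; by Theorem~\ref{thm:5:red-eq-dis}(4),(6),(5) these three classes equal $[\mathfrak{C}_T]^k_1$, $[\mathfrak{C}_T]^\omega_1$, and $[\mathfrak{C}_T]^{<\omega}_\omega$, so the same $\Gamma$ already witnesses the other side. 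The concluding identities of degree structures then follow by applying (1)--(3) with $n=2$ to nonempty $\Pi^0_1$ subsets of $2^\nn$, which are $\Pi^0_2$.

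The engine for the left-to-right direction is the uniform bound: \emph{for every index $e$ and every $\Pi^0_n$ set $P\subseteq\nn^\nn$ with $n\ge 2$, the predicate ``$\Phi_e(g)$ is total and $\Phi_e(g)\in P$'' is $\Pi^0_n$, uniformly in $e$ and an index of $P$}---one conjoins the $\Pi^0_2$ statement ``$\Phi_e(g)$ is total'' with the $\Pi^0_n$ statement obtained from $h\in P$ by replacing each oracle query ``$h(j)=v$'' with the $\Sigma^0_1$ predicate $\exists s\,\Phi_e(g\res s;j)=v$ and absorbing the resulting $\Sigma^0_1$'s (possible because $n-1\ge 1$). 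Given this, for (1) and (3): take $\Gamma:Q\to P$ in $[\mathfrak{C}_T]^k_1={\rm dec}^k_{\rm p}[-]$ (resp.\ $[\mathfrak{C}_T]^\omega_1={\rm dec}^\omega_{\rm p}[-]$), witnessed by a partition $\{Q_i\}_{i\in I}$ of $Q$ ($I=k$, resp.\ $I=\nn$) and a computable $i\mapsto c_i$ with $\Gamma\res Q_i=\Phi_{c_i}\res Q_i$. Put $H_i=\{g:\Phi_{c_i}(g)\in P\}$, a uniformly $\Pi^0_n$ family; since each $g\in Q$ lies in some $Q_i$ with $\Phi_{c_i}(g)=\Gamma(g)\in P$, we have $Q\subseteq\bigcup_iH_i$, so $D^*_j:=Q\cap\bigcup_{i\le j}H_i$ is a uniform $\Pi^0_n$ layer of $Q$ with $D^*_{k-1}=Q$ (resp.\ $\bigcup_jD^*_j=Q$). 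Then $\Gamma'(g):=\Phi_{c_{j(g)}}(g)$ with $j(g)=\min\{j:g\in H_j\}$ maps $Q$ into $P$ and equals the computable $\Phi_{c_j}$ on the $d$-layer piece $D^*_j\setminus D^*_{j-1}$, so $\Gamma'\in{\rm dec}^k_{\rm d}[\Pi^0_n]$ (resp.\ ${\rm dec}^\omega_{\rm d}[\Pi^0_n]$).

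For (2) the strategy repeats with a team of learners. Take $\Gamma:Q\to P$ in $[\mathfrak{C}_T]^{<\omega}_\omega=\bigcup_k[\mathfrak{C}_T]^k_\omega$; fix $k$ and learners $\Psi_0,\dots,\Psi_{k-1}$ witnessing $(k,\omega)$-computability (Definition~\ref{def:1-2:nonunif_bas}), and put
\[
G_e=\{g:\lim_s\Psi_e(g\res s)\text{ converges, and for that limit }p\text{ we have }\Phi_p(g)\text{ total and }\Phi_p(g)\in P\}.
\]
Here ``$\lim_s\Psi_e(g\res s)$ converges'' is $\Sigma^0_2$, while ``for the limit $p$, $\Phi_p(g)\in P$'' is $\forall p[\lim_s\Psi_e(g\res s)\ne p\ \vee\ \Phi_p(g)\in P]$, i.e.\ $\forall p[\Pi^0_2\vee\Pi^0_n]=\Pi^0_n$ by the bound above; hence $G_e$ is $\Pi^0_n$ when $n\ge 3$ and lies in the difference class $(\Pi^0_n)_2$ when $n=2$. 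In either case $Q\subseteq\bigcup_{e<k}G_e$, because for $g\in Q$ some $\Psi_e$ converges to a $p$ with $\Phi_p(g)=\Gamma(g)\in P$. For $n\ge 3$, $D^*_j:=Q\cap\bigcup_{e\le j}G_e$ is a uniform $\Pi^0_n$ layer of $Q$ whose $j$-th $d$-layer piece lies inside $G_j$; setting $\Gamma'(g)=\Phi_{\lim_s\Psi_j(g\res s)}(g)$ there produces $\Gamma':Q\to P$ that is learnable (via the single learner $\Psi_j$) on each piece, so $\Gamma'\in{\rm dec}^{<\omega}_{\rm d}[\Pi^0_n]{\rm dec}^\omega_{\rm p}[\Pi^0_1]$.

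The delicate case is $n=2$ in (2): then the $G_e$ are only $(\Pi^0_2)_2$, so their partial unions need not be $\Pi^0_2$ and the $k$-piece layer above is unavailable. I would reorganize the finite cover $\{Q\cap G_e\}_{e<k}$ of $Q$ into a finite $\Pi^0_2$ $d$-layer by recursion on the number of active learners: writing $G_e=U_e\setminus V_e$ with $U_e,V_e\in\Pi^0_2$, to $d$-layer a $\Pi^0_2$-in-$Q$ set $Q'$ above a $\Pi^0_2$-in-$Q$ floor $F\subseteq Q'$ with $Q'\setminus F\subseteq\bigcup_{e\in E}G_e$, pick $e_0\in E$, recursively $d$-layer $F\cup(Q'\cap U_{e_0}\cap V_{e_0})$ above $F$ using $E\setminus\{e_0\}$ (the newly added points lie in $V_{e_0}$, hence outside $G_{e_0}$), then append the single piece $Q'\cap G_{e_0}=(Q'\cap U_{e_0})\setminus(Q'\cap U_{e_0}\cap V_{e_0})$ served by $\Psi_{e_0}$, and then recursively $d$-layer $Q'$ above the new floor $Q'\cap U_{e_0}$ using $E\setminus\{e_0\}$ (the remaining points lie outside $U_{e_0}\supseteq G_{e_0}$). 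Every layer set is a finite union of $\Pi^0_2$-in-$Q$ sets, the recursion halts because $|E|$ strictly decreases, and each resulting piece sits inside some $G_e$; defining $\Gamma'$ piecewise by the assigned learners then finishes the proof. The main obstacle is exactly this $n=2$ reorganization: keeping all layer sets $\Pi^0_2$ while producing only finitely many pieces, each served by a single learner.
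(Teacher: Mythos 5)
Your proposal is correct. For items (1) and (3) it is essentially the paper's own argument: the paper likewise observes that ``$\Phi_d(g)$ converges totally and lands in $P$'' is uniformly $\Pi^0_n$ for $n\geq 2$, and takes the partial unions $Q_e=\bigcup_{d\leq e}(D_d\cap\Phi_d^{-1}[P])$ as the $\Pi^0_n$ layer, reading off the $d$-layer pieces exactly as you do. Where you genuinely diverge is item (2). The paper first passes to the static normal form $[\mathfrak{C}_T]^{<\omega}_\omega={\rm dec}^{<\omega}_{\rm p}{\rm dec}^{\omega}_{\rm p}[\Pi^0_1]$ of Theorem \ref{thm:5:red-eq-dis}(5), so that every candidate algorithm $\Gamma^i_n$ lives on a $\Pi^0_1$ piece $Q^i_n$; the failure set $E^*_1=\bigcup_n\bigl(Q^0_n\cap(\Gamma^0_n)^{-1}[\nn^\nn\setminus P]\bigr)$ is then a countable union of ($\Pi^0_1$ piece) $\cap$ ($\Sigma^0_n$ failure) sets, hence $\Sigma^0_n$ already when $n=2$, and a two-piece $\Pi^0_n$ $d$-layer $\{E^*_0,E^*_1\}$ suffices (the paper only writes out the two-learner case). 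You instead keep the learners dynamic, which is why your success sets $G_e$ degrade to $(\Pi^0_2)_2$ at $n=2$, and you repair this with the recursive interleaving of the $U_e\setminus V_e$ into a $(2^k-1)$-piece $\Pi^0_2$ $d$-layer. That interleaving does work: each appended difference $(Q'\cap U_{e_0})\setminus F'$ lands inside $G_{e_0}$ precisely because the preceding recursive call has already absorbed $Q'\cap U_{e_0}\cap V_{e_0}$ into the floor, the layer sets remain $\Pi^0_2$ as finite unions and intersections of $\Pi^0_2$ sets, and the recursion terminates since $|E|$ decreases. The trade-off is clear: the paper's detour through the $\Pi^0_1$-piecewise characterization is what makes the failure set drop from $(\Pi^0_2)_2$ to $\Sigma^0_2$ and keeps the $d$-layer short, while your route avoids that characterization entirely, treats arbitrary team size $k$ in one stroke, and is self-contained at the cost of exponentially many (but still finitely many, which is all the statement needs) layer pieces.
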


\begin{proof}\upshape
For (3), we assume that $P\leq^{\omega}_1Q$.
Every partial computable function $\Phi_e$ can be assumed to have a $\Pi^0_2$ domain $D_e$.
Then, $Q_e=\bigcup_{d\leq e}(D_d\cap\Phi_d^{-1}[P])$ is $\Pi^0_n$, and $\{Q_e\}_{e\in\nn}$ forms a $\Pi^0_n$ layer.
Moreover, it is not hard to see that $\Phi_e$ maps every element of $Q_e\setminus Q_{e-1}$ into $P$.

For (2), we assume that $P\leq^{<\omega}_\omega Q$ is witnessed by two functions $\Gamma\in{\rm dec}^2_{\rm p}{\rm dec}^\omega_{\rm p}[\Pi^0_1]$ by Theorem \ref{thm:5:red-eq-dis}.
Then there is a collection of partial computable functions $\{\Gamma^i_n\}_{i<2,n\in\nn}$ and a partition $\{E_i\}_{i<2}$ of $Q$ and collections $\{Q^i_n\}_{n\in\nn}$ of pairwise disjoint $\Pi^0_1$ sets that covers $E_i$ and $\Gamma$ agrees with $\Gamma^i_n$ on the domain $E_i\cap Q^i_n$ for every $i<2$ and $n\in\nn$.
Then, $E^*_1=\bigcup_{n\in\nn}(Q^0_n\cap(\Gamma^0_n)^{-1}[\nn^\nn\setminus P])$ is $\Sigma^0_n$ and included in $E_1$.
Thus, $\{E^*_0,E^*_1\}$ forms a $\Pi^0_n$ $d$-layer, where $E^*_0=\nn^\nn\setminus E^*_1$.
It is not hard to see that $\Gamma$ agrees with $\Gamma^i_n$ on the domain $Q\cap E^*_i\cap Q^i_n$ for every $i<2$ and $n\in\nn$.
\end{proof}

\begin{remark}
It is not hard to see that ${\rm dec}_{\rm p}^{<\omega}[\Pi^0_1]$ is exactly the class of all partial computable functions, because, given a finite $\Pi^0_1$ partition $\{Q_i\}_{i<k}$ and $g\in{\rm dom}(\Gamma)$, we can effectively find the unique piece containing $g$. 
\end{remark}

\section{Strange Set Constructions}

\subsection{Medvedev's Semantics for Intuitionism}
To introduce useful set constructions, let us return back to Medvedev's original idea.
To formulate semantics for the intuitionistic propositional calculus ({\sf IPC}), Kolmogorov tried to interpret each proposition as a problem.
Medvedev \cite{Med} formalized his idea by interpreting each proposition $p$ as a mass problem $\bhk{p}\subseteq\nn^\nn$.
Under the interpretation:
\begin{enumerate}
\item A {\em proof} $\pi$ is a dynamical process represented by an infinite sequence of natural numbers, i.e., $\pi\in\nn^\nn$.
\item $\bhk{p}$ is the set of all proofs of a proposition $p$, i.e., $\bhk{p}\subseteq\nn^\nn$.
\item A proposition $p$ is {\em provable} if $p$ has a computable proof, i.e., $\bhk{p}\subseteq\nn^\nn$ contains a computable element.
\end{enumerate}
To prove the disjunction $p_0\vee p_1$, we need to algorithmically decide which part is valid, i.e., we first declare one part to be valid and then construct a witness for this part.
Consequently, $p_0\vee p_1$ is provable under that interpretation if and only if we can algorithmically construct an element of $\bhk{p_0\vee p_1}=\bhk{p_0}\linf\bhk{p_1}=\{\lrangle{i}\fr f:i<2\;\&\;f\in\bhk{p_i}\}$.
Generally, let {\sf Form} denote the all propositional formulas.
Medvedev's idea is defining a mass-problem-interpretation of {\sf IPC} by a function $\bhk{\cdot}:{\sf Form}\to\mathcal{P}(\nn^\nn)$ as in Definition \ref{def:1-2:Med-interpre}.

\begin{definition}\label{def:1-2:Med-interpre}
\index{Medvedev!interpretation}%
We say that a function $\bhk{\cdot}:{\sf Form}\to\mathcal{P}(\nn^\nn)$ is {\em a Medvedev interpretation} if it satisfies the following six conditions.
\begin{enumerate}
\item $\bhk{\top}$ contains a computable element.
\item $\bhk{\bot}=\emptyset$.
\item $\bhk{\varphi\wedge\psi}=\bhk{\varphi}\lsup\bhk{\psi}=\{f\oplus g:f\in\bhk{\varphi}\;\&\;g\in\bhk{\psi}\}$.
\item $\bhk{\varphi\vee\psi}=\bhk{\varphi}\linf\bhk{\psi}=\{\lrangle{0}\fr f:f\in\bhk{\varphi}\}\cup\{\lrangle{1}\fr g:g\in\bhk{\psi}\}$.
\item $\bhk{\varphi\rightarrow\psi}=\bhk{\varphi}\!\boldsymbol{\rightarrow}\!\bhk{\psi}=\{e\fr g\mid\Phi_e(g\oplus *):\bhk{\varphi}\to\bhk{\psi}\}$.
\item $\bhk{\neg\varphi}=\bhk{\varphi\rightarrow\bot}$.
\end{enumerate}
Here, $\Phi(g\oplus *)$ denotes the partial function $\lambda f.\Phi(g\oplus f):\subseteq\nn^\nn\to\nn^\nn$, and recall that $\Phi_e$ is the $e$-th partial computable function on $\nn^\nn$.
Arithmetical quantifications can also be interpreted as follows.
\begin{enumerate}
\item[7.] $\bhk{\exists n\varphi(n)}=\bigoplus_{n\in\nn}\bhk{\varphi(n)}$.
\item[8.] $\bhk{\forall n\varphi(n)}=\bigotimes_{n\in\nn}\bhk{\varphi(n)}$.
\end{enumerate}
\end{definition}

As mentioned in Section \ref{sec:1-2a:deg_brow}, Medvedev \cite{Med} showed that the quotient algebra $\mathcal{D}^1_1$ called the Medvedev lattice is Brouwerian under Medvedev's interpretation (Definition \ref{def:1-2:Med-interpre}).
Following him, Muchnik \cite{Muc} showed that $\mathcal{D}^\omega_1$ called the Muchnik lattice is Brouwerian.
Usually, the Medvedev reducibility is written as $\leq_M$ or $\leq_s$ rather than $\leq^1_1$, and the Muchnik reducibility is written by $\leq_w$ rather than $\leq^\omega_1$.
\index{Medvedev!reducibility}\index{$\leq_M$}\index{$\leq_s$}\index{$\leq_w$}%

\begin{remark}~
\begin{enumerate}
\item 
\index{logic!Jankov}\index{excluded middle!weak law of}\index{logic!de Morgan}%
Both of the Medvedev lattice $\mathcal{D}^1_1$ and the Muchnik lattice $\mathcal{D}^\omega_1$ provide sound and complete semantics for {\em Jankov's Logic} ${\sf KC}={\sf IPC}+\neg p\vee\neg\neg p$, the intuitionistic propositional logic with {\em the weak law of excluded middle}, which is also called {\em De Morgan logic}.
The Medvedev lattice and the Muchnik lattice are extensively studied from the aspect of Intermediate Logic.
See Sorbi-Terwijn \cite{SoTe} and Hinman \cite{Hin}.
\item 
\index{${\sf RCA}$}\index{${\sf WKL}$}%
Forty years after the pioneering work by Muchnik, the Muchnik reducibility become useful in the context of Reverse Mathematics (see Simpson \cite{SimRM}).
The reason is that the Muchnik reducibility $\leq^\omega_1$ is strongly associated with the provability relation in ${\sf RCA}$, {\em the recursive comprehension axiom}.
Then, the Muchnik degrees of $\Pi^0_1$ subsets of $2^\nn$ might be seen as instances of ${\sf WKL}$, {\em the weak K\"onig's lemma}.
For example, by using a result of Binns and Simpson \cite{BS} for the Muchnik degrees of $\Pi^0_1$ subsets of $2^\nn$, Mummert \cite{Mum} obtains an embedding theorem about the Lindenbaum algebra between ${\sf RCA}_0$ and ${\sf WKL}_0$.
\item For more basic results about the Medvedev and Muchnik degrees of $\Pi^0_1$ subsets of $2^\nn$, see Simpson \cite{Sim,Sim2,Sim5,Simta}.
There are lots of research on the algebraic structure of the Medvedev degrees of $\Pi^0_1$ subsets of $2^\nn$, such as density \cite{CH1}, embeddability of distributive lattices \cite{BS}, join-reducibility \cite{Bin}, meet-irreducibility \cite{Al1}, noncuppability \cite{CKWW}, decidability \cite{CK}, and undecidability \cite{Sha}.
The structure of Weihrauch degrees, an extension of the Medvedev degrees, has also been widely studied as a computable-analysistic approach to (Constructive) Reverse Mathematics (see \cite{BMP,BGa,BG}).
\end{enumerate}
\end{remark}

\subsection{Disjunction Operations Based on Learning Theory}

Hayashi \cite{Hay0,Hay} introduced {\em Limit Computable Mathematics} ({\sf LCM}), an extended constructive mathematics based on {\em Learning Theory}.
Like the BHK-interpretation for intuitionistic logic, there is a {\em limit-BHK interpratation} for Limit Computable Mathematics.
We introduce three mass-problem-interpretations $\bhk{\cdot}_{\sf LCM}^i:{\sf Form}\to\mathcal{P}(\nn^\nn)$ of {\sf LCM} based on the limit-BHK interpretation.
To formulate a mass-problem-style interpretation of {\sf LCM}, imagine the following {\em dynamic} proof models.

\medskip

\noindent
\index{model!one-tape}%
{\bf The one-tape model} is defined as follows:
When a verifier $\Psi$ tries to prove that ``$P_0$ or $P_1$'', a tape $\Lambda$ is given.
At each stage, $\Psi$ declares $0$ or $1$, and writes one letter on the tape $\Lambda$.
\begin{itemize}
\item {\bf Intuitionism}: $\Psi$ does not change his declaration, say $i\in\{0,1\}$, and the infinite word written on the tape $\Lambda$ witnesses the validity of $P_i$.
\item {\bf LCM}: the sequence of declarations of $\Psi$ converges, say $i\in\{0,1\}$, and the infinite word written on the tape $\Lambda$ witnesses the validity of $P_i$.
\item {\bf Classical}: any declaration of $\Psi$ is nonsense, and the infinite word written on the tape $\Lambda$ witnesses the validity of $P_0$ or $P_1$.
\end{itemize}

\noindent
\index{model!two-tape}%
{\bf The two-tape model} is follows:
When a verifier $\Psi$ tries to prove ``$P_0$ or $P_1$'', two tapes $\Lambda_0$ and $\Lambda_1$ are given.
At each stage, $\Psi$ declares $0$ or $1$, say $i$, and he writes one letter on the tape $\Lambda_i$.
\begin{itemize}
\item {\bf Intuitionism}: For either $i<2$, the word written on $\Lambda_{1-i}$ is empty, and the infinite word written on $\Lambda_i$ witnesses the validity of $P_i$.
\item {\bf LCM}: For either $i<2$, the word written on $\Lambda_{1-i}$ is finite, and the infinite word written on $\Lambda_i$ witnesses the validity of $P_i$.
\item {\bf Classical}: For either $i<2$, the infinite word written on $\Lambda_i$ witnesses the validity of $P_i$.
\end{itemize}

\noindent
\index{model!backtrack-tape}%
{\bf The backtrack-tape model} is follows:
When a verifier $\Psi$ tries to prove that ``$P_0$ or $P_1$'', a cell $\square$, and two infinite tapes $\Lambda,\Delta$ are given.
The cell $\square$ is called {\em the declaration}, $\Lambda$ is called {\em the working tape}, and $\Delta$ is called {\em the record tape}.
At each stage, the verifier $\Psi$ works as follows.
\begin{enumerate}
\item If no letter is written on the declaration $\square$, then $\Psi$ declares $0$ or $1$ and this is written on the declaration $\square$ and the record tape $\Delta$.
\item When some letter is written on the declaration $\square$, the verifier $\Psi$ chooses one letter $k$ from $\nn\cup\{\sharp\}$, and his choice $k$ is written on the record tape $\Delta$.
\begin{enumerate}
\item In the case $k\not=\sharp$, it expresses that $\Psi$ writes the letter $k$ on the working tape $\Lambda$.
\item In the case $k=\sharp$, it expresses that $\Psi$ erases all letters from the declaration $\square$ and the working tape $\Lambda$.
\end{enumerate}
\end{enumerate}
\begin{itemize}
\item {\bf Intuitionism}: $\Psi$ does not choose $\sharp$, hence he does not change his declaration, say $i$, and the infinite word written on the tape $\Lambda$ witnesses the validity of $P_i$.
\item {\bf LCM}: $\Psi$ chooses $\sharp$ at most finitely often, hence the sequence of declarations of $\Psi$ converges, say $i$, and the infinite word written on the tape $\Lambda$ witnesses the validity of $P_i$.
\item {\bf Classical}: No classical counterpart.
\end{itemize}

To give formal definitions of these dynamic proof models, we introduce some auxiliary definitions.

\begin{definition}[Notations for One/Two-Tape Models]
Let $I\subseteq\nn$ be a set of indices of working tapes.
A pair $\pair{x_0,x_1}\in I\times\nn$ indicates the instruction to write the letter $x_1\in\nn$ on the $x_0$-th tape.
Then every string $\sigma=\pair{i(t),n(t)}_{t<s}\in(I\times \nn)^{<\nn}$ can be think of as the {\em record} of the process that obeys the sequence of instructions $\pair{i(0),n(0)},\pair{i(1),n(1)},\dots,\pair{i(s-1),n(s-1)}$.
Fix $\sigma\in(I\times\nn)^{<\nn}$, and $i\in I$.
Then {\em the $i$-th projection of $\sigma$} is inductively defined as follows.
\index{${\tt pr}_i$}%
\begin{align*}
{\tt pr}_i(\lrangle{})=\lrangle{},& & {\tt pr}_i(\sigma)=
\begin{cases}
{\tt pr}_i(\sigma^-)\fr n, \mbox{ if } \sigma=\sigma^-\fr\lrangle{\pair{i,n}},\\
{\tt pr}_i(\sigma^-), \mbox{ otherwise.}
\end{cases}
\end{align*}
The string ${\tt pr}_i(\sigma)$ represents the word written on the $i$-th tape reconstructed from the record $\sigma$.
Moreover, {\em the number of times of mind-changes of (the process reconstructed from a record) $\sigma\in (I\times\nn)^{<\nn}$} is given by
\index{${\tt mc}$}%
\[{\tt mc}(\sigma)=\#\{n<|\sigma|-1:(\sigma(n))_0\not=(\sigma(n+1))_0\}.\]
Here, for $x=\pair{x_0,x_1}\in I\times\nn$, the first (second, resp.) coordinate $x_0$ ($x_1$, resp.) is denoted by $(x)_0$ ($(x)_1$, resp.).
Furthermore, for $f\in(I\times\nn)^{\nn}$, we define ${\tt pr}_i(f)=\bigcup_{n\in\nn}{\tt pr}_i(f\res n)$ for each $i\in I$, and ${\tt mc}(f)=\lim_n{\tt mc}(f\res n)$, where if the limit does not exist, we write ${\tt mc}(f)=\infty$.
\end{definition}

\begin{definition}[Notations for Backtrack-Tape Models]
For any set $X$ and string $\sigma\in X^{<\nn}$, {\em the $n$-th shift} $\sigma^{\shft n}$ is defined as $\sigma^{\shft n}(m)=\sigma(n+m)$ for each $m<|\sigma|-n$.
\index{N-th shift@$n$-th shift}\index{${\shft n}$}%
The {\em tail of $\sigma$} is defined by 
\index{tail}\index{${\tt tall}$}%
\[{\tt tail}(\sigma)=\sigma^{\shft n}\mbox{, for }n=\min\{m\in\nn:\sigma(k)\not=\sharp\mbox{ for all }k\geq m\}.\]
Intuitively, the symbol $\sharp$ indicates the instruction to erase all letters written on the working tape.
Hence, the string ${\tt tail}(\sigma)$ extracts the remaining data from the record $\sigma$ after the latest erasing.
Furthermore, for $f\in X^{\nn}$, we define $f^{\shft n}=\bigcup_{m\geq n}(f\res m)^{\shft n}$, and ${\tt tail}(f)=\lim_m{\tt tail}(f\res m)$ if the limit exists.
Here, note that $\lim_m{\tt tail}(f\res m)$ exists if and only if $f$ contains only finitely many $\sharp$'s.
\end{definition}

\begin{example}
We consider two functions $\sigma\in(2\times\nn)^{<\nn}$ and $\tau\in(\nn\cup\{\sharp\})^{<\nn}$.
\begin{enumerate}
\item If $\sigma=\lrangle{\pair{1,3},\pair{1,1},\pair{0,4},\pair{0,15},\pair{1,9},\pair{0,26},\pair{0,5}}$, then the projections of $\sigma$ are ${\tt pr}_0(\sigma)=\lrangle{4,15,26,5}$, and ${\tt pr}_1(\sigma)=\lrangle{3,1,9}$.
Moreover, ${\tt mc}(\sigma)=3$.
\item If $\tau=\lrangle{0,2,7,18,28,\sharp,1,8,2,8,45,9,\sharp,0,4,52,35,3,6}$, then the tail of $\tau$ is ${\tt tail}(\tau)=\tau^{\shft 13}=\lrangle{0,4,52,35,3,6}$.
\end{enumerate}
\end{example}

\begin{definition}[One-Tape Disjunctions]\label{def:1-2b:onetape}
\index{disjunction!one-tape}\index{$\bhk{P_0\vee P_1}_{\sf Int}^1$}%
\index{$\bhk{P_0\vee P_1}_{\sf LCM}^1$}\index{$\bhk{P_0\vee P_1}_{\sf CL}^1$}%
Let $P_0$ and $P_1$ be subsets of Baire space $\nn^\nn$.
\begin{enumerate}
\item $\bhk{P_0\vee P_1}_{\sf Int}^1=\bigcup_{i<2}(\{i^\nn\}\lsup P_i)$.
\item $\bhk{P_0\vee P_1}_{\sf LCM}^1=\bigcup_{i<2}(\{f\in 2^\nn:(\forall^\infty n)\;f(n)=i\}\lsup P_i)$.
\item $\bhk{P_0\vee P_1}_{\sf CL}^1=\bigcup_{i<2}(2^\nn\lsup P_i)$.
\end{enumerate}
Here, $i^\nn$ denotes the infinite sequence consisting of $i$'s, i.e., $i^\nn=\lrangle{i,i,i,\dots,i,i,i,\dots}$.
\end{definition}

\begin{definition}[Two-Tape Disjunctions]\label{def:1-2b:twotape}
\index{disjunction!two-tape}\index{$\bhk{P_0\vee P_1}_{\sf Int}^2$}%
\index{$\bhk{P_0\vee P_1}^2_{\sf LCM}$}\index{$\bhk{P_0\vee P_1}_{\sf CL}^2$}%
Let $P_0$ and $P_1$ be subsets of Baire space $\nn^\nn$.
\begin{enumerate}
\item $\bhk{P_0\vee P_1}_{\sf Int}^2=\{f\in(2\times \nn)^\nn:((\exists i<2)\;{\tt pr}_i(f)\in P_i)\;\&\;{\tt mc}(f)=0\}$.
\item $\bhk{P_0\vee P_1}^2_{\sf LCM}=\{f\in(2\times \nn)^\nn:((\exists i<2)\;{\tt pr}_i(f)\in P_i)\;\&\;{\tt mc}(f)<\infty\}$.
\item $\bhk{P_0\vee P_1}_{\sf CL}^2=\{f\in(2\times \nn)^\nn:(\exists i<2)\;{\tt pr}_i(f)\in P_i\}$.
\end{enumerate}
\end{definition}

\begin{definition}[Backtrack Disjunctions]\label{def:12b:backtrack}
\index{disjunction!backtrack}\index{$\bhk{P_0\vee P_1}_{\sf Int}^3$}\index{$\bhk{P_0\vee P_1}_{\sf LCM}^3$}%
Let $P_0$ and $P_1$ be subsets of Baire space $\nn^\nn$.
\begin{enumerate}
\item $\bhk{P_0\vee P_1}_{\sf Int}^3=\{f\in(\nn\cup\{\sharp\})^\nn:{\tt tail}(f)^{\shft 1}\in P_{{\tt tail}(f;0)}\;\&\;(\forall n)\;f(n)\not=\sharp\}$.
\item $\bhk{P_0\vee P_1}_{\sf LCM}^3=\{f\in(\nn\cup\{\sharp\})^\nn:{\tt tail}(f)^{\shft 1}\in P_{{\tt tail}(f;0)}\;\&\;(\forall^\infty n)\;f(n)\not=\sharp\}$.
\end{enumerate}
\end{definition}

In Definition \ref{def:12b:backtrack}, for example, the string $\tau=\lrangle{\sharp}\fr\lrangle{i}\fr\sigma$ represents the record that a verifier $\Psi$ erased all letters from tapes (this action is indicated by $\sharp$), declared that $P_i$ is valid, and wrote the word $\sigma$ on the working tape.
That is to say, ${\tt tail}(\tau;0)=i$ is the current declaration of the verifier and  ${\tt tail}(\tau)^{\shft 1}=\sigma$ is the current word written on the working tape.

\begin{remark}
Note that we always have to choose a new symbol $\sharp$ which has not been already used, since we may need to distinguish the new $\sharp$ from other symbols and other $\sharp$'s used in other disjunctions.
Formally, we can assume that all objects in our paper are elements of $\nn^\nn$, subsets of $\nn^\nn$, or (partial) functions on $\nn^\nn$ by setting $\lrceil{0}=\sharp$, $\lrceil{(n+1)}=n$, and $\lrceil{f}(n)=f(\lrceil{n})$ for every $n\in\nn$.
For instance, $\bhk{P_0\vee P_1}_{\sf LCM}^3$ is always interpreted as the set $\bhk{P_0\vee P_1}_{{\sf LCM}}^{3\bullet}$ of all $f\in\nn^\nn$ such that $\lrceil{f}\in\bhk{P_0\vee P_1}_{\sf LCM}^3$, and then $\bhk{Q\vee \bhk{P_0\vee P_1}_{{\sf LCM}}^3}_{{\sf LCM}}^{3}$ is interpreted as $\bhk{Q\vee \bhk{P_0\vee P_1}_{{\sf LCM}}^{3\bullet}}_{{\sf LCM}}^{3\bullet}$ of all $f\in\nn^\nn$ such that $\lrceil{f}\in\bhk{P_0\vee P_1}_{\sf LCM}^3$.
Then, note that outer $\sharp$'s are automatically distinguished from inner $\sharp$'s contained in $f\in\bhk{Q\vee \bhk{P_0\vee P_1}_{{\sf LCM}}^{3\bullet}}_{{\sf LCM}}^{3\bullet}$.
Hereafter, $\bhk{P_0\vee P_1}_{\sf LCM}^3$ is identified with $\bhk{P_0\vee P_1}_{{\sf LCM}}^{3\bullet}$.
\end{remark}

\begin{notation}
Hereafter, we frequently use the notation ${\tt write}(i,\sigma)$ for any $i\in\nn$ and $\sigma\in\nn^{<\nn}$.
\index{${\tt write}$}%
\[{\tt write}(i,\sigma)=i^{|\sigma|}\oplus\sigma=\lrangle{\pair{i,\sigma(0)},\pair{i,\sigma(1)},\pair{i,\sigma(2)},\dots,\pair{i,\sigma(|\sigma|-1)}}.\]
This string indicates the {\em instruction to write the string $\sigma$ on the $i$-th tape} in the one/two-tape model.
We also use the notation ${\tt write}(i,f)=\bigcup_{n\in\nn}{\tt write}(i,f\res n)=i^\nn\oplus f$ for any $f\in\nn^{\nn}$.
\end{notation}

\begin{prop}\label{prop:1:basic}
Let $P$ and $Q$ be subsets of Baire space $\nn^\nn$.
\begin{enumerate}
\item $\bhk{P\vee P}^1_X\equiv^1_1P$ for each $X\in\{{\sf Int},{\sf LCM},{\sf CL}\}$.
\item $\bhk{P\vee Q}^i_{\sf CL}\leq^1_1\bhk{P\vee Q}^i_{\sf LCM}\leq^1_1\bhk{P\vee Q}^i_{\sf Int}$ for each $i\in\{1,2,3\}$ (except for ${\sf CL}$ if $i=3$).
\item $\bhk{P\vee Q}^i_{X}\leq^1_1\bhk{P\vee Q}^j_{X}$ for each $j\leq i$ and $X\in\{{\sf Int},{\sf LCM},{\sf CL}\}$.
\item $P\oplus Q\equiv^1_1\bhk{P\vee Q}^i_{\sf Int}$ for each $i\in\{1,2,3\}$.
\item $P\cup Q\equiv^1_1\bhk{P\vee Q}^1_{\sf CL}$.
\end{enumerate}
\end{prop}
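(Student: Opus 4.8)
The plan is to exhibit in each case an explicit computable functional witnessing the reduction. Since $\leq^1_1$ is Medvedev reducibility, it suffices to produce a single partial computable $\Gamma$ carrying every member of one mass problem into the other, and several of the maps needed here turn out to be (coded) identities. Two elementary computable operations will recur: prepending a constant declaration sequence, and reading off the odd-indexed part $\lambda n.g(2n+1)$ of a sequence $g$ (which recovers $h$ when $g=f\oplus h$).

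For (1), $h\mapsto 0^\nn\oplus h$ lands in $\bhk{P\vee P}^1_X$ for every $X$ (the all-$0$ declaration is constant, hence eventually constant, hence in $2^\nn$), and conversely $g\mapsto\lambda n.g(2n+1)$ sends any $g\in\bhk{P\vee P}^1_X$ into $P$, since by Definition \ref{def:1-2b:onetape} the odd-indexed part of such a $g$ always lies in $P$; so $\bhk{P\vee P}^1_X\equiv^1_1 P$. For (2) the identity functional works, because inspecting Definitions \ref{def:1-2b:onetape}, \ref{def:1-2b:twotape} and \ref{def:12b:backtrack} shows $\bhk{P\vee Q}^i_{\sf Int}\subseteq\bhk{P\vee Q}^i_{\sf LCM}\subseteq\bhk{P\vee Q}^i_{\sf CL}$ (the last set omitted when $i=3$): the mind-change requirement relaxes from ${\tt mc}=0$ to ${\tt mc}<\infty$ to no condition, and the prohibition on the erase symbol $\sharp$ relaxes from ``never'' to ``only finitely often''. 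For (4), in each of $i=1,2,3$ the intuitionistic clause collapses to the coproduct $P\oplus Q=\lrangle{0}\fr P\cup\lrangle{1}\fr Q$: for $i=1$ one has $\bhk{P\vee Q}^1_{\sf Int}=(\{0^\nn\}\lsup P)\cup(\{1^\nn\}\lsup Q)$, reduced to the coproduct by $i^\nn\oplus h\mapsto\lrangle{i}\fr h$ and back by $\lrangle{i}\fr h\mapsto i^\nn\oplus h$; for $i=2$ the condition ${\tt mc}=0$ forces a constant declaration, so $\bhk{P\vee Q}^2_{\sf Int}$ is the very same set; and for $i=3$ the ban on $\sharp$ forces ${\tt tail}(f)=f$, so $\bhk{P\vee Q}^3_{\sf Int}$ equals the coproduct outright, modulo the coding $\lrceil{\cdot}$ introduced in the Remark following Definition \ref{def:12b:backtrack}. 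For (5), $\bhk{P\vee Q}^1_{\sf CL}=2^\nn\lsup(P\cup Q)$, so once more $h\mapsto 0^\nn\oplus h$ and $g\mapsto\lambda n.g(2n+1)$ furnish the two reductions.

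The only case requiring genuine work is (3) in the {\sf LCM} and {\sf CL} regimes --- the {\sf Int} subcase being essentially the identity, as in (4) --- and since the claim asks only for $j\le i$, it suffices by transitivity to treat the two consecutive steps one-tape $\to$ two-tape and two-tape $\to$ backtrack. The point is that one cannot naively relay the datum $h$: in a one-tape {\sf LCM} record $f\oplus h$ the letters of $h$ emitted while $f(n)$ disagrees with $i=\lim_n f(n)$ would be deposited on the wrong tape, so the winning tape would end up carrying only a proper subsequence of $h$, and $P_i$ need not be closed under deleting finitely many entries. The fix is to bookkeep occurrences: at stage $n$ declare $f(n)$ and write $h(k)$ on tape $f(n)$, where $k=\#\{m\le n:f(m)=f(n)\}-1$; then the tape indexed by $i$ accumulates exactly $h(0),h(1),h(2),\dots$, so its projection equals $h\in P_i$, while the record has ${\tt mc}$ equal to the number of alternations of $f$, which is finite because $f$ converges. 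For two-tape $\to$ backtrack one simulates analogously: follow the two-tape declarations, and whenever the declaration changes to a value $j$, emit $\sharp$, re-declare $j$, re-emit ${\tt pr}_j$ of the portion processed so far, and then keep relaying; an {\sf LCM} two-tape record switches only finitely often, so only finitely many $\sharp$'s appear, and after the last one the working tape carries ${\tt pr}_i$ of the whole record for the eventual declaration $i$, which must lie in $P_i$ (the losing projection is finite, hence not in $P_{1-i}\subseteq\nn^\nn$). In the {\sf CL} regime only $i,j\le 2$ occur, and there the counting trick is replaced by writing $h$ on both tapes alternately, so that whichever of $P_0,P_1$ contains $h$, the corresponding projection equals $h$; the record then has ${\tt mc}=\infty$, which is harmless since {\sf CL} imposes no mind-change bound. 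The main obstacle is carrying out these bookkeeping simulations correctly, together with the standing convention of renaming each freshly introduced $\sharp$ (as in the Remark after Definition \ref{def:12b:backtrack}) so that nested backtrack disjunctions do not interfere; none of this affects the $\equiv^1_1$ statements.
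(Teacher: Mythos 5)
Your proposal is correct and follows essentially the same strategy as the paper: explicit computable reductions throughout, the identity (set inclusion) for item (2), ${\tt write}(0,\cdot)$ and the odd-part projection for items (1) and (5), the observation that the intuitionistic clauses all collapse to the coproduct for item (4), and relay-style simulations for the two nontrivial steps of item (3). The one place you genuinely diverge is the one-tape-to-two-tape reduction in item (3). For the {\sf LCM}/{\sf Int} cases your occurrence-counting bookkeeping (write $h(k)$ on tape $f(n)$ at the $(k+1)$-st occurrence of the value $f(n)$) and the paper's version (write the entire missing tail $\tau^{\shft p}$ of $h$ onto tape $\sigma(s)$ at each stage) accomplish the same thing: any tape visited infinitely often ends up carrying exactly $h$. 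But in the {\sf CL} regime your decision to abandon the declaration entirely and write $h$ on both tapes alternately is not just a stylistic variant — it is actually needed. In $\bhk{P\vee Q}^1_{\sf CL}=\bigcup_{i<2}(2^\nn\lsup P_i)$ the declaration sequence $f$ carries no information about which $P_i$ contains $h$, so a relay that populates only the tape named by $f$ can leave the full copy of $h$ on the wrong tape (e.g.\ $f=0^\nn$ with $h\in Q\setminus P$), producing a record whose total projection is not in the corresponding $P_i$; the paper's uniform construction glosses over this, whereas your two-tape duplication handles it, at the harmless cost of ${\tt mc}=\infty$. The remaining minor informality — calling $\bhk{P\vee Q}^2_{\sf Int}$ ``the very same set'' as $P\oplus Q$ when it is only a trivial computable recoding of it — does not affect the $\equiv^1_1$ claims.
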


\begin{proof}\upshape
(1) The reduction $f\oplus g\mapsto g$ witnesses $P\leq^1_1\bhk{P\vee P}^1_X$, and the reduction $f\mapsto {\tt write}(0,f)$ witnesses $\bhk{P\vee P}^1_X\leq^1_1P$, for each $X\in\{{\sf Int},{\sf LCM},{\sf CL}\}$.
Intuitively, ${\tt write}(0,f)$ indicates the instruction, in the one-tape model, to declare ``$P_0$ is correct'' at each stage and to write the infinite word $f$ on the tape $\Lambda$.

(2) Clearly, $\bhk{P\vee Q}^i_{\sf CL}\supseteq\bhk{P\vee Q}^i_{\sf LCM}\supseteq\bhk{P\vee Q}^i_{\sf Int}$ for each $i\in\{1,2,3\}$ (except for ${\sf CL}$ if $i=3$).

(3) Fix $X\in\{{\sf Int},{\sf LCM},{\sf CL}\}$.
We inductively construct a computable function $\Xi$ witnessing $\bhk{P\vee Q}^2_X\leq^1_1\bhk{P\vee Q}^1_X$.
First set $\Xi(\lrangle{})=\lrangle{}$, and assume that $\Xi(\sigma\oplus\tau)$ has been already defined for every strings $\sigma$ and $\tau$ of length $s$.
Then we now define $\Xi(\sigma\oplus\tau)$ for each strings $\sigma$ and $\tau$ of length $s+1$.
We inductively assume that ${\tt pr}_{i}(\Xi(\sigma^-\oplus\tau^-))\subseteq \tau^-$ for each $i<2$ (recall that $\sigma^-$ denotes the immediate predecessor of $\sigma$).
For $p=|{\tt pr}_{\sigma(s)}(\Xi(\sigma^-\oplus\tau^-))|$, we put $\Xi(\sigma\oplus\tau)=\Xi(\sigma^-\oplus\tau^-)\fr{\tt write}(\sigma(s),\tau^{\shft p})$.
Intuitively, this indicates the instruction to add some tail $\tau(p),\tau(p+1),\dots,\tau(s)$ to the word $\tau(0),\tau(1),\dots,\tau(p-1)$ written on the $\sigma(s)$-tape.
Then, we can inductively ensure the following condition.
\[{\tt pr}_{\sigma(s)}(\Xi(\sigma\oplus\tau))={\tt pr}_{\sigma(s)}(\Xi(\sigma^-\oplus\tau^-))\fr(\tau^{\shft p})=(\tau^-\res p)\fr \tau^{\shft p}=\tau.\]

Finally, we set $\Xi(f\oplus g)=\bigcup_{n\in\nn}\Xi((f\res n)\oplus(g\res n))$, for any $f,g\in\nn^\nn$.
Therefore, for any $f\oplus g\in\bhk{P\vee Q}^1_X$ and each $i<2$, if $f(n)=i$ for infinitely many $n\in\nn$, then ${\tt pr}_i(\Xi(f\oplus g))$ is total, and ${\tt pr}_i(\Xi(f\oplus g))=g$.
By definition, ${\tt pr}_i(\Xi(f\oplus g))=g\in P_i$ for some $i<2$.
Hence, $\Xi(f\oplus g)\in\bhk{P\vee Q}^2_X$.

Fix $X\in\{{\sf Int},{\sf LCM}\}$.
We inductively construct a computable function $\Xi$ witnessing $\bhk{P\vee Q}^3_X\leq^1_1\bhk{P\vee Q}^2_X$.
First set $\Xi(\lrangle{\pair{i,n}})=\lrangle{i,n}$ for each $\pair{i,n}\in 2\times\nn$.
Fix $\sigma=\sigma^{--}\fr\lrangle{\pair{i,m},\pair{j,n}}\in (2\times\nn)^{<\nn}$, and assume that $\Xi(\sigma^-)$ has been already defined.
Then, let us define $\Xi(\sigma)$ as follows:
\[
\Xi(\sigma^{--}\fr\lrangle{\pair{i,m},\pair{j,n}})=
\begin{cases}
\Xi(\sigma^{-})\fr\lrangle{n} & \text{ if }j=i;\\
\Xi(\sigma^{-})\fr\lrangle{\sharp,j}\fr{\tt pr}_{j}(\sigma) & \text{ otherwise.}
\end{cases}
\]
Finally set $\Xi(f)=\bigcup_n\Xi(f\res n)$, for any $f\in (2\times\nn)^\nn$.
It is easy to see that ${\tt tail}(f)$ is defined for any $f\in\bhk{P\vee Q}^2_X$, since $\#\{k\in\nn:\Xi(f;k)=\sharp\}={\tt mc}(f)$.
Therefore, ${\tt tail}^{\shft 1}(\Xi(f))\in P_{{\tt tail}(\Xi(f);0)}$.
If $X={\sf Int}$, then no $\sharp$ occurs in $\Xi(f)$.

(4) By definition, $\bhk{P\vee Q}^3_{\sf Int}=P\oplus Q$.
(5) The reduction $f\oplus g\mapsto g$ witnesses $P\cup Q\leq^1_1\bhk{P\vee Q}^1_{\sf CL}$, and the reduction $f\mapsto{\tt write}(0,f)=0^\nn\oplus f$ witnesses $\bhk{P\vee Q}^1_{\sf CL}\leq^1_1P\cup Q$.
\end{proof}

\begin{definition}
\index{disjunction!with a mind-changes-bound}\index{$\bhk{P_0\vee P_1}^1_{{\sf LCM}[n]}$}%
\index{$\bhk{P_0\vee P_1}^2_{{\sf LCM}[n]}$}\index{$\bhk{P_0\vee P_1}_{{\sf LCM}[n]}^3$}%
For each proof model, there are variations of {\sf LCM} disjunctions, for any {\em bound of mind changes}.
Let $P_0,P_1$ be any subsets of Baire space $\nn^\nn$, and $n$ be any natural number.
\begin{enumerate}
\item {\em The one-tape {\sf LCM} disjunction of $P_0$ and $P_1$ with mind-changes-bound $n$} is defined as follows.
\[\bhk{P_0\vee P_1}^1_{{\sf LCM}[n]}=\bhk{P_0\vee P_1}^1_{{\sf LCM}}\cap\{f\in 2^\nn:\#\{n\in\nn:f(n+1)\not=f(n)\}<n\}\otimes 2^\nn.\]
\item {\em The two-tape {\sf LCM} disjunction of $P_0$ and $P_1$ with mind-changes-bound $n$} is defined as follows.
\[\bhk{P_0\vee P_1}^2_{{\sf LCM}[n]}=\bhk{P_0\vee P_1}^2_{{\sf LCM}}\cap\{f\in(2\times \nn)^\nn:{\tt mc}(f)<n\}.\]
\item {\em The backtrack-tape {\sf LCM} disjunction of $P_0$ and $P_1$ with mind-changes-bound $n$} is defined as follows.
\[\bhk{P_0\vee P_1}_{{\sf LCM}[n]}^3=\bhk{P_0\vee P_1}^3_{{\sf LCM}}\cap\{f\in(\nn\cup\{\sharp\})^\nn:\#\{k\in\nn:f(k)=\sharp\}<n\}.\]
\end{enumerate}
\end{definition}

\begin{prop}\label{prop:1-2b:cnsivee}
Let $P,Q$ be subsets of Baire space $\nn^\nn$.
\begin{enumerate}
\item $P\oplus Q\equiv^1_1\bhk{P\vee Q}^i_{{\sf LCM}[1]}$ for each $i\in\{1,2,3\}$.
\item $\bhk{P\vee P}^2_{{\sf LCM}[2]}\equiv^1_1\bhk{P\vee P}^3_{{\sf LCM}[2]}$.
Indeed, $\bhk{\bigvee_{i<n}P_i}^2_{{\sf LCM}[n]}\equiv^1_1\bhk{P\vee P}^3_{{\sf LCM}[n]}$, where $P_i=P$ for each $i<n$.
Here, for each collection $\{P_i\}_{i<k}$ of subsets of Baire space, $\bhk{\bigvee_{i<k}P_i}^2_{{\sf LCM}[n]}$ is defined as follows.
\[\{f\in(k\times \nn)^\nn:((\exists i<k)\;{\tt pr}_i(f)\in P_i)\;\&\;{\tt mc}(f)<n\}.\]
\end{enumerate}
\end{prop}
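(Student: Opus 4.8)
The plan is to settle part~(1) by observing that a mind-changes-bound of $1$ degenerates each of the three {\sf LCM} disjunctions into the corresponding intuitionistic one, and to prove the equivalences in part~(2) by two explicit computable reductions implementing the correspondence ``an attempt of the backtrack verifier — one of the at most $n$ segments cut out by the $\sharp$'s — $\longleftrightarrow$ a fresh tape in the $n$-tape model''. For part~(1), note that in each definition the constraint ``strictly fewer than $1$ mind-changes'' means ``$0$ mind-changes'': for $i=1$ it forces the declaration stream $f\in 2^\nn$ to be constant, hence of the form $i^\nn$; for $i=2$ it is literally ${\tt mc}(f)=0$; and for $i=3$ it forbids $\sharp$ entirely. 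In all three cases one gets $\bhk{P\vee Q}^i_{{\sf LCM}[1]}=\bhk{P\vee Q}^i_{\sf Int}$ as sets, so $\bhk{P\vee Q}^i_{{\sf LCM}[1]}\equiv^1_1 P\oplus Q$ by Proposition~\ref{prop:1:basic}(4).

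For part~(2), first observe that $\bhk{P\vee P}^2_{{\sf LCM}[2]}$ is by definition $\bhk{\bigvee_{i<n}P_i}^2_{{\sf LCM}[n]}$ with $n=2$ and $P_0=P_1=P$, so it suffices to prove the displayed general equivalence; throughout, the hypothesis $P_i=P$ for all $i$ is what lets us freely relabel tapes and ignore the $0/1$ declarations of the backtrack model. For $\bhk{\bigvee_{i<n}P_i}^2_{{\sf LCM}[n]}\leq^1_1\bhk{P\vee P}^3_{{\sf LCM}[n]}$, I would build a computable monotone map $\Xi$ that reads the two-tape record $f$ and simulates it on the backtrack model, keeping the label $j<n$ of the tape currently declared by $f$ and maintaining the backtrack working tape equal to ${\tt pr}_j$ of the portion of $f$ read so far: each instruction of $f$ addressing the active tape appends one letter, while each instruction switching to a tape $j'$ outputs $\sharp$, then an arbitrary (harmless) declaration $0$, then ${\tt pr}_{j'}$ of the prefix read so far. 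This uses exactly ${\tt mc}(f)<n$ many $\sharp$'s and leaves ${\tt pr}_{i^*}(f)\in P_{i^*}=P$ on the working tape for the finally stabilized tape $i^*$, so $\Xi(f)\in\bhk{P\vee P}^3_{{\sf LCM}[n]}$. Conversely, for $\bhk{P\vee P}^3_{{\sf LCM}[n]}\leq^1_1\bhk{\bigvee_{i<n}P_i}^2_{{\sf LCM}[n]}$, I would build a computable monotone map $\Theta$ that reads the backtrack record $g$ letter by letter, keeps a counter $j$ of the $\sharp$'s seen so far (so $0\leq j<n$), skips the very first letter and every letter immediately following a $\sharp$ (these are declarations), converts every remaining letter $k$ into the instruction $\pair{j,k}$, and increments $j$ on each $\sharp$; then ${\tt mc}(\Theta(g))$ equals the number of $\sharp$'s in $g$, hence $<n$, the declaration stream of $\Theta(g)$ settles on the tape $j^*\leq n-1$ equal to that number, and ${\tt pr}_{j^*}(\Theta(g))={\tt tail}(g)^{\shft 1}\in P_{{\tt tail}(g;0)}=P=P_{j^*}$, so $\Theta(g)\in\bhk{\bigvee_{i<n}P_i}^2_{{\sf LCM}[n]}$. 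After coding $\sharp$ as a natural number as in the Remark following Definition~\ref{def:12b:backtrack}, both maps are genuine reductions witnessing $\leq^1_1$.

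The only real obstacle is bookkeeping. One must check that the off-by-one cancels: both models grant the first declaration for free, so ``$<n$ occurrences of $\sharp$'' on the backtrack side matches ``$<n$ mind-changes'', i.e.\ at most the $n$ tape labels $0,\dots,n-1$, on the $n$-tape side. One must also verify that $\Xi$'s re-writing step stays monotone and that the resulting elements of $(\nn\cup\{\sharp\})^\nn$ and $(n\times\nn)^\nn$ are total rather than finite strings; both are immediate once one observes that after the last mind-change (resp.\ last $\sharp$) every further input symbol contributes at least one output symbol.
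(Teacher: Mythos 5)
Your proof is correct and follows essentially the same route as the paper: part (1) by observing that $\bhk{P\vee Q}^i_{{\sf LCM}[1]}=\bhk{P\vee Q}^i_{\sf Int}$ and invoking Proposition \ref{prop:1:basic}(4), and part (2) by the same pair of tape-translation reductions (the paper reuses the map $\Xi$ from the proof of Proposition \ref{prop:1:basic}(3) for the direction from the $n$-tape model to the backtrack model, and defines a $\sharp$-counting map $\Xi^*$ for the converse). If anything, your backtrack-to-$n$-tape direction is slightly more careful than the paper's, since you explicitly skip the declaration letters following each $\sharp$ so that the surviving tape carries ${\tt tail}(g)^{\shft 1}$ rather than ${\tt tail}(g)$.
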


\begin{proof}\upshape
(1) Clearly $\bhk{P\vee Q}^i_{\sf LCM[1]}=\bhk{P\vee Q}^i_{\sf Int}$ for each $i\in\{1,2,3\}$.
By Proposition \ref{prop:1:basic} (4), we have $P\oplus Q\equiv^1_1\bhk{P\vee Q}^i_{\sf Int}$.

(2) The reduction $\Xi:h\mapsto h^*$ in the proof of Proposition \ref{prop:1:basic} (3) also witnesses $\bhk{P\vee P}^3_{{\sf LCM}[n]}\leq^1_1\bhk{\bigvee_{i<n}P_i}^2_{{\sf LCM}[n]}$.
We inductively define a computable function $\Xi^*$ witnessing $\bhk{\bigvee_{i<n}P_i}^2_{{\sf LCM}[n]}\leq^1_1\bhk{P\vee P}^3_{{\sf LCM}[n]}$.
Put $\Xi^*(\lrangle{})=\lrangle{}$, and fix $\sigma=\sigma^-\fr\lrangle{k}\in(\nn\cup\{\sharp\})^{<\nn}$.
Assume that $\Xi^*(\sigma^-)$ has been already defined.
Then, $\Xi^*(\sigma)$ is defined as follows.
\begin{align*}
{\tt count}(\sigma)&=\#\{m<|\sigma|:\sigma(m)=\sharp\},\\
\Xi^*(\sigma^-\fr\lrangle{k})&=
\begin{cases}
\Xi^*(\sigma^-)\fr\lrangle{({\tt count}(\sigma),k)} & \mbox{ if } k\not=\sharp,\\
\Xi^*(\sigma^-) & \mbox{ otherwise.}
\end{cases}
\end{align*}
For any $g\in\bhk{P\vee P}^3_{{\sf LCM}[n]}$, we have ${\tt count}(g\res s)<n$ for any $s\in\nn$, and hence ${\tt mc}(\Xi^*(g))<n$, since $g$ contains at most $n$ many $\sharp$'s.
Moreover, ${\tt pr}_{\lim_s{\tt count}(g\res s)}(\Xi^*(g))={\tt tail}(g)^{\shft 1}\in P$.
\end{proof}

\begin{prop}\label{prop:1-2:wellbehaved}
Let $P_0$, $P_1$, $Q_0$, and $Q_1$ be subsets of Baire space $\nn^\nn$, and fix $i\in\{2,3\}$ and $X\in\{{\sf Int},{\sf LCM},{\sf CL}\}\cup\{{\sf LCM}[n]:n\in\nn\}$.
If $P_0\leq^1_1Q_0$ and $P_1\leq^1_1Q_1$, then $\bhk{P_0\vee P_1}^i_X\leq^1_1\bhk{Q_0\vee Q_1}^i_X$.
Hence, the operator $\mathbf{D}^i_X:\mathcal{D}^1_1\times\mathcal{D}^1_1\to\mathcal{D}^1_1$ introduced by $\mathbf{D}^i_X(\deg^1_1(P),\deg^1_1(Q))=\deg^1_1(\bhk{P\vee Q}^i_X)$ is well-defined.
Here, $\deg^1_1(P)$ denotes the equivalent class $\{R\subseteq\nn^\nn:R\equiv^1_1P\}$.
\end{prop}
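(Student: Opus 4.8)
The plan is to prove the monotonicity claim directly; the well-definedness of $\mathbf{D}^i_X$ then falls out for free. Fix partial computable functions $\Phi_0,\Phi_1:\subseteq\nn^\nn\to\nn^\nn$ witnessing $P_0\leq^1_1Q_0$ and $P_1\leq^1_1Q_1$, so that $\Phi_j(g)\in P_j$ for every $g\in Q_j$. Given a record $g$ belonging to $\bhk{Q_0\vee Q_1}^i_X$, I would build $\Xi(g)$ by scanning $g$ one instruction at a time and copying all of its \emph{control} data verbatim --- in the two-tape model (Definition \ref{def:1-2b:twotape}) the first coordinate $(g(t))_0$ of each instruction, and in the backtrack model (Definition \ref{def:12b:backtrack}) the declaration letters together with every occurrence of $\sharp$ --- while replacing the \emph{content} written on each tape: whenever $g$ appends letters to (the reconstruction of) tape $i$, instead of echoing those letters I feed the current finite prefix ${\tt pr}_i(g\res n)$ (respectively ${\tt tail}(g\res n)^{\shft 1}$) to $\Phi_i$ and emit, as instructions to write on that same tape $i$, whatever finitely many new output bits $\Phi_i$ has produced by then. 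Since on a finite string $\Phi_i$ runs for only boundedly many steps, $\Xi$ is a well-defined partial computable function and the scan never stalls.

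For correctness on $g\in\bhk{Q_0\vee Q_1}^i_X$, let $i^*<2$ be an index whose associated component of $g$ lies in $Q_{i^*}$ --- that is ${\tt pr}_{i^*}(g)\in Q_{i^*}$ in the two-tape case, or ${\tt tail}(g;0)=i^*$ with ${\tt tail}(g)^{\shft 1}\in Q_{i^*}$ in the backtrack case. This component is infinite, so $g$ keeps extending tape $i^*$; the finite prefixes fed to $\Phi_{i^*}$ converge to it, and since $\Phi_{i^*}$ is total there the $i^*$-component of $\Xi(g)$ is total and equals $\Phi_{i^*}$ of the $i^*$-component of $g$, hence lies in $P_{i^*}$; in particular $\Xi(g)$ is an infinite record. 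It then remains only to check that $\Xi$ respects the side condition attached to $X$. In the backtrack model $\Xi(g)$ contains exactly as many $\sharp$'s as $g$ and the declaration after the last $\sharp$ is unchanged, so the $\sharp$-counting conditions (for ${\sf Int}$, ${\sf LCM}$, ${\sf LCM}[n]$) and the requirement ${\tt tail}(\Xi(g))^{\shft 1}\in P_{{\tt tail}(\Xi(g);0)}$ are immediate. In the two-tape model the first-coordinate sequence of $\Xi(g)$ is obtained from that of $g$ by replacing each term with a (possibly empty) block of copies of itself, whence ${\tt mc}(\Xi(g))\le{\tt mc}(g)$; this yields ${\tt mc}(\Xi(g))=0$ when ${\tt mc}(g)=0$, ${\tt mc}(\Xi(g))<\infty$ when ${\tt mc}(g)<\infty$, and ${\tt mc}(\Xi(g))<n$ when ${\tt mc}(g)<n$, while ${\sf CL}$ carries no condition. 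Thus $\Xi(g)\in\bhk{P_0\vee P_1}^i_X$, giving $\bhk{P_0\vee P_1}^i_X\leq^1_1\bhk{Q_0\vee Q_1}^i_X$. The final assertion follows formally: if $P\equiv^1_1Q$ and $P'\equiv^1_1Q'$, applying this in both directions gives $\bhk{P\vee P'}^i_X\equiv^1_1\bhk{Q\vee Q'}^i_X$, so $\mathbf{D}^i_X$ is a well-defined operation on $\mathcal{D}^1_1\times\mathcal{D}^1_1$.

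I expect the main friction to be the limit-bookkeeping of the on-the-fly simulation: the lengths of $g$ and of $\Xi(g)$ are unrelated, and $\Phi_i$ on a short prefix may converge to a value that is later overturned, so one must commit the $\ell$-th output bit of tape $i$ only once $\Phi_i(\text{current prefix};\ell)$ halts within the step bound furnished by the prefix length, and then invoke the standard convention that such a halting computation queries only inside the prefix to conclude that it agrees with the value computed from the full infinite component. This is also exactly where a one-tape analogue would fail --- there is a single working tape but the declaration may still flip, so one cannot commit to writing $\Phi_0$ or $\Phi_1$ of its content --- which is precisely why the statement is restricted to $i\in\{2,3\}$: the extra tape (or the erase-and-restart mechanism) lets us run $\Phi_0$ and $\Phi_1$ in parallel on their respective contents, with the correct one succeeding automatically.
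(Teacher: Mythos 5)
Your proposal is correct and follows essentially the same route as the paper's proof: an online translation of the record that copies the control data (declarations, tape indices, $\sharp$'s) and replaces the reconstructed tape contents by the step-bounded finite approximations of $\Phi_0,\Phi_1$ applied to them, then checks that the mind-change/$\sharp$-counting side conditions are preserved (the paper even gets ${\tt mc}(\Delta(g))={\tt mc}(g)$ where you settle for $\le$, which is all that is needed). Your closing remarks on the step-bounded evaluation convention and on why the one-tape model is excluded match the conventions the paper sets up in Section \ref{subsec:1:notation} and the restriction to $i\in\{2,3\}$.
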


\begin{proof}\upshape
We first consider the two-tape model.
Assume that $P_0\leq^1_1Q_0$ and $P_1\leq^1_1Q_1$ via computable functions $\Gamma_0$ and $\Gamma_1$, respectively.
We construct a computable function $\Delta$ witnessing $\bhk{P_0\vee P_1}^2_X\leq^1_1\bhk{Q_0\vee Q_1}^2_X$.
Set $\Delta(\lrangle{})=\lrangle{}$.
Fix $\sigma\in(2\times\nn)^{<\nn}$ and assume that $\Delta(\sigma^-)$ has been already defined.
For each $i<2$, we define ${\tt new}\Gamma_i({\tt pr}_i(\sigma))\in\nn^{<\nn}$ by the unique string such that $\Gamma_i({\tt pr}_i(\sigma))=\Gamma_i({\tt pr}_i(\sigma^-))\fr{\tt new}\Gamma_i({\tt pr}_i(\sigma))$.
Then we define $\Delta(\sigma)$ as follows.
\[\Delta(\sigma)=\Delta(\sigma^-)\fr{\tt write}(0,{\tt new}\Gamma_0({\tt pr}_0(\sigma)))\fr{\tt write}(1,{\tt new}\Gamma_1({\tt pr}_1(\sigma))).\]

Note that ${\tt new}\Gamma_i({\tt pr}_i(\sigma))=\lrangle{}$ for some $i<2$, since ${\tt pr}_i(\sigma)={\tt pr}_i(\sigma^-)$ for either $i<2$.
Therefore, ${\tt mc}(\Delta(g))={\tt mc}(g)$ for any $g\in\nn^\nn$.
Furthermore, for any $g\in\nn^\nn$, we have ${\tt pr}_i(\Delta(g))=\Gamma_i({\tt pr}_i(g))$ for each $i<2$.
Thus, $\Delta(g)\in\bhk{P_0\vee P_1}^2_X$ for any $g\in\bhk{Q_0\vee Q_1}^2_X$.

Next we consider the backtrack-tape model.
Assume that $P_0\leq^1_1Q_0$ and $P_1\leq^1_1Q_1$ via computable functions $\Gamma_0$ and $\Gamma_1$, respectively.
We construct a computable function $\Theta$ witnessing $\bhk{P_0\vee P_1}^3_X\leq^1_1\bhk{Q_0\vee Q_1}^3_X$.
Set $\Theta(\lrangle{})=\lrangle{}$.
Fix $\sigma\in(\nn\cup\{\sharp\})^{<\nn}$ and assume that $\Theta(\tau)$ has been already defined for each $\tau\subsetneq\sigma$.
If $\sigma=\sigma^{--}\fr\lrangle{m,n}$ for some $m,n\in\nn$, then we have $\Gamma_{{\tt tail}(\sigma;0)}({\tt tail}(\sigma)^{\shft 1})=\Gamma_{{\tt tail}(\sigma;0)}({\tt tail}(\sigma)^{\shft 1})\fr\eta$ for some $\eta\in\nn^{<\nn}$, and we define $\Theta(\sigma)=\Theta(\sigma^-)\fr\eta$.
If $\sigma=\sigma^{--}\fr\lrangle{\sharp,i}$ for some $i<2$, i.e., ${\tt tail}(\sigma;0)=i$, then define $\Theta(\sigma)=\Theta(\sigma^-)\fr\lrangle{\sharp,i}$.
Otherwise, we set $\Theta(\sigma)=\Theta(\sigma^-)$.
Note that $\#\{n\in\nn:\Theta(g;n)=\sharp\}=\#\{n\in\nn:g(n)=\sharp\}$ for any $g\in\nn^\nn$.
Furthermore, ${\tt tail}(\Theta(g);0)={\tt tail}(g;0)$, and ${\tt tail}(\Theta(g))^{\shft 1}=\Gamma_{{\tt tail}(g;0)}({\tt tail}(g)^{\shft 1})$ for any $g\in\bhk{Q_0\vee Q_1}^3_X$.
Hence, $\Theta(g)\in\bhk{P_0\vee P_1}^3_X$ for any $g\in\bhk{Q_0\vee Q_1}^3_X$.
\end{proof}

\begin{table}\label{intuitable}\small
\begin{center}
\begin{tabular}{ccccccc}
$P\cup Q$ & & & & & & $P\oplus Q$ \\
\rotatebox[origin=c]{90}{$\equiv$} & & & & & & \rotatebox[origin=c]{90}{$\equiv$} \\
$\bhk{P\vee Q}^1_{\sf CL}$ & $\leq$ ($\equiv$) & $\bhk{P\vee Q}^1_{\sf LCM}$ & $\leq$ ($\equiv$) & $\bhk{P\vee Q}^1_{\sf LCM[2]}$ & $\leq$ ($\equiv$) & $\bhk{P\vee Q}^1_{\sf Int}$ \\
\rotatebox[origin=c]{90}{$\leq$} & & \rotatebox[origin=c]{90}{$\leq$} & & \rotatebox[origin=c]{90}{$\leq$} & & \rotatebox[origin=c]{90}{$\equiv$} \\
$\bhk{P\vee Q}^2_{\sf CL}$ & $\leq$ & $\bhk{P\vee Q}^2_{\sf LCM}$ & $\leq$ & $\bhk{P\vee Q}^2_{\sf LCM[2]}$ & $\leq$ & $\bhk{P\vee Q}^2_{\sf Int}$ \\
& & \rotatebox[origin=c]{90}{$\leq$} & & \rotatebox[origin=c]{90}{$\leq$} (\rotatebox[origin=c]{90}{$\equiv$}) & & \rotatebox[origin=c]{90}{$\equiv$} \\
& & $\bhk{P\vee Q}^3_{\sf LCM}$ & $\leq$ & $\bhk{P\vee Q}^3_{\sf LCM[2]}$ & $\leq$ & $\bhk{P\vee Q}^3_{\sf Int}$ 
\end{tabular}
\end{center}
\caption{Degrees of difficulty of disjunctions, where $\leq$ and $\equiv$ denote the Medvedev reducibility and equivalence, and ($\equiv$) denotes the Medvedev equivalence when $P=Q$}%
\end{table}

\begin{remark}
Though the original limit-BHK interpretation of the disjunctive notion seems to be a one-tape notion, we will observe that the two-tape notions and the backtrack notions exhibit amazing and fascinating behaviors as operations on the subsets of Baire space.
While the one-tape models are almost static, the two-tape models can be understood as learning proof models with {\em bounded-errors}, and the backtrack tape models can be understood as learning proof models with no predetermined bound for errors.
In Part II, we adopt the two-tape notions except for the classical one-tape disjunction $\cup$, since the two-tape notions (the bounded-errors learning models) are useful to clarify differences among the classes $[\mathfrak{C}_T]^1_1,[\mathfrak{C}_T]^1_{<\omega},[\mathfrak{C}_{T}]^1_{\omega|<\omega},[\mathfrak{C}_T]^{<\omega}_1$ which are defined (as certain classes of bounded-errors functions) later.
In Part II, we also adopt dynamic generalizations of the backtrack tape models since such models turn out to be a strong tool to establish many theorems.
\end{remark}

\section{Galois Connection}

\subsection{Decomposing Disjunction by Piecewise Computable Functions}

The main theorem in this section (Theorem \ref{theorem:12c:charact}) states that our degree structures $\mathcal{D}^\alpha_{\beta|\gamma}$ (Definition \ref{def:1-2a:degree}) are completely characterized by the disjunction operations (Definitions \ref{def:1-2b:onetape}, \ref{def:1-2b:twotape}, and \ref{def:12b:backtrack}).

\begin{prop}[Untangling]\label{prop:1-5:inf}
Let $P,Q$ be subsets of Baire space $\nn^\nn$.
\begin{enumerate}
\item There is a $(1,n|2)$-truth-table function $\Gamma:\bhk{P\vee Q}_{{\sf LCM}[n]}^1\to P\oplus Q$.
\item There is a $(1,n|2)$-computable function $\Gamma:\bhk{P\vee Q}_{{\sf LCM}[n]}^2\to P\oplus Q$.
\item There is a $(1,n)$-computable function $\Gamma:\bhk{P\vee Q}_{{\sf LCM}[n]}^3\to P\oplus Q$.
\item There is a $(1,\omega|2)$-truth-table function $\Gamma:\bhk{P\vee Q}_{{\sf LCM}}^1\to P\oplus Q$.
\item There is a $(1,\omega|2)$-computable function $\Gamma:\bhk{P\vee Q}_{{\sf LCM}}^2\to P\oplus Q$.
\item There is a $(1,\omega)$-computable function $\Gamma:\bhk{P\vee Q}_{{\sf LCM}}^3\to P\oplus Q$.
\item There is a $(2,1)$-truth-table function $\Gamma:\bhk{P\vee Q}_{{\sf CL}}^1\to P\oplus Q$.
\item There is a $(2,1)$-computable function $\Gamma:\bhk{P\vee Q}_{{\sf CL}}^2\to P\oplus Q$.
\end{enumerate}
\end{prop}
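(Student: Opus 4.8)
The plan is to establish all eight reductions from a single template. Each disjunction $\bhk{P\vee Q}^i_X$ presents a dynamic ``proof'' $f$ from which one can recover, possibly only in the limit, both the witnessed disjunct $j\in\{0,1\}$ and a witness lying in $P_j$; ``untangling'' then just means running a learner whose conjecture is, at each stage, an index of the functional ``output $\lrangle{j}\fr(\text{witness track for branch }j)$'', whose value lands in $P\oplus Q=(\lrangle{0}\fr P)\cup(\lrangle{1}\fr Q)$ as soon as $j$ has stabilised. The number of mind changes of this learner will be bounded by the number of ``declaration changes'' of $f$ (in the ${\sf LCM}$ models) or the number of occurrences of $\sharp$ in $f$ (in the backtrack models), which is precisely what the parameter $n$ --- or, in the unbounded version, $\omega$ --- controls. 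First I would dispose of the classical cases (7) and (8), where no learning is required: put $\Gamma_j(h)=\lrangle{j}\fr w_j(h)$ for $j<2$, where $w_j$ reads off the $j$-th witness track, namely $w_j(f\oplus g)=g$ in the one-tape model (a total, hence truth-table, functional) and $w_j(f)={\tt pr}_j(f)$ in the two-tape model (a partial computable functional). Since every $h\in\bhk{P\vee Q}^i_{\sf CL}$ has ${\tt pr}_i(f)\in P_i$ (resp. $g\in P_i$) for some $i<2$, one of $\Gamma_0(h)$, $\Gamma_1(h)$ always lies in $P\oplus Q$, so $\{\Gamma_0,\Gamma_1\}$ witnesses that the reduction is $(2,1)$-truth-table in item 7 and $(2,1)$-computable in item 8.

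For the one-tape and two-tape ${\sf LCM}$ and ${\sf LCM}[n]$ cases (items 1, 2, 4, 5) I would use a learner with only two conjectures. Let $e_\star$ be an index of the (partial) computable map sending $h$ to $\lrangle{d_0(h)}\fr w_{d_0(h)}(h)$, where $d_0(h)$ is the declaration exhibited at the very first step of $h$ and $w_j$ is as above, and let $e_\diamond$ be an index of $h\mapsto\lrangle{1-d_0(h)}\fr w_{1-d_0(h)}(h)$. Define $\Psi(\lrangle{})=e_\star$, and for nonempty $\sigma$ let $\Psi(\sigma)=e_\star$ if the declaration currently displayed by $\sigma$ equals the first declaration shown by $\sigma$, and $\Psi(\sigma)=e_\diamond$ otherwise. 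Along a proof $f$ the displayed declaration stabilises at $i^{*}:=\lim_t(\text{declaration at step }t)$, and the ``agrees-with-$d_0(f)$'' bit flips exactly at the declaration changes of $f$; hence $\#{\tt mcl}_\Psi(f)={\tt mc}(f)$, which is $<n$ in the $[n]$-variant and finite in general, while $\#{\tt indx}_\Psi(f)\le 2$. The limiting conjecture is $e_\star$ or $e_\diamond$ according as $i^{*}=d_0(f)$ or not, and in both cases its value equals $\lrangle{i^{*}}\fr w_{i^{*}}(f)$, which lies in $P\oplus Q$ once we know $w_{i^{*}}(f)\in P_{i^{*}}$ --- immediate in the one-tape model, and in the two-tape model a consequence of the observation below. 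The functionals are total in the one-tape model, which gives the truth-table strengthening in items 1 and 4, and only partial in the two-tape model, items 2 and 5.

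For the backtrack cases (items 3, 6) I would take $\Psi(\sigma)=e^{*}_{c(\sigma)}$, where $c(\sigma)$ counts the $\sharp$'s in $\sigma$ and $e^{*}_c$ is an index of the total computable map that scans its input $g$ until $c$ occurrences of $\sharp$ have been seen, puts $p$ equal to the position immediately after the $c$-th $\sharp$ (with $p=0$ if $c=0$), and outputs $\lrangle{g(p)}\fr g^{\shft (p+1)}$. Along a proof $f$ --- which, being in $\bhk{P\vee Q}^3_{\sf LCM}$, contains only finitely many $\sharp$'s, and $<n$ of them in the $[n]$-variant --- the learner changes its conjecture exactly when a new $\sharp$ is read, so $\#{\tt mcl}_\Psi(f)$ is the number of $\sharp$'s in $f$; and the final conjecture outputs $\lrangle{{\tt tail}(f;0)}\fr{\tt tail}(f)^{\shft 1}\in\lrangle{{\tt tail}(f;0)}\fr P_{{\tt tail}(f;0)}\subseteq P\oplus Q$ by the definition of $\bhk{P\vee Q}^3_{\sf LCM}$. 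This gives $(1,n)$-computability in item 3 and $(1,\omega)$-computability in item 6.

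The step I expect to be the main obstacle is the observation used for the two-tape ${\sf LCM}$ models, namely that the limit declaration $i^{*}$ is forced to be the witnessed disjunct, i.e. ${\tt pr}_{i^{*}}(f)\in P_{i^{*}}$. This is exactly where one uses that $P$ and $Q$ are subsets of Baire space, hence consist of infinite sequences: after the last declaration change every further letter is written on track $i^{*}$, so ${\tt pr}_{1-i^{*}}(f)$ is a finite string and therefore cannot belong to $P_{1-i^{*}}$; consequently the clause ``$(\exists i<2)\,{\tt pr}_i(f)\in P_i$'' of Definition \ref{def:1-2b:twotape} forces $i=i^{*}$. The remaining obligations --- verifying that the two-conjecture device contributes no mind change beyond the ${\tt mc}(f)$ declaration changes (which is the reason $d_0$ must be read dynamically from the input rather than hard-wired to branch $0$ in the initial conjecture), and the routine checks that the displayed maps are (partial) computable and that the learners converge --- are straightforward bookkeeping.
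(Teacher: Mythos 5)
Your proof is correct and follows essentially the same route as the paper's: a two-element team of projection functionals for the classical cases (7)--(8), a learner using only two indices that tracks the currently displayed declaration for the one- and two-tape {\sf LCM} cases (1), (2), (4), (5), and a learner indexed by the running count of $\sharp$'s for the backtrack cases (3), (6), with the same key observation that finitely many mind changes force ${\tt pr}_{1-i^*}(f)$ to be a finite string and hence force the witnessed disjunct to equal the limit declaration. The only departures are cosmetic: your $e_\star/e_\diamond$ device for the initial conjecture (the paper simply conjectures $e_{(\sigma(|\sigma|-1))_0}$), and a harmless misstatement that $e^*_c$ is total --- it is only partial, but items (3) and (6) make no truth-table claim, so nothing is lost.
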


\begin{proof}\upshape
For the items (1), (4), and (7), we consider the truth-table functionals $\Delta_0:f\oplus g\mapsto 0\fr g$ and $\Delta_1:f\oplus g\mapsto 1\fr g$.
By the definition of $\bhk{P\vee Q}^1_{{\sf CL}}$, obviously $\Delta_0(f\oplus g)\in P\oplus Q$ or $\Delta_1(f\oplus g)\in P\oplus Q$ for any $f\oplus g\in\bhk{P\vee Q}^1_{\sf CL}$.
Let $e_0$ and $e_1$ be indices of $\Delta_0$ and $\Delta_1$, respectively.
On $\sigma\oplus\tau\in(2\times\nn)^{<\nn}$, we set $\Psi(\sigma\oplus\tau)=e_{\sigma(|\sigma|-1)}$.
Note that the partial function $\Gamma$ identified by the learner $\Psi$ is $(1,n|2)$-truth-table on $\bhk{P\vee Q}^1_{{\sf LCM[n]}}$, and $(1,\omega|2)$-truth-table on $\bhk{P\vee Q}^1_{{\sf LCM}}$.
Moreover, clearly $\Gamma(f\oplus g)=(\lim_sf(s))\fr g\in P\oplus Q$ for every $f\oplus g\in\bhk{P\vee Q}^1_{{\sf LCM}}$.

For the items (2), (5), and (8), we consider the partial computable functions $\Delta_0:f\mapsto 0\fr{\tt pr}_0(f)$ and $\Delta_1:f\mapsto 1\fr{\tt pr}_1(f)$.
By the definition of $\bhk{P\vee Q}^2_{{\sf CL}}$, obviously $\Delta_0(f)\in P\oplus Q$ or $\Delta_1(f)\in P\oplus Q$ for any $f\in\bhk{P\vee Q}^2_{\sf CL}$.
Let $e_0$ and $e_1$ be indices of $\Delta_0$ and $\Delta_1$, respectively.
On $\sigma\in(2\times\nn)^{<\nn}$, we set $\Psi(\sigma)=e_{(\sigma(|\sigma|-1))_0}$.
Note that the partial function $\Gamma$ identified by the learner $\Psi$ is $(1,n|2)$-computable on $\bhk{P\vee Q}^2_{{\sf LCM[n]}}$, and $(1,\omega|2)$-computable on $\bhk{P\vee Q}^2_{{\sf LCM}}$.
Moreover, clearly $\Gamma(f)\in P\oplus Q$ for every $f\in\bhk{P\vee Q}^2_{{\sf LCM}}$.

For the items (3) and (6), on $\sigma\in(\nn\cup\{\sharp\})^{<\nn}$, $\Psi(\sigma)$ guesses an index of the partial computable function $g\mapsto g^{\shft t(\sigma)}$, where $t(\sigma)=\max\{n:\sigma(n)=\sharp\}+1$ if such $n$ exists; otherwise, $t(\sigma)=0$.
Note that the partial function $\Gamma$ identified by the learner $\Psi$ is $(1,n)$-computable on $\bhk{P\vee Q}^3_{{\sf LCM[n]}}$, and $(1,\omega)$-computable on $\bhk{P\vee Q}^3_{{\sf LCM}}$.
Moreover, clearly $\Gamma(f)\in P\oplus Q$ for every $f\in\bhk{P\vee Q}^3_{{\sf LCM}}$.
\end{proof}

\begin{notation}
One can iterate two-tape disjunction operations as $\bhk{\bigvee^{(1)}P}^2_X=P$, and $\bhk{\bigvee^{(n+1)}P}^2_X=\bhk{P\vee\bhk{\bigvee^{(n)}P}^2_X}^2_X$.
Then, for instance, $\bhk{\bigvee^{(n)}P}^2_{\sf LCM}$ can be identified with the following subset of Baire space. 
\index{$\bhk{\bigvee^{(n)}P}^2_{\sf LCM}$}%
\[\{f\in (n\times\nn)^\nn:((\exists i<n)\;{\tt pr}_i(f)\in P)\;\&\;{\tt mc}(f)<\infty\}.\]

As in the proof of Proposition \ref{prop:1-2:wellbehaved}, we use the notation ${\tt new}\Gamma(\sigma)$ for any function $\Gamma:\nn^{<\nn}\to\nn^{<\nn}$ and $\sigma\in\nn^{<\nn}$ in the proof of the next theorem.
Here, ${\tt new}\Gamma(\sigma)$ is the unique string that satisfies the following condition.
\[\Gamma(\sigma)=\Gamma(\sigma^-)\fr{\tt new}\Gamma(\sigma).\]
\end{notation}

\begin{theorem}\label{theorem:12c:charact}
Let $P$ and $Q$ be any subsets of Baire space $\nn^\nn$.
\begin{enumerate}
\item $P\leq^1_{<\omega}Q$ if and only if $\bhk{P\vee P}^{3}_{{\sf LCM}[m]}\leq^1_1 Q$ for some $m\in\nn$.
\item $P\leq^1_{\omega|<\omega}Q$ if and only if $\bhk{\bigvee^{(m)}P}^{2}_{{\sf LCM}}\leq^1_1 Q$ for some $m\in\nn$.
\item $P\leq^1_\omega Q$ if and only if $\bhk{P\vee P}^{3}_{{\sf LCM}}\leq^1_1 Q$.
\item $P\leq^{<\omega}_1Q$ if and only if $\bhk{\bigvee^{(m)}P}^{2}_{{\sf CL}}\leq^1_1 Q$ for some $m\in\nn$.
\item $P\leq^{<\omega}_\omega Q$ if and only if $\bhk{\bigvee^{(m)}\bhk{P\vee P}^{3}_{{\sf LCM}}}^{2}_{{\sf CL}}\leq^1_1 Q$.
\item $P\leq^{\omega}_1 Q$ if and only if $\bigcup_{m\in\nn}\bhk{\bigvee^{(m)}P}^{2}_{{\sf CL}}\leq^1_1 Q$.
\end{enumerate}
\end{theorem}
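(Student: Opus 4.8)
The plan is to prove both implications of each item (1)--(6) by one of two uniform strategies. For the ``if'' direction (passing from a Medvedev reduction out of a disjunction set to the named reducibility) I compose the given reduction with an \emph{untangling} map; for the ``only if'' direction I directly build a Medvedev reduction \emph{into} the relevant disjunction set that replays the learning or decomposition procedure witnessing the named reducibility. The translation dictionary for the latter is: a mind change of the learner $\mapsto$ the instruction $\sharp$ (erase the working tape and re-declare) in the backtrack model; a newly adopted hypothesis $\mapsto$ a fresh tape in the two-tape model; a component function of a finite decomposition $\mapsto$ one of finitely many parallel two-tape tracks; and a Turing index used by a nonuniform reduction $\mapsto$ one of countably many tapes.

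For the ``if'' directions, suppose the disjunction set $S(P)$ in question satisfies $S(P)\leq^1_1 Q$ via a computable $\Delta$. Proposition \ref{prop:1-5:inf} already hands us, for each binary building block, a function of the right class onto $P\oplus P$, which is $\equiv^1_1 P$ by Proposition \ref{prop:1:basic}: item (3) for $\bhk{P\vee P}^3_{{\sf LCM}[m]}$, (6) for $\bhk{P\vee P}^3_{{\sf LCM}}$, (5) for the two-tape ${\sf LCM}$ block, and (7)/(8) --- or simply the $m$ projections $f\mapsto{\tt pr}_i(f)$ --- for the classical blocks. For the iterated operations I peel off one layer at a time: using $\bhk{\bigvee^{(m)}P}^2_{{\sf LCM}}=\bhk{P\vee\bhk{\bigvee^{(m-1)}P}^2_{{\sf LCM}}}^2_{{\sf LCM}}$ and applying \ref{prop:1-5:inf}(5) $m-1$ times gives a $(1,\omega|2^{m-1})$-computable map onto $P$; the $m$ projections composed with \ref{prop:1-5:inf}(6) give an $(m,\omega)$-computable map for $\bhk{\bigvee^{(m)}\bhk{P\vee P}^3_{{\sf LCM}}}^2_{{\sf CL}}$; and one projection followed by a nonuniform choice of the winning index gives a nonuniformly computable map for $\bigcup_m\bhk{\bigvee^{(m)}P}^2_{{\sf CL}}$. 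Post-composing $\Delta$ with these maps and projections, invoking Proposition \ref{prop:1-3:monoid} (exponents multiply as cardinals) and Proposition \ref{prop:1-2:func} (to collapse $(1,m)$, $(1,\omega|2^{m-1})$, $(m,1)$, $(m,\omega)$ into $(1,{<}\omega)$, $(1,\omega|{<}\omega)$, $({<}\omega,1)$, $({<}\omega,\omega)$, and any $(\omega,\cdot)$ into nonuniform computability), yields exactly $P\leq^\alpha_{\beta|\gamma}Q$.

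For the ``only if'' directions I fix a witness $\Gamma:Q\to P$ of the named class and build a computable $\Delta:\subseteq\nn^\nn\to\nn^\nn$ with $\Delta[Q]$ inside the disjunction set. For (3): given a learner $\Psi$ for $\Gamma$, let $\Delta$ emit $\langle\sharp,0\rangle$ whenever $\Psi(g\res n)$ changes and otherwise append the next symbol produced by $\Phi_{\Psi(g\res n)}(g)$; since $\Psi$ converges on $Q$ and the limit computation outputs $\Gamma(g)\in P$, only finitely many $\sharp$'s occur and ${\tt tail}(\Delta(g))^{\shft 1}\in P$, so $\Delta(g)\in\bhk{P\vee P}^3_{{\sf LCM}}$; bounding the mind changes of $\Psi$ by $m$ gives (1). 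For (2), re-index $\Psi$'s hypotheses in order of first appearance (as in Theorem \ref{thm:5:red-eq-dis}(2)) and devote tape $j$ to the $j$-th hypothesis: convergence of $\Psi$ gives finitely many declaration changes and the error bound gives at most $m$ tapes, so $\Delta(g)\in\bhk{\bigvee^{(m)}P}^2_{{\sf LCM}}$. For (4) and (5) the $k$ component functions --- each first learned as in (3) in case (5) --- are run in parallel on $k$ two-tape tracks with no outer mind-change constraint, which is exactly the classical $\bhk{\bigvee^{(k)}\cdot}^2_{{\sf CL}}$ construction; that $\Delta$ is computable (with total limit on $Q$) and that a single correct tape suffices for membership is routine.

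The hard part will be the ``only if'' direction of (6). There $\Gamma(g)=\Phi_{e(g)}(g)\in P$ for an index $e(g)$ that varies with $g$ with no finite uniform bound, whereas every member of $\bigcup_m\bhk{\bigvee^{(m)}P}^2_{{\sf CL}}$ uses only finitely many tapes, so the naive ``$\Phi_e(g)$ on tape $e$ for all $e$'' produces elements of infinite depth, outside the set. The remedy is to have $\Delta$ open a new, deeper tape only along indices whose computations keep producing output, so that on each $g\in Q$ the construction settles at finite depth while the tape carrying $\Phi_{e(g)}(g)$ still receives all of it; reconciling this lazy deepening with the nested ${\sf CL}$-bookkeeping, and checking via Proposition \ref{prop:1-2:func}(5) that the composite in the other direction is precisely a nonuniform reduction, is the crux. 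A milder nuisance, shared by (2) and (5), is keeping the hypothesis re-indexing coherent under nesting so that the per-tape declaration-change counts stay finite, which is handled exactly as in the proof of Proposition \ref{prop:1-2:wellbehaved}.
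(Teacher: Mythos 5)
Your treatment of items (1)--(5), and of all the ``if'' directions, is essentially the paper's proof: the ``if'' halves are exactly Proposition \ref{prop:1-5:inf} composed with the given Medvedev reduction and collapsed via Propositions \ref{prop:1-2:func} and \ref{prop:1-3:monoid}, and your mind-change-to-$\sharp$ and hypothesis-to-tape translations for the ``only if'' halves of (1)--(5) reproduce the paper's constructions, including the re-indexing device for (2) and the per-tape backtrack simulation for (5). The genuine gap is item (6), whose ``only if'' direction you explicitly leave unresolved. The difficulty you describe is an artifact of reading $\bigcup_{m\in\nn}\bhk{\bigvee^{(m)}P}^{2}_{{\sf CL}}$ as requiring each member to use only boundedly many tapes. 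Under that reading the statement itself, not merely your proof, would be in doubt: a computable $\Delta$ on $Q$ whose output uses finitely many tapes on each input, one of which carries an element of $P$, is a strictly more uniform datum than $P\leq^\omega_1Q$, and your proposed remedy cannot produce it --- ``opening a new tape only along indices whose computations keep producing output'' still opens infinitely many tapes whenever $\Phi_e(g)$ is total for infinitely many $e$, which is the generic situation. So the ``lazy deepening'' does not settle at finite depth, and the crux you defer is not actually repairable along those lines.

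The reading the paper uses identifies $\bigcup_{m\in\nn}\bhk{\bigvee^{(m)}P}^{2}_{{\sf CL}}$ with the $\omega$-indexed classical disjunction $\{f\in(\nn\times\nn)^\nn:(\exists e)\;{\tt pr}_e(f)\in P\}$; this is forced by the subsequent corollary, which needs $\mathbb{V}^\omega_1$ to recover Sorbi's adjoint $\widehat{\rm Deg}$, and by the later proposition identifying $\bhk{\bigvee_{v\in\nn}P_v}_{\sf CL}$ with $\widehat{\rm Deg}(\bigoplus_{v}P_v)$ up to $\equiv^1_1$. With that reading, the ``naive'' construction you reject is precisely the right one and is what the paper does: set $\Delta(\sigma)=\Delta(\sigma^-)\fr\bigl(\concat_{e<|\sigma|}{\tt write}(e,{\tt new}\Phi_e(\sigma))\bigr)\fr{\tt write}(|\sigma|,\Phi_{|\sigma|}(\sigma))$, so that ${\tt pr}_e(\Delta(g))=\Phi_e(g)$ for every $e$, and $P\leq^\omega_1Q$ guarantees that some projection lands in $P$. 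The missing idea is thus not a cleverer bookkeeping scheme but the observation that no bound on the number of tapes is imposed; once that is granted, (6) is the easiest item rather than the hardest.
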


\begin{proof}
The ``if'' parts of all items follow from Proposition \ref{prop:1-5:inf}.
We show the ``only if'' part for every item.

(1)
Assume that $P\leq^1_{<\omega}Q$ via a learner $\Psi$ with mind-change-bound $n$.
We need to construct a computable function $\Delta$ witnessing $\bhk{P\vee P}^{3}_{{\sf LCM}[n]}\leq^1_1Q$.
For any $g\in Q$, by uniformly computable procedure, we can enumerate all elements of ${\tt mcl}_\Psi(g)$ as $m^g_0,m^g_1,\dots,m^g_{k-1}$, where $k<n$.
Then, we define $\Delta(g)$ as follows.
\[\Delta(g)=0\fr\Phi_{\Psi(\lrangle{})}(g\res m^g_0)\fr\sharp
\fr 0\fr\left(\concat_{j<k-1}\Phi_{\Psi(g\res m^g_i+1)}(g\res m^g_i)\fr\sharp\fr 0\right)\fr\Phi_{\Psi(g\res m^g_{k-1}+1)}(g).\]
It is easy to see that $\Delta$ is computable.
Note that ${\tt tail}(\Delta(g))=\Phi_{\Psi(g\res m^g_{k-1}+1)}(g)\in P$, since $P\leq^1_{<\omega}Q$ via $\Psi$, and $\lim_s\Psi(g\res s)$ converges to $\Psi(g\res m^g_{k-1}+1)$.
Furthermore, $\sharp$ occurs $k$ times in $\Delta(g)$, and $k<n$ because of mind-change-bound $n$.
Thus, $\Delta(g)\in\bhk{P\vee P}^{3}_{{\sf LCM}[n]}$ for any $g\in Q$, as desired.

\medskip

(2)
Assume that $P\leq^1_{\omega|<\omega}Q$ via a leaner $\Psi$, where $\#{\tt indx}_\Psi(g)<n$ for any $g\in Q$.
We need to construct a computable function $\Delta$ witnessing $\bhk{\bigvee^{(n)}P}^{2}_{{\sf LCM}}\leq^1_1Q$.
We again use the function ${\tt reindex}_\Psi:\nn^{<\nn}\to\nn$ defined in the proof of Theorem \ref{thm:5:red-eq-dis} (2).
Fix $\sigma\in\nn^{<\nn}$.
Pick the greatest substring $\tau\subsetneq\sigma$ such that $\Psi(\tau)=\Psi(\sigma)$.
Then, define ${\tt new}^*\Phi_{\Psi(\sigma)}(\sigma)$ by the unique $\eta$ such that $\Phi_{\Psi(\sigma)}(\sigma)=\Phi_{\Psi(\sigma)}(\tau)\fr\eta$.
Here, if there is no such $\tau$, then we define ${\tt new}^*\Phi_{\Psi(\sigma)}(\sigma)=\Phi_{\Psi(\sigma)}(\sigma)$.
Assume that $\Delta(\sigma^-)$ has been already defined.
Then, we define $\Delta(\sigma)$ as follows.
\[\Delta(\sigma)=\Delta(\sigma^-)\fr{\tt write}({\tt reindex}_\Psi(\sigma),{\tt new}^*\Phi_{\Psi(\sigma)}(\sigma)).\]

Fix $g\in Q$.
Note that ${\tt reindex}_\Psi(g\res s)<n$ for each $s\in\nn$, since $\#{\tt indx}_\Psi(g)<n$.
Thus, we have $\Delta(g)\in(n\times\nn)^\nn$.
Moreover, ${\tt mc}(\Delta(g))<\infty$, since $\Psi$ is a learner converging on $Q$.
Thus, $\lim_s\Psi(g\res s)$ and hence $\lim_s{\tt reindex}_\Psi(g\res s)$ converge.
Therefore, ${\tt pr}_{\lim_s{\tt reindex}_\Psi(g\res s)}(\Delta(g))=\Phi_{\lim_s\Psi(g\res s)}(g)\in P$.
Hence, $\bhk{\bigvee^{(n)}P}^{2}_{{\sf LCM}}\leq^1_1Q$.

\medskip

(3)
By similar argument used in proof of (1).

\medskip

(4)
Assume that $P\leq^{<\omega}_1Q$ via a finite collection $\{\Phi_e\}_{e<n}$ of partial computable functions.
We need to construct a computable function $\Delta$ witnessing $\bhk{\bigvee^{(n)}P}^{2}_{{\sf CL}}\leq^1_1Q$.
Assume that $\Delta(\sigma^-)$ is already defined.
Define $\Delta(\sigma)$ as follows.
\[\Delta(\sigma)=\Delta(\sigma^-)\fr\concat_{e<n}{\tt write}(e,{\tt new}\Phi_e(\sigma)).\]

Note that ${\tt pr}_e(\Delta(g))\in P$ if $\Phi_e(g)\in P$.
Thus, for any $g\in Q$, we have ${\tt pr}_e(\Delta(g))\in P$ for some $e<n$.
In other words, $\bhk{\bigvee^{(m)}P}^{2}_{{\sf CL}}\leq^1_1Q$ via $\Delta$.

\medskip

(5)
Assume that $P\leq^{<\omega}_{\omega}$ via a team $\{\Psi_i\}_{i<n}$ of learners.
We construct a computable function $\Delta$.
We first set $\Delta(\lrangle{})=\lrangle{}$.
Fix $\sigma\in\nn^{<\nn}$, and assume that $\Delta(\sigma^-)$ has been already defined.
We define $\eta^\sigma_i\in\nn^{<\nn}$ for each $i<n$ as follows.
Fix $i<n$.
If $\Psi_i(\sigma)=\Psi_i(\sigma^-)$, put ${\tt new}^{**}\Phi_{\Psi_i(\sigma)}(\sigma)={\tt new}\Phi_{\Psi_i(\sigma)}(\sigma)$.
If $\Psi_i(\sigma)\not=\Psi_i(\sigma^-)$, put ${\tt new}^{**}\Phi_{\Psi_i(\sigma)}(\sigma)=\sharp\fr\Phi_{\Psi_i(\sigma)}(\sigma)$.
Then, we define $\Delta(\sigma)$ as follows.
\[\Delta(\sigma)=\Delta(\sigma^-)\fr\concat_{i<n}{\tt write}(i,{\tt new}^{**}\Phi_{\Psi_i(\sigma)}(\sigma)).\]

Pick $g\in Q$.
Then, by our assumption, $\Phi_{\lim_n\Psi_i(g\res n)}(g)\in P$ for some $i<b$.
Then ${\tt tail}({\tt pr}_i(\Delta(g)))$ converges, and ${\tt tail}({\tt pr}_i(\Delta(g)))^{\shft 1}=\Phi_{\lim_n\Psi_i(g\res n)}(g)\in P$.
Thus, $\Delta(g)\in\bhk{\bigvee^{(m)}\bhk{P\vee P}^{3}_{{\sf LCM}}}^{2}_{{\sf CL}}$.

\medskip

(6)
Assume that $P\leq^{\omega}_1Q$.
We need to construct a computable function $\Delta$ witnessing $\bigcup_{m\in\nn}\bhk{\bigvee^{(m)}P}^{2}_{{\sf CL}}\leq^1_1Q$.
Assume that $\Delta(\sigma^-)$ has been already defined.
Define $\Delta(\sigma)$ as follows. 
\[\Delta(\sigma)=\Delta(\sigma^-)\fr\left(\concat_{e<|\sigma|}{\tt write}(e,{\tt new}\Phi_e(\sigma))\right)\fr({\tt write}(|\sigma|,\Phi_{|\sigma|}(\sigma))).\]

Note that ${\tt pr}_e(\Delta(g))=\Phi_e(g)$.
Thus, for any $g\in Q$, we have ${\tt pr}_e(\Delta(g))\in P$ for some $e\in\nn$.
In other words, $\bigcup_{m\in\nn}\bhk{\bigvee^{(m)}P}^{2}_{{\sf CL}}\leq^1_1Q$ via $\Delta$.
\end{proof}

\begin{remark}
Given an operation $O:\mathcal{P}(\nn^\nn)\times\mathcal{P}(\nn^\nn)\to\mathcal{P}(\nn^\nn)$, one can introduce the reducibility notion $\leq_O$ by defining $P\leq_OQ$ as $O^{(n)}(P)\leq^1_1Q$ for some $n\in\nn$, where $O^{(1)}(P)=P$ and $O^{(n+1)}(P)=O(P, O^{(n)}(P))$.
\index{$P\leq_OQ$}%
Then, Theorem \ref{theorem:12c:charact} indicates that our reducibility notions induced by seven monoids in Theorem \ref{thm:main:first1-2} are also induced from corresponding disjunction operations.
\end{remark}

\subsection{Galois Connection between Degree Structures}

\begin{remark}
For degree structures $\mathcal{D}_u$ and $\mathcal{D}_r$ on $\mathcal{P}(\nn^\nn)$, each operator $O:\mathcal{P}(\nn^\nn)\to\mathcal{P}(\nn^\nn)$ induces the new operator $O_{ur}:\mathcal{D}_u\to\mathcal{D}_r$ defined by $O_{ur}(\deg_u(P))=\deg_r(O(P))$ for any $P\subseteq\nn^\nn$.
We identify $O$ with $O_{ur}$ whenever $O_{ur}$ is well-defined.
\index{$O_{ur}$}%
Recall that every partially ordered set can be viewed as a category.
Sorbi \cite{Sor} showed that $\widehat{\rm Deg}:\mathcal{D}^\omega_1\to\mathcal{D}^1_1$ is left-adjoint to ${\rm id}:\mathcal{D}^1_1\to\mathcal{D}^\omega_1$, and ${\rm id}\circ\widehat{\rm Deg}:\mathcal{D}^\omega_1\to\mathcal{D}^\omega_1$ is identity, where $\widehat{\rm Deg}(P)$ denotes the Turing upward closure of $P\subseteq\nn^\nn$.
\index{$\widehat{\rm Deg}(P)$}%
\end{remark}

\begin{definition}~
\index{$\mathbb{V}^1_{\tt eff}(P)$}\index{$\mathbb{V}^1_{\omega\mid{\tt eff}}(P)$}%
\index{$\mathbb{V}^1_{\omega}(P)$}\index{$\mathbb{V}^{\tt eff}_{1}(P)$}%
\index{$\mathbb{V}^{\tt eff}_\omega(P)$}\index{$\mathbb{V}^{\omega}_1(P)$}%
\begin{enumerate}
\item $\mathbb{V}^1_{\tt eff}(P)=\bigoplus_{m\in\nn}\bhk{P\vee P}^{3}_{{\sf LCM}[m]}$.
\item $\mathbb{V}^1_{\omega|\tt eff}(P)=\bigoplus_{m\in\nn}\bhk{\bigvee^{(m)}P}^{2}_{{\sf LCM}}$.
\item $\mathbb{V}^1_{\omega}(P)=\bhk{P\vee P}^{3}_{{\sf LCM}}$.
\item $\mathbb{V}^{\tt eff}_{1}(P)=\bigoplus_{m\in\nn}\bhk{\bigvee^{(m)}P}^{2}_{{\sf CL}}$.
\item $\mathbb{V}^{\tt eff}_\omega(P)=\bigoplus_{m\in\nn}\bhk{\bigvee^{(m)}\bhk{P\vee P}^{3}_{{\sf LCM}}}^{2}_{{\sf CL}}$.
\item $\mathbb{V}^{\omega}_1(P)=\bigcup_{m\in\nn}\bhk{\bigvee^{(m)}P}^{2}_{{\sf CL}}$.
\end{enumerate}
\end{definition}

\begin{cor}~
\begin{enumerate}
\item $\mathbb{V}^1_{\tt eff}:\mathcal{D}^1_{\tt eff}\to\mathcal{D}^1_1$ is left-adjoint to ${\rm id}_{\mathcal{P}(\nn^\nn)}:\mathcal{D}^1_1\to\mathcal{D}^1_{\tt eff}$, and ${\rm id}_{\mathcal{P}(\nn^\nn)}\circ\mathbb{V}^1_{\tt eff}$ is the identity on $\mathcal{D}^1_{\tt eff}$.
\item $\mathbb{V}^1_{\omega|{\tt eff}}:\mathcal{D}^1_{\omega|{\tt eff}}\to\mathcal{D}^1_1$ is left-adjoint to ${\rm id}_{\mathcal{P}(\nn^\nn)}:\mathcal{D}^1_1\to\mathcal{D}^1_{\omega|{\tt eff}}$, and ${\rm id}_{\mathcal{P}(\nn^\nn)}\circ\mathbb{V}^1_{\omega|{\tt eff}}$ is the identity on $\mathcal{D}^1_{\omega|{\tt eff}}$.
\item $\mathbb{V}^1_{\omega}:\mathcal{D}^1_{\omega}\to\mathcal{D}^1_1$ is left-adjoint to ${\rm id}_{\mathcal{P}(\nn^\nn)}:\mathcal{D}^1_1\to\mathcal{D}^1_{\omega}$, and ${\rm id}_{\mathcal{P}(\nn^\nn)}\circ\mathbb{V}^1_{\omega}$ is the identity on $\mathcal{D}^1_{\omega}$.
\item $\mathbb{V}^{\tt eff}_1:\mathcal{D}^{\tt eff}_1\to\mathcal{D}^1_1$ is left-adjoint to ${\rm id}_{\mathcal{P}(\nn^\nn)}:\mathcal{D}^1_1\to\mathcal{D}^{\tt eff}_1$, and ${\rm id}_{\mathcal{P}(\nn^\nn)}\circ\mathbb{V}^{\tt eff}_1$ is the identity on $\mathcal{D}^{\tt eff}_1$.
\item $\mathbb{V}^{\tt eff}_\omega:\mathcal{D}^{\tt eff}_\omega\to\mathcal{D}^1_1$ is left-adjoint to ${\rm id}_{\mathcal{P}(\nn^\nn)}:\mathcal{D}^1_1\to\mathcal{D}^{\tt eff}_\omega$, and ${\rm id}_{\mathcal{P}(\nn^\nn)}\circ\mathbb{V}^{\tt eff}_\omega$ is the identity on $\mathcal{D}^{\tt eff}_\omega$.
\item $\mathbb{V}^\omega_1:\mathcal{D}^\omega_1\to\mathcal{D}^1_1$ is left-adjoint to ${\rm id}_{\mathcal{P}(\nn^\nn)}:\mathcal{D}^1_1\to\mathcal{D}^\omega_1$, and ${\rm id}_{\mathcal{P}(\nn^\nn)}\circ\mathbb{V}^\omega_1$ is the identity on $\mathcal{D}^\omega_1$.
\end{enumerate}
\end{cor}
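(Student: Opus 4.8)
The plan is to reduce all six items to a single \emph{adjunction inequality} and then obtain the remaining assertions by formal reasoning about Galois connections between preorders. Write $\mathcal{F}$ for one of the monoids $[\mathfrak{C}_T]^\alpha_{\beta|\gamma}$ occurring in the statement and $\mathbb{V}$ for the corresponding operator $\mathbb{V}^\alpha_{\beta|\gamma}$. The heart of the matter is the equivalence
\[
\mathbb{V}(P)\leq^1_1 Q\iff P\leq_{\mathcal{F}}Q\qquad\text{for all }P,Q\subseteq\nn^\nn .
\]
Granting this, everything else is automatic. Taking $Q=P$ (so the right side holds by reflexivity) gives $\mathbb{V}(P)\leq^1_1 P$, hence $\mathbb{V}(P)\leq_{\mathcal{F}}P$ because $[\mathfrak{C}_T]^1_1\subseteq\mathcal{F}$; taking $Q=\mathbb{V}(P)$ (the left side then holds by reflexivity) gives the reverse $P\leq_{\mathcal{F}}\mathbb{V}(P)$; so $\mathbb{V}(P)\equiv_{\mathcal{F}}P$, which is exactly the claim that ${\rm id}\circ\mathbb{V}$ is the identity on $\mathcal{D}^\alpha_{\beta|\gamma}$. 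Well-definedness of $\mathbb{V}$ on degrees, monotonicity of $\mathbb{V}$ and of ${\rm id}:\mathcal{D}^1_1\to\mathcal{D}^\alpha_{\beta|\gamma}$, and the adjunction $\mathbb{V}\dashv{\rm id}$ itself are then the standard consequences of the displayed equivalence together with transitivity of the two preorders (Corollary \ref{cor:1-3:monoid}, whose argument applies verbatim to the ${\tt eff}$-modified notions, as already used in Proposition \ref{prop:1-2:comgen}).

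For items (3) and (6) the displayed equivalence is literally Theorem \ref{theorem:12c:charact}(3) and (6), since $\mathbb{V}^1_\omega(P)=\bhk{P\vee P}^{3}_{{\sf LCM}}$ and $\mathbb{V}^\omega_1(P)=\bigcup_{m\in\nn}\bhk{\bigvee^{(m)}P}^{2}_{{\sf CL}}$. For items (1), (2), (4) and (5) I would derive it as the ``${\tt eff}$-ification'' of Theorem \ref{theorem:12c:charact}(1), (2), (4), (5). The point is that a Medvedev reduction into a coproduct $\bigoplus_{m\in\nn}R_m$ carries exactly the same information as an $\mathcal{F}$-reduction in which the relevant bound (number of mind-changes for (1), number of hypotheses for (2), number of pieces for (4), team size for (5)) is allowed to depend on the input \emph{through a partial computable function}, because the first output coordinate $\Gamma(g)(0)$ is determined by a finite prefix of $g$ and may serve as that bound. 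Concretely, for ``$\Leftarrow$'': given an $\mathcal{F}$-reduction of $P$ to $Q$ with input-dependent computable bound $B$, run the construction from the ``only if'' direction of the matching clause of Theorem \ref{theorem:12c:charact} with the fixed number $n$ replaced, on input $g\in Q$, by $B(g)$; this lands in the $B(g)$-th summand, and prefixing $B(g)$ gives a Medvedev reduction of $\mathbb{V}(P)$ to $Q$. For ``$\Rightarrow$'': given a Medvedev reduction $\Gamma$ of $\mathbb{V}(P)$ to $Q$, read off $m_g:=\Gamma(g)(0)$, apply the ``untangling'' reductions of Proposition \ref{prop:1-5:inf} inside the $m_g$-th summand to pass from $\Gamma(g)$ to a member of $P\oplus P\equiv^1_1 P$ (via Proposition \ref{prop:1-2b:cnsivee}), and compose with $g\mapsto\Gamma(g)$; since $g\mapsto m_g$ is partial computable and $\mathcal{F}$ is closed under composition, the composite witnesses $P\leq_{\mathcal{F}}Q$.

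I expect the only genuine work beyond invoking Theorem \ref{theorem:12c:charact} and Proposition \ref{prop:1-5:inf} to be the bookkeeping in this last paragraph: checking that the fixed bound $n$ in each clause of Theorem \ref{theorem:12c:charact} can be replaced throughout by the value of a partial computable function of the input without spoiling uniformity, and symmetrically that reading the summand index off the first output coordinate and feeding it to the untanglers of Proposition \ref{prop:1-5:inf} stays within the monoid $[\mathfrak{C}_T]^\alpha_{\beta|\gamma}$. The nested case (5), where $\mathbb{V}^{\tt eff}_\omega(P)$ stacks a two-tape ${\sf CL}$ disjunction over backtrack ${\sf LCM}$ disjunctions, requires combining two of the untanglers of Proposition \ref{prop:1-5:inf} (items (8) and (6)) and is the most laborious, but introduces no new idea. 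Once the displayed equivalence is established for all six $\mathcal{F}$, the six bulleted conclusions follow uniformly by the abstract argument of the first paragraph.
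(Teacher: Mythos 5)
Your proposal is correct and follows essentially the same route as the paper: the paper's own proof is a one-line citation of its characterization theorem (the reference printed is to the piecewise-computability theorem, but the intended content is clearly Theorem \ref{theorem:12c:charact} together with the definitions of the $\mathbb{V}$ operators as coproducts of the disjunctions appearing there), and your reduction of all six items to the adjunction equivalence $\mathbb{V}(P)\leq^1_1Q\iff P\leq_{\mathcal F}Q$, plus the observation that the ${\tt eff}$ cases are exactly the input-dependent-bound versions absorbed by the coproduct over $m$, is the argument the paper leaves implicit. Nothing in your outline deviates from or adds to what the paper relies on.
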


\begin{proof}\upshape
By Theorem \ref{thm:5:red-eq-dis}.
\end{proof}

\subsection{$\Sigma^0_2$ Decompositions}

In computability theory, we sometimes encounter conditional branching given by a $\Sigma^0_2$ formula $S\equiv \exists n\tilde{S}(n)$.
That is, if $S$ is true, one chooses a procedure $p_1$, and if $S$ is false, one chooses another procedure $p_2$.
Thus, one may define {\em the computability with a $\Sigma^0_2$ conditional branching} as the class ${\rm dec}^2_{\rm d}[\Pi^0_2]$.
However, even if we know that $S$ is true, we have no algorithm to find a witness of $S$ since $\tilde{S}(n)$ is $\Pi^0_1$, while we sometimes require a witness of $S$.
This observation motivates us to study a missing interesting subclass of the nonuniformly computable functions.

\begin{prop}\label{prop:1-2c:dec-sigma2-lem}
${\rm dec}^{<\omega}_{\rm d}[\Pi^0_2]{\rm dec}^\omega_{\rm p}[\Pi^0_1]$ is the smallest monoid including ${\rm dec}^2_{\rm d}[\Pi^0_2]$ and ${\rm dec}^\omega_{\rm p}[\Pi^0_1]$.
\end{prop}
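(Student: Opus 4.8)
The plan is to prove the two halves separately: that $\mathcal{M}_0:={\rm dec}^{<\omega}_{\rm d}[\Pi^0_2]{\rm dec}^{\omega}_{\rm p}[\Pi^0_1]$ is a monoid containing both ${\rm dec}^{2}_{\rm d}[\Pi^0_2]$ and ${\rm dec}^{\omega}_{\rm p}[\Pi^0_1]$, and conversely that every monoid containing these two classes contains $\mathcal{M}_0$. The two containments are immediate: a computable function is trivially $\Pi^0_1$-piecewise computable (one-block partition), so every ${\rm dec}^{2}_{\rm d}[\Pi^0_2]$-function lies in $\mathcal{M}_0$; and every $\Pi^0_1$-piecewise computable function lies in $\mathcal{M}_0$ via the trivial one-level $\Pi^0_2$ $d$-layer $\{\nn^\nn\}$. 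Since the identity is computable, $\mathcal{M}_0$ has a unit, so it remains only to prove closure under composition.

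For closure I would use two facts. First, $\mathcal{B}:={\rm dec}^{\omega}_{\rm p}[\Pi^0_1]=[\mathfrak{C}_T]^1_\omega$ is itself a monoid, by Theorem \ref{thm:5:red-eq-dis}(3) and Corollary \ref{cor:1-3:monoid}. Second, a preimage lemma: if $\Delta\in\mathcal{B}$ and $S$ is (uniformly) $\Pi^0_2$, then $\Delta^{-1}(S)$ is $\Pi^0_2$ relative to ${\rm dom}(\Delta)$ — indeed, on each $\Pi^0_1$ block $\Delta$ is computable, so preimages of $\Sigma^0_2$ (resp.\ $\Pi^0_2$) sets are $\Sigma^0_2$ (resp.\ $\Pi^0_2$), and a countable union of uniformly $\Sigma^0_2$ sets taken over pairwise disjoint $\Pi^0_1$ blocks is again $\Sigma^0_2$. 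Now take $\Gamma,\Delta\in\mathcal{M}_0$ with outer $\Pi^0_2$ $d$-layers $\{F_i^*\}_{i<l}$, $\{E_j^*\}_{j<k}$ and inner $\mathcal{B}$-functions $\Gamma_i=\Gamma\res(F_i^*\setminus F_{i-1}^*)$, $\Delta_j=\Delta\res(E_j^*\setminus E_{j-1}^*)$. Partition ${\rm dom}(\Gamma\circ\Delta)$ into the $\le kl$ sets $P_{j,i}=\{g: g\in E_j^*\setminus E_{j-1}^* \text{ and } \Delta(g)\in F_i^*\setminus F_{i-1}^*\}$; on $P_{j,i}$ we have $\Gamma\circ\Delta=\Gamma_i\circ\Delta_j\in\mathcal{B}$, uniformly. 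Ordering the pairs $(j,i)$ lexicographically and setting $H^*_{(j,i)}=(E_{j-1}^*\cup\widetilde{S_i})\cap E_j^*$, where $\widetilde{S_i}$ is the ambient $\Pi^0_2$ set cutting out $\Delta^{-1}(F_i^*)$ supplied by the preimage lemma, one checks that the $H^*_{(j,i)}$ form an increasing family of $\Pi^0_2$ sets (using that $\Pi^0_2$ is closed under finite unions and intersections) covering ${\rm dom}(\Gamma\circ\Delta)$ whose successive differences are exactly the $P_{j,i}$; this is the required $\Pi^0_2$ $d$-layer, so $\Gamma\circ\Delta\in{\rm dec}^{kl}_{\rm d}[\Pi^0_2]\mathcal{B}\subseteq\mathcal{M}_0$.

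For minimality it suffices to factor an arbitrary $\Gamma\in\mathcal{M}_0$ as a finite composition of functions in ${\rm dec}^{2}_{\rm d}[\Pi^0_2]\cup\mathcal{B}$, which I would do by induction on the number $l$ of levels of the outer $d$-layer, mimicking the factorization in the proof of Proposition \ref{prop:1-4:monoid2}. For $l=1$, $\Gamma\in\mathcal{B}$ and we are done. For $l\ge 2$, peel off the top level $F_{l-1}^*\setminus F_{l-2}^*$: write $\Gamma=\Gamma^B\circ\Gamma^A$, where $\Gamma^A(g)=0\fr g$ if $g\in F_{l-2}^*$ and $\Gamma^A(g)=1\fr\Gamma_{l-1}(g)$ otherwise, and $\Gamma^B(0\fr g)=\Gamma'(g)$, $\Gamma^B(1\fr h)=h$, with $\Gamma'=\Gamma\res F_{l-2}^*$ an $(l-1)$-level member of $\mathcal{M}_0$. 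Then $\Gamma^A$ itself splits as $(\text{conditional }\Gamma_{l-1})\circ(\text{branch})$, where the branch $g\mapsto 0\fr g \,/\, 1\fr g$ is in ${\rm dec}^{2}_{\rm d}[\Pi^0_2]$ (two computable pieces over the $\Pi^0_2$ $d$-layer $\{F_{l-2}^*,\nn^\nn\}$) and the conditional application is in $\mathcal{B}$ (identity on the clopen set of $0$-tagged inputs, $\Gamma_{l-1}$ on the $1$-tagged inputs, hence $\Pi^0_1$-piecewise computable). For $\Gamma^B$, the induction hypothesis writes $\Gamma'=D_p\circ\cdots\circ D_1$ with each $D_t\in{\rm dec}^{2}_{\rm d}[\Pi^0_2]\cup\mathcal{B}$; one lifts each $D_t$ to $\widehat{D_t}$ acting as $D_t$ on $0$-tagged inputs and as the identity on $1$-tagged inputs (with computable tag bookkeeping at the ends), and since the $0$- and $1$-tagged inputs are clopen-separated, $\widehat{D_t}\in{\rm dec}^{2}_{\rm d}[\Pi^0_2]$ when $D_t$ is (fold the clopen $1$-part into the second layer) and $\widehat{D_t}\in\mathcal{B}$ when $D_t$ is; then $\Gamma^B=(\text{final strip})\circ\widehat{D_p}\circ\cdots\circ\widehat{D_1}$.

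The main obstacle is this minimality direction, and precisely the fact that the "$\Sigma^0_2$-branchings" of the outer $d$-layer cannot be commuted past the $\Pi^0_1$-piecewise computable pieces: the pieces $\Gamma_i$ have domains $F_i^*\setminus F_{i-1}^*$ that are only differences of $\Pi^0_2$ sets, so they cannot legitimately be applied before the branch is resolved, which forces the factorization to interleave ${\rm dec}^{2}_{\rm d}[\Pi^0_2]$-branchings with $\mathcal{B}$-applications one level at a time. Keeping the auxiliary tag coordinate clopen, so that the lifting operation preserves membership in both ${\rm dec}^{2}_{\rm d}[\Pi^0_2]$ and $\mathcal{B}$, together with the closure of $\Pi^0_2$ under finite unions, is what makes this bookkeeping go through; the analogous bookkeeping in the closure-under-composition argument is comparatively mild. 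Throughout both directions I would also verify the uniformity demanded by the definition of ${\rm dec}^{X}_{x}[\Lambda]\mathcal{F}$, namely that the blocks and the inner functions produced are uniform in their indices.
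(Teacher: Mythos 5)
Your proposal is correct, but for the minimality direction it takes a genuinely different route from the paper. The paper's proof handles minimality with a single two-factor splitting: given $\Gamma$ with outer $\Pi^0_2$ $d$-layer $\{D_i\}$ and inner $\Pi^0_1$ partitions $\{P^i_n\}_n$, it sets $\Gamma_0(g)=i\fr g$ for $g\in D_i$ (a tagging map in ${\rm dec}^2_{\rm d}[\Pi^0_2]$) and then observes that \emph{all} of the remaining work is one single ${\rm dec}^\omega_{\rm p}[\Pi^0_1]$ function $\Gamma_1(i\fr g)=\Gamma^i_n(g)$, because after tagging the doubly-indexed family $\{i\fr P^i_n\}_{i,n}$ collapses into one uniform $\Pi^0_1$ partition of the tagged space. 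Your induction on the number of levels, peeling off the top layer and interleaving a ${\rm dec}^2_{\rm d}[\Pi^0_2]$-branch with a conditional $\mathcal{B}$-application at each stage, also works (the clopen tag keeps each lifted factor inside its class, as you note), but it produces a longer factorization and requires the lifting bookkeeping that the paper's collapse avoids; the paper's version also yields the sharper fact, used in Corollary \ref{cor:1-2c:computable-along}, that every member of the class is a composition of exactly one generator of each kind. On the other hand, your proposal is more complete than the paper's: the paper states only the two-level factorization and leaves both the trivial containments and closure of ${\rm dec}^{<\omega}_{\rm d}[\Pi^0_2]{\rm dec}^\omega_{\rm p}[\Pi^0_1]$ under composition implicit, whereas you prove closure explicitly via the preimage lemma (that $\mathcal{B}$-preimages of $\Sigma^0_2$ sets are $\Sigma^0_2$, hence of $\Pi^0_2$ sets are $\Pi^0_2$ relative to the domain by complementation) and the lexicographically ordered layer $H^*_{(j,i)}=(E^*_{j-1}\cup\widetilde{S_i})\cap E^*_j$, which checks out. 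Both arguments are sound; yours fills gaps the paper glosses over, the paper's is the more economical factorization.
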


\begin{proof}
It suffices to show that every $\Gamma\in{\rm dec}^{2}_{\rm d}[\Pi^0_2]{\rm dec}^\omega_{\rm p}[\Pi^0_1]$ is the composition of some $\Gamma_0\in{\rm dec}^2_{\rm d}[\Pi^0_2]$ and $\Gamma_1\in{\rm dec}^\omega_{\rm p}[\Pi^0_1]$.
For every $\Gamma\in{\rm dec}^2_{\rm d}[\Pi^0_2]{\rm dec}^\omega_{\rm p}[\Pi^0_1]$, there exist a $\Pi^0_2$ $d$-layer $\{D_0,D_1\}$ and $\Pi^0_1$ partitions $\{\{P^0_{n}\}_{n\in\nn},\{P^1_{n}\}_{n\in\nn}\}$ such that $\Gamma^i_{n}=\Gamma\res D_i\cap P^i_{n}$ is computable uniformly in $i<2$ and $n\in\nn$, where $\{P^i_{n}\}_{n\in\nn}$ is a partition of $D_i$ for every $i\in\{0,1\}$.
Let $\Gamma_0:D_0\cup D_1\to D_0\oplus D_1$ be the union of two computable homeomorphisms $D_0\simeq 0\fr D_0$ and $D_1\simeq 1\fr D_1$.
For instance, put $\Gamma_0(g)=i\fr g$ for $g\in D_i$.
Then $\Gamma_0\in {\rm dec}^2_{\rm d}[\Pi^0_2]$ since $\{D_0,D_1\}$ is a $\Pi^0_2$ $d$-layer.
Define $\Gamma_1(i\fr g)=\Gamma^i_n(g)$ for any $i<2$ and $g\in i\fr P^i_n$.
Then, $\Gamma_1\in{\rm dec}^\omega_{\rm p}[\Pi^0_1]$, since $\{\Gamma^i_n\}_{i<2,n\in\nn}$ is uniformly computable, and $\{P^i_n\}_{i<2,n\in\nn}$ is uniformly $\Pi^0_1$.
Clearly we have $\Gamma^i_n\res D_i\cap P^i_n=\Gamma_1\circ\Gamma_0\res D_i\cap P^i_n$ for any $i<2$ and $n\in\nn$.
Hence, $\Gamma=\Gamma_1\circ\Gamma_0$.
\end{proof}

The following concept of {\em hyperconcatenation} plays a key role in many proofs in Part II.
In the next section, we will see that the hyperconcatenation can be defined as {\em infinitary disjunction along an ill-founded tree}.

\begin{definition}[Hyperconcatenation]\label{def:1-2c:hyperconcat_f}
\index{${\tt content}$}\index{${\tt walk}$}%
For any strings $\sigma\in(\nn\cup\{{\tt pass}\})^{<\nn}$ and $\tau\in(\nn\cup\{\sharp,{\tt pass}\})^{<\nn}$, the {\em content of $\sigma$}, ${\tt content}(\sigma)$, and the {\em walk of $\tau$}, ${\tt walk}(\tau)$, is inductively defined as follows.
\begin{align*}
&{\tt content}(\lrangle{})=\lrangle{}, & & {\tt content}(\sigma)=
\begin{cases}
{\tt content}(\sigma^-)\fr \sigma(|\sigma|-1) & \mbox{if }\sigma(|\sigma|-1)\not={\tt pass},\\
{\tt content}(\sigma^-) & \mbox{otherwise.}
\end{cases}
\\
&{\tt walk}(\tau\res 1)=\lrangle{}, & & {\tt walk}(\tau)=
\begin{cases}
{\tt walk}(\tau^-)\fr v & \mbox{ if }\tau(|\tau|-2)=\sharp\;\&\;\tau(|\tau|-1)=v,\\
{\tt walk}(\tau^-) & \mbox{ otherwise.}
\end{cases}
\end{align*}
Then, the {\em content} of $f\in(\nn\cup\{{\tt pass}\})^{\nn}$ and the {\em walk} of $g\in(\nn\cup\{\sharp,{\tt pass}\})^{\nn}$ are defined by ${\tt content}(f)=\bigcup_{n\in\nn}{\tt content}(f\res n)$ and ${\tt walk}(g)=\bigcup_{n\in\nn}{\tt walk}(g\res n)$, respectively.
Let $P$ and $Q$ be {\em any} subsets of Baire space $\nn^\nn$.
\index{concatenation!hyper-}\index{hyperconcatenation}\index{$\bhk{Q\vee P}_{\Sigma^0_2}^\htie$}%
\index{concatenation!hyper-!non-Lipschitz}\index{hyperconcatenation!non-Lipschitz}\index{$\bhk{Q\vee P}_{\Sigma^0_2}$}%
The {\em hyperconcatenation} $\bhk{Q\vee P}_{\Sigma^0_2}^\htie$ and the {\em non-Lipschitz hyperconcatenation} $\bhk{Q\vee P}_{\Sigma^0_2}$ of $Q$ and $P$ are defined as follows.
\begin{align*}
\bhk{Q\vee P}_{\Sigma^0_2}^\htie&=\{g\in(\nn\cup\{\sharp\})^\nn:{\tt walk}(g)\in Q\;\text{or}\;{\tt tail}(g)^{\shft 1}\in P\},\\
\bhk{Q\vee P}_{\Sigma^0_2}&=\{g\in(\nn\cup\{\sharp,{\tt pass}\})^\nn:{\tt content}\circ{\tt walk}(g)\in Q\;\text{or}\;{\tt tail}(g)^{\shft 1}\in P\}.
\end{align*}

Note that these notions are non-commutative.
\end{definition}

\begin{theorem}[As the Law of Excluded Middle]\label{thm:5:lemiddle}
The implications (b$^+$) $\rightarrow$ (a) $\rightarrow$ (a$^-$) $\leftrightarrow$ (b$^-$) hold for any $P,Q,R\subseteq\nn^\nn$:
\begin{enumerate}
\item[(a)] $\bhk{Q\vee P}^\htie_{\Sigma^0_2}\leq^1_1R$.
\item[(a$^-$)] $\bhk{Q\vee P}_{\Sigma^0_2}\leq^1_1R$.
\item[(b$^+$)] There is a $\Sigma^0_2$ sentence $\varphi\equiv\exists v\theta(v)$ with a uniform sequence $\{\Gamma_i\}_{i\in\nn},\Delta$ of computable functions such that
\begin{itemize}
\item if $g\in R$ satisfies $\theta(v)$, then $\Gamma_v(g;u)\downarrow$ for any $u\in\nn$, and $\Gamma_v(g)\in P$.
\item if $g\in R$ satisfies $\neg\theta(v)$, then $\Delta(g;u)\downarrow$ for any $u\leq v$, and $[\Delta(g)\res v+1]$ intersects with $Q$.
\item if $g\in R$ satisfies $\neg\exists v\theta(v)$, then $\Delta(g;u)\downarrow$ for any $u\in\nn$, and $\Delta(g)\in Q$.
\end{itemize}
\item[(b$^-$)] There is a $\Sigma^0_2$ sentence $\varphi\equiv\exists v\theta(v)$ with a uniform sequence $\{\Gamma_i\}_{i\in\nn},\Delta$ of computable functions such that
\begin{itemize}
\item if $g\in R$ satisfies $\theta(v)$, then $\Gamma_v(g;u)\downarrow$ for any $u\in\nn$, and $\Gamma_v(g)\in P$.
\item if $g\in R$ satisfies $\neg\exists v\theta(v)$, then $\Delta(g;u)\downarrow$ for any $u\in\nn$, and $\Delta(g)\in Q$.
\end{itemize}
\end{enumerate}
\end{theorem}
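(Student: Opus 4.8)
The plan is to run the displayed cycle as two \emph{encoding} arguments, one \emph{decoding} argument, and one trivial inclusion: (b$^+$)$\,\to\,$(a) and (b$^-$)$\,\to\,$(a$^-$) construct reductions out of the given $\Sigma^0_2$ data, (a$^-$)$\,\to\,$(b$^-$) reads a $\Sigma^0_2$ sentence off of a given reduction, and (a)$\,\to\,$(a$^-$) comes for free. For the last one, observe that every element of $\bhk{Q\vee P}^\htie_{\Sigma^0_2}$ is a sequence over $\nn\cup\{\sharp\}$ with no occurrence of ${\tt pass}$, and on such sequences ${\tt content}\circ{\tt walk}$ agrees with ${\tt walk}$ while ${\tt tail}(\cdot)^{\shft 1}$ is unchanged; hence $\bhk{Q\vee P}^\htie_{\Sigma^0_2}\subseteq\bhk{Q\vee P}_{\Sigma^0_2}$ as subsets of Baire space, so any computable reduction witnessing (a) already witnesses (a$^-$).

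For (b$^+$)$\,\to\,$(a) I would build a computable $\Theta$ realising $\bhk{Q\vee P}^\htie_{\Sigma^0_2}\leq^1_1R$ directly. Write $\theta(v)\equiv\forall w\,\theta_0(v,w)$, and on $g\in R$ let $\Theta$ keep a current candidate $c$, starting at $0$ after one harmless non-$\sharp$ symbol. While $c$ is current, $\Theta$ dovetails the search for some $w$ with $\neg\theta_0(c,w)$ against the computation of $\Gamma_c(g;0),\Gamma_c(g;1),\dots$, appending these values to the working tape; when $\neg\theta(c)$ is confirmed it writes $\sharp$ (which in $\bhk{\cdot\vee\cdot}^\htie_{\Sigma^0_2}$ erases the working tape and supplies the next ${\tt walk}$-letter) followed by $\Delta(g;c)$ — legitimate, since (b$^+$) yields $\Delta(g;c)\!\downarrow$ from $\neg\theta(c)$ — and advances to $c+1$. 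If the candidate stabilises at $v$, then $\theta(v)$ holds, $\Gamma_v(g)$ is total and in $P$, and equals ${\tt tail}(\Theta(g))^{\shft 1}$; if it is unbounded, then $\neg\exists v\,\theta(v)$, all $\Delta(g;u)\!\downarrow$, every refuted clause keeps ${\tt walk}(\Theta(g))\res(c{+}1)=\Delta(g)\res(c{+}1)$ extendible into $Q$, and in the limit ${\tt walk}(\Theta(g))=\Delta(g)\in Q$. The ``prefix intersects $Q$'' clause of (b$^+$) is exactly what makes this monotone build of the ${\tt walk}$ legitimate, so no ${\tt pass}$ is needed. For (b$^-$)$\,\to\,$(a$^-$) I would reuse this skeleton, but now only the $\Pi^0_1$ predicates $\theta(v)$ can be approximated, so the candidate at stage $s$ is the least $v<s$ whose $\theta(v)$ is not yet refuted; since (b$^-$) promises $\Delta(g)\in Q$ only in the limit, on each candidate change $\Theta$ writes $\sharp$ followed by the next not-yet-transcribed letter of $\Delta(g)$ if it has converged and by ${\tt pass}$ otherwise. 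Then infinitely many changes (i.e.\ $\neg\exists v\,\theta(v)$) give ${\tt content}\circ{\tt walk}(\Theta(g))=\Delta(g)\in Q$, and finitely many (i.e.\ $\exists v\,\theta(v)$, with least true witness $v^{*}$) leave the final tail equal to $\Gamma_{v^{*}}(g)\in P$.

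For (a$^-$)$\,\to\,$(b$^-$) I would decode: fix a computable $\Theta$ with $\Theta(g)\in\bhk{Q\vee P}_{\Sigma^0_2}$ for all $g\in R$ (so $\Theta(g)$ is total on $R$), take $\varphi\equiv\exists v\,\theta(v)$ with $\theta(v)$ the $\Pi^0_1$ statement ``no computation $\Theta(g;s)$ with $s\ge v$ halts with output $\sharp$'', set $\Gamma_v(g)$ to be $\Theta(g)$ shifted past one plus the largest position $<v$ carrying a $\sharp$ (past position $0$ if there is none), and set $\Delta(g)={\tt content}\circ{\tt walk}(\Theta(g))$; these are uniformly computable in $g$. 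The verification splits on whether $\Theta(g)$ has finitely or infinitely many $\sharp$'s. In the finite case ${\tt content}\circ{\tt walk}(\Theta(g))$ is a finite string, hence not in $Q$, so membership in $\bhk{Q\vee P}_{\Sigma^0_2}$ forces ${\tt tail}(\Theta(g))^{\shft 1}\in P$, and if $\theta(v)$ holds then the last $\sharp$ sits below $v$, whence $\Gamma_v(g)={\tt tail}(\Theta(g))^{\shft 1}\in P$ and is total. In the infinite case ${\tt tail}(\Theta(g))$ diverges, so the $P$-disjunct fails and $\Delta(g)\in Q$, and ``infinitely many $\sharp$'s'' is literally $\neg\exists v\,\theta(v)$; uniformity of $\{\Gamma_i\},\Delta$ is read off the descriptions.

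I expect the main obstacle to be the (b$^-$)$\,\to\,$(a$^-$) construction, where one must interleave the non-monotone approximation to the least true $v$, the erasures of the working tape at each candidate change, and the ${\tt pass}$-padded transcription of $\Delta(g)$ into the ${\tt walk}$, and then check that the output is total and lands in $\bhk{Q\vee P}_{\Sigma^0_2}$ in both regimes — in particular that the candidate changes infinitely often exactly when $\Delta(g)$ is the intended $Q$-solution. The remaining bookkeeping with ${\tt tail}$, ${\tt walk}$, ${\tt content}$ and the ``${\tt new}$'' increments along prefixes (as in Proposition~\ref{prop:1-2:wellbehaved}) is routine but needs care, and the exact placement of the initial padding symbol must be checked so that ${\tt tail}(\cdot)^{\shft 1}$ selects the right infinite word when the candidate never moves.
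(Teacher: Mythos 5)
Your proposal is correct and follows essentially the same route as the paper: the same candidate-advancing construction for (b$^+$)$\to$(a) (refute $\theta(c)$ in order, write $\sharp$ then $\Delta(g;c)$, resume with $\Gamma_{c+1}$), the same least-unrefuted-$v$ construction with ${\tt pass}$-padding for (b$^-$)$\to$(a$^-$), the same shift-past-the-last-$\sharp$ decoding for (a$^-$)$\to$(b$^-$), and the inclusion $\bhk{Q\vee P}^\htie_{\Sigma^0_2}\subseteq\bhk{Q\vee P}_{\Sigma^0_2}$ for (a)$\to$(a$^-$). The only quibble is that what makes the $\htie$-version work without ${\tt pass}$ is the clause ``$\Delta(g;u)\downarrow$ for $u\leq v$ whenever $\neg\theta(v)$'' rather than the ``prefix intersects $Q$'' clause, but your construction invokes exactly the right fact, so nothing is missing.
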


\begin{proof}\upshape
(b$^+$)$\rightarrow$(a):
Assume that $S_i=\{g\in\nn^\nn:\Theta(g;i)\uparrow\}$ for some computable function $\Theta$, and that $P\leq^1_1R\cap S_i$ via $\Gamma_i$ and $Q\leq^1_1R\setminus\bigcup_{i\in\nn}S_i$ via $\Delta$.
For a string $\sigma\in\nn^{<\nn}$, define $d(\sigma)$ and $t(\sigma;i)$ as follows:
\begin{align*}
d(\sigma)&=\max\{d\in\nn:(\forall i<d)\;\Theta(\sigma;i)\downarrow\};\\
t(\sigma;i)&=\min\{t\in\nn:\Theta(\sigma\res t;i)\downarrow\},\text{ for any }i<d(\sigma).
\end{align*}
Then let us define $\Lambda(\sigma)=\concat_{i<d(\sigma)}\left(\Gamma_i(\sigma\res t(\sigma;i))\fr\sharp\fr\Delta(\sigma;i)\right)\fr\Gamma_{d(\sigma)}(\sigma)$.

(a$^-$)$\rightarrow$(b$^-$):
Assume that $\bhk{Q\vee P}_{\Sigma^0_2}\leq^1_1R$ via a computable function $\Phi$.
Set $S_v=\{g\in\nn^\nn:(\forall n\geq v)\;\Phi(g;n)\not=\sharp\}$.
For a string $\sigma\in\nn^{<\nn}$, we first computes the following ${\tt count}(\sigma)$ and ${\tt mcl}_\sharp(\sigma,n)$ for each $n\in\nn$:
\begin{align*}
{\tt count}(\sigma)&=\#\{m<|\sigma|:\Phi(\sigma;m)=\sharp\},\\
{\tt mcl}_\sharp(\sigma,n)&=\min\{m\leq |\sigma|:{\tt count}(\sigma\res m)>n\}, \text{ if such $m$ exists.}
\end{align*}
Then set $\Gamma_v(\sigma)=\Phi(\sigma)^{\shft {\tt mcl}_\sharp(\sigma,{\tt count}(\sigma\res v))+1}$; and set $\Delta(\sigma)=\lambda n.\Phi(\sigma,{\tt mcl}_\sharp(\sigma,n))$.
Note that if $g\in R\cap S_k$ for some $k\in\nn$, then $\Gamma_k(g)\in P$; otherwise, $\Delta(g)\in Q$.
Therefore, $P\leq^1_1R\cap S_v$ via $\Gamma_v$ and $Q\leq^1_1R\setminus S$ via $\Delta$.

(b$^-$)$\rightarrow$(a$^-$):
For each $\sigma\in\nn^{<\nn}$, let $v(\sigma)$ be the least $v$ such that $R(u,v,\sigma)$ holds for all $u<|\sigma|$, where $\varphi(g)\equiv(\exists v)(\forall u)R(u,v,g\res u)$.
We inductively define a computable function $\Phi$ as follows.
We first set $\Phi(\lrangle{})=\lrangle{}$.
Assume that $\Phi(\sigma^-)$ has been already defined.
\[
\Phi(\sigma)=
\begin{cases}
\Phi(\sigma^-)\fr\gamma, & \mbox{ if }v(\sigma)=v(\sigma^-)\;\&\;\Gamma_{v(\sigma)}(\sigma)={\tt tail}^+(\Phi(\sigma^-))\fr\gamma,\\
\Phi(\sigma^-)\fr\lrangle{\sharp,\delta(0)}, & \mbox{ if }v(\sigma)\not=v(\sigma^-)\;\&\;\Delta(\sigma)={\tt content}\circ{\tt walk}(\Phi(\sigma^-))\fr\delta,\\
\Phi(\sigma^-)\fr\lrangle{\sharp,{\tt pass}}, & \mbox{ if }v(\sigma)\not=v(\sigma^-)\;\&\;\Delta(\sigma)={\tt content}\circ{\tt walk}(\Phi(\sigma^-)).
\end{cases}
\]
For any $g\in\nn^\nn$, if $\varphi(g)\equiv(\exists v)(\forall u)R(u,v,g\res u)$, then for the least such $v\in\nn$, we have ${\tt tail}^+(\Phi(g))=\Gamma_v(g)$.
Otherwise, we have ${\tt content}\circ{\tt walk}(\Phi(g))=\Delta(g)$.
Hence, $\Phi(g)\in \bhk{Q\vee P}_{\Sigma^0_2}$, for any $g\in R$.
\end{proof}

\begin{definition}\label{def:1-2c:computable-along}
\index{computable!along a sequence}\index{computable!strictly along a sequence}%
Let $\{S_n\}_{n\in\nn}$ be an increasing sequence of subsets of $\nn^\nn$.
We say that a partial function $\Gamma:\subseteq\nn\to\nn$ is {\em computable along $\{S_n\}_{n\in\nn}$} if $\Gamma\res{\rm dom}(\Gamma)\setminus\bigcup_nS_n$ and $\Gamma\res {\rm dom}(\Gamma)\cap S_n\setminus S_{n-1}$ is computable uniformly in $n\in\nn$, where $S_{-1}=\emptyset$.
Moreover, we also say that a partial function $\Gamma:\subseteq\nn\to\nn$ is {\em computable strictly along $\{S_n\}_{n\in\nn}$} if there is a uniform sequence of computable functions $\{\Gamma_n\}_{n\in\nn}$ and $\Delta$ such that $\Gamma\res{\rm dom}(\Gamma)\setminus\bigcup_nS_n=\Delta\res{\rm dom}(\Gamma)\setminus\bigcup_nS_n$ and $\Gamma\res{\rm dom}(\Gamma)\cap S_n\setminus S_{n-1}=\Gamma_n\res {\rm dom}(\Gamma)\cap S_n\setminus S_{n-1}$ and $\Delta(g)\res n$ is defined for any $g\in{\rm dom}(\Gamma)\setminus S_n$.
\end{definition}

\begin{remark}
Theorem \ref{thm:5:lemiddle} implies that there is a function $\Gamma:\bhk{Q\vee P}_{\Sigma^0_2}\to P\oplus Q$ ($\Gamma:\bhk{Q\vee P}^\htie_{\Sigma^0_2}\to P\oplus Q$) such that $\Gamma$ is computable (strictly) along sequences of $\Pi^0_1$ sets.
\end{remark}

\begin{cor}\label{cor:1-2c:computable-along}
${\rm dec}^{<\omega}_{\rm d}[\Pi^0_2]{\rm dec}^\omega_{\rm p}[\Pi^0_1]$ is the smallest monoid containing all functions computable (strictly) along sequences of $\Pi^0_1$ sets.
\end{cor}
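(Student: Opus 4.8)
The plan is to reduce the statement to Proposition \ref{prop:1-2c:dec-sigma2-lem}, which already identifies ${\rm dec}^{<\omega}_{\rm d}[\Pi^0_2]{\rm dec}^\omega_{\rm p}[\Pi^0_1]$ with the smallest monoid generated by ${\rm dec}^2_{\rm d}[\Pi^0_2]$ and ${\rm dec}^\omega_{\rm p}[\Pi^0_1]$. So it is enough to prove two containments: (A) every function computable along a sequence $\{S_n\}_{n\in\nn}$ of $\Pi^0_1$ sets lies in ${\rm dec}^{<\omega}_{\rm d}[\Pi^0_2]{\rm dec}^\omega_{\rm p}[\Pi^0_1]$; and (B) each of ${\rm dec}^2_{\rm d}[\Pi^0_2]$ and ${\rm dec}^\omega_{\rm p}[\Pi^0_1]$ lies in the smallest monoid generated by the functions computable \emph{strictly} along sequences of $\Pi^0_1$ sets. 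Since the strict notion is stronger than the plain one, (A) for the plain notion together with (B) for the strict notion pins down both monoids simultaneously, which is exactly the content of Corollary \ref{cor:1-2c:computable-along}.

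For (A), given $\Gamma$ computable along $\{S_n\}_{n\in\nn}$ with each $S_n\in\Pi^0_1$, I would first note that $\bigcup_n S_n$ is $\Sigma^0_2$, so $D_0^\ast=\nn^\nn\setminus\bigcup_n S_n$ and $D_1^\ast=\nn^\nn$ form a $\Pi^0_2$ layer and hence $\{D_0,D_1\}$ with $D_0={\rm dom}(\Gamma)\setminus\bigcup_n S_n$, $D_1={\rm dom}(\Gamma)\cap\bigcup_n S_n$ is a $\Pi^0_2$ $d$-layer of ${\rm dom}(\Gamma)$. Let $\Gamma_0$ be the split map sending $g\in D_0$ to $0\fr g$ and $g\in D_1$ to $1\fr g$; it is computable on each of the two pieces, so $\Gamma_0\in{\rm dec}^2_{\rm d}[\Pi^0_2]$. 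On the other side, $\Gamma\res D_0$ is computable by Definition \ref{def:1-2c:computable-along}, while $\Gamma\res D_1$ is decomposable along the $\Pi^0_1$ $d$-layer $\{S_n\setminus S_{n-1}\}_n$ into uniformly computable pieces; by Theorem \ref{thm:5:red-eq-dis}(3) (which gives ${\rm dec}^\omega_{\rm d}[\Pi^0_1]={\rm dec}^\omega_{\rm p}[\Pi^0_1]$) this restriction is $\Pi^0_1$-\emph{partition}-computable, say via $\Pi^0_1$ pieces $\{R_k^\ast\}_k$. Pulling these pieces back through $f\mapsto f^{\shft 1}$ and intersecting with the clopen conditions ``$f(0)=0$'' and ``$f(0)=1$'' yields a uniformly $\Pi^0_1$ partition of ${\rm dom}(\Gamma_1)$ on each part of which the map $\Gamma_1$ (``apply the functional computing $\Gamma\res D_0$ to $f^{\shft 1}$'' on the part with $f(0)=0$, ``apply the functional for $R_k^\ast$ to $f^{\shft 1}$'' on the parts with $f(0)=1$) is computable; hence $\Gamma_1\in{\rm dec}^\omega_{\rm p}[\Pi^0_1]$, and $\Gamma=\Gamma_1\circ\Gamma_0$ is as required.

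For (B), the equality ${\rm dec}^\omega_{\rm p}[\Pi^0_1]={\rm dec}^\omega_{\rm d}[\Pi^0_1]$ (again Theorem \ref{thm:5:red-eq-dis}(3)) exhibits any $\Pi^0_1$-piecewise computable function as computable strictly along the increasing $\Pi^0_1$ sequence underlying its $d$-layer, with the ``off-sequence'' witness $\Delta$ taken to be the constant total functional $g\mapsto 0^\nn$ and the ``on-sequence'' functionals $\Gamma_n$ the uniform computable pieces. For ${\rm dec}^2_{\rm d}[\Pi^0_2]$, write $\Gamma$ with $\Pi^0_2$ $d$-layer $\{D_0,D_1\}$, $D_0={\rm dom}(\Gamma)\cap D_0^\ast$ for a $\Pi^0_2$ set $D_0^\ast$, and fix an increasing sequence $\{S_n\}_n$ of $\Pi^0_1$ sets with $\bigcup_n S_n=\nn^\nn\setminus D_0^\ast$. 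Then the split map $\Gamma_0$ (sending $g\in D_0$ to $0\fr g$ and $g\in D_1$ to $1\fr g$) is computable strictly along $\{S_n\}_n$ — the total map $g\mapsto 0\fr g$ serves as $\Delta$ — while the map $\Gamma_1$ that decides the first coordinate and applies the appropriate computable restriction of $\Gamma$ to $f^{\shft 1}$ is itself (partial) computable, hence trivially computable strictly along the empty sequence. Thus $\Gamma=\Gamma_1\circ\Gamma_0$ is in the monoid generated by strictly-along functions, and combining (A) and (B) with Proposition \ref{prop:1-2c:dec-sigma2-lem} finishes the argument.

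The main obstacle I anticipate is the bookkeeping in step (A): turning the ambient $\Pi^0_1$ $d$-layer of $D_1$ into a genuine uniformly $\Pi^0_1$ \emph{partition} of the (otherwise arbitrary-looking) domain of $\Gamma_1$, and checking the uniformity of the pulled-back $\Pi^0_1$ collection — this is precisely where one must lean on ${\rm dec}^\omega_{\rm p}[\Pi^0_1]={\rm dec}^\omega_{\rm d}[\Pi^0_1]$ rather than trying to verify ``$\Pi^0_1$'' directly. By contrast, the side condition in the strict notion (that $\Delta(g)\res n$ converges whenever $g\notin S_n$) is routine once $\Delta$ is chosen to be an honestly total functional, which the split maps supply for free.
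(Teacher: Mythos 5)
Your argument is correct and follows essentially the same route as the paper: both proofs sandwich the class of (strictly) along-computable functions between the generators ${\rm dec}^2_{\rm d}[\Pi^0_2]\cup{\rm dec}^\omega_{\rm p}[\Pi^0_1]$ and the composite class ${\rm dec}^{<\omega}_{\rm d}[\Pi^0_2]{\rm dec}^\omega_{\rm p}[\Pi^0_1]$ and then invoke Proposition \ref{prop:1-2c:dec-sigma2-lem}. The only difference is that the paper dismisses both inclusions as ``clear,'' whereas you spell out the split-map factorization and the use of ${\rm dec}^\omega_{\rm d}[\Pi^0_1]={\rm dec}^\omega_{\rm p}[\Pi^0_1]$ from Theorem \ref{thm:5:red-eq-dis}(3), which is a faithful elaboration of the intended argument.
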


\begin{proof}
Let $\mathcal{S}$ be the class of all functions computable (strictly) along sequences of $\Pi^0_1$ sets.
Then, clearly, we have ${\rm dec}^2_{\rm d}[\Pi^0_2]\cup\Gamma_1\in{\rm dec}^\omega_{\rm p}[\Pi^0_1]\subseteq\mathcal{S}\subseteq{\rm dec}^{<\omega}_{\rm d}[\Pi^0_2]{\rm dec}^\omega_{\rm p}[\Pi^0_1]$.
Thus, the desired condition follows from Proposition \ref{prop:1-2c:dec-sigma2-lem}.
\end{proof}

\section{Going Deeper and Deeper}

\subsection{Falsifiable Mass Problems}

We are mostly interested in local degree structures such as Turing degrees of c.e.~subsets of $\nn$ and Medvedev degrees of $\Pi^0_1$ subsets of $2^\nn$.
In such cases, the straightforward two-tape (backtrack) notions in Definitions \ref{def:1-2b:twotape} and \ref{def:12b:backtrack} are hard to use, since, for instance, $\bhk{P\vee Q}^2_{{\sf LCM}[2]}$ may not belong to $\Pi^0_1$ even if $P$ and $Q$ are $\Pi^0_1$.
This observation prompts us to define {\em consistent} two-tape disjunctions.
{\em The consistency set ${\rm Con}(T_i)_{i\in I}$ for $\{T_i\}_{i\in I}$} is defined as follows.
\index{consistency set}%
\[{\rm Con}(T_i)_{i\in I}=\{f\in(I\times\nn)^\nn:(\forall i\in I)(\forall n\in\nn)\;{\tt pr}_i(f\res n)\in T_i\}.\]

The notion of consistency sets has a relationship with consistent learning (see also Remark below Proposition \ref{prop:1:concat}).
The consistency sets are useful to reduce the complexity of our disjunctions to be $\Pi^0_1$.
We now introduce the following consistent modifications of our disjunctive notions.

\begin{definition}
Let $P_0$ and $P_1$ denote $\Pi^0_1$ subsets of $\nn^\nn$.
\index{$P_0\lcm P_1$}\index{$P_0\tie_n P_1$}\index{$P_0\cls P_1$}%
\begin{align*}
P_0\lcm P_1&=\bhk{P_0\vee P_1}_{\sf LCM}^2\cap{\rm Con}(T_{P_0},T_{P_1}).\\
P_0\tie_{n}P_1&=\bhk{P_0\vee P_1}_{{\sf LCM}[n]}^2\cap{\rm Con}(T_{P_0},T_{P_1}).\\
P_0\cls P_1&=\bhk{P_0\vee P_1}_{\sf CL}^2\cap{\rm Con}(T_{P_0},T_{P_1}).
\end{align*}
Here $T_{P_0}$ and $T_{P_1}$ are corresponding trees for $P_0$ and $P_1$, respectively.
\end{definition}

\begin{prop}\label{prop:1-2:consistency}
Let $P$ and $Q$ be $\Pi^0_1$ subsets of $\nn^\nn$.
\begin{enumerate}
\item $P\tie_{n}Q\equiv^1_1\bhk{P\vee Q}_{{\sf LCM}[n]}^2$ for each $n\in\nn$.
\item $P\lcm Q\equiv^1_1\bhk{P\vee Q}_{\sf LCM}^2$.
\item $P\cls Q\equiv^1_1\bhk{P\vee Q}_{\sf CL}^2$.
\end{enumerate}
\end{prop}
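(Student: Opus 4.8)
The plan is to prove each item as two separate reductions. One direction is immediate: since $P\tie_n Q\subseteq\bhk{P\vee Q}^2_{{\sf LCM}[n]}$ (and likewise $P\lcm Q\subseteq\bhk{P\vee Q}^2_{\sf LCM}$ and $P\cls Q\subseteq\bhk{P\vee Q}^2_{\sf CL}$), the inclusion witnesses $\bhk{P\vee Q}^2_{{\sf LCM}[n]}\leq^1_1 P\tie_n Q$, because a solution of the consistent disjunction is already a solution of the ambient one. So the substance of the proposition is to build, for each item, one computable $\Gamma$ carrying every member of the ambient disjunction into its consistent restriction.

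First I would set up an online ``trimming'' reduction $\Gamma$. Given $f$ in the ambient disjunction, read its instruction stream one symbol at a time and echo each instruction $\pair{j,x}$, unless appending $x$ to the tape-$j$ content ${\tt pr}_j$ of the partial output already produced would leave the tree $T_{P_j}$, in which case silently drop that instruction and proceed. Since $P_j$ is $\Pi^0_1$, its corresponding tree $T_{P_j}$ is computable, so $\Gamma$ is computable. By construction every initial segment of ${\tt pr}_j(\Gamma(f))$ lies in $T_{P_j}$, so $\Gamma(f)\in{\rm Con}(T_{P_0},T_{P_1})$ for free; indeed, because $T_{P_j}$ is downward closed, ${\tt pr}_j(\Gamma(f))$ is exactly the longest initial segment of ${\tt pr}_j(f)$ that stays in $T_{P_j}$. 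What then needs checking is that $\Gamma(f)$ still satisfies the remaining clauses of the ambient disjunction. In all three cases that disjunction furnishes an $i<2$ with ${\tt pr}_i(f)\in P_i=[T_{P_i}]$; hence every prefix of ${\tt pr}_i(f)$ is in $T_{P_i}$, no tape-$i$ instruction is ever dropped, so ${\tt pr}_i(\Gamma(f))={\tt pr}_i(f)\in P_i$, and in particular tape $i$ receives infinitely many symbols, so $\Gamma(f)$ is a genuine element of $(2\times\nn)^\nn$ and $\Gamma$ never stalls (between two consecutive echoed tape-$i$ instructions only finitely many symbols are dropped). For items (1) and (2) one also needs that trimming does not inflate the mind-change count: the sequence of targeted tapes of $\Gamma(f)$ is obtained from that of $f$ by deletions, and deleting an entry from a $\{0,1\}$-sequence cannot increase the number of adjacent unequal pairs (the deleted symbol differs from at least one of its two neighbours), so ${\tt mc}(\Gamma(f))\leq{\tt mc}(f)$, which is $<n$ in (1) and $<\infty$ in (2). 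Item (3) imposes no mind-change bound, so nothing further is needed there.

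The only genuinely delicate point is checking that $\Gamma$ produces an infinite output and does not destroy the ``${\tt pr}_j\in P_j$ for some $j$'' clause, and both are settled by the same observation: that clause already designates a tape all of whose finite approximations lie in its tree, hence that tape is copied verbatim. As a conceptual sanity check, in the ${\sf LCM}$ cases this copied tape must be the eventually-active one, since the other tape is written to only finitely often and therefore cannot carry an infinite path, so trimming only ever affects the ``wrong'' tape and only a finite initial segment of it survives. The rest is routine bookkeeping with the two-tape instruction encoding, which I would organize inductively on the length of the input prefix exactly as in the proof of Proposition \ref{prop:1-2:wellbehaved}.
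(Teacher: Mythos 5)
Your proposal is correct and follows essentially the same route as the paper: the inclusion $P\tie_*Q\subseteq\bhk{P\vee Q}^2_*$ gives one direction for free, and the other is an online ``trimming'' reduction that drops any instruction whose echo would violate consistency with the trees $T_P,T_Q$, noting that the tape carrying a genuine element of $P_i=[T_{P_i}]$ is never trimmed and that deletions cannot increase ${\tt mc}$. The only (harmless) divergence is that the paper tests ${\tt pr}_i$ of the \emph{input} prefix against $T_i$ rather than of the partial output, so your aside that ${\tt pr}_j(\Gamma(f))$ is exactly the longest initial segment of ${\tt pr}_j(f)$ in $T_{P_j}$ is not quite accurate for your variant (a dropped symbol may be followed by one that re-enters the tree relative to the truncated output), but nothing in the argument depends on it.
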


\begin{proof}\upshape
For each item, clearly $P\tie_*Q\geq^1_1\bhk{P\vee Q}_*^2$.
Thus, it suffices to construct a computable functional $\Phi$ witnessing $P\tie_*Q\leq^1_1\bhk{P\vee Q}_*^2$.
Let $T_0$ and $T_1$ denote the corresponding computable trees for $P$ and $Q$ respectively.
Set $\Phi(\lrangle{})=\lrangle{}$.
Fix $\sigma\in(2\times\nn)^{<\nn}$.
Assume that $\Phi(\sigma^-)$ has already been defined, and $\sigma=\sigma^-\fr\lrangle{\pair{i,k}}$ for some $i<2$ and $k\in\nn$.
Then, 
\[
\Phi(\sigma)=
\begin{cases}
\Phi(\sigma^-)\fr\lrangle{\pair{i,k}}&\mbox{ if }{\tt pr}_i(\sigma)\in T_i,\\
\Phi(\sigma^-)&\mbox{ if }{\tt pr}_i(\sigma)\not\in T_i,
\end{cases}
\]

Clearly, $\Phi$ is a computable function, since $T_i$ is computable for each $i<2$.
For any $g\in(2\times\nn)^\nn$, clearly ${\tt mc}(\Phi(g))\leq{\tt mc}(g)$.
Fix $g\in\bhk{P\vee Q}_*^2$, where $*\in\{{\sf LCM},{\sf LCM}[n],{\sf CL}\}$.
Then ${\tt pr}_i(g)\in P_i$ for some $i<2$, where $P_0=P$ and $P_1=Q$.
Therefore, $\Phi(g)$ is total, and ${\tt pr}_i(\Phi(g))\in P_i$ for such $i<2$.
\end{proof}

\begin{prop}
Let $P$ and $Q$ be $\Pi^0_1$ subsets of $\nn^\nn$.
\begin{enumerate}
\item $P\tie_{n}Q$ is $\Pi^0_1$, for any $n\in\nn$.
\item $P\lcm Q$ is $\Sigma^0_2$.
\item $P\cls Q$ is $\Pi^0_1$.
\end{enumerate}
\end{prop}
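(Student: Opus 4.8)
The plan is to derive all three complexity bounds from one structural observation about consistency sets. Write $P_0=P$, $P_1=Q$, and let $T_0,T_1$ be computable trees with $[T_i]=P_i$ (the corresponding trees for $P$ and $Q$). The first step is to record that ${\rm Con}(T_0,T_1)=\{f\in(2\times\nn)^\nn:(\forall i<2)(\forall n)\;{\tt pr}_i(f\res n)\in T_i\}$ is $\Pi^0_1$, which is immediate since $\sigma\mapsto{\tt pr}_i(\sigma)$ is computable and each $T_i$ is computable.

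The key step is the claim ${\rm Con}(T_0,T_1)\subseteq\bhk{P\vee Q}^2_{\sf CL}$. Given $f\in{\rm Con}(T_0,T_1)$, the first coordinates $(f(t))_0\in\{0,1\}$ take some value $i<2$ at infinitely many positions $t$; each such position writes a letter onto tape $i$, so ${\tt pr}_i(f)=\bigcup_n{\tt pr}_i(f\res n)$ is infinite. The strings ${\tt pr}_i(f\res n)$ form an increasing chain of initial segments of ${\tt pr}_i(f)$ with unbounded lengths, so every initial segment of ${\tt pr}_i(f)$ occurs as an initial segment of some ${\tt pr}_i(f\res n)\in T_i$; since $T_i$ is a tree, this forces ${\tt pr}_i(f)\in[T_i]=P_i$, hence $f\in\bhk{P\vee Q}^2_{\sf CL}$. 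In other words, once $f$ is consistent, the disjunctive clause ``$(\exists i<2)\;{\tt pr}_i(f)\in P_i$'' present in each of the three definitions is automatic.

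With the claim in hand the three assertions become complexity bookkeeping. For (3): $P\cls Q=\bhk{P\vee Q}^2_{\sf CL}\cap{\rm Con}(T_0,T_1)={\rm Con}(T_0,T_1)$, hence $\Pi^0_1$. For (2): the disjunctive conjunct of $\bhk{P\vee Q}^2_{\sf LCM}$ is absorbed, leaving $P\lcm Q=\{f:{\tt mc}(f)<\infty\}\cap{\rm Con}(T_0,T_1)$; since ${\tt mc}(f\res n)$ is computable and nondecreasing in $n$, the condition ${\tt mc}(f)<\infty$ reads $(\exists k)(\forall n)\;{\tt mc}(f\res n)\le k$, which is $\Sigma^0_2$, so the intersection with a $\Pi^0_1$ set is $\Sigma^0_2$. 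For (1): ${\tt mc}(f)<n$ already forces ${\tt mc}(f)<\infty$, so both ${\sf LCM}$-conjuncts are absorbed and $P\tie_n Q=\{f:{\tt mc}(f)<n\}\cap{\rm Con}(T_0,T_1)$; here ${\tt mc}(f)<n$ amounts to $(\forall m)\;{\tt mc}(f\res m)<n$, which is $\Pi^0_1$, so $P\tie_n Q$ is $\Pi^0_1$.

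The only step that requires care is the claim, and within it the passage from consistency — a constraint on the finite projections ${\tt pr}_i(f\res n)$ only — to membership of the genuinely infinite path ${\tt pr}_i(f)$ in $[T_i]$; this rests on the two elementary but easy-to-overlook facts that, on whichever tape is written infinitely often, the projections grow monotonically with cofinal lengths, and that $T_i$ is closed under initial segments. Everything else is routine, and in particular no new definitions or constructions are needed beyond those already in the excerpt.
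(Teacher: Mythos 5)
Your proof is correct and follows essentially the same route as the paper's: the paper packages the two universal conditions into a single computable tree $T_{P,Q,n}=\{\sigma:(\forall i<2)\;{\tt pr}_i(\sigma)\in T_i\;\&\;{\tt mc}(\sigma)<n\}$ and proves $P\tie_nQ=[T_{P,Q,n}]$, using the same pigeonhole-plus-consistency argument to show that the existential disjunct is automatic for any path through the tree. Your factoring of that observation into the standalone claim ${\rm Con}(T_0,T_1)\subseteq\bhk{P\vee Q}^2_{\sf CL}$ is a clean presentational variant rather than a different method.
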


\begin{proof}\upshape
Let $T_0$ and $T_1$ denote the corresponding computable trees for $P$ and $Q$ respectively.
We consider the following computable tree:
\[T_{P,Q,n}=\{\sigma\in(2\times\nn)^{<\nn}:(\forall i<2)\;{\tt pr}_i(\sigma)\in T_i\;\&\;{\tt mc}(\sigma)<n\}.\]
Note that $T_{P,Q,n}$ is uniformly computable in $n$, since ${\tt pr}_i(\sigma)$ and ${\tt mc}(\sigma)$ are computable uniformly in $\sigma\in\nn^{<\nn}$.
Clearly, $P\tie_nQ\subseteq[T_{P,Q,n}]$.
Moreover, for any $g\in[T_{P,Q,n}]$, ${\tt pr}_i(g)$ is total for some $i<2$.
Then, ${\tt pr}_i(g)\in[T_i]$ for such $i$, and ${\tt mc}(g)\leq n$, since the relation ${\tt mc}(f)\leq n$ is equivalent to $(\forall k)\;{\tt mc}(f\res k)\leq n$.
Thus, $g\in P\tie_nQ$.
Consequently, $P\tie_nQ=[T_{P,Q,n}]$ is $\Pi^0_1$.
Hence, $P\lcm Q=\bigcup_n[T_{P,Q,n}]$ is $\Sigma^0_2$.
The items (3) also follows from the similar argument.
\end{proof}

\begin{definition}
Let $L_P$ denote the set of all leaves of the corresponding tree for a nonempty $\Pi^0_1$ set $P$ (where recall from Section \ref{subsec:1:notation} that such a tree is assumed to be uniquely determined when an index of $P$ is given).
Then {\em the (non-commutative) concatenation of $P$ and $Q$} is defined as follows.
\index{concatenation!non-commutative}\index{concatenation}\index{$P\ntie Q$}%
\[P\ntie Q=P\cup\bigcup_{\rho\in L_P}\rho\fr Q.\]
{\em The commutative concatenation of $P$ and $Q$} is defined by $P\tie Q=(P\ntie Q)\linf(Q\ntie P)$.
\index{concatenation!commutative}\index{$P\tie Q$}%
\end{definition}

\begin{remark}
On the study of Wadge degrees of finite level of Borel hierarchy, Duparc \cite{Dup} introduced various operators such as $P^{\longrightarrow}Q=P\cup\bigcup_{\rho\in\nn^{<\nn}}\rho\fr\lrangle{\sharp}\fr Q$.
\index{Duparc's operation}\index{$P^{\longrightarrow}Q$}%
The following proposition indicates that our non-commutative concatenation is essentially same as Duparc's operation $P^{\longrightarrow}Q$.
\end{remark}

\begin{prop}\label{prop:1-2:cut-leaf}
Let $P,Q$ be $\Pi^0_1$ subsets of Baire space $\nn^\nn$.
Then, the concatenation $P\fr Q$ is $(1,1)$-equivalent to the set $P^{\rightarrow}Q$ of all infinite paths of the tree $\{\sigma\fr\lrangle{\sharp}\fr\tau:\sigma\in T_P\;\&\;\tau\in T_Q\}$.
\end{prop}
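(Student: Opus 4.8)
The plan is to produce Turing functionals realizing both directions of the Medvedev equivalence. Throughout I use the paper's convention that $\sharp$ is a fixed reserved symbol (via the coding $\lrceil{\cdot}$), so that all objects live in $\nn^\nn$ and no element of $T_P$ or $T_Q$ ever uses the letter $\sharp$; write $T_P,T_Q$ for the corresponding computable trees, so $P=[T_P]$ and $Q=[T_Q]$. Note that the elements of $P^{\rightarrow}Q$ are exactly the $h\in\nn^\nn$ such that either $h\in[T_P]$ (no $\sharp$ occurs), or $h=\sigma\fr\lrangle{\sharp}\fr g$ with $\sigma\in T_P$ (not necessarily a leaf) and $g\in[T_Q]=Q$; while $P\fr Q=P\cup\bigcup_{\rho\in L_P}\rho\fr Q$.

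For $P^{\rightarrow}Q\leq^1_1 P\fr Q$: given an oracle $y\in P\fr Q$, define $\Phi$ by the following rule for computing $\Phi(y;k)$. Search for the least $n\leq k+1$ with $y\res n\notin T_P$. If there is no such $n$, output $y(k)$; if $n^*$ is found, output $\sharp$ when $k=n^*-1$, and output $y(k-1)$ when $k\geq n^*$. This is a total computable functional, with use $y\res(k+1)$. If $y\in P=[T_P]$, no $n^*$ is ever found and $\Phi(y)=y\in P\subseteq P^{\rightarrow}Q$. If $y=\rho\fr g$ with $\rho\in L_P$ and $g\in Q$, then since $\rho$ is a leaf, the least level at which $y$ leaves $T_P$ is $n^*=|\rho|+1$, and one checks directly that $\Phi(y)=\rho\fr\lrangle{\sharp}\fr g$; since $\rho\in T_P$ and $g\in[T_Q]$, this lies in $P^{\rightarrow}Q$.

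For $P\fr Q\leq^1_1 P^{\rightarrow}Q$: given $h\in P^{\rightarrow}Q$, the functional $\Psi$ copies $h$ as long as the initial segments read so far lie in $T_P$. If $h\in P$ this never stops and produces $\Psi(h)=h\in P\subseteq P\fr Q$. Otherwise, at the first $n$ with $h\res n\notin T_P$ we know (by membership in $P^{\rightarrow}Q$) that $h(n-1)=\sharp$, that $\sigma:=h\res(n-1)\in T_P$, and that $g:=h^{\shft n}\in Q$; moreover $\Psi$ has at this point emitted exactly $\sigma$. Now $\Psi$ performs a greedy descent in $T_P$: put $\rho^{(0)}=\sigma$ and $\rho^{(i+1)}=\rho^{(i)}\fr\lrangle{k_i}$ for the least $k_i$ with $\rho^{(i)}\fr\lrangle{k_i}\in T_P$, emitting the new letters as they are found. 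If the descent never stops, its union is an infinite path of $T_P$, hence an element of $P$, and that is $\Psi(h)\in P\fr Q$. If it stops, it does so at a node $\rho^{(i)}$ having no child in $T_P$, i.e.\ $\rho^{(i)}\in L_P$; then $\Psi$ continues its output with $g$, yielding $\rho^{(i)}\fr g\in\bigcup_{\rho\in L_P}\rho\fr Q\subseteq P\fr Q$. In all cases $\Psi(h)\in P\fr Q$, and together with the first reduction this gives $P\fr Q\equiv^1_1 P^{\rightarrow}Q$.

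The main obstacle is the italicized step of the second reduction: deciding, at each stage of the descent, whether the current node $\rho^{(i)}$ is a leaf of $T_P$ (so that $\Psi$ should switch to emitting $g$) or has a child (so that it should take one more step). In general ``$\rho$ is a leaf'' is only a $\Pi^0_1$ condition, so one must use the convention that the corresponding tree $T_P$ is chosen with $L_P$ computable, as recalled in Section~\ref{subsec:1:notation}; this is automatic for $\Pi^0_1$ subsets of Cantor space, whose trees are binary branching, which is the setting in which the concatenation is actually applied. Granting this, the descent is effective: it either dead-ends at a decidably recognized leaf, or (by König's lemma, when $T_P$ is finitely branching, in case $\sigma$ was non-extendible) reaches a leaf in finitely many steps, or runs forever along an infinite path; in every case $\Psi$ is a genuine computable functional. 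This also makes precise the claim of the preceding remark that $P^{\rightarrow}Q$ and Duparc's $P^{\longrightarrow}Q$ are essentially the same, since they differ only in whether the prefix before $\sharp$ is required to lie in $T_P$, and the copy-until-exit and greedy-descent arguments absorb exactly that difference.
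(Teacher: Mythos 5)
Your proof is correct and follows essentially the same route as the paper's: one direction inserts $\sharp$ at the moment the oracle crosses a leaf of $T_P$ (the paper's ${\tt cut}$), and the other copies the input until $\sharp$ appears and then extends the emitted prefix to a dead end of $T_P$ before appending the $Q$-part (the paper's $\Gamma$ via ${\tt leaf}$). The one place you are more careful than the paper is the second reduction: the paper jumps directly to ``the least leaf of $L_P$ extending $\sigma$,'' which silently presupposes that such a leaf exists, whereas your greedy descent also covers a node with no dead end above it by emitting an infinite path of $P$, and you rightly flag that both arguments rely on the convention that $L_P$ is computable.
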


\begin{proof}\upshape
To see $P^\rightarrow Q\leq^1_1P\fr Q$, we inductively define a total computable function ${\tt cut}:\nn^\nn\to\nn^\nn$.
First set ${\tt cut}(\lrangle{})=\lrangle{}$, and fix $\sigma=\sigma^-\fr\lrangle{n}\in\nn^{<\nn}$.
We assume that ${\tt cut}(\sigma^-)$ has been already defined.
If $\sigma=\sigma^-\fr\lrangle{n}\in L_P$, then we set ${\tt cut}(\sigma)={\tt cut}(\sigma^-)\fr\lrangle{n,\sharp}$.
Otherwise, we set ${\tt cut}(\sigma)={\tt cut}(\sigma^-)\fr\lrangle{n}$.
Then, ${\tt cut}$ is computable, since $P$ is $\Pi^0_1$ and then $T_P$ is computable.
Moreover, we can see the following.
\[
{\tt cut}(f)=
\begin{cases}
f & \mbox{ if } f\in P,\\
(f\res k)\fr\lrangle{\sharp}\fr f^{\shft k} & \mbox{ if }(\exists k\in\nn)\;f\res k\in L_P.
\end{cases}
\]
Clearly, $P^\rightarrow Q\leq^1_1P\fr Q$ via the computable function ${\tt cut}$.

Conversely, we consider the computable function ${\tt leaf}:\nn^{<\nn}\to\nn^{<\nn}$ which maps $\sigma$ to the least leaf of $L_P$ extending $\sigma$.
Then, we inductively define a computable function $\Gamma$ witnessing $P\fr Q\leq^1_1P^\rightarrow Q$ as follows.
First set $\Gamma(\lrangle{})=\lrangle{}$, and fix $\sigma=\sigma^-\fr\lrangle{n}\in(\nn\cup\{\sharp\})^{<\nn}$.
We assume that $\Gamma(\sigma^-)$ has been already defined.
If $n\not=\sharp$, then we set $\Gamma(\sigma)=\Gamma(\sigma^-)\fr\lrangle{n}$.
If $n=\sharp$, then we set $\Gamma(\sigma)={\tt leaf}(\Gamma(\sigma^-))$.
It is easy to see that $P\fr Q\leq^1_1P^\rightarrow Q$ via $\Gamma$.
\end{proof}

\begin{remark}
Inspired by our method used in Part II, Cenzer-Kihara-Weber-Wu \cite{CKWW} explicitly employed the concept of the (non-commutative) concatenation to show that ${\sf CPA}\ntie{\sf CPA}$ has a greatest Medvedev degree of $\Pi^0_1$ subsets of $2^\nn$ with no tree-immune.
Here, a $\Pi^0_1$ set $P\subseteq 2^\nn$ is {\em tree-immune} if the $\Pi^0_1$ tree $\{\sigma\in 2^{<\nn}:P\cap[\sigma]\not=\emptyset\}$ includes no infinite computable subtree, and ${\sf CPA}$ is the set of all {\em complete consistent extensions of Peano Arithmetic}.
\index{tree-immune}\index{${\sf CPA}$}%
Note that ${\sf CPA}$ is a {\em Medvedev complete} $\Pi^0_1$ subset of $2^\nn$.
\end{remark}

\begin{prop}\label{prop:1:concat}
Let $P,Q$ be $\Pi^0_1$ subsets of $\nn^\nn$.
\begin{enumerate}
\item $P\tie P\equiv^1_1P\ntie P$.
\item $P\tie Q\equiv^1_1\bhk{P\vee Q}^2_{{\sf LCM}[2]}$.
\end{enumerate}
\end{prop}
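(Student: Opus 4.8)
The plan is to dispatch (1) at once and reduce (2) to a statement about the consistent variant $P\tie_2 Q$. For (1), unfolding the definition gives $P\tie P=(P\ntie P)\linf(P\ntie P)$, i.e.\ a coproduct of the form $R\linf R$ with $R=P\ntie P$; and $R\linf R\equiv^1_1 R$ for every $R\subseteq\nn^\nn$, via the computable reductions $g\mapsto g^{\shft 1}$ (sending $\lrangle{0}\fr r$ and $\lrangle{1}\fr r$ to $r$) and $r\mapsto\lrangle{0}\fr r$, so $P\tie P\equiv^1_1 P\ntie P$. For (2), Proposition~\ref{prop:1-2:consistency}(1) gives $\bhk{P\vee Q}^2_{{\sf LCM}[2]}\equiv^1_1 P\tie_2 Q$, so it suffices to prove $P\tie Q\equiv^1_1 P\tie_2 Q$. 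Write $P_0=P$, $P_1=Q$, with corresponding computable trees $T_{P_0},T_{P_1}$; following the device behind Proposition~\ref{prop:1-2:cut-leaf} I will assume, after a harmless computable re-coding of each tree that only adds length-one dead ends and leaves the set of infinite paths unchanged, that every node of $T_{P_i}$ has a leaf above it and that leaves are recognizable, so that the map ${\tt leaf}$ of Proposition~\ref{prop:1-2:cut-leaf} (sending $\sigma$ to the least leaf of $L_{P_i}$ extending it) is total computable.

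For $P\tie_2 Q\leq^1_1 P\tie Q$: given $f\in P\tie Q=\lrangle{0}\fr(P_0\ntie P_1)\cup\lrangle{1}\fr(P_1\ntie P_0)$, read $i=f(0)$ and put $f'=f^{\shft 1}\in P_i\ntie P_{1-i}$. On the two-tape model I initially declare $i$ and copy the letters of $f'$ onto tape $i$, as long as each successive prefix of $f'$ stays inside $T_{P_i}$; the first time the next letter leaves $T_{P_i}$, the prefix $\rho$ read so far must be a leaf of $T_{P_i}$ and $f'^{\shft|\rho|}$ must lie in $P_{1-i}$ (this is forced by $f'\in P_i\ntie P_{1-i}$, since if $f'\in P_i$ no prefix ever leaves the tree), so at that point I switch the declaration to $1-i$ and copy the remaining tail onto tape $1-i$. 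If the switch never occurs, then $f'\in P_i$ and tape $i$ receives exactly $f'$. Either way the output is consistent with $(T_{P_0},T_{P_1})$, has at most one mind change, and has the content of one tape in the corresponding $P_j$; hence it lies in $P\tie_2 Q$. This direction uses only that the trees are computable.

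For $P\tie Q\leq^1_1 P\tie_2 Q$: given $f\in P\tie_2 Q$, let $i_0$ be its initial declaration and output $g$ with $g(0)=i_0$, emitting for $g^{\shft 1}$ the tape-$i_0$ content of $f$ as long as the declaration stays $i_0$. If the declaration never changes, then ${\tt pr}_{i_0}(f)\in P_{i_0}$ (the other tape is empty), so $g^{\shft 1}={\tt pr}_{i_0}(f)\in P_{i_0}\ntie P_{1-i_0}$. If it changes — at most once, at some stage $k$ — then by consistency the finite word $\sigma={\tt pr}_{i_0}(f)$ already emitted lies in $T_{P_{i_0}}$, while ${\tt pr}_{1-i_0}(f)\in P_{1-i_0}$; I then compute $\rho={\tt leaf}(\sigma)\in L_{P_{i_0}}$, emit the missing segment $\rho^{\shft|\sigma|}$, and finally emit ${\tt pr}_{1-i_0}(f)$, so $g^{\shft 1}=\rho\fr{\tt pr}_{1-i_0}(f)\in\bigcup_{\rho'\in L_{P_{i_0}}}\rho'\fr P_{1-i_0}\subseteq P_{i_0}\ntie P_{1-i_0}$. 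In both cases $g\in\lrangle{i_0}\fr(P_{i_0}\ntie P_{1-i_0})\subseteq P\tie Q$.

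The delicate point, and the one I expect to be the main obstacle, is precisely the on-line commitment in this last reduction: we are forced to emit $g(0)=i_0$ and begin writing $g^{\shft 1}$ before knowing whether the declaration of $f$ will ever change, and should it change we are left having committed to a finite prefix $\sigma$ that must still be completed to a member of $P_{i_0}\ntie P_{1-i_0}$. This is exactly what the consistency built into $P\tie_2 Q$ (guaranteeing $\sigma\in T_{P_{i_0}}$) together with a total computable ${\tt leaf}$ (secured by a standard choice of corresponding tree with dense leaves, cf.\ Proposition~\ref{prop:1-2:cut-leaf}) is there to repair; everything else is routine bookkeeping on the two-tape model, and one checks in passing that all the functions defined are computable and total on the relevant sets.
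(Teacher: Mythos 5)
Your proof is correct and follows essentially the same route as the paper's: part (1) via $R\linf R\equiv^1_1R$ for $R=P\ntie P$, and part (2) via Proposition \ref{prop:1-2:consistency} together with the two explicit reductions between $P\tie Q$ and $P\tie_2Q$, the forward one copying onto tape $i$ until a prefix exits $T_{P_i}$ exactly as in the paper. The only cosmetic difference is that the paper factors the direction $P\tie Q\leq^1_1P\tie_2Q$ through $(P^{\rightarrow}Q)\oplus(Q^{\rightarrow}P)$ and Proposition \ref{prop:1-2:cut-leaf}, whereas you inline the same ${\tt leaf}$-completion device (with the same implicit assumptions on its totality and computability) directly into the reduction.
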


\begin{proof}\upshape
(1) $P\tie P=(P\ntie P)\oplus(P\ntie P)\equiv^1_1P\ntie P$.
(2) By Proposition \ref{prop:1-2:consistency} (1), we have $P\tie_2Q\equiv^1_1\bhk{P\vee Q}^2_{{\sf LCM}[2]}$.
Then, $P\tie_2Q\leq^1_1P\tie Q$ is witnessed by the following reduction $\Delta$.
\[
\Delta(f)=
\begin{cases}
{\tt write}(f(0),f^{\shft 1}), & \text{ if }f^{\shft 1}\in [T_{\sigma(0)}],\\
{\tt write}(f(0),f^{\shft 1}\res k)\fr{\tt write}(1-f(0),f^{\shft k+1}), & \text{ if }(\exists k\in\nn)\;f^{\shft 1}\res k\in L_{\sigma(0)}.
\end{cases}
\]
Here, $T_0$ and $T_1$ are the corresponding computable trees for $P$ and $Q$ respectively, and $L_i$ is the set of all leaves of $T_i$ for each $i<2$.
Clearly, $\Delta$ is computable.
Fix $\lrangle{i}\fr g\in P\tie Q$.
Obviously, ${\tt mc}(\lrangle{i}\fr g)<2$.
If $g\in[T_i]$ then ${\tt pr}_i(\Delta(\lrangle{i}\fr g))=g\in[T_i]$, and if $g=\sigma\fr h$ for some $\sigma\in L_i$ and $h\in[T_{1-i}]$ then ${\tt pr}_i(\Delta(\lrangle{i}\fr\sigma\fr h))=h\in[T_{1-i}]$.
Hence, $\Delta(\lrangle{i}\fr g)\in P\tie_2Q$.

To see $P\tie Q\leq^1_1P\tie_2Q$, it suffices to construct a computable functional $\Gamma$ witnessing $(P^\to Q)\oplus(Q^\to P)\leq^1_1P\tie_2Q$ by Proposition \ref{prop:1-2:cut-leaf}.
Set $\Gamma(\lrangle{})=\lrangle{}$, and $\Gamma(\lrangle{\pair{i,n}})=\lrangle{i,n}$ for any $i<2$ and $n\in\nn$.
Fix $\sigma=\sigma^{--}\fr\lrangle{\pair{i,m},\pair{j,n}}\in(2\times\nn)^{<\nn}$, and assume that $\Gamma(\sigma^-)$ is already defined.
If $i\not=j$, then set $\Gamma(\sigma)=\Gamma(\sigma^-)\fr\lrangle{\sharp,n}$.
Otherwise, set $\Gamma(\sigma)=\Gamma(\sigma^-)\fr\lrangle{n}$.
Clearly $\Gamma$ is computable.
Fix $g\in P\tie_2Q$.
If ${\tt mc}(g)=0$, then $\Gamma(g)=\lrangle{i}\fr{\tt pr}_i(g)\in P\oplus Q\subseteq(P^\to Q)\oplus(Q^\to P)$, where $i=(g(0))_0$.
If ${\tt mc}(g)=1$, then ${\tt pr}_i(g)$ is a finite string, where $i=(g(0))_0$.
In this case, we can easily see $\Gamma(g)=\lrangle{i}\fr{\tt pr}_i(g)\fr\lrangle{\sharp}\fr{\tt pr}_{1-i}(g)\in(P^\to Q)\oplus(Q^\to P)$.
\end{proof}

In the case of $P\tie P$, we use the non-commutative concatenation $P\fr P$ to simplify our proof without mentioning.

\begin{remark}
These disjunctions have some connection with {\em consistent conservative Popperian learning} (see \cite{JORS}).
\begin{itemize}
\item The term ``{\em consistent}'' means:
the scientist should modify his hypothesis whenever it was found to be refuted.
\item The term ``{\em conservative}'' means:
the scientist changes his hypothesis only when it was found to be refuted.
\item The term ``{\em Popperian}'' means:
the scientist can test whether his hypothesis is currently consistent or refuted.
\end{itemize}
The notion of {\em Popperian learning} is introduced by Case and Ngo-Manguelle \cite{CN79} based on Gold's theory of ``identification in the limit'' \cite{Gol}.
A learner (a scientist) is a computable function $\Psi:\nn^{<\nn}\to\nn$, and a natural phenomenon is a computable function $f:\nn\to\nn$.
Then the formula $\Psi(f\res n)=e$ means the following situation: the scientist $\Psi$ predicts that a rule generating the phenomenon $f$ can be explained by a word (a formula, or an algorithm) $e$ (i.e., $f=\Phi_e$) when he observes $f(0),\dots f(n-1)$.
We say that $\Psi$ {\em learns} $f$ if $\Phi_{\lim_n\Psi(f\res n)}=f$.
The learner $\Psi$ is {\em Popperian} if $\Phi_{\Psi(\sigma)}$ is total for each $\sigma\in\nn^{<\nn}$.
The learner $\Psi$ is {\em consistent} at $\sigma\in\nn^{<\nn}$ if $\Phi_{\Psi(\sigma)}\res|\sigma|=\sigma$.
The learner $\Psi$ is {\em conservative} if, for any $\sigma\in\nn^{<\nn}$, $\Psi(\sigma)=\Psi(\sigma^-)$ whenever $\Phi_{\Psi(\sigma^-)}\res|\sigma|=\sigma$.
Note that, for every Popperian learner $\Psi$, he can algorithmically determine whether $\Psi$ is consistent at $\sigma$ or not, for a given $\sigma\in\nn^{<\nn}$.
The terminology ``{\em Popperian}'' derives from Popper's falsifiabillity principle in philosophy of science.
\index{falsifiability}\index{learner}\index{learner!Popperian}\index{learner!consistent}\index{learner!conservative}%

The complexity $\Pi^0_1$ reflects the concept of Popperian learning.
The consistency set ${\rm Con}(T_i)_{i\in I}$ restricts our learning process to be consistent.
Additionally, the non-commutative concatenation $P\ntie Q$ of $P$ and $Q$ restricts our learning process to be conservative, since it represents the following situation:
a choice on the first hypothesis $P$ is refuted if, and only if, the scientist proposes the second (refutable) hypothesis $Q$ and start verifying it.
\end{remark}

\begin{table}\label{tablehiercon}\caption{Hierarchy of Consistent Disjunctions}%
\begin{center}
\begin{tabular}{ccc}\toprule
$P\linf Q$ & $\bhk{P\vee Q}^1_{\sf Int}$ & Intuitionistic disujunction ($=P\tie_1Q$) \\
$P\cup Q$ & $\bhk{P\vee Q}^1_{\sf CL}$ & Classical one-tape disjunction \\
$P\tie Q$ & $\bhk{P\vee Q}^2_{{\sf LCM}[2]}$ & Commutative concatenation ($\equiv P\fr Q$ if $P=Q$) \\
$P\tie_{n}Q$ & $\bhk{P\vee Q}^2_{{\sf LCM}[n]}$ & {\sf LCM} disjunction with mind-changes-bound $n$ \\
$P\lcm Q$ & $\bhk{P\vee Q}^2_{{\sf LCM}}$ & {\sf LCM} disjunction \\
$P\cls Q$ & $\bhk{P\vee Q}^2_{\sf CL}$ & Classical disjunction \\
\bottomrule
\end{tabular}
\end{center}
\end{table}

\begin{prop}
For $\Pi^0_1$ sets $P,Q\subseteq \nn^\nn$ and $n\in\nn$,
\[\bhk{P\vee Q}_{\sf LCM}^2\leq^1_1\bhk{P\vee Q}_{{\sf LCM}[n+2]}^2\leq^1_1\bhk{P\vee Q}^1_{\sf CL}\leq^1_1\bhk{P\vee Q}^1_{\sf Int}.\]
\end{prop}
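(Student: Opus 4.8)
The plan is to split the displayed chain into its three links. Two of them are plain set inclusions, so the only real work is the middle link $\bhk{P\vee Q}^2_{{\sf LCM}[n+2]}\leq^1_1\bhk{P\vee Q}^1_{\sf CL}$, and this is where the hypothesis that $P$ and $Q$ are $\Pi^0_1$ gets used.

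First I would dispose of the two outer links by the identity reduction, recalling that a larger mass problem is Medvedev-below a smaller one. Since ${\tt mc}(f)<n+2$ forces ${\tt mc}(f)<\infty$, we have $\bhk{P\vee Q}^2_{{\sf LCM}[n+2]}\subseteq\bhk{P\vee Q}^2_{\sf LCM}$, hence $\bhk{P\vee Q}^2_{\sf LCM}\leq^1_1\bhk{P\vee Q}^2_{{\sf LCM}[n+2]}$. Likewise $\{i^\nn\}\subseteq 2^\nn$ for $i<2$ gives $\bhk{P\vee Q}^1_{\sf Int}\subseteq\bhk{P\vee Q}^1_{\sf CL}$, hence $\bhk{P\vee Q}^1_{\sf CL}\leq^1_1\bhk{P\vee Q}^1_{\sf Int}$; this is in any case a special case of Proposition \ref{prop:1:basic}(2,3).

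For the middle link I would construct a single total computable $\Phi$ whose image already lies inside $\bhk{P\vee Q}^2_{{\sf LCM}[2]}$, which is enough because $\bhk{P\vee Q}^2_{{\sf LCM}[2]}\subseteq\bhk{P\vee Q}^2_{{\sf LCM}[n+2]}$ for $n\geq 0$. Let $T_P$ and $T_Q$ be the corresponding computable trees of $P$ and $Q$. Given an input $f\oplus g\in\bhk{P\vee Q}^1_{\sf CL}$, so that $g\in[T_P]\cup[T_Q]$, I would discard the ``nonsense'' component $f$ and process $g$ bit by bit in the two-tape model: keep the declaration equal to $0$ and copy $g$ onto tape $0$ (by appending ${\tt write}(0,\cdot)$) for as long as the observed prefix $g\res m$ lies in $T_P$. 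If some prefix $g\res m$ falls out of $T_P$ --- which, since $T_P$ is computable, is detected at a finite stage --- then $g\notin[T_P]=P$, hence $g\in Q$; at that stage I would switch the declaration to $1$ once and for all, append ${\tt write}(1,g\res m)$, and thereafter copy $g$ onto tape $1$. If no such prefix ever appears, the declaration is constantly $0$, so ${\tt mc}(\Phi(f\oplus g))=0$ and ${\tt pr}_0(\Phi(f\oplus g))=g\in P$; if it appears, the declaration changes exactly once, so ${\tt mc}(\Phi(f\oplus g))=1<2$ and ${\tt pr}_1(\Phi(f\oplus g))=g\in Q$ (using that $g\in[T_Q]$). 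In both cases $\Phi(f\oplus g)\in\bhk{P\vee Q}^2_{{\sf LCM}[2]}$. Composing the three reductions then yields the chain.

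The part I expect to be the main obstacle, modest as it is, lies entirely in the middle link: one must check carefully that the single tape switch is genuinely the \emph{only} change of declaration (so that the output lands in the $[2]$-restricted disjunction, not merely in $\bhk{P\vee Q}^2_{\sf LCM}$) and that $\Phi(f\oplus g)$ is an infinite sequence. Both hinge on recognizing ``$g\notin P$'' from a finite prefix of $g$, which is exactly what $P=[T_P]$ with $T_P$ computable makes possible; without some such effectivity assumption on $P$ and $Q$ the statement fails.
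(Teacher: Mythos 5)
Your proof is correct and is essentially the paper's argument: the two outer links are handled as trivial inclusions, and the middle link is the same ``follow $T_P$ until a prefix of $g$ leaves it, then commit once and for all to $Q$'' reduction, which is exactly where the $\Pi^0_1$ hypothesis enters. The only difference is presentational: the paper first invokes the equivalences $\bhk{P\vee Q}^2_{{\sf LCM}[2]}\equiv^1_1 P\tie Q$ and $\bhk{P\vee Q}^1_{\sf CL}\equiv^1_1 P\cup Q$ and then exhibits this reduction as $P\ntie Q\leq^1_1 P\cup Q$ (appending $g$ after the leaf of $T_P$ at which it was refuted), whereas you build the same reduction directly into the two-tape set $\bhk{P\vee Q}^2_{{\sf LCM}[2]}$.
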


\begin{proof}\upshape
It suffices to show $P\tie Q\leq^1_1P\cup Q$, since $\bhk{P\vee Q}^1_{\sf CL}\equiv^1_1P\cup Q$ by Proposition \ref{prop:1:basic} (5) and $\bhk{P\vee Q}^2_{{\sf LCM[2]}}\equiv^1_1P\tie Q$ by Proposition \ref{prop:1:concat} (2).
Indeed, we can show that $(P\ntie Q)\otimes(Q\ntie P)\leq^1_1P\cup Q$.
We construct a computable functional $\Phi$ witnessing $P\ntie Q\leq^1_1P\cup Q$.
If $\sigma\in T_P$, then set $\Phi(\sigma)=\sigma$.
If $\sigma\not\in T_P$, then pick a unique $\rho\subseteq\sigma$ such that $\rho\in L_P$, and set $\Phi(\sigma)=\rho\fr\sigma$ for such $\rho$, where $L_P$ is the set of all leaves of $T_P$.
Clearly $\Phi$ is computable, and note that $\Phi(\sigma)\subseteq\Phi(\tau)$ whenever $\sigma\subseteq\tau$.
If $g\in P$, then $\Phi(g)=g\in P$.
If $g\in Q\setminus P$, then there is a unique $\rho\subset g$ such that $\rho\in L_P$, and $\Phi(g)=\rho\fr g\in P\ntie Q$.
Thus, $P\ntie Q\leq^1_1P\cup Q$ via $\Phi$.
\end{proof}

\subsection{Compactified Infinitaly Disjunctions}

This subsection is concerned with a trick to represent {\em infinitary} disjunctive notions as effective compact sets.

\begin{definition}
Fix a collection $\{P_i\}_{i\in I}$ of subsets of Baire space $\nn^\nn$.
\index{$\bhk{\bigvee_{i\in I}P_i}_{\sf Int}$}\index{$\bhk{\bigvee_{i\in I}P_i}_{\sf LCM}$}%
\index{$\bhk{\bigvee_{i\in I}P_i}_{\sf CL}$}%
\begin{enumerate}
\item $\bhk{\bigvee_{i\in I}P_i}_{\sf Int}=\{f\in(I\times\nn)^\nn:((\exists i\in I)\;{\tt pr}_i(f)\in P_i)\;\&\;{\tt mc}(f)=0\}$.
\item $\bhk{\bigvee_{i\in I}P_i}_{\sf LCM}=\{f\in(I\times\nn)^\nn:((\exists i\in I)\;{\tt pr}_i(f)\in P_i)\;\&\;{\tt mc}(f)<\infty\}$.
\item $\bhk{\bigvee_{i\in I}P_i}_{\sf CL}=\{f\in(I\times\nn)^\nn:(\exists i\in I)\;{\tt pr}_i(f)\in P_i\}$.
\end{enumerate}
\end{definition}

\begin{prop}\label{prop:2:inf-disjunc-equiv}
Let $\{P_n\}_{n\in\nn}$ be an infinite collection of subsets of Baire space $\nn^\nn$.
\begin{enumerate}
\item $\bhk{\bigvee_{n\in\nn}P_n}_{\sf Int}\equiv^1_1\bigoplus_{n\in\nn}P_n$, where $\bigoplus_{n\in\nn}P_n=\{\lrangle{n}\fr f:f\in P_n\}$.
\item $\bhk{\bigvee_{i,n}P_{i,n}}_{\sf LCM}\equiv^1_1\bhk{P_0\vee P_1}^3_{\sf LCM}$, where $P_{i,n}=P_i$ for each $i<2$ and $n\in\nn$.
\end{enumerate}
\end{prop}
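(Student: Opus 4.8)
The plan is to prove both equivalences by writing down explicit computable monotone reductions in each direction, in the bookkeeping style of Propositions~\ref{prop:1:basic} and~\ref{prop:1-2b:cnsivee}; no deep ingredient is needed, only a careful translation between the two proof-model formalisms.

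For~(1), I would first observe that membership in $\bhk{\bigvee_{n\in\nn}P_n}_{\sf Int}$ forces ${\tt mc}(f)=0$, so every entry of $f\in(\nn\times\nn)^\nn$ carries the same first coordinate $i=(f(0))_0$; hence $f={\tt write}(i,{\tt pr}_i(f))$ with ${\tt pr}_i(f)\in P_i$. The reduction $\bhk{\bigvee_{n\in\nn}P_n}_{\sf Int}\leq^1_1\bigoplus_{n\in\nn}P_n$ is then $\lrangle{i}\fr g\mapsto{\tt write}(i,g)$ (read off the leading index and write $g$ onto that tape, producing a record with no mind changes), and the converse reduction is $f\mapsto\lrangle{(f(0))_0}\fr{\tt pr}_{(f(0))_0}(f)$ (read $i$ off the first entry, then copy second coordinates). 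Both maps are plainly computable, which settles~(1).

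For~(2), write $I=2\times\nn$ for the index set and recall $P_{i,n}=P_i$. I would first record the structural fact that if $f\in\bhk{\bigvee_{i,n}P_{i,n}}_{\sf LCM}$ then ${\tt mc}(f)<\infty$, so the first coordinates of $f$ are eventually constant, say equal to $(i^*,n^*)$; every other tape then carries only a finite word, so the witnessing tape is necessarily the $(i^*,n^*)$-th one and ${\tt pr}_{(i^*,n^*)}(f)\in P_{i^*,n^*}=P_{i^*}$. To get $\bhk{\bigvee_{i,n}P_{i,n}}_{\sf LCM}\leq^1_1\bhk{P_0\vee P_1}^3_{\sf LCM}$, I take a backtrack record $g\in(\nn\cup\{\sharp\})^\nn$ with finitely many $\sharp$'s, parse it computably (possible since $\sharp\notin\nn$) into successive blocks with declarations $i_0,i_1,i_2,\dots$ and working-tape words $w_0,w_1,w_2,\dots$ separated by $\sharp$'s, and emit the two-tape record ${\tt write}((i_0,0),w_0)\fr{\tt write}((i_1,1),w_1)\fr{\tt write}((i_2,2),w_2)\fr\cdots$, routing the $c$-th block to the \emph{fresh} tape $(i_c,c)$, where $c$ counts the $\sharp$'s so far; the number of index changes is then at most the number of $\sharp$'s, so the image has finite mind-change count, and its final tape $(i^*,c^*)$ carries ${\tt tail}(g)^{\shft 1}\in P_{i^*}=P_{i^*,c^*}$. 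For the reverse inequality $\bhk{P_0\vee P_1}^3_{\sf LCM}\leq^1_1\bhk{\bigvee_{i,n}P_{i,n}}_{\sf LCM}$ I would mimic the map $\Xi$ of Proposition~\ref{prop:1:basic}(3): scanning a two-tape record and tracking the current index, append the next letter whenever the index repeats, and whenever the index jumps to some $(i',n')$ append $\lrangle{\sharp}\fr\lrangle{i'}\fr{\tt pr}_{(i',n')}(\sigma)$ for the prefix $\sigma$ read so far — that is, erase, re-declare $i'$, and re-transcribe the accumulated content of tape $(i',n')$; here the number of $\sharp$'s equals ${\tt mc}(f)<\infty$, and after the last $\sharp$ the record re-declares $i^*$ with working-tape word ${\tt pr}_{(i^*,n^*)}(f)\in P_{i^*}$, so the image lands in $\bhk{P_0\vee P_1}^3_{\sf LCM}$.

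The step I expect to need the most care is the translation between the two formalisms in~(2): a two-tape index may be revisited, for which the backtrack model has no literal counterpart (each $\sharp$ blanks the working tape), so the reduction from two-tape records is forced to re-transcribe accumulated tape contents at every index change; dually one must check that this bookkeeping keeps the mind-change count finite, so the images really do land in the $\sf LCM$-sets, and that no tape other than the stabilized one can accidentally certify membership — the latter relying only on each $P_j$ consisting of infinite sequences. All of these are exactly the points already handled in Propositions~\ref{prop:1:basic} and~\ref{prop:1-2b:cnsivee}, so I expect the verification to be a routine, if slightly fiddly, adaptation rather than a real obstacle.
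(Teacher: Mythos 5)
Your proof is correct and follows essentially the same route as the paper's: part~(1) uses exactly the same pair of reductions ($f\mapsto{\tt write}(f(0),f^{\shft 1})$ and $f\mapsto\lrangle{(f(0))_0}\fr{\tt pr}_{(f(0))_0}(f)$), and part~(2) is the same syntactic translation between record formats as the paper's $\Xi$ and $\Xi^*$, routing the $c$-th $\sharp$-delimited block to the fresh tape $(i_c,c)$ in one direction and emitting $\sharp$-separated blocks in the other. The one place you diverge --- appending $\lrangle{\sharp}\fr\lrangle{i'}\fr{\tt pr}_{(i',n')}(\sigma)$ so as to re-transcribe the accumulated content of a revisited tape --- is a refinement that the paper's displayed $\Xi$ for this proposition actually omits (it only appends $\lrangle{\sharp,j,w}$, so its claim ${\tt tail}(\Xi(f))^{\shft 1}={\tt pr}_{i,m}(f)$ fails for records that leave and later return to the witnessing tape), and your version is the correct, safer one, matching what the paper itself does in Proposition~\ref{prop:1:basic}(3).
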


\begin{proof}\upshape
(1) $\bhk{\bigvee_{n\in\nn}P_n}_{\sf Int}\geq^1_1\bigoplus_{n\in\nn}P_n$ is witnessed by $f\mapsto(f(0))_0\fr{\tt pr}_{(f(0))_0}(f)$, and $\bhk{\bigvee_{n\in\nn}P_n}_{\sf Int}\leq^1_1\bigoplus_{n\in\nn}P_n$ is witnessed by $f\mapsto{\tt write}(f(0),f^{\shft 1})$, where recall that ${\tt write}(f(0),f^{\shft 1})=(f(0))^\nn\oplus(\lambda n.f(n+1))$ indicates the instruction to writing the infinite word $f^{\shft 1}$ on the $f(0)$-th tape.

(2) We first construct a computable function $\Xi$ witnessing $\bhk{\bigvee_{i,n}P_{i,n}}_{\sf LCM}\geq^1_1\bhk{P_0\vee P_1}^3_{\sf LCM}$.
For $\pair{\pair{i,n},v}\in(2\times\nn)\times\nn$, we first set $\Xi(\lrangle{\pair{\pair{i,n},v}})=\lrangle{\pair{\pair{i,n},v}}$.
For each string $\sigma=\sigma^{--}\fr\lrangle{\pair{\pair{i,n},v},\pair{\pair{j,m},w}}\in((2\times\nn)\times\nn)^{<\nn}$, inductively assume that $\Xi(\sigma^-)$ has been already defined.
If $\pair{i,n}=\pair{j,m}$, then we set $\Xi(\sigma)=\Xi(\sigma^-)\fr\lrangle{w}$.
Otherwise, we set $\Xi(\sigma)=\Xi(\sigma^-)\fr\lrangle{\sharp,j,w}$.
For any $f\in\bhk{\bigvee_{i,n}P_{i,n}}_{\sf LCM}$, the backtrack symbol $\sharp$ occurs in $\Xi(f)$ finitely often, since ${\tt mc}(f)<\infty$.
Therefore, ${\tt tail}(\Xi(f))$ converges, and ${\tt tail}(\Xi(f))^{\shft 1}={\tt pr}_{i,m}(f)\in P_i$ for some $i<2$ and $m\in\nn$.
Thus, $\Xi(f)\in\bhk{P_0\vee P_1}^3_{\sf LCM}$.

We next construct a computable function $\Xi^*$ witnessing $\bhk{\bigvee_{i,n}P_{i,n}}_{\sf LCM}\leq^1_1\bhk{P_0\vee P_1}^3_{\sf LCM}$.
Set $\Xi^*(\lrangle{})=\lrangle{}$.
For $\sigma=\sigma^{--}\fr\lrangle{v,w}\in(\nn\cup\{\sharp\})^{<\nn}$, inductively assume that $\Xi^*(\sigma^-)$ has been already defined.
To define $\Xi^*(\sigma)$, recall the definition ${\tt count}(\sigma)=\#\{n<|\sigma|:\sigma(n)=\sharp\}$.
Then $\Xi^*(\sigma)$ is defined as follows.
\[
\Xi^*(\sigma)=
\begin{cases}
\Xi^*(\sigma^-)\fr\lrangle{\pair{\pair{{\tt tail}(\sigma;0),{\tt count}(\sigma)},w}},&\mbox{ if $v\not=\sharp$ and $w\not=\sharp$},\\
\Xi^*(\sigma^-),&\mbox{ otherwise}
\end{cases}
\]

For any $f\in\bhk{P_0\vee P_1}^3_{\sf LCM}$, we have ${\tt mc}(\Xi^*(f))<\infty$, since ${\tt count}(f)=\#\{k\in\nn:f(k)=\sharp\}$ is finite.
Therefore, we have ${\tt pr}_{\pair{{\tt tail}(f;0),{\tt count}(f)}}(\Xi^*(f))={\tt tail}(f)^{\shft 1}\in P_{{\tt tail}(f;0)}$.
Thus, $\Xi^*(f)\in\bhk{\bigvee_{i,n}P_{i,n}}_{\sf LCM}$.
\end{proof}

We again use the consistent modifications of infinitary models, $\left[\blcm\right]_{n\in\nn}P_n=\bhk{\bigvee_{n\in\nn}P_n}_{\sf LCM}\cap{\rm Con}(T_{P_n})_{n\in\nn}$, and $\left[\bcls\right]_{n\in\nn}P_n=\bhk{\bigvee_{n\in\nn}P_n}_{\sf CL}\cap{\rm Con}(T_{P_n})_{n\in\nn}$.
\index{$\left[\blcm\right]_{n\in\nn}P_n$}\index{$\left[\bcls\right]_{n\in\nn}P_n$}%

\begin{prop}\label{prop:2:inf-consistency}
Let $\{P_n\}_{n\in\nn}$ be a computable collection of $\Pi^0_1$ subsets of Baire space $\nn^\nn$.
\begin{enumerate}
\item $\bhk{\bigvee_{n\in\nn}P_n}_{\sf LCM}\equiv^1_1\left[\blcm\right]_{n\in\nn}P_n$.
\item $\bhk{\bigvee_{n\in\nn}P_n}_{\sf CL}\equiv^1_1\left[\bcls\right]_{n\in\nn}P_n$.
\end{enumerate}
\end{prop}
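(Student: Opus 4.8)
The plan is to imitate the proof of Proposition \ref{prop:1-2:consistency}, replacing the two-tape index set $\{0,1\}$ by the full index set $\nn$ and the pair of trees $T_{P_0},T_{P_1}$ by the computable family $\{T_{P_n}\}_{n\in\nn}$. For each item, the inequality $\left[\blcm\right]_{n\in\nn}P_n\geq^1_1\bhk{\bigvee_{n\in\nn}P_n}_{\sf LCM}$ (resp.\ the ${\sf CL}$ version) is immediate, since the consistent version is obtained from the unrestricted version by intersecting with the consistency set ${\rm Con}(T_{P_n})_{n\in\nn}$; the identity map is the witnessing reduction. So the real work is the other direction.

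First I would construct, by induction on string length, a computable functional $\Phi:\subseteq\nn^\nn\to\nn^\nn$ witnessing $\bhk{\bigvee_{n\in\nn}P_n}_{\sf LCM}\leq^1_1\left[\blcm\right]_{n\in\nn}P_n$. Set $\Phi(\lrangle{})=\lrangle{}$, and for $\sigma=\sigma^-\fr\lrangle{\pair{i,k}}\in(\nn\times\nn)^{<\nn}$, having already defined $\Phi(\sigma^-)$, put
\[
\Phi(\sigma)=
\begin{cases}
\Phi(\sigma^-)\fr\lrangle{\pair{i,k}}&\mbox{ if }{\tt pr}_i(\sigma)\in T_{P_i},\\
\Phi(\sigma^-)&\mbox{ if }{\tt pr}_i(\sigma)\not\in T_{P_i}.
\end{cases}
\]
Since the family $\{P_n\}_{n\in\nn}$ is a computable collection of $\Pi^0_1$ sets, the predicate ``${\tt pr}_i(\sigma)\in T_{P_i}$'' is decidable uniformly in $\sigma$ (recall from Section \ref{subsec:1:notation} that the corresponding tree of a $\Pi^0_1$ set is computable, uniformly in an index), so $\Phi$ is computable. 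For $g\in(\nn\times\nn)^\nn$ one checks, as in Proposition \ref{prop:1-2:consistency}, that $\Phi(g)$ lands in ${\rm Con}(T_{P_n})_{n\in\nn}$ by construction, that ${\tt mc}(\Phi(g))\leq{\tt mc}(g)$ (dropping instructions never introduces a mind-change), and that ${\tt pr}_i(\Phi(g))={\tt pr}_i(g)$ whenever ${\tt pr}_i(g)$ is total and lies in $[T_{P_i}]$. Hence if $g\in\bhk{\bigvee_{n\in\nn}P_n}_{\sf LCM}$, witnessed by ${\tt pr}_i(g)\in P_i$ with ${\tt mc}(g)<\infty$, then $\Phi(g)$ is total with ${\tt pr}_i(\Phi(g))=({\tt pr}_i(g))\in P_i$ and ${\tt mc}(\Phi(g))<\infty$, so $\Phi(g)\in\left[\blcm\right]_{n\in\nn}P_n$. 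The ${\sf CL}$ case is identical: the very same $\Phi$ works, and one simply omits the bookkeeping about ${\tt mc}$, using only that ${\tt pr}_i(\Phi(g))\in P_i$ for the witnessing $i$.

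The main obstacle — really the only point needing care beyond copying the finite-index argument — is the uniformity: in the infinitary setting the instruction alphabet is $\nn\times\nn$ rather than $2\times\nn$, and the test ``${\tt pr}_i(\sigma)\in T_{P_i}$'' must be computable \emph{uniformly in the index $i$ read off from $\sigma$}. This is exactly what the hypothesis that $\{P_n\}_{n\in\nn}$ is a \emph{computable} collection of $\Pi^0_1$ sets delivers, via the convention fixed in Section \ref{subsec:1:notation} that a uniform sequence of $\Pi^0_1$ sets comes with a uniformly computable sequence of corresponding trees. Once this is in place the verification is routine and the two items follow together.
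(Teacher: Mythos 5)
Your proposal is correct and follows exactly the route the paper intends: the published proof of this proposition is literally ``As in the proof of Proposition \ref{prop:1-2:consistency},'' and your argument is precisely that adaptation, with the identity map for the trivial direction and the consistency-filtering functional $\Phi$ for the other, the only new ingredient being the uniform computability of the trees $\{T_{P_n}\}_{n\in\nn}$, which you correctly identify and justify from the hypothesis. Nothing further is needed.
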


\begin{proof}\upshape
As in the proof of Proposition \ref{prop:1-2:consistency}.
\end{proof}

However, the problem is that our models of infinitary disjunctions are not compact.
A modification of infinitary sum was introduced by Binns-Simpson \cite{BS} to embed a free Boolean algebra into the Muchnik lattice of $\Pi^0_1$ subsets of Cantor space, and such a variation was called {\em a recursive meet}.
An important feature of their modification is that it is a $\Pi^0_1$ subset of the compact space $2^\nn$.

\begin{definition}[Binns-Simpson \cite{BS}]
Let $P$ and $\{Q_n\}_{n\in\nn}$ be computable collection of $\Pi^0_1$ subsets of $2^\nn$, and let $\rho_n$ denote the length-lexicographically $n$-th leaf of the corresponding computable tree of $P$.
Then, we define the {\em infinitary concatenation} and {\em recursive meet} as follows:
\index{concatenation!infinitary}\index{$P\ntie\{Q_i\}_{i\in\nn}$}
\index{recursive meet}\index{$\cmeet_{i\in\nn}Q_i$}%
\begin{align*}
P\ntie\{Q_i\}_{i\in\nn}=P\cup\bigcup_n\rho_n\fr Q_n,& &\cmeet_{i\in\nn}Q_i={\sf CPA}\ntie\{Q_i\}_{i\in\nn}.
\end{align*}

Here, recall that ${\sf CPA}$ is a Medvedev complete set, which consists of all {\em complete consistent extensions of Peano Arithmetic}.
The Medvedev completeness of ${\sf CPA}$ ensures that for any nonempty $\Pi^0_1$ subset $P\subseteq 2^\nn$, a computable function $\Phi:{\sf CPA}\to P$ exists.
\end{definition}

\begin{prop}
For any computable sequence $\{P_n\}_{n\in\nn}$ of nonempty $\Pi^0_1$ subsets of $2^\nn$, $\cmeet_{n\in\nn}P_n\equiv^1_{<\omega}\bigoplus_{n\in\nn}P_n$.
\end{prop}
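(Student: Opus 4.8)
The plan is to prove the two reductions $\cmeet_{n\in\nn}P_n\leq^1_1\bigoplus_{n\in\nn}P_n$ and $\bigoplus_{n\in\nn}P_n\leq^1_{<\omega}\cmeet_{n\in\nn}P_n$ separately; since $\leq^1_1$ refines $\leq^1_{<\omega}$, together these yield the asserted $\equiv^1_{<\omega}$. Write $T$ for the corresponding computable tree of ${\sf CPA}$, and let $\{\rho_n\}_{n\in\nn}$ be its leaves enumerated length-lexicographically; both $T$ and the leaf set are computable, and $n$ is computable from $\rho_n$. The structural fact I would record first is that
\[\cmeet_{n\in\nn}P_n\;=\;{\sf CPA}\;\cup\;\bigcup_{n\in\nn}\bigl(\rho_n\fr P_n\bigr)\]
is a \emph{disjoint} union which is moreover ``detectable'': for $h\in\cmeet_{n\in\nn}P_n$, either $h\in{\sf CPA}=[T]$, in which case $h\res s\in T$ for all $s$, or $h=\rho_n\fr f$ with $f\in P_n$, in which case $h\res s\in T$ precisely for $s\le|\rho_n|$ and $\rho_n$ is the longest initial segment of $h$ lying in $T$ (this uses that a leaf $\rho_n$ satisfies $\rho_n\fr\lrangle{j}\notin T$ for every $j$). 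Disjointness is immediate, since an element of $[T]$ never leaves $T$ and distinct leaves are $\subseteq$-incomparable.

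The reduction $\cmeet_{n\in\nn}P_n\leq^1_1\bigoplus_{n\in\nn}P_n$ is then a plain Turing functional: given an oracle $g=\lrangle{m}\fr f\in\bigoplus_{n\in\nn}P_n$ (so $f\in P_m$), read $m=g(0)$, compute the leaf $\rho_m$, and output $\rho_m\fr g^{\shft 1}=\rho_m\fr f\in\rho_m\fr P_m\subseteq\cmeet_{n\in\nn}P_n$. This is total and computable, hence $\leq^1_1$.

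For $\bigoplus_{n\in\nn}P_n\leq^1_{<\omega}\cmeet_{n\in\nn}P_n$ I would invoke Proposition~\ref{prop:1-2:characterization} and exhibit a total computable $\psi:\nn^{<\nn}\to\nn^{<\nn}$ whose running output drops at most once along any $h\in\cmeet_{n\in\nn}P_n$. Using the Medvedev completeness of ${\sf CPA}$, fix a computable $\Phi$ with $\Phi:{\sf CPA}\to P_0$, and let $\Phi(\sigma)$ denote, as in Section~\ref{subsec:1:notation}, the finite stage-$|\sigma|$ approximation of $\Phi$ on oracle $\sigma$, so that $\Phi(\sigma)\subseteq\Phi(\tau)$ whenever $\sigma\subseteq\tau$ (use-monotonicity). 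Put
\[\psi(\sigma)=\begin{cases}\lrangle{0}\fr\Phi(\sigma)&\text{if }\sigma\in T,\\\lrangle{n}\fr\sigma^{\shft|\rho_n|}&\text{if }\sigma\notin T\text{ and the longest }\rho\subseteq\sigma\text{ with }\rho\in T\text{ is a leaf }\rho_n,\\\lrangle{}&\text{otherwise.}\end{cases}\]
This $\psi$ is total computable. Along $h\in{\sf CPA}$ it always takes the first form, is $\subseteq$-nondecreasing, and converges to $\lrangle{0}\fr\Phi(h)\in\bigoplus_{n\in\nn}P_n$ (here $\Phi(h)\in P_0$): zero drops. Along $h=\rho_n\fr f$ it equals $\lrangle{0}\fr\Phi(h\res s)$ for $s\le|\rho_n|$ and then, for $s>|\rho_n|$, equals $\lrangle{n}\fr(f\res(s-|\rho_n|))$, which is $\subseteq$-nondecreasing and converges to $\lrangle{n}\fr f\in\bigoplus_{n\in\nn}P_n$; the single transition at $s=|\rho_n|+1$ is the only place where the value can fail to extend its predecessor, so there is at most one drop, and the ``otherwise'' clause is never reached along such $h$. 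By Proposition~\ref{prop:1-2:characterization} the function $\Gamma(h)=\lim_s\psi(h\res s)$ is $(1,2)$-computable and maps $\cmeet_{n\in\nn}P_n$ into $\bigoplus_{n\in\nn}P_n$, so $\bigoplus_{n\in\nn}P_n\leq^1_2\cmeet_{n\in\nn}P_n$, a fortiori $\leq^1_{<\omega}$.

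The main obstacle is this second reduction: there is no computable way to decide in advance whether a given $h\in\cmeet_{n\in\nn}P_n$ will eventually leave $T$ (landing in some $\rho_n\fr P_n$) or will stay in ${\sf CPA}$ forever, and the two alternatives force incompatible first coordinates ($n$ versus $0$) of the output. This is precisely what blocks a $\leq^1_1$ reduction in general and pins the difficulty at level $\leq^1_{<\omega}$; a single mind change suffices because, once $h$ has left $T$, the responsible leaf $\rho_n$ — and hence the correct output — is unambiguously determined. (Uniformity of $\{P_n\}_{n\in\nn}$ is needed only so that $\cmeet_{n\in\nn}P_n$ is a bona fide $\Pi^0_1$ set; the two reductions above do not otherwise use it.)
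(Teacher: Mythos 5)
Your proposal is correct and follows essentially the same route as the paper: the easy direction is the same computable map $\lrangle{n}\fr g\mapsto\rho_n\fr g$ (in reverse), and the hard direction uses Medvedev completeness of ${\sf CPA}$ to fix $\Phi:{\sf CPA}\to P_0$ and switches exactly once upon detecting that the oracle has left $T_{\sf CPA}$ past a leaf $\rho_n$. The only cosmetic difference is that you package the mind-change argument through Proposition \ref{prop:1-2:characterization} (a string-valued $\psi$ with at most one drop) while the paper writes down the $(1,2)$-learner directly; these are interchangeable.
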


\begin{proof}\upshape
The condition $\cmeet_{n\in\nn}P_n\leq^1_1\bigoplus_{n\in\nn}P_n$ is witnessed by a computable function $n\fr g\mapsto\rho_n\fr g$.
We will construct a learner witnessing $\cmeet_{n\in\nn}P_n\geq^1_{<\omega}\bigoplus_{n\in\nn}P_n$.
Fix a computable function $\Phi_e:{\sf CPA}\to 0\fr P_0$.
Such $\Phi_e$ exists, since every nonempty $\Pi^0_1$ subset of $2^\nn$ is $(1,1)$-reducible to ${\sf CPA}$.
We also fix a partial computable function $\Phi_{i(n)}:\rho_n\fr g\mapsto n\fr g$, for each $n\in\nn$.
For $\sigma\in 2^{<\nn}$, if $\sigma\in T_{\sf CPA}$ then set $\Psi(\sigma)=e$.
If $\sigma\not\in T_{\sf CPA}$, then $\rho_n\subseteq\sigma$ for some $n$.
For such $n$, we set $\Psi(\sigma)=i(n)$.
The function $\Gamma$ identified by the learner $\Psi$ is clearly $(1,2)$-computable, and $\Gamma(g)\in\bigoplus_{n\in\nn}P_n$ for any $g\in\cmeet_{n\in\nn}P_n$.
\end{proof}

\subsection{Infinitary Disjunctions along well-Founded Trees}
\label{section:gdd_wellfounded}

\begin{definition}[Transfinite Mind-Changes]\label{def:div:trans-mc}
Let $(\mathcal{O},\leq_{\mathcal{O}})$ denote {\em Kleene's system of ordinal notations} (see Rogers \cite{Rog}).
Then for each $a\in\mathcal{O}$ we introduce the $a$-th derivative of $P\subseteq\nn^\nn$ as follows.
\index{$\mathcal{O}$}\index{$\leq_\mathcal{O}$}\index{Kleene's system of ordinal notations}\index{derivative}\index{$P^a$}\index{$P^{a+}$}%
\begin{align*}
P^a=
\begin{cases}
P\\
\bhk{P\vee P^{b}}^2_{\sf LCM[2]}\\
\bigoplus_{n\in\nn}P^{\Phi_e(n)}
\end{cases}
&
P^{a+}=
\begin{cases}
P&\mbox{ if }a=0,\\
\bhk{P\vee P^{b+}}^2_{\sf LCM[2]}&\mbox{ if }a=2^b,\\
\bhk{P\vee\bigoplus_{n\in\nn}P^{\Phi_e(n)+}}^2_{\sf LCM[2]}&\mbox{ if }a=3\cdot 5^e.
\end{cases}
\end{align*}
Here, we require $\Phi_e(n)<_\mathcal{O}\Phi_e(n+1)$ for every $3\cdot 5^e\in\mathcal{O}$ in the definition of $\mathcal{O}$.
In particular, this implies that $P^{(\Phi_e(m))}\leq^1_1P^{(\Phi_e(n))}$ whenever $n\leq m$.
Additionally, we may require that $\Phi_e(n)<\Phi_e(n+1)$ as a natural number by padding.
If $P$ is a nonempty $\Pi^0_1$ subset of $2^\nn$, we also define another derivative $P^{(a)}$ as follows.
\index{$P^{(a)}$}%
\[
P^{(a)}=
\begin{cases}
P&\mbox{ if }a=0,\\
P\ntie P^{(b)}&\mbox{ if }a=2^b,\\
P\fr\{P^{(\Phi_e(n))}\}_{n\in\nn}&\mbox{ if }a=3\cdot 5^e.
\end{cases}
\]
\end{definition}

\begin{prop}\label{prop:3:compactness-trans}
For any nonempty $\Pi^0_1$ set $P\subseteq 2^\nn$ and any notation $a\in\mathcal{O}$, the $a$-th derivative $P^{(a)}$ is a $\Pi^0_1$ subset of $2^\nn$.
\end{prop}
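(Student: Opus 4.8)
The plan is to prove a uniform, effective refinement of the statement and obtain the proposition as an immediate corollary. Concretely, I would show by effective transfinite recursion along Kleene's system $\mathcal{O}$ that there is a partial computable function $a\mapsto t(a)$, converging on every $a\in\mathcal{O}$ and uniform in a fixed index of $P$, such that $t(a)$ is an index of a \emph{computable} subtree $T^{(a)}\subseteq 2^{<\nn}$ with $[T^{(a)}]=P^{(a)}$; the proposition follows at once. The only inputs I need are recalled in Section~\ref{subsec:1:notation}: since $P$ is a nonempty $\Pi^0_1$ subset of $2^\nn$, its corresponding tree $T_P\subseteq 2^{<\nn}$ is computable and so is its leaf set $L_P$, for which I fix the length-lexicographic enumeration $(\rho_n)_n$. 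I also note in passing that $P\subseteq P^{(a)}$ for every $a$ (each clause only adds sets), so every set arising in the recursion is nonempty and the ``corresponding tree''/``leaf'' data it invokes is meaningful; and that throughout, the \emph{scaffold} over which copies are grafted is always the fixed tree $T_P$, never the corresponding tree of an intermediate $P^{(b)}$.

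First I would isolate a small tree-surgery observation, of exactly the kind already carried out in the proofs of Propositions~\ref{prop:1-2:cut-leaf} and~\ref{prop:1:concat}. Given a family $(S_\rho)_{\rho\in L_P}$ of nonempty subtrees of $2^{<\nn}$ that is computable uniformly in $\rho\in L_P$, the set
\[
S\;=\;T_P\;\cup\;\bigcup_{\rho\in L_P}\rho\fr S_\rho
\]
is again a subtree of $2^{<\nn}$; an index for $S$ is computable from indices for $T_P$, $L_P$ and $(S_\rho)_\rho$; and $[S]=[T_P]\cup\bigcup_{\rho\in L_P}\rho\fr[S_\rho]$. Decidability of $S$ is immediate: given $\sigma$, test $\sigma\in T_P$, and if this fails search for the unique leaf $\rho\subseteq\sigma$ of $T_P$ (unique because distinct leaves are incomparable) and test $\sigma^{\shft |\rho|}\in S_\rho$. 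The path identity is proved by observing that a path $f\in[S]$ either stays in $T_P$ forever, so $f\in[T_P]$, or, at the least place where it leaves $T_P$, has just passed through some leaf $\rho$ of $T_P$, and then $f\res m\in\rho\fr S_\rho$ for all large $m$, i.e.\ $f^{\shft |\rho|}\in[S_\rho]$. (The successor clause below uses the special case $S_\rho=S_0$ for all $\rho$; the case analysis is identical.)

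With this observation available, I would define $t$ by the three clauses $t(0)=$ an index of $T_P$; $t(2^b)=$ an index of $T_P\cup\bigcup_{\rho\in L_P}\rho\fr T^{(b)}$, where $T^{(b)}$ is the tree with index $t(b)$; and $t(3\cdot 5^e)=$ an index of $T_P\cup\bigcup_n\rho_n\fr T^{(\Phi_e(n))}$, where $T^{(\Phi_e(n))}$ has index $t(\Phi_e(n))$. By the surgery observation each clause returns an index computable from the indices it names, so the whole recipe is an instance of the recursion theorem; let $t$ be the resulting partial computable function. A transfinite induction along the well-founded order $<_\mathcal{O}$ then establishes simultaneously, for every $a\in\mathcal{O}$, that $t(a)\!\downarrow$, that $T^{(a)}$ is a computable nonempty subtree of $2^{<\nn}$, and that $[T^{(a)}]$ equals $P$, $P\ntie P^{(b)}$, or $P\ntie\{P^{(\Phi_e(n))}\}_n$ according to the form of $a$ — that is, $[T^{(a)}]=P^{(a)}$. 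Hence $P^{(a)}$ is a $\Pi^0_1$ subset of $2^\nn$.

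The one point I expect to require care is the limit clause: the surgery observation needs the family $(T^{(\Phi_e(n))})_n$ to be \emph{uniformly} computable, not merely computable for each separate $n$. This is precisely what the recursion-theorem formulation of $t$ secures, together with the fact — built into the definition of $\mathcal{O}$ — that a limit notation $3\cdot 5^e$ comes with a total $<_\mathcal{O}$-increasing fundamental sequence $\Phi_e$, so that $n\mapsto t(\Phi_e(n))$ is total and computable. Everything else — tree-ness, decidability, the path analysis, nonemptiness, and the verification that $[T^{(a)}]$ matches the defining clause of $P^{(a)}$ — is routine, paralleling the computations in the preceding subsection.
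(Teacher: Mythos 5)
Your proof is correct and follows essentially the same route as the paper's: a transfinite induction along $<_\mathcal{O}$ establishing that the family $\{P^{(b)}:b\leq_\mathcal{O}a\}$ is \emph{uniformly} $\Pi^0_1$, with the successor and limit clauses handled by computing a tree index for $P\ntie Q$ (resp.\ $P\ntie\{Q_n\}_n$) from indices for the pieces. The paper leaves the index computations as "easy"; you have merely made them explicit via the tree-surgery lemma and the recursion theorem, which is exactly the intended content.
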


\begin{proof}\upshape
Fix $a\in\mathcal{O}$.
By our definition, obviously $P^{(a)}$ is a subset of $2^\nn$.
We inductively assume that $\{P^{(b)}:b<_\mathcal{O}a\}$ is uniformly $\Pi^0_1$.
For $a=2^b$, we can easily compute a $\Pi^0_1$ index of $P^{(a)}=P\ntie P^{(b)}$ is from a $\Pi^0_1$ index of $P^{(b)}$.
For $a=3\cdot 5^e$, we can also easily compute a $\Pi^0_1$ index of $P^{a}=P\fr\{P^{(\Phi_e(n))}\}_{n\in\nn}$ from a computable sequence of $\Pi^0_1$ indices of $\{P^{(\Phi_e(n))}\}_{n\in\nn}$.
Thus, $\{P^{(b)}:b\leq_{\mathcal{O}}a\}$ is uniformly $\Pi^0_1$.
\end{proof}

\begin{prop}\label{prop:div:ordinal-as-tree}
For any nonempty $\Pi^0_1$ set $P\subseteq 2^\nn$ and any notation $a\in\mathcal{O}$, the condition $P^{a+}\leq^1_1P^{(a)}\leq^1_1P^{a}$ holds.
\end{prop}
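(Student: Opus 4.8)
The plan is a transfinite induction on the notation $a\in\mathcal{O}$, organised as an \emph{effective} transfinite recursion so that the two reductions witnessing $P^{a+}\leq^1_1P^{(a)}$ and $P^{(a)}\leq^1_1P^{a}$ are produced uniformly from $a$ together with an index of $P$; this uniformity is essential, since the limit clause $a=3\cdot5^{e}$ refers to the computable sequence of strictly smaller notations $\{\Phi_{e}(n)\}_{n}$. Along the induction I would carry the side facts that $P^{a},P^{a+}$ are $\Pi^0_1$ (replacing, where complexity matters, each two-tape $\bhk{\,\cdot\vee\cdot\,}^2_{{\sf LCM}[2]}$ by its consistent $\Pi^0_1$ counterpart via Proposition~\ref{prop:1-2:consistency}, which does not change the $\leq^1_1$-degree), and that $\ntie$ is associative and is monotone in its second argument with respect to $\leq^1_1$ (the latter realized by a leaf-tracking functional: relay the input verbatim while it stays in $T_P$, and once it has passed a leaf $\rho\in L_P$ — detectable one step later because $L_P$ is computable — append the inductively supplied reduction of the remaining tail). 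The base case $a=0$ is immediate since $P^{0+}=P^{(0)}=P^{0}=P$.

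For a successor $a=2^{b}$ I would first rewrite $P^{a}\equiv^1_1P\tie P^{b}=(P\ntie P^{b})\linf(P^{b}\ntie P)$ and $P^{a+}\equiv^1_1(P\ntie P^{b+})\linf(P^{b+}\ntie P)$ using Proposition~\ref{prop:1:concat}(2). The inequality $P^{a+}\leq^1_1P^{(a)}$ then falls out cheaply: $\linf$ is the infimum in the Medvedev lattice, so $P^{a+}\leq^1_1P\ntie P^{b+}$, and the inductive hypothesis $P^{b+}\leq^1_1P^{(b)}$ together with monotonicity of $\ntie$ gives $P\ntie P^{b+}\leq^1_1P\ntie P^{(b)}=P^{(a)}$. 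For $P^{(a)}\leq^1_1P^{a}$ one uses the meet again: it suffices to reduce $P\ntie P^{(b)}$ both to $P\ntie P^{b}$ (immediate, as above) and to $P^{b}\ntie P$. When the build-up of $b$ passes through no limit notation, $P^{(b)}$ is — up to $\equiv^1_1$ and in fact literally on the relevant trees — a finite iterated concatenation of copies of $P$, so by associativity of $\ntie$ the sets $P\ntie P^{(b)}$ and $P^{(b)}\ntie P$ coincide and the meet condition is trivial.

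The limit case $a=3\cdot5^{e}$ is the smoother one to run directly. Writing $P^{(a)}=P\ntie\{P^{(\Phi_{e}(n))}\}_{n}=P\cup\bigcup_{n}\rho_{n}\fr P^{(\Phi_{e}(n))}$ (with $\rho_{n}$ the length-lexicographically $n$-th leaf of $T_P$) and $P^{a}=\bigoplus_{n}P^{\Phi_{e}(n)}$: for $P^{(a)}\leq^1_1P^{a}$ a single functional works, sending $\lrangle{n}\fr h$ with $h\in P^{\Phi_{e}(n)}$ to $\rho_{n}\fr R_{n}(h)$, where $R_{n}$ is the uniformly given reduction $P^{(\Phi_{e}(n))}\leq^1_1P^{\Phi_{e}(n)}$; no surgery is needed because the coproduct tag $n$ appears at the first symbol. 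For $P^{a+}\leq^1_1P^{(a)}$ one uses $P^{a+}\equiv^1_1(P\ntie\bigoplus_{n}P^{\Phi_{e}(n)+})\linf(\dots)\leq^1_1P\ntie\bigoplus_{n}P^{\Phi_{e}(n)+}$ and a leaf-tracking functional on $P\ntie\{P^{(\Phi_{e}(n))}\}_{n}$: relay the input while it stays in $T_P$, and once it passes a leaf $\rho_{n}$, recover the index $n$, read the tail (which lies in $P^{(\Phi_{e}(n))}$), and feed it through the inductive $P^{\Phi_{e}(n)+}\leq^1_1P^{(\Phi_{e}(n))}$, re-tagging by $n$ — the committed output prefix is precisely $\rho_{n}$, so here too no surgery is required. (Throughout the limit clauses I would invoke the standing convention, implicit already in the infinitary concatenation $P\ntie\{\cdot\}$, that $T_P$ is arranged to have infinitely many leaves; any nonempty $\Pi^0_1$ subset of $2^\nn$ may be padded to this form without changing its $\leq^1_1$-degree.)

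The hard part is the remaining case of $P^{(a)}\leq^1_1P^{a}$ in which the build-up of $b$ \emph{does} pass through a limit notation, i.e.\ establishing $P\ntie P^{(b)}\leq^1_1P^{b}\ntie P$ when $P^{(b)}$ is no longer a finite concatenation of copies of $P$. Here the source and the target carry their ``$P$-part'' and their ``derivative-part'' in opposite order, so the obvious streaming strategy — guess that the input stays inside $T_{P^{b}}$, relay its image under the inductive reduction prefixed by a fixed leaf of $T_P$, and switch once the input escapes $T_{P^{b}}$ — would demand surgery on an output prefix already committed past a leaf of $T_{P^{(b)}}$, which is impossible. I expect to resolve this by working throughout with the $\sharp$-separated normal forms $P^{\rightarrow}Q$ of Proposition~\ref{prop:1-2:cut-leaf}, whose recorded backtrack symbol $\sharp$ legitimises exactly the rerouting one needs (since the empty string is always an available ``node of $T_P$''), and by strengthening the inductive hypothesis so that the reductions it produces are prefix-honest — they factor through the canonical tree inclusions $T_P\subseteq T_{P^{(b)}}$ and only commit to nodes that remain extendible in the target — which is precisely the property that makes the nested rerouting terminate.
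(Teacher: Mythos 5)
Your skeleton is essentially the paper's: transfinite induction uniform in $a$, conversion of the two-tape disjunctions into concatenations via Propositions~\ref{prop:1-2b:cnsivee} and~\ref{prop:1:concat} (keeping $P^{(a)}$ and the consistent versions $\Pi^0_1$ by Proposition~\ref{prop:3:compactness-trans}), the leaf-tracking functional for monotonicity of $\ntie$ in its second argument, the coproduct-tag argument at limit notations, and literal associativity when $|b|_\mathcal{O}$ is finite. All of that is fine. The problem is the step you yourself isolate as ``the hard part'' --- showing $P\ntie P^{(b)}\leq^1_1 P^{(b)}\ntie P$ once the build-up of $b$ passes through a limit notation --- which is the only place the proposition has any real content. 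What you offer there (``I expect to resolve this by working with the $\sharp$-separated normal forms \dots and by strengthening the inductive hypothesis so that the reductions are prefix-honest'') is a research direction, not an argument: you do not state the strengthened hypothesis, do not verify it survives the limit clause of the recursion, and do not explain why it defeats the obstruction you correctly diagnose (a streamed output prefix committed past a leaf of $T_{P^{(b)}}$ cannot be repaired). Passing to $P^{\rightarrow}Q$ via Proposition~\ref{prop:1-2:cut-leaf} is only a $\leq^1_1$-equivalence, so by itself it cannot remove that obstruction. This is a genuine gap, not a presentational one: note that $P\leq^1_1 P\ntie P$ fails for special $P$ (otherwise the mind-change hierarchy would collapse), so no argument that ignores the limit structure can possibly close it.

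For comparison, the paper attacks this step by \emph{sidestepping} the order reversal rather than performing surgery on committed prefixes. To get $P\ntie P^{(b)}\leq^1_1 P^{(b)}\ntie P$ it composes two reductions: first $P\ntie P^{(b)}\leq^1_1 P^{(b)}$, witnessed simply by $g\mapsto\rho\fr g$ for a fixed leaf $\rho\in L_P$, so the leading copy of $P$ in the target is bought outright and never has to be reconstructed from the input; and second the absorption claim $P^{(b)}\leq^1_1 P^{(b)}\ntie P$, which is where the padding convention $\Phi_e(n)+1\leq_\mathcal{O}\Phi_e(n+1)$ is used. For $b=3\cdot 5^e$ one has $P^{(b)}\ntie P\equiv^1_1 P\ntie\{P^{(\Phi_e(n))}\ntie P\}_{n\in\nn}$, and the induction hypothesis together with $P^{(\Phi_e(n+1))}\leq^1_1 P^{(\Phi_e(n)+1)}=P\ntie P^{(\Phi_e(n))}$ lets the trailing copy of $P$ be swallowed by shifting the slots of the limit by one index (the case $b=2^c$ recursing down to the limit stage below). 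Even so, the slot-shift still has to be reconciled with the fact that the reduction has already committed the leaf $\rho_n$ selecting slot $n$ before it learns the input will leave that slot, so the bookkeeping you call ``prefix-honesty'' does not disappear; but the decomposition through $P^{(b)}$ and the index shift are the two ideas your proposal is missing, and without at least them the hard case is not proved.
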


\begin{proof}
Clearly $P\ntie P^{(b)}$ is $(1,1)$-equivalent to $\bhk{P\vee P^{(b)}}^2_{\sf LCM[2]}$, since $P^{(b)}$ is $\Pi^0_1$ by Proposition \ref{prop:3:compactness-trans}, where the $(1,1)$-equivalence follows by Proposition \ref{prop:1-2b:cnsivee} and \ref{prop:1:concat}.
It is easy to see that $\bhk{P\vee\bigoplus_{n\in\nn}P^{(\Phi_e(n))}}^2_{\sf LCM[2]}\leq^1_1P\fr\{P^{(\Phi_e(n))}\}_{n\in\nn}\leq^1_1\bigoplus_{n\in\nn}P^{(\Phi_e(n))}$ holds.
For successor steps, it suffices to show that $P\fr P^{(b)}\leq_W(P^{(b)}\fr P)$.
If $|b|_\mathcal{O}$ is a finite ordinal, it is clear.
If $|b|_\mathcal{O}$ is an infinite ordinal, say $b=3\cdot 5^e$, then $P^{(b)}\leq^1_1P^{(b)}\fr P$ holds, since $\Phi_e(n)+1\leq_\mathcal{O}\Phi_e(n+1)$.
\end{proof}

\begin{notation}
Every $a\in\mathcal{O}$ is often identified with the corresponding well-founded tree $T_a$ consisting of all finite nonempty $<_\mathcal{O}$-decreasing sequences $\lrangle{a_0,a_1,a_2,\dots}$, where $a_0=a$ and for every $i\in\nn$, either $2^{a_{i+1}}=a_i$ or $a_{i+1}=\Phi_e(n)$ holds for some $n\in\nn$ and $e$ with $3\cdot 5^e=a_i$.
Our padding assumption $\Phi_e(n)<\Phi_e(n+1)$ implies that $T_a$ is computable.
\end{notation}

Definition \ref{def:div:trans-mc} immediately induces associated piecewise computability notions.
For a notation $a\in\mathcal{O}$, a collection $\{S_\kappa\}_{\kappa\in T_a}$ of $\Sigma^0_1$ subsets of $X\subseteq\nn^\nn$ is {\em $a$-indexed} if $S_{\lrangle{a}}=X$ and the mapping $\kappa\mapsto S_\kappa$ is an order preserving homomorphism from the tree $(T_a,\subseteq)$ onto the ordered set $(\{S_\kappa\}_{\kappa\in T_a},\supseteq)$, where $O(\leq a)=\{b:b\leq_\mathcal{O}a\}$.
It is {\em strictly $a$-indexed} if it is $a$-indexed and $S_\kappa=\bigcup_{n\in\nn}S_{\kappa\fr\Phi_e(n)}$ whenever $\kappa=\kappa^-\fr 3\cdot 5^e$.
A partial function $\Gamma:\subseteq\omega^\omega\to\omega^\omega$ is said to be {\em (strictly) $a$-indexed $\Pi^0_1$ $d$-layerwise computable} if there are a (strictly) $a$-indexed collection of $\Sigma^0_1$ subsets $\{S_\kappa\}_{\kappa\in T_a}$ of the domain of $\Gamma$ and a uniformly computable collection $\{\Gamma_\kappa\}_{\kappa\in T_a}$ of partial computable functions such that $\Gamma$ agrees with $\Gamma_\kappa$ on the domain $S_\kappa\setminus\bigcup_{\lambda\supsetneq\kappa}S_\lambda$.
\index{layer!$d$-layer!$a$-indexed}\index{layer!$d$-layer!strictly $a$-indexed}%
\index{layer!layerwise computability!$a$-indexed $\Pi^0_1$ $d$-}%
\index{layer!layerwise computability!strictly $a$-indexed $\Pi^0_1$ $d$-}%

It is easy to see that these notions are subclasses of ${\rm dec}^\omega_{\rm p}[\Pi^0_1]$.
If the order type $|a|_\mathcal{O}$ of $\{b:b<_\mathcal{O}a\}$ is $\omega$, the strict $a$-indexed $\Pi^0_1$ $d$-layerwise computability realizes the class $[\mathfrak{C}_T]^1_{\tt eff}$.
Obviously, a strict $a$-indexed $\Pi^0_1$ $d$-layerwise computable function $\Gamma:P^a\to P$ and an $a$-indexed $\Pi^0_1$ $d$-layerwise computable function $\Gamma^*:P^{a+}\to P$ exist.

\begin{remark}
Obviously, $a$-indexed $\Pi^0_1$ $d$-layerwise computability can be viewed as the effective version of discontinuity level $\leq_\mathcal{O}a$ in the sense of Hertling \cite{Hertling96} and Hemmerling \cite{Hemmerling08}.
Here, a partial function $\Gamma:\subseteq\nn^\nn\to\nn^\nn$ shall be said to be {\em of effective discontinuity level $\leq_\mathcal{O}a$} if there is a computable collection $\{\Gamma_b\}_{b\leq_\mathcal{O}}a$ of partial computable functions with uniform $\Sigma^0_1$ domains $\{S_b\}_{b\leq_\mathcal{O}a}$ such that for every $x\in{\rm dom}(\Gamma)$, $\Gamma(x)=\Gamma_b(x)$ for a unique $b\leq_\mathcal{O}a$ with $x\in S_b\setminus\bigcup_{c<_\mathcal{O}b}S_c$.

Note that Hemmerling \cite{Hemmerling08} studied its boldface version in the context of levels of subhierarchy (see Ma{\l}ek \cite{Malek06}) of the Baire one star functions $\mathcal{B}^*_1$ (see O'Malley \cite{Malley77}), whose original definition seems to be a boldface version of the Blum-Blum locking \cite{BlBl} in learning theory.
Then, the boldface version of the learnability with mind-change $1$ seems to be interpreted as the Baire one double star functions $\mathcal{B}^{**}_1$ (see Pawlak \cite{Pawlak00}).

Indeed, the notion of the discontinuity level is a useful tool to analyze the Baire hierarchy of the Borel measurable functions.
For instance, Solecki \cite[Theorem 3.1]{Sole98} used a transfinite derivation process in the proof of his dichotomy theorem for the Baire one functions, and Semmes \cite[Lemma 4.3.3]{Sem} introduced a high level analog of a transfinite derivation process in the proof of his decomposition theorem for the $\mathbf{\Lambda}_{2,3}$ functions (a subclass of the Baire two functions).

See also de Brecht \cite{deBre13} for a systematic study on the levels of discontinuity.
\end{remark}

\begin{definition}[see Freivalds-Smith \cite{FreSmi} and Luo-Schulte \cite{LuoS06}]
Let $\Psi:\nn^{<\nn}\to\nn$ be a learner.
We say that $c:\nn^{<\nn}\to\mathcal{O}$ is a {\em mind-change counter} for $\Psi$ if, for any $\sigma\in\nn^{<\nn}$, $c(\sigma)<_\mathcal{O}c(\sigma^-)$ whenever $\Psi(\sigma)\not=\Psi(\sigma^-)$.
\index{mind-change counter}%
A learner $\Psi$ is {\em $a$-bounded} if there is a computable mind-change counter $c:\nn^{<\nn}\to\mathcal{O}$ for $\Psi$ such that $c(\lrangle{})\leq_\mathcal{O}a$.
\index{learner!$a$-bounded}%
\end{definition}

\begin{remark}
The computational power of $a$-bounded learnability is very closely related to Ershov's mind-change hierarchy (Ershov hierarchy \cite{Ershov68}) of $\Delta^0_2$ subsets of $\nn$, or the effective version of the Hausdorff difference hierarchy of $\mathbf{\Delta}^0_2$ subsets of $\nn^\nn$ (for Ershov hierarchy, see also Stephan-Yang-Yu \cite{SteYanYu}).
\end{remark}

\begin{prop}
For a notation $a\in\mathcal{O}$, a partial function $\Gamma:\subseteq\nn^\nn\to\nn^\nn$ is of effective discontinuity level $\leq_\mathcal{O}a$ if and only if it is learnable via an $a$-bounded learner.
\end{prop}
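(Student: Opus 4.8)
The plan is to prove the equivalence by two explicit and essentially symmetric translations: from the mind‑change counter of an $a$‑bounded learner to the uniformly $\Sigma^0_1$ pieces of an effective‑discontinuity‑level‑$\leq_\mathcal{O}a$ decomposition, and back. Throughout I will use two standing facts about Kleene's $\mathcal{O}$: first, that a mind‑change counter $c$ may be taken to be $\leq_\mathcal{O}$‑non‑increasing (strictly decreased at each mind change, held fixed otherwise), as is the case in Freivalds–Smith \cite{FreSmi}; and second, that for fixed $a\in\mathcal{O}$ the set $\{b:b\leq_\mathcal{O}a\}$ is linearly — indeed well‑ — ordered by $\leq_\mathcal{O}$, and that this set, the comparison $<_\mathcal{O}$ restricted to it, and its $<_\mathcal{O}$‑minima are all uniformly computable (via the computable tree $T_a$). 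The single observation that powers both directions is: if $c$ is a non‑increasing counter for a learner $\Psi$ and $x\in{\rm dom}(\Gamma)$, then $n\mapsto c(x\res n)$ is $\leq_\mathcal{O}$‑non‑increasing, hence eventually constant with some value $b(x)\leq_\mathcal{O}a$; and once $c(x\res n)=b(x)$ it never changes again, so $\Psi$ makes no further mind change and $\Psi(x\res n)=\lim_m\Psi(x\res m)$ for all such $n$.

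For the ``if'' direction, let $\Psi$ be an $a$‑bounded learner identifying $\Gamma$, with computable non‑increasing counter $c$, $c(\lrangle{})\leq_\mathcal{O}a$. For each $b\leq_\mathcal{O}a$ put $S_b=\bigcup\{[\sigma]:c(\sigma)=b\}$; since $c$ is computable these are uniformly $\Sigma^0_1$. Along any $x\in{\rm dom}(\Gamma)$ the set $\{b:x\in S_b\}$ is exactly the finite $\leq_\mathcal{O}$‑chain of values that $c$ takes, whose $<_\mathcal{O}$‑least element is $b(x)$; thus the unique $b$ with $x\in S_b\setminus\bigcup_{c<_\mathcal{O}b}S_c$ is $b(x)$. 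Define $\Gamma_b$ uniformly in $b$ by: on input $x$, search for the least $n$ with $c(x\res n)=b$ and then output $\Phi_{\Psi(x\res n)}(x)$. By the key observation, when $b=b(x)$ we have $\Psi(x\res n)=\lim_m\Psi(x\res m)$, so $\Gamma_{b(x)}(x)=\Phi_{\lim_m\Psi(x\res m)}(x)=\Gamma(x)$. Hence $\{\Gamma_b,S_b\}_{b\leq_\mathcal{O}a}$ witnesses that $\Gamma$ has effective discontinuity level $\leq_\mathcal{O}a$.

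For the ``only if'' direction, let $\{\Gamma_b,S_b\}_{b\leq_\mathcal{O}a}$ witness effective discontinuity level $\leq_\mathcal{O}a$, with $\{S_b\}$ uniformly $\Sigma^0_1$ and (by padding) pairwise distinct fixed indices $e_b$ of $\Gamma_b$. Fix uniformly computable c.e.\ codes of the $S_b$, and for a string $\sigma$ let $B(\sigma)$ be the finite computable set of $b\leq_\mathcal{O}a$ such that some $\tau\subseteq\sigma$ has entered the code of $S_b$ within $|\sigma|$ stages; $B$ is monotone in $\sigma$. Put $b(\sigma)=\min_{<_\mathcal{O}}B(\sigma)$ with the convention $\min_{<_\mathcal{O}}\emptyset=a$, and set $\Psi(\sigma)=e_{b(\sigma)}$, $c(\sigma)=b(\sigma)$. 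Monotonicity of $B$ makes $c$ non‑increasing with $c(\lrangle{})\leq_\mathcal{O}a$, and the conjecture changes only when $b(\sigma)$ strictly drops, so $c$ is a mind‑change counter and $\Psi$ is $a$‑bounded. For $x\in{\rm dom}(\Gamma)$, the $<_\mathcal{O}$‑least $b(x)$ with $x\in S_{b(x)}$ exists (it is the unique $b$ with $x\in S_b\setminus\bigcup_{c<_\mathcal{O}b}S_c$), $b(x)$ eventually enters $B(x\res n)$, and no $c<_\mathcal{O}b(x)$ ever enters (else $x\in S_c$), so $\Psi(x\res n)\to e_{b(x)}$ and $\Phi_{\lim_n\Psi(x\res n)}(x)=\Gamma_{b(x)}(x)=\Gamma(x)$; thus $\Psi$ learns $\Gamma$.

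The only real point requiring care — not a genuine difficulty — is the bookkeeping around $\mathcal{O}$: adopting the convention that counters are $\leq_\mathcal{O}$‑non‑increasing, and verifying that the order $\leq_\mathcal{O}$ on the predecessors of a fixed $a$ (together with the needed minima and the enumeration of $\{b:b\leq_\mathcal{O}a\}$) is computable, which one reads off the computable well‑founded tree $T_a$. With those in hand, both directions are straightforward simulations, and the result matches the expectation in the subsequent remark that $a$‑bounded learnability is the effective analogue of the Ershov/Hausdorff level $\leq_\mathcal{O}a$.
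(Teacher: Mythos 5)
Your proof is correct and takes essentially the same route as the paper, which compresses the whole argument into the single remark that one passes between the pieces $S_b$ and the $\Sigma^0_1$ sets generated by the c.e.\ sets $\{\sigma : c(\sigma)\leq_\mathcal{O} b\}$; your two translations are exactly that correspondence, spelled out. The one interpretive step you flag --- reading the mind-change counter as $\leq_\mathcal{O}$-non-increasing, as in Freivalds--Smith --- is also implicitly assumed by the paper's one-line proof, so nothing is lost.
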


\begin{proof}
The desired equivalence is obtained from an interpretation between $S_b$ and the $\Sigma^0_1$ set generated by the c.e.~set $\{\sigma\in\nn^{<\nn}:c(\sigma)\leq b\}$.
\end{proof}

\subsection{Infinitary Disjunctions along any Graphs}

In the classical proof process, a verifier $\Psi$ on ``$P_0$ or $P_1$'' may change his mind infinitely often.
In the backtrack-tape model, this situation means that $\Psi$ chooses the backtrack symbol $\sharp$ infinitely many often.
Then the word on $\Lambda$ is eventually finite, and it verifies neither $P_0$ nor $P_1$.
Therefore, in the model, if $\Psi$ succeeds to verify ``$P_0$ or $P_1$'' then the backtrack symbol $\sharp$ occurs on the record $\Delta$ at most finitely often.
Consequently, in the backtrack-tape model, classical verification coincides with {\sf LCM} verification.
However, we would like to cover the case that unbounded or infinitely many mind-changes occur.
This may be archived by regarding the backtrack-tape model as a kind of infinitary tape model.

\medskip

\noindent
{\bf The dynamic-tape model}: 
\index{model!dynamic-tape}%
Assume that a directed graph $(V,E)$ is given, where $V$ can be infinite, $E\subseteq V\times V$, and an {\em initial vertex} $\varepsilon\in V$ is chosen.
For any $v\in V$, let ${\rm adj}(v)=\{w\in V:(v,w)\in E\}$.
When a verifier $\Psi$ tries to prove that ``$\bigvee_{v\in V}P_v$'', infinite tapes $\square$, and $\Lambda_v$ for $v\in V$ are given.
The tape $\square$ is called {\em the declaration}, $\Lambda_v$ is called {\em the working tape} for each $v\in V$.
First the letter $\varepsilon$ is written on $\square$, and no word is written on $\Lambda_v$ for $v\in V$.
At each stage $s$, assume that $v[s]$ is written on $\square$.
Then the verifier $\Psi$ executes one or the other of two following actions.
\begin{enumerate}
\item $\Psi$ declares some $w\in{\rm adj}(v[s])$, erases all words on $\square$, and writes $w$ on $\square$; or
\item $\Psi$ writes a letter $k\in\nn$ on the working tape $\Lambda_{v[s]}$.
\end{enumerate}

Assume that a verifier $\Psi$ tries to prove that ``$P_0$ or $P_1$''.
\begin{itemize}
\item {\bf Intuitionism}: Consider $V=\{\varepsilon,0,1\}$, $E=\{(\varepsilon,0),(\varepsilon,1)\}$, and $P_\varepsilon=\emptyset$.
\item {\bf LCM with ordinal-bounded mind-changes}: For a computable well-founded tree $V=T\subseteq\nn^{<\nn}$, consider the following.
\[E=E(T)=\{(\sigma,\tau)\in T\times T:(\exists i\in\nn)\;\tau=\sigma\fr i\},\ P_\sigma=
\begin{cases}
P_0, & \mbox{ if $|\sigma|$ is even,}\\
P_1, & \mbox{ if $|\sigma|$ is odd,}
\end{cases}
\]
\item {\bf LCM}: Consider $V=\nn$; $E=\{(n,n+1):n\in\nn\}$; $P_{2n}=P_0$ for any $n\in\nn$; and $P_{2n+1}=P_1$ for any $n\in\nn$.
Moreover, the word written on the declaration $\square$ must converge.
\item {\bf $(V,E)$-relaxed Classical}: $(V,E)=(V_0,V_1,E)$ is a given directed bipartite graph, and $P_\tau=P_i$ for any $\tau\in V_i$ and $i<2$.
\end{itemize}

\begin{definition}[Dynamic Disjunctions]\label{def:dynamic}
\index{disjunction!dynamic}%
Let $G=(V,E)$ be a directed graph, and let $\{P_v\}_{v\in V}$ be a collection of subsets of Baire space.
For $E\subseteq V^2$, put $\overline{E}=E\cup\{\lrangle{v,v}:v\in V\}$.
We define the {\em dynamic disjunction of $\{P_v\}_{v\in V}$ along the graph $(V,E)$} as follows.
\index{disjunction!dynamic!along a graph}\index{$\bigbhk{\bigvee_{v\in (V,E)}P_v}$}%
\[\bigbhk{\bigvee_{v\in (V,E)}P_v}=\left\{f\in(V\times\nn)^\nn:(\forall n\in\nn)\;(\lrangle{(f(n))_0,(f(n+1))_0}\in \overline{E})\;\&\;(\exists v\in V)\;{\tt pr}_v(f)\in P_v\right\}.\]

Moreover, if $\{P_v\}_{v\in V}$ is a computable sequence of $\Pi^0_1$ subsets of $\nn^\nn$, and $T_{P_v}$ be the corresponding tree for $P_v$, we also define its consistent versions.
\index{$\btie_{v\in(V,E)}P_v$}\index{$\bhtie_{v\in(V,E)}P_v$}%
\begin{enumerate}
\item $\btie_{v\in(V,E)}P_v=\bhk{\bigvee_{v\in (V,E)}P_v}\cap{\rm Con}(T_{P_v})_{v\in V}$.
\item $\bhtie_{v\in(V,E)}P_v=\{f\in(V\times\nn)^\nn:(\forall n\in\nn)\;(\lrangle{(f(n))_0,(f(n+1))_0}\in \overline{E})\}\cap{\rm Con}(T_{P_v})_{v\in V}$.
\end{enumerate}
Here, recall that, for $x=(x_0,x_1)$, the first coordinate $x_0$ is denoted by $(x)_0$.
If $P_v=P$ for any $v\in V$, then we simply write $\btie_{v\in V}P$ and $\bhtie_{v\in V}P$ for $\btie_{v\in(V,E)}P_v$ and $\bhtie_{v\in(V,E)}P_v$ respectively.
\index{$\btie_{v\in V}P$}\index{$\bhtie_{v\in V}P$}%
\end{definition}

As our dynamic-tape model is an infinitary-tape model, this model may be natural to be regarded as expressing a proof process of an infinitary disjunction $\bigvee_{v\in V}P_v$.
Therefore, we refer the model with $(V,E)$ as {\em an infinitary disjunction along $(V,E)$}.
\index{disjunction!infinitary!along a graph}%
Later we will introduce a more complicated model.
It will be called {\em the nested-tape model}.
We first see an upper and lower bound of the degrees of difficulty of these disjunctive notions, and a relationship among various models we have introduced.
Let $\widehat{\rm Deg}(P)$ denote the {\em Turing upward closure} of $P$, i.e., $\widehat{\rm Deg}(P)=\{g:(\exists f\leq_Tg)\;f\in P\}$, and $[(V,E)]$ denote the set of all infinite paths through a graph $(V,E)$, i.e., $[(V,E)]=\{p\in V^\nn:(p(n),p(n+1))\in E\}$.
\index{Turing upward closure}\index{$\widehat{\rm Deg}(P)$}\index{$[(V,E)]$}%

\begin{prop}\label{prop:3:uplow}
Let $(V,E)$ be a computable directed graph, and $\{P_v\}_{v\in V}$ be a computable sequence of $\Pi^0_1$ subsets of $\nn^\nn$.
\begin{enumerate}
\item $\widehat{\rm Deg}\left(\bigoplus_{v\in V}P_v\right)\leq^1_1\btie_{v\in(V,E)}P_v\leq^1_1\bigoplus_{v\in V}P_v$.
\item $\widehat{\rm Deg}\left([(V,E)]\oplus\bigoplus_{v\in V}P_v\right)\leq^1_1\bhtie_{v\in(V,E)}P_v\leq^1_1[(V,E)]\oplus\bigoplus_{v\in V}P_v$.
\end{enumerate}
\end{prop}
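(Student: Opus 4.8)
The plan is to handle items (1) and (2) in parallel, since (2) is essentially (1) with an extra coordinate tracking the path through $(V,E)$. In both cases there are two reductions to establish: an upper bound (reducing the dynamic disjunction to the plain coproduct) and a lower bound (reducing the Turing upward closure of the coproduct, or of the coproduct joined with $[(V,E)]$, to the dynamic disjunction). First I would treat the upper bound $\btie_{v\in(V,E)}P_v\leq^1_1\bigoplus_{v\in V}P_v$: given $f\in(V\times\nn)^\nn$ in the dynamic disjunction, the declaration sequence $(f(n))_0$ walks along $\overline E$, and by the consistency constraint ${\rm Con}(T_{P_v})_{v\in V}$ together with the requirement $(\exists v)\,{\tt pr}_v(f)\in P_v$, one can show that the declaration sequence must eventually stabilize — if it changed value infinitely often, no single tape would receive an infinite word, so no ${\tt pr}_v(f)$ could be total, contradicting ${\tt pr}_v(f)\in P_v\subseteq\nn^\nn$. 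Hence there is a unique eventual declaration $v^*$, and the map $f\mapsto \lrangle{v^*}\fr {\tt pr}_{v^*}(f)$ witnesses the reduction; since $v^*$ and ${\tt pr}_{v^*}(f)$ are computable from $f$ in the limit and, more carefully, the reduction is actually Medvedev (one reads off more and more of ${\tt pr}_{v[s]}(f)$ and commits once the declaration settles — this is exactly the style of the ``Untangling'' argument in Proposition \ref{prop:1-5:inf} but here we get $(1,1)$ because consistency forces monotone behavior on the surviving tape). For (2) the same argument applies but one additionally outputs the declaration path $(f(n))_0\res\omega\in[(V,E)]$ paired in, giving $\bhtie_{v\in(V,E)}P_v\leq^1_1[(V,E)]\oplus\bigoplus_{v\in V}P_v$; note that for $\bhtie$ the declaration path need not stabilize, but it is still literally a member of $[(V,E)]$ and the consistency set guarantees each ${\tt pr}_v(f)$ is a path through $T_{P_v}$, so we recover an element of $[(V,E)]$ together with (possibly only partial, but that is fine since $\bigoplus$ here should be read as in Definition \ref{def:1-2b:twotape}'s spirit) data on the $P_v$'s — I would double-check that the intended reading of the right-hand side makes this reduction go through cleanly.

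Next I would do the lower bounds. For $\widehat{\rm Deg}\left(\bigoplus_{v\in V}P_v\right)\leq^1_1\btie_{v\in(V,E)}P_v$: given $f\in\btie_{v\in(V,E)}P_v$, we argued above that the declaration stabilizes at some $v^*$ with ${\tt pr}_{v^*}(f)\in P_{v^*}$, so $\lrangle{v^*}\fr{\tt pr}_{v^*}(f)\in\bigoplus_{v\in V}P_v$ and this is computable from $f$ — hence in fact $\bigoplus_{v\in V}P_v\leq^1_1\btie_{v\in(V,E)}P_v$ directly and a fortiori the Turing upward closure reduces. Wait — this would collapse the two bounds to an equivalence, which is presumably false in general, so the subtlety must be that the declaration need NOT stabilize when we only know $f\in\btie$: re-examining, $\btie$ requires $(\exists v)\,{\tt pr}_v(f)\in P_v$, and if some $P_v$ contains a computable point that happens to be a leaf-free infinite path, a malicious declaration could oscillate among infinitely many vertices while never building any infinite tape — no, that contradicts $(\exists v)\,{\tt pr}_v(f)\in P_v$ being a total element. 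So stabilization genuinely holds, and the gap between upper and lower bound in the statement reflects that the reduction toward the dynamic disjunction is only a Muchnik (Turing-upward-closed) reduction, not Medvedev: to produce an element of $\btie_{v\in(V,E)}P_v$ from an oracle computing some $f\in\bigoplus_{v\in V}P_v$, we pick the path in $(V,E)$ from $\varepsilon$ to the chosen vertex $v$ (which exists and is computable since $(V,E)$ is a computable graph and $v$ is extractible from $f$, but choosing it may require knowing $v$, i.e. non-uniformly), write the decreasing/declaration prefix along that path, then stream ${\tt pr}_v(f)$ onto the $v$-th tape while declaring $v$ forever. The non-uniformity in selecting $v$ (and a path to it) is exactly why we only get $\widehat{\rm Deg}$ on the left. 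For (2), the analogous lower bound uses the extra path coordinate: from an oracle computing a path $p\in[(V,E)]$ together with coproduct data, one replays $p$ as the declaration sequence, distributing the infinite words onto the appropriate tapes at the appropriate times; here one must interleave writing on tape $p(n)$ before moving the declaration to $p(n+1)$, and feed in the $P_{p(n)}$-data only on those indices $n$ where $p(n)=v$ for the ``active'' $v$ — I would set this up so that consistency is respected at every finite stage, which is automatic because each ${\tt pr}_v$ stays a path in $T_{P_v}$.

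The main obstacle I anticipate is getting the bookkeeping exactly right in the lower-bound reduction for (2), namely how to simultaneously (a) realize a prescribed infinite path $p\in[(V,E)]$ as the declaration sequence of a legal dynamic-tape run, and (b) ensure that the tape contents still form an element of $\bigoplus_{v\in V}P_v$ when the declaration visits a vertex infinitely often versus finitely often — the definition of $\bhtie$ via ${\rm Con}(T_{P_v})_{v\in V}$ only demands finite consistency, so I must be careful that the construction never writes a letter that would leave $T_{P_v}$, which forces me to only advance tape $v$ using genuine $P_v$-path data supplied by the oracle. A secondary subtlety is confirming the precise intended meaning of $\bigoplus_{v\in V}P_v$ on the right-hand sides (the coproduct indexed by an arbitrary, possibly infinite, computable vertex set) and checking that $[(V,E)]$ is genuinely $\Pi^0_1$ when $(V,E)$ is a computable graph, so that the joins make sense in the relevant local degree structure; both are routine but should be stated. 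Everything else is a direct adaptation of the reductions already carried out in Propositions \ref{prop:1-5:inf}, \ref{prop:1-2:consistency}, and \ref{prop:2:inf-disjunc-equiv}, so I would cite those for the streaming/untangling templates and only write out the new graph-walk bookkeeping in detail.
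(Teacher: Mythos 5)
There is a genuine gap here, and it begins with the direction of the reducibility. In this paper $P\leq^1_1Q$ means there is a computable function \emph{from $Q$ into $P$}, so the upper bound $\btie_{v\in(V,E)}P_v\leq^1_1\bigoplus_{v\in V}P_v$ asks for a computable map sending each $v\fr f$ with $f\in P_v$ to an element of the dynamic disjunction; the one-line map $v\fr f\mapsto{\tt write}(v,f)=v^\nn\oplus f$ (declare $v$ forever, copy $f$ onto tape $v$) does this, with consistency automatic since $f\in[T_{P_v}]$ and $\lrangle{v,v}\in\overline{E}$. You instead describe a map going \emph{from} $\btie_{v\in(V,E)}P_v$ \emph{into} $\bigoplus_{v\in V}P_v$ and call it the upper bound; such a map, were it computable, would witness the much stronger $\bigoplus_{v\in V}P_v\leq^1_1\btie_{v\in(V,E)}P_v$ and, as you yourself notice mid-proof, collapse the two sides to Medvedev equivalence. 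Your resolution of that tension fails on both counts. First, the stabilization claim is false: the declaration of an $f\in\btie_{v\in(V,E)}P_v$ need not converge, because a walk along a graph with cycles can return to a vertex $v$ infinitely often while also leaving it infinitely often, and ${\tt pr}_v(f)$ is then still total; the proposition immediately following this one identifies $\btie_{v\in(\{0,1\},\{0,1\}^2)}P_v$ with the classical disjunction $\bhk{P_0\vee P_1}^2_{\sf CL}$, where mind changes are unbounded. Second, even when the declaration does stabilize, its limit $v^*$ is a limit and not computable from $f$, so ``commit once the declaration settles'' is not a $(1,1)$-reduction.

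The intended lower bound needs no extraction at all: $\widehat{\rm Deg}$ is Turing-upward closed, so the \emph{identity} map suffices --- for any $f\in\btie_{v\in(V,E)}P_v$ there is some $v$ with ${\tt pr}_v(f)\in P_v$, hence $v\fr{\tt pr}_v(f)\leq_Tf$ and $f\in\widehat{\rm Deg}(\bigoplus_{v\in V}P_v)$; the non-uniform choice of $v$ is exactly what $\widehat{\rm Deg}$ absorbs. You instead locate the non-uniformity in the opposite reduction (choosing a path from an initial vertex $\varepsilon$ to $v$), but the formal definition of the dynamic disjunction imposes no initial vertex, and the reduction into $\btie$ is completely uniform. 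For item (2) the same two one-line reductions work, with a case split on whether the input is $0\fr g$ with $g\in[(V,E)]$ or $1\fr g$ with $g\in\bigoplus_{v\in V}P_v$, and with the partial computable ${\tt walk}$ map extracting a member of $[(V,E)]$ from any $f\in\bhtie_{v\in(V,E)}P_v$ all of whose projections are finite. Your worry about keeping every ${\tt pr}_v$ inside $T_{P_v}$ when replaying a path of $[(V,E)]$ is a legitimate bookkeeping point, but it does not rescue the argument: until the direction of $\leq^1_1$ is corrected and the false stabilization claim is removed, the proposal establishes neither inequality.
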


\begin{proof}\upshape
(1) $\btie_{v\in(V,E)}P_v\leq^1_1\bigoplus_{v\in V}P_v$ is witnessed by $v\fr f\mapsto{\tt write}(v,f)=v^\nn\oplus f$.
For any $f\in\btie_{v\in(V,E)}P_v$, we have ${\tt pr}_v(f)\in P_v$ for some $v\in V$.
Thus, we have ${\tt pr}_v(f)\leq_Tf$, since ${\tt pr}_v$ is partially computable, and $f\in{\rm dom}({\tt pr}_v)$.
Hence, $f\in\widehat{\rm Deg}(P_v)$.

(2) Fix $f\in [(V,E)]\oplus\bigoplus_{v\in V}P_v$.
If $f(0)=1$, then we can show the desired condition as in (1).
If $f$ is of the form $f=0\fr g$, we have $\lambda n.\lrangle{g(n),0}\in\bhtie_{v\in(V,E)}P_v$ since $g\in[(V,E)]$.
Hence, $\bhtie_{v\in(V,E)}P_v\leq^1_1[(V,E)]\oplus\bigoplus_{v\in V}P_v$.
To see $\widehat{\rm Deg}\left([(V,E)]\oplus\bigoplus_{v\in V}P_v\right)\leq^1_1\bhtie_{v\in(V,E)}P_v$, we inductively define a partial computable function ${\tt walk}:\subseteq(V\times\nn)^\nn\to V^\nn$ as follows.
Set ${\tt walk}(\lrangle{})=\lrangle{}$, and fix $\sigma=\sigma^{--}\fr\lrangle{\pair{u,m},\pair{v,n}}\in(V\times\nn)^{<\nn}$.
Assume that ${\tt walk}(\sigma^-)$ has been already defined.
Then, ${\tt walk}(\sigma)$ is defined as follows.
\[
{\tt walk}(\sigma^{--}\fr\lrangle{\pair{u,m},\pair{v,n}})=
\begin{cases}
{\tt walk}(\sigma^-)\fr\lrangle{v} & \mbox{ if }v\not=u,\\
{\tt walk}(\sigma^-) & \mbox{ otherwise.}
\end{cases}
\]

The notation ${\tt walk}$ has already been introduced in Definition \ref{def:1-2c:hyperconcat_f} with a slightly different definition, but these two notions are essentially equivalent.
Therefore, we may use the same notation.

For any $f\in\bhtie_{v\in(V,E)}P_v$, if ${\tt pr}_v(f)$ is total for some $v\in V$, then the desired condition follows as in (1).
Otherwise, ${\tt mc}(f)=\infty$, i.e., there are infinitely many $n\in\nn$ such that $(f(n+1))_0\not=(f(n))_0$.
In this case, ${\tt walk}(f)=\bigcup_{s\in\nn}{\tt walk}(f\res s)$ is an infinite path through the graph $(V,E)$.
In other words, the condition $f\in\bhtie_{v\in(V,E)}P_v$ ensures that ${\tt pr}_v(f)$ is total and belongs to $P_v$ for some $v\in V$, or otherwise ${\tt walk}(f)$ is total and belongs to $[(V,E)]$.
Consequently, $f\in\widehat{\rm Deg}\left([(V,E)]\oplus\bigoplus_{v\in V}P_v\right)$, since ${\tt pr}_v$ and ${\tt walk}$ are partial computable.
\end{proof}

\begin{prop}
Let $P,P_0,P_1,P_v$, for $v\in V$, be $\Pi^0_1$ subsets of $\nn^\nn$, uniformly.
\begin{enumerate}
\item $\btie_{v\in(T,E(T))}P_v=\bhtie_{v\in(T,E(T))}P_v$ for any well-founded tree $T\subseteq\nn^{<\nn}$.
\item $P_0\oplus P_1\equiv^{1}_{1}\btie_{v\in(V_1,E_1)}P_v\equiv^{1}_{1}\bhtie_{v\in(V_1,E_1)}P_v$, where $V_1=\{\varepsilon,0,1\}$, $E_1=\{(\varepsilon,0),(\varepsilon,1)\}$, and $P_\varepsilon=\emptyset$.
\item $P_0\tie P_1\equiv^{1}_{1}\btie_{v\in(V_2,E_2)}P_v\equiv^{1}_{1}\bhtie_{v\in(V_1,E_1)}P_v$, where $V_2=\{\varepsilon,0,1,01,10\}$, $E_2=\{(\varepsilon,0),(\varepsilon,1),(0,01),(1,10)\}$, $P_\varepsilon=\emptyset$, $P_{01}=P_1$, and $P_{10}=P_0$.
\item $P^{a+}\equiv^1_1\btie_{v\in(T_a,E(T_a))}P$ for every $a\in\mathcal{O}$, where recall the definition of $P^{a+}$ and $T_a$ in Definition \ref{def:div:trans-mc} and the notation below Proposition \ref{prop:div:ordinal-as-tree}.
\item $\bhk{P_0\vee P_1}^2_{\sf CL}\equiv^{1}_{1}\btie_{v\in(\{0,1\},\{0,1\}^2)}P_v$.
\item $\bhk{P_0\vee P_1}^3_{\sf LCM}\equiv^{1}_{1}\btie_{v\in(\nn,S)}P_v$, where $S=\{(n,n+1):n\in\nn\}$; $P_{2n}=P_0$ and $P_{2n+1}=P_1$ for any $n\in\nn$.
\item $\bhk{P\vee P}^3_{\sf LCM}\equiv^{1}_{1}\bhk{\bigvee_{n\in\nn}P}_{\sf LCM}\equiv^{1}_{1}\btie_{v\in(\nn,S)}P$.
\item $\widehat{\rm Deg}\left(\bigoplus_{v\in\nn}P_v\right)\equiv^1_1\bhk{\bigvee_{v\in\nn}P_v}_{\sf CL}\equiv^1_1\btie_{v\in(\nn,\nn^2)}P_v\equiv^1_1\left[\bcls\right]_{v\in\nn}P_v$.
\end{enumerate}
\end{prop}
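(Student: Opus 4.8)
The plan is to prove item (1) first, deduce items (2)--(4) from it, and obtain items (5)--(8) by identifying each dynamic disjunction with one already analysed in Propositions \ref{prop:1-2:consistency}, \ref{prop:2:inf-disjunc-equiv}, \ref{prop:2:inf-consistency} and \ref{prop:3:uplow}. For item (1) I would prove the more general fact that $\btie_{v\in(V,E)}P_v=\bhtie_{v\in(V,E)}P_v$ whenever $(V,E)$ has no infinite $E$-path; the case $(V,E)=(T,E(T))$ for a well-founded tree $T$ is then immediate, since the infinite $E(T)$-paths are exactly the infinite branches of $T$. The inclusion $\btie\subseteq\bhtie$ is trivial. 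Conversely take $f\in\bhtie_{v\in(V,E)}P_v$ and write $f(n)=\pair{v_n,k_n}$. The condition $\pair{v_n,v_{n+1}}\in\overline E$ for all $n$ says that the subsequence of distinct consecutive values of $(v_n)_n$ is an $E$-path, so by well-foundedness $(v_n)_n$ is eventually constant, say $v_n=w$ for all $n\geq N$. Then ${\tt pr}_w(f)$ collects the $k_n$ for all $n\geq N$, hence is total, and ${\rm Con}(T_{P_v})_{v\in V}$ forces ${\tt pr}_w(f\res n)\in T_{P_w}$ for every $n$, so ${\tt pr}_w(f)\in[T_{P_w}]=P_w$; thus $f$ satisfies the winning condition and lies in $\btie_{v\in(V,E)}P_v$.

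Items (2) and (3) then follow from item (1), since $(V_1,E_1)$ and $(V_2,E_2)$ are well-founded trees, together with a direct unwinding of $\btie$: because $P_\varepsilon=\emptyset$, consistency with $T_\emptyset=\{\lrangle{}\}$ forbids writing on the $\varepsilon$-tape, so no legal record ever declares $\varepsilon$. In $(V_1,E_1)$ the declared vertex is then a fixed $i\in\{0,1\}$, giving $\btie_{v\in(V_1,E_1)}P_v=\{i^\nn\oplus p:i<2,\ p\in P_i\}\equiv^1_1P_0\oplus P_1$; in $(V_2,E_2)$ at most one mind change (from $0$ or $1$ to $01$ or $10$) occurs, so by Proposition \ref{prop:1:concat} (2) and Proposition \ref{prop:1-2b:cnsivee} (2), $\btie_{v\in(V_2,E_2)}P_v\equiv^1_1\bhk{P_0\vee P_1}^2_{{\sf LCM}[2]}\equiv^1_1P_0\tie P_1$, where the summands coming from records that start at $01$ or $10$ are absorbed because $P_0\tie P_1\leq^1_1P_i$ for each $i<2$. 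For item (4) I would argue by transfinite induction on $a\in\mathcal O$: $T_0$ is a single node, $T_{2^b}$ is the root with the single subtree $\lrangle{a}\fr T_b$ below it, and $T_{3\cdot5^e}$ is the root with the subtrees $\lrangle{a}\fr T_{\Phi_e(n)}$ $(n\in\nn)$. Using item (1), monotonicity of $\bhk{\cdot\vee\cdot}^2_{{\sf LCM}[2]}$ (Proposition \ref{prop:1-2:wellbehaved}), Proposition \ref{prop:1-2b:cnsivee}, and the definition of the coproduct, one obtains $\btie_{v\in(T_a,E(T_a))}P\equiv^1_1\bhk{P\vee\btie_{v\in(T_b,E(T_b))}P}^2_{{\sf LCM}[2]}$ for $a=2^b$ and $\btie_{v\in(T_a,E(T_a))}P\equiv^1_1\bhk{P\vee\bigoplus_n\btie_{v\in(T_{\Phi_e(n)},E(T_{\Phi_e(n)}))}P}^2_{{\sf LCM}[2]}$ for $a=3\cdot5^e$ (the padding assumption $\Phi_e(n)<\Phi_e(n+1)$ is what makes $T_a$ computable and the subtrees uniform); the induction hypothesis then reproduces exactly the recursion defining $P^{a+}$ in Definition \ref{def:div:trans-mc}.

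Items (5)--(8) concern non-well-founded graphs. In (5), $\overline{\{0,1\}^2}=\{0,1\}^2$, so the walk condition is vacuous and $\btie_{v\in(\{0,1\},\{0,1\}^2)}P_v=\bhk{P_0\vee P_1}^2_{\sf CL}\cap{\rm Con}(T_{P_0},T_{P_1})=P_0\cls P_1\equiv^1_1\bhk{P_0\vee P_1}^2_{\sf CL}$ by Proposition \ref{prop:1-2:consistency} (3). For (6), the ray $(\nn,S)$ forces the declared vertex to increase by one at each jump, so an element is a consistent record making finitely many jumps and ending at some vertex $m$ with designated set $P_{m\bmod 2}$; building explicit $(1,1)$-reductions both ways --- translating each $\sharp$ of a member of $\bhk{P_0\vee P_1}^3_{\sf LCM}$ into one vertex-jump, and inserting one extra jump whenever the declared parity disagrees with the current vertex --- yields $\btie_{v\in(\nn,S)}P_v\equiv^1_1\bhk{P_0\vee P_1}^3_{\sf LCM}$, and (7) is the case $P_0=P_1=P$ together with $\bhk{\bigvee_{n\in\nn}P}_{\sf LCM}\equiv^1_1\bhk{P\vee P}^3_{\sf LCM}$ from Proposition \ref{prop:2:inf-disjunc-equiv} (2). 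In (8), $\overline{\nn^2}=\nn^2$ gives $\btie_{v\in(\nn,\nn^2)}P_v=\bhk{\bigvee_{v\in\nn}P_v}_{\sf CL}\cap{\rm Con}(T_{P_v})_{v\in\nn}=\left[\bcls\right]_{v\in\nn}P_v$ by definition, hence $\equiv^1_1\bhk{\bigvee_{v\in\nn}P_v}_{\sf CL}$ by Proposition \ref{prop:2:inf-consistency} (2), and Proposition \ref{prop:3:uplow} (1) already gives $\widehat{\rm Deg}(\bigoplus_{v\in\nn}P_v)\leq^1_1\btie_{v\in(\nn,\nn^2)}P_v$, so the only remaining point is the reduction $\btie_{v\in(\nn,\nn^2)}P_v\leq^1_1\widehat{\rm Deg}(\bigoplus_{v\in\nn}P_v)$.

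I expect that last point --- equivalently, that the classical infinitary disjunction of a uniform sequence of $\Pi^0_1$ sets is Medvedev-equivalent to an upward-closed set --- to be the main obstacle. Given only an oracle $g$ computing some $\lrangle{v^\ast}\fr p$ with $p\in P_{v^\ast}$ via an unknown Turing functional, one must produce, uniformly in $g$, a single record declaring vertices in $(\nn,\nn^2)$ that eventually parks on a tape whose (fixed) designated set really contains the written word. The natural approach is to dovetail over all functionals $\Phi_e$, using that each $P_v=[T_{P_v}]$ is $\Pi^0_1$ so that a candidate can be rejected the moment its tentative output leaves the computable tree $T_{P_v}$, and to exploit the unlimited backtracking permitted by $\btie_{v\in(\nn,\nn^2)}$ to discard rejected candidates; the delicate part is routing the eventually-correct candidate's output onto a vertex whose designated set is exactly $P_{v^\ast}$ without having already polluted that vertex's tape. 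A secondary, purely bookkeeping, point is the parity-matching in item (6) and checking in item (4) that the siblings $\lrangle{a}\fr T_{\Phi_e(n)}$ of $T_{3\cdot5^e}$ yield a genuine coproduct rather than an uncontrolled merge.
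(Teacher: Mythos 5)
Your handling of items (1)--(3), (5) and (7) is essentially the paper's argument, and your item (4) takes a genuinely different route: you propose effective transfinite induction on $a\in\mathcal{O}$, whereas the paper gives a single direct re-coding of finite records (every $\sigma$ extendible in $\btie_{v\in(T_a,E(T_a))}P$ decomposes uniquely as $\concat_{i\leq|\kappa|}{\tt write}(\kappa\res i,{\tt cut}(\sigma;i))$ for a node $\kappa\in T_a$, and symmetrically for $P^{a+}$, so one just interchanges the two codings). Your induction is viable but you would need to carry out the recursion effectively in the notation $a$ to keep the two reductions uniform at limit stages; the direct translation sidesteps this entirely. Similarly, in (6) your sketch glosses over the fact that the ray $(\nn,S)$ only permits unit steps, so jumping from vertex $2k+i$ to $2(k{+}1)+j$ forces you to write a letter on every intermediate tape; the paper handles this with an explicit ${\tt move}$ gadget.

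The genuine gap is in item (8), exactly where you flag it, and the approach you sketch there cannot be repaired as stated. You propose to reject a candidate functional the moment its output leaves $T_{P_v}$ and to "exploit the unlimited backtracking permitted by $\btie_{v\in(\nn,\nn^2)}$ to discard rejected candidates" --- but in the dynamic disjunction nothing is ever discarded: ${\tt pr}_v(f)$ is the concatenation of \emph{every} letter written while $v$ is declared, over all visits to $v$, so leaving a vertex does not erase its tape, a polluted tape stays polluted, and there is in general no second tape whose designated set is again $P_v$ on which to retry. The paper's construction works on the opposite principle: it never rejects and never shares. Each functional $\Phi_e$ gets its own dedicated tape, and a fair scheduler (the ${\tt rq}_e$/${\tt act}_e$ "requires attention / acts" bookkeeping) guarantees that every total $\Phi_e(g)$ is written out in full --- which is possible precisely because $(\nn,\nn^2)$ is the complete graph, so every tape can be revisited infinitely often --- while the winning condition of $\bhk{\bigvee_{v\in\nn}P_v}_{\sf CL}$ only requires \emph{one} tape to carry a correct solution and tolerates garbage on all the others. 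In particular the $\Pi^0_1$-ness of the $P_v$, which you lean on for rejection, plays no role in this direction; it is only used (via Propositions \ref{prop:1-2:consistency} and \ref{prop:2:inf-consistency}) to pass between the consistent and unrestricted versions beforehand. To complete your proof you must replace the reject-and-discard mechanism with a write-everything-in-parallel scheduler of this kind.
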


\begin{proof}\upshape
(1) By Definition, $\btie_vP_v\subseteq\bhtie_vP_v$.
On the other hand, any $f\in\bhtie_{v\in(T,E(T))}P_v$ can pass at most finitely many vertices since $(T,E(T))$ has no infinite path.
In other words, the set $\{(f(n))_0:n\in\nn\}$ is finite.
By Pigeon Hole Principle, there is a vertex $v\in T$ such that $(f(n))_0=v$ occurs for infinitely many $n\in\nn$.
Then, ${\tt pr}_v(f)$ must be infinite.
Therefore, ${\tt pr}_v(f)\in[T_{P_v}]=P_v$ since $f\in{\rm Con}(T_{P_v})_{v\in V}$.
Hence, $f\in\btie_{v\in(T,E(T))}P_v$.

(2) The condition $\btie_{v\in(V_1,E_1)}P_v\leq^1_1P_0\oplus P_1$ follows from Proposition \ref{prop:3:uplow} (1).
For any $f\in \btie_{v\in(V_1,E_1)}P_v$, there is $i<2$ such that $(f(n))_0=i$ for any $n\in\nn$.
Thus, $i\fr f\in P_0\oplus P_1$.
The $(1,1)$-equivalence of $\btie_{v\in(V_1,E_1)}P_v$ and $\bhtie_{v\in(V_1,E_1)}P_v$ follows from the item (1) since $(V_1,E_1)$ is finite.

(3) Clearly, $P_0\tie_2P_1\subseteq\btie_{v\in(V_2,E_2)}P_v$.
Thus, by Proposition \ref{prop:1:concat} (2), $P_0\tie P_1\geq^1_1\btie_{v\in(V_2,E_2)}P_v$.
For $f\in\btie_{v\in(V_2,E_2)}P_v$, if $|(f(0))_0|=1$ then $\Phi(f)=f\in P_0\tie_2 P_1$.
If $|(f(0))_0|=2$, say $(f(0))_0=\lrangle{i,j}$, then $\Phi(f)={\tt write}(j,{\tt pr}_j(f))\in P_0\tie_1P_1$.
Hence, $P_0\tie P_1\leq^1_1\btie_{v\in(V_2,E_2)}P_v$ via the computable function $\Phi$.
The $(1,1)$-equivalence of $\btie_{v\in(V_1,E_1)}P_v$ and $\bhtie_{v\in(V_1,E_1)}P_v$ follows from the item (1) since $(V_1,E_1)$ is finite.

(4) If $\sigma$ is extendible to an element of $\btie_{v\in(T_a,E(T_a))}P$, there is a unique $\kappa\in T_a$ such that $\sigma$ can be represented as $\concat_{i\leq|\kappa|}{\tt write}(\kappa\res i,{\tt cut}(\sigma;i))$ for some sequence ${\tt cut}(\sigma)\in (T_P)^{|\kappa|}$.
Conversely, if $\sigma$ is extendible to an element of $P^{a+}$, there is a unique $\kappa\in T_a$ such that $\sigma$ can be represented as $(\concat_{i<|\kappa|-1}\kappa^*(i)\fr {\tt cut}(\sigma;i)\fr\sharp)\fr\kappa^*(|\kappa|-1){\tt }$ for some sequence ${\tt cut}(\sigma)\in (T_P)^{|\kappa|}$, where $\kappa^*(i)$ indicates the location of $\kappa(i)$ in the tree $T_P$.
The procedures to interchange these cuts are the desired $(1,1)$-reductions.

(5) It is easy to see that $\btie_{v\in(\{0,1\},\{0,1\}^2)}P_v=P_0\tie_\infty P_1$.
Moreover, $\bhk{P_0\vee P_1}^2_{\sf CL}\equiv^1_1P_0\tie_\infty P_1$ by Proposition \ref{prop:1-2:consistency}.

(6) For each $\sigma=\tau\fr\lrangle{\pair{i,m},\pair{j,n}}\in(\nn\times\nn)^{<\nn}$, we inductively define a computable function $\Xi(\sigma)$ as follows.
If $i=j$, then we set $\Xi(\sigma)=\Xi(\tau\fr\lrangle{\pair{i,m}})\fr\lrangle{n}$.
Otherwise, we set $\Xi(\sigma)=\Xi(\tau\fr\lrangle{\pair{i,m}})\fr\lrangle{\sharp,j,n}$.
Then, $\bhk{P_0\vee P_1}^3_{\sf LCM}\leq^{1}_{1}\btie_{v\in(\nn,S)}P_v$ is witnessed by $\Xi$.
Conversely, to see $\btie_{v\in(\nn,S)}P_v\leq^1_1\bhk{P_0\vee P_1}^3_{\sf LCM}$, we again inductively define another computable function $\Xi^*(\sigma)$, for each $\sigma\in(\nn\cup\{\sharp\})$.
Set $\Xi^*(\lrangle{})=\lrangle{}$, fix $\sigma=\sigma^{--}\fr\lrangle{j,k}\in(\nn\cup\{\sharp\})^{<\nn}$, and assume that $\Xi^*(\sigma^-)$ has been already defined.
For $w\geq v+2$, we consider the instruction ${\tt move}(v,w)=\lrangle{(v+1,0),(v+2,0),\dots,(w-2,0),(w-1,0)}\in (V\times\nn)^{w-v-1}$ to move from the tape $\Lambda_v$ to the tape $\Lambda_w$ in the dynamic tape model.
If $w<v+2$, then we assume that ${\tt move}(v,w)$ is the empty string.
Put $p(\sigma)=2\cdot{\tt count}(\sigma)+{\tt tail}(\sigma;0)$, where recall that ${\tt count}(\sigma)=\#\{n<|\sigma|:\sigma(n)=\sharp\}$.
If $j\not=\sharp$ and $k\not=\sharp$, then we define $\Xi^*(\sigma)=\Xi^*(\sigma^-)\fr{\tt move}(p(\sigma^*),p(\sigma))\fr\lrangle{\pair{p(\sigma),k}}$, where $\sigma^*$ is the last string $\Xi^*(\sigma^*)\supsetneq\Xi^*((\sigma^*)^-)$.
Otherwise, we set $\Xi^*(\sigma)=\Xi^*(\sigma^-)$.
Then, we have $\lrangle{(\Xi^*(f;n))_0,(\Xi^*(f;n+1))_0}\in\overline{S}$ for any $f\in\bhk{P_0\vee P_1}^3_{\sf LCM}$.
It is easy to verify that $\Xi^*(f)\in\btie_{v\in(\nn,S)}P_v$.

(7) The $(1,1)$-equivalence of $\bhk{P\vee P}^3_{\sf LCM}$ and $\bhk{\bigvee_{n\in\nn}P}_{\sf LCM}$ follows from Proposition \ref{prop:2:inf-disjunc-equiv} (2).
Thus, the desired condition follows from (5).

(8)
Clearly, $\bhk{\bigvee_{v\in\nn}P_v}_{\sf CL}\cap{\rm Con}(T_{P_v})_{v\in\nn}=\btie_{v\in(\nn,\nn^2)}P_v$.
Thus, the equivalence $\bhk{\bigvee_{v\in\nn}P_v}_{\sf CL}\equiv^1_1\btie_{v\in(\nn,\nn^2)}P_v\equiv^1_1\left[\bcls\right]_{v\in\nn}P_v$ follows from Proposition \ref{prop:1-2:consistency} and \ref{prop:2:inf-consistency}.
$\widehat{\rm Deg}\left(\bigcup_{v\in\nn}P_v\right)\leq^1_1\btie_{v\in(\nn,\nn^2)}P_v$ follows from Proposition \ref{prop:3:uplow} (1).
We may assume that $\Phi_e(\lrangle{})=\lrangle{}$ for each index $e\in\nn$.
We inductively define a computable function $\Gamma$ witnessing $\btie_{v\in(\nn,\nn^2)}P_v\leq^1_1\widehat{\rm Deg}\left(\bigcup_{v\in\nn}P_v\right)$.
For each $\sigma\in\nn^{<\nn}$ and $e\in\nn$, we also inductively define two parameters ${\tt act}_e(\sigma)\in\nn$ and ${\tt rq}_e(\sigma)\in\nn\cup\{-1\}$.
Here, ${\tt act}_e(\sigma)$ will represent the last stage at which the $e$-th strategy acts along $\sigma$, and ${\tt rq}_e(\sigma)\geq 0$ will indicate that the $e$-th strategy {\em requires attention}.
First we set ${\tt act}_e(\lrangle{})=0$ and ${\tt rq}_e(\lrangle{})=-1$ for each $e\in\nn$.
Inductively we assume that $\Gamma(\sigma^-)$, ${\tt act}_e(\sigma^-)$, and ${\tt rq}_e(\sigma^-)$ is already defined.
Calculate $r=\min\{{\tt rq}_e(\sigma^-):e<|\sigma|\;\&\;{\tt rq}_e(\sigma^-)>0\}$, and pick the least $e$ such that ${\tt rq}_e(\sigma^-)=r$ if such $r$ and $e$ exist.
In this case, we say that $e$ {\em acts}.
If there is no such $e$, we set $\Gamma(\sigma)=\Gamma(\sigma^-)$, ${\tt act}_e(\sigma)={\tt act}_e(\sigma^-)$, and ${\tt rq}_e(\sigma)={\tt rq}_e(\sigma^-)$.
If there is such $e$, put $\sigma^*=(\Phi_e(\sigma))^{\shft |\Phi_e(\sigma\res|{\tt act}_e(\sigma)|)|}$, i.e., $\Phi_e(\sigma)=(\Phi_e(\sigma\res|{\tt act}_e(\sigma)|)|)\fr\sigma^*$.
Then we set $\Gamma(\sigma)=\Gamma(\sigma^-)\fr {\tt write}(e,\sigma^*)$.
Then, put ${\tt rq}_e(\sigma)=-1$ and ${\tt act}_e(\sigma)=|\sigma|$.
For each $e^*\in\nn\setminus\{e\}$, set ${\tt act}_{e^*}(\sigma)={\tt act}_{e^*}(\sigma^-)$.
Moreover, if $e^*\leq|\sigma|$, ${\tt rq}_{e^*}(\sigma^-)=-1$, and $|\Phi_{e^*}(\sigma\res|{\tt act}_{e^*}(\sigma)|)|<|\Phi_{e^*}(\sigma)|$, then declare ${\tt rq}_{e^*}(\sigma)=|\sigma|$.
Otherwise, put ${\tt rq}_{e^*}(\sigma)={\tt rq}_{e^*}(\sigma^-)$.
Fix $g\in\nn^\nn$.
We claim that $\Phi_e(g)$ act infinitely often whenever $\Phi_e(g)$ is total.
Our construction ensures that only finitely many $e$'s require attentions along $g\res s$ for each $s\in\nn$.
Therefore, for $R=\{e\in\nn:{\tt rq}_e(g\res s)>0\}$, if $e\in R$, then the strategy $e$ acts by stage $s+\#R$, i.e., ${\tt act}_e(g\res s+\#R)\geq s$.
Assume that $e$ act at stage $t\in\nn$.
Then the algorithm $\Gamma(g\res t)$ writes the new information $(g\res t)^*$ of $\Phi_e(g)$ on the $e$-th tape, i.e., ${\tt pr}_e(\Gamma(g\res t))=\Phi_e(g\res t)$.
Thus, eventually, we have ${\tt pr}_e(\Gamma(g))=\Phi_e(g)$.
For any $g\in\widehat{\rm Deg}\left(\bigcup_{v\in\nn}P_v\right)$, there is an index $e\in\nn$ such that $\Phi_e(g)\in P_v$ for some $v\in\nn$.
Consequently, $\Gamma(g)\in\btie_{v\in(\nn,\nn^2)}P_v$.
\end{proof}

\begin{figure}[t]\centering
\begin{center}
\[
	\SelectTips{cm}{}
	\xymatrix @-1pc@C=0.5pc@R=0.2pc {
		& *+[o][F-]{P} & & *+[o][F-]{Q} & & 
		& *+[o][F-]{P} \ar[rr] & & *+[o][F-]{Q} & &
		& & & & &
		& \ar@/^1pc/ @{.>} [d] \\
		(1) & & *\txt{start} \ar[ul] \ar[ur] & & &
		(2) & & *\txt{start} \ar[ul] & & &
		& *+[o][F-]{P} \ar[rr] & & *+[o][F-]{Q} \ar@{.>}[ull] & &
		& *+[o][F-]{P} \ar@/^1pc/ @{.>} [u] \ar@/^1pc/[d] \\
		& & & & & 
		& & & & &
		(5) & *+[o][F-]{P} \ar[rr] & & *+[o][F-]{Q} \ar[ull] & &
		(6) & *+[o][F-]{P} \ar@/^1pc/[u] \ar@/^1pc/[d] \\
		& *+[o][F-]{P} \ar[rr] & & *+[o][F-]{Q} & & 
		& *+[o][F-]{P} \ar@<0.5ex>[rr] & & *+[o][F-]{Q} \ar@<0.5ex>[ll] & &
		& *+[o][F-]{P} \ar[rr] & & *+[o][F-]{Q} \ar[ull] & &
		& *+[o][F-]{P} \ar@/^1pc/[u] \ar@/^1pc/[d] \\
		(3) & & *\txt{start} \ar[ul] \ar[dr] & & &
		(4) & & *\txt{start} \ar[ul] \ar[ur] & & &
		& *+[o][F-]{P} \ar[rr] & & *+[o][F-]{Q} \ar[ull] & &
		& *+[o][F-]{P} \ar@/^1pc/[u] \\
		& *+[o][F-]{P} & & *+[o][F-]{Q} \ar[ll] & & 
		& & & & & 
		& & *\txt{start} \ar[ul] & & & 
		& *\txt{start} \ar[u]
	}
\]
\end{center}
 \vspace{-0.5em}
\caption{The dynamical representations of disjunction operations: (1) $\bhk{P\vee Q}_{\sf Int}$ ($P\oplus Q$); (2) $P\fr Q$; (3) $\bhk{P\vee Q}^2_{{\sf LCM}[2]}$ ($P\tie Q$); (4) $\bhk{P\vee Q}^2_{\sf CL}$ ($P\tie_\infty Q$); (5) $\bhk{P\vee Q}^3_{\sf LCM}$; (6) $\widehat{\rm Deg}(P)$, the Turing upward closure of $P$.}
  \label{fig:arrow2b}
\end{figure}

\begin{prop}
Let $(V,E)$ be a computable directed graph, and $\{P_v\}_{v\in V}$ be a computable sequence of $\Pi^0_1$ subsets of $2^\nn$.
Then we have the following.
\begin{enumerate}
\item $\btie_{v\in(V,E)}P_v$ is $\Sigma^0_3$.
\item $\bhtie_{v\in(V,E)}P_v$ is $\Pi^0_1$.
\end{enumerate}
\end{prop}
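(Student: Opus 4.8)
The plan is to exhibit both sets directly and read the complexity off the defining conditions, using the coding conventions of Section~\ref{subsec:1:notation} to regard everything as living in $(V\times\nn)^\nn\cong\nn^\nn$.

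\emph{Item (2).} First I would note that $\bhtie_{v\in(V,E)}P_v$ is exactly the set $[S]$ of infinite paths through the tree
\[
S=\bigl\{\sigma\in(V\times\nn)^{<\nn}:(\forall n<|\sigma|-1)\;\lrangle{(\sigma(n))_0,(\sigma(n+1))_0}\in\overline{E}\ \&\ (\forall v\in V)\;{\tt pr}_v(\sigma)\in T_{P_v}\bigr\}.
\]
This is immediate from the definitions: the $\overline{E}$-clause on $f$ is the conjunction over $n$ of the corresponding clause on $f\res(n+2)$, and membership in ${\rm Con}(T_{P_v})_{v\in V}$ is by definition the conjunction over $n$ of ``$(\forall v)\;{\tt pr}_v(f\res n)\in T_{P_v}$''; moreover ``$(\forall v)\;{\tt pr}_v(\sigma)\in T_{P_v}$'' is inherited by initial segments of $\sigma$ (since $\tau\subseteq\sigma$ forces ${\tt pr}_v(\tau)\subseteq{\tt pr}_v(\sigma)$ and each $T_{P_v}$ is a tree), so $S$ is genuinely a tree. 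Then $S$ is computable: the first clause is decidable because $\overline{E}$, and hence $E$ and $V$, are computable; for the second clause only the finitely many $v$ occurring among $(\sigma(0))_0,\dots,(\sigma(|\sigma|-1))_0$ need a nontrivial test (for the remaining $v$ one has ${\tt pr}_v(\sigma)=\lrangle{}\in T_{P_v}$), and for those one computes ${\tt pr}_v(\sigma)$ and decides membership in the uniformly computable tree $T_{P_v}$. Hence $\bhtie_{v\in(V,E)}P_v=[S]$ is $\Pi^0_1$ (a $\Pi^0_1$ subset of $(V\times\nn)^\nn$, identified with one of $2^\nn$ in the usual way when $V$ is finite).

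\emph{Item (1).} I would use the identity
\[
\btie_{v\in(V,E)}P_v=\bhtie_{v\in(V,E)}P_v\cap\bigcup_{v\in V}A_v,\qquad A_v=\{f\in(V\times\nn)^\nn:{\tt pr}_v(f)\text{ is infinite}\}.
\]
For $\subseteq$: $\btie\subseteq\bhtie$ is trivial, and any $f\in\btie$ has ${\tt pr}_v(f)\in P_v\subseteq 2^\nn$ for some $v$, so ${\tt pr}_v(f)$ is total, i.e.\ $f\in A_v$. For $\supseteq$: if $f\in\bhtie$ and ${\tt pr}_v(f)$ is infinite, then since $f\in{\rm Con}(T_{P_v})_{v\in V}$ the strings ${\tt pr}_v(f\res n)$ form a chain exhausting ${\tt pr}_v(f)$ with all members in $T_{P_v}$, so every initial segment of ${\tt pr}_v(f)$ lies in $T_{P_v}$ and ${\tt pr}_v(f)\in[T_{P_v}]=P_v$; combined with the $\overline{E}$-clause this puts $f$ in $\bhk{\bigvee_{v\in(V,E)}P_v}\cap{\rm Con}(T_{P_v})_{v\in V}=\btie$. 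Now ``${\tt pr}_v(f)$ is infinite'' says $(\forall m)(\exists n)\;\#\{j<n:(f(j))_0=v\}>m$, so $\{(v,f):f\in A_v\}$ is uniformly $\Pi^0_2$ and $\bigcup_{v\in V}A_v$ is $\Sigma^0_3$; since $\bhtie_{v\in(V,E)}P_v$ is $\Pi^0_1\subseteq\Sigma^0_3$ by item~(2) and $\Sigma^0_3$ is closed under finite intersections, $\btie_{v\in(V,E)}P_v$ is $\Sigma^0_3$.

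The whole argument is bookkeeping; the only mildly delicate point is the implication ``${\tt pr}_v(f)$ infinite $\Rightarrow{\tt pr}_v(f)\in P_v$'' inside the consistency set (this is also why $\btie$ and $\bhtie$ coincide, and are both $\Pi^0_1$, when $V$ is finite: by the pigeonhole principle some first coordinate then recurs infinitely often). I do not anticipate any real obstacle; the remaining work is the routine verification that the finitary predicates defining $S$ are computable and that ``${\tt pr}_v(f)$ is infinite'' is uniformly $\Pi^0_2$.
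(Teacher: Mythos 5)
Your proof is correct and follows essentially the same route as the paper: item (2) is the observation that the edge condition and the consistency set are jointly a computable closed (tree) condition, and item (1) comes from counting ``$\exists v$'' over a uniformly $\Pi^0_2$ matrix intersected with that $\Pi^0_1$ set. The only cosmetic difference is that you use the consistency set to replace ``${\tt pr}_v(f)\in P_v$'' by ``${\tt pr}_v(f)$ is infinite,'' whereas the paper exhibits a $\Pi^0_2$ formula for ``${\tt pr}_v(f)\in P_v$'' directly; the complexity count is identical.
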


\begin{proof}\upshape
Clearly, ${\rm Con}(T_{P_v})_{v\in V}$ is $\Pi^0_1$.
Moreover, the relation $\lrangle{(f(n))_0,(f(n+1))_0}\in\overline{E}$ is computable, uniformly in $f\in(\nn\times\nn)^\nn$ and $n\in\nn$.
Thus, $\bhtie_{v\in(V,E)}P_v$ is $\Pi^0_1$.
The relation ${\tt pr}_v(f)\in P_v$ is $\Pi^0_2$ in $v\in V$ and $f\in\nn^\nn$, since it is equivalent to the following formula.
\[(\forall n\in\nn)(\exists m\in\nn)\;|{\tt pr}_v(f\res m)|>n\;\&\;{\tt pr}_v(f\res m)\in T_{P_v}.\]
Therefore, $\btie_{v\in(V,E)}P_v$ is $\Sigma^0_3$.
\end{proof}

\subsection{Infinitary Disjunctions along ill-Founded Trees}

To study $(<\omega,\omega)$-degrees, the team-learning proof model of $P$ is expected to be useful.
However, the model may be far from $\Pi^0_1$ whenever $P$ is $\Pi^0_1$.
To break out of the dilemma, the following minor modification of consistent dynamic disjunction is helpful.
For any tree $T_P\subseteq\nn^{<\nn}$ and $i\in\nn$, we let $T_P\fr\lrangle{i}$ denote the tree $T_P\cup\bigcup_{\rho\in L_P}\rho\fr\lrangle{i}$, and $T_P\fr T_Q$ denote the tree $T_P\cup\bigcup_{\rho\in L_P}\rho\fr T_Q$.
In other words, $T_P\fr T_Q$ is a corresponding tree of $P\fr Q$.

\begin{definition}\label{def:1-2d:concatetree}
\index{concatenation!along a tree}\index{$\bhtie_{\sigma\in V}P_\sigma$}%
Let $V$ be a subtree of $\nn^{<\nn}$, $\{P_\sigma\}_{\sigma\in V}$ be a computable sequence of $\Pi^0_1$ subsets of $2^\nn$, and $T_\sigma\subseteq 2^{<\nn}$ be the corresponding tree of $P_\sigma$ for each $\sigma\in V$.
Then {\em the concatenation of $\{P_\sigma\}_{\sigma\in V}$ along the tree $V$} is defined as follows.
\[\bhtie_{\sigma\in V}P_\sigma=\left[\bigcup_{\tau\in V}\left(\concat_{i<|\tau|}T_{\tau\res i}\fr\lrangle{\tau(i)}\right)\fr T_\tau\right].\]
We assume that $T_\sigma$ is the full binary tree $2^{<\nn}$ for each $\sigma\not\in V$.
Each $\alpha\in 2^{<\nn}$ is uniquely represented as 
\[\alpha=\rho_0\fr\lrangle{\tau(0)}\fr\rho_1\fr\lrangle{\tau(1)}\fr\dots\fr\lrangle{\tau(|\tau|-2)}\fr\rho_{|\tau|-1}\fr\lrangle{\tau(|\tau|-1)}\fr\beta,\]
where $\tau\in 2^{<\nn}$, $\rho(i)\in T_{\tau\res i}$ for each $i<|\tau|$, and $\beta\in T_\tau$.
For such $\tau$ and $\beta$, we set ${\tt walk}(\alpha)=\tau$, and ${\tt cut}(\alpha)=\lrangle{\rho_0,\rho_1,\dots,\rho_{|\tau|-1},\beta}$.
\index{${\tt walk}$}\index{${\tt cut}$}%
We also define ${\tt tail}^{\tt cut}(\alpha)={\tt cut}(\alpha;|{\tt walk}(\alpha)|)=\beta$.
\index{${\tt tail}^{\tt cut}$}%
Hence, each $\alpha\in 2^{<\nn}$ is represented as 
\[\alpha=\left(\concat_{i<|{\tt walk}(\alpha)|}{\tt cut}(\alpha;i)\fr\lrangle{{\tt walk}(\alpha;i)}\right)\fr{\tt cut}(\alpha;|{\tt walk}(\alpha)|).\]
Then the set $\bhtie_{\sigma\in V}P_\sigma$ is characterized as follows.
\[\bhtie_{\sigma\in V}P_\sigma=\left[\left\{\alpha\in 2^{<\nn}:{\tt walk}(\alpha)\in V\;\&\;(\forall i\leq|{\tt walk}(\alpha)|)\;{\tt cut}(\alpha;i)\in T_{{{\tt walk}(\alpha)\res i}}\right\}\right].\]
\end{definition}

\begin{remark}
The notation ${\tt walk}$ has already been introduced in Definition \ref{def:1-2c:hyperconcat_f} and the proof of Proposition \ref{prop:3:uplow}.
The meanings of the symbol ${\tt walk}$ in Definitions \ref{def:1-2c:hyperconcat_f} and \ref{def:1-2d:concatetree} are formally different, but the ideas behind these definitions are the same.
Thus, there is no confusion in using the same notation.
\end{remark}

\begin{prop}\label{prop:3b:dyn1}
Let $V$ be a computable subtree of $2^{<\nn}$, and $\{P_\sigma\}_{\sigma\in V}$ be a computable sequence of $\Pi^0_1$ subsets of $2^\nn$.
Then $\bhtie_{\sigma\in V}P_\sigma$ is $\Pi^0_1$ subset of $2^\nn$.
Moreover, $\bhtie_{\sigma\in V}P_\sigma$ is $(1,1)$-equivalent to $\bhtie_{\sigma\in (V,E(V))}P_\sigma$ in the sense of Definition \ref{def:dynamic}.
\end{prop}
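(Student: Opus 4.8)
The plan is to treat the two assertions in turn. For ``$\bhtie_{\sigma\in V}P_\sigma$ is $\Pi^0_1$'', I would start from the explicit description of $\bhtie_{\sigma\in V}P_\sigma$ as $[S]$, where
\[S=\{\alpha\in 2^{<\nn}:{\tt walk}(\alpha)\in V\;\&\;(\forall i\leq|{\tt walk}(\alpha)|)\;{\tt cut}(\alpha;i)\in T_{{\tt walk}(\alpha)\res i}\},\]
given in Definition \ref{def:1-2d:concatetree}, and argue that $S$ is a \emph{computable} tree. The decomposition $\alpha=\rho_0\fr\lrangle{\tau(0)}\fr\rho_1\fr\lrangle{\tau(1)}\fr\cdots$ can be recovered by scanning $\alpha$ left to right: while the current block is still a node of $T_{\tau\res i}$ one keeps reading; the moment it lands on a leaf of $T_{\tau\res i}$ one reads the next bit, interprets it as the branching bit $\tau(i)$, checks $\tau\res i\fr\lrangle{\tau(i)}\in V$, and passes to $T_{\tau\res i\fr\lrangle{\tau(i)}}$. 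This is effective because $V$ is computable, $\{T_{\sigma}\}_{\sigma\in V}$ is uniformly computable, and the leaf sets $\{L_{P_\sigma}\}_{\sigma\in V}$ are uniformly computable (Section \ref{subsec:1:notation}); hence ${\tt walk}$ and ${\tt cut}$ are computable and membership in $S$ is decidable. Closure of $S$ under initial segments is immediate, since shortening $\alpha$ shortens ${\tt walk}(\alpha)$ (still in the tree $V$) and shortens each ${\tt cut}(\alpha;i)$ (still in the tree $T_{{\tt walk}(\alpha)\res i}$). Thus $\bhtie_{\sigma\in V}P_\sigma=[S]$ is $\Pi^0_1$ and, being a set of binary sequences, a subset of $2^\nn$.

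For the $(1,1)$-equivalence I would exhibit computable reductions both ways. The reduction $\bhtie_{\sigma\in(V,E(V))}P_\sigma\leq^1_1\bhtie_{\sigma\in V}P_\sigma$ is the easy ``unfolding'': given $\alpha\in\bhtie_{\sigma\in V}P_\sigma$, while $\alpha$ traverses its $i$-th cut on the virtual tape $T_{P_{\tau\res i}}$ emit the pairs $\lrangle{\tau\res i,b}$ for the successive bits $b$ of that cut (these are $\overline{E(V)}$-steps staying at the vertex $\tau\res i$), and when $\alpha$ produces the branching bit $\tau(i)$ record a move to the child $\tau\res i\fr\lrangle{\tau(i)}\in V$ (an $E(V)$-step); the first coordinates then run along $\overline{E(V)}$ and ${\tt pr}_\sigma$ of the output equals the relevant cut, which lies in $T_{P_\sigma}$, so the image lies in $\bhtie_{\sigma\in(V,E(V))}P_\sigma$, and the map is computable because ${\tt walk}$ and ${\tt cut}$ are.

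The reverse reduction $\bhtie_{\sigma\in V}P_\sigma\leq^1_1\bhtie_{\sigma\in(V,E(V))}P_\sigma$ is where I expect the real work, because of a genuine mismatch between the two presentations: in $\bhtie_{\sigma\in V}P_\sigma$ one may branch only after the record has \emph{reached a leaf} of the current tape $T_{P_\sigma}$, whereas in $\bhtie_{\sigma\in(V,E(V))}P_\sigma$ the walk may abandon a vertex $\sigma$ with ${\tt pr}_\sigma$ an arbitrary node $\rho\in T_{P_\sigma}$. I would resolve this by \emph{greedy completion}: given $f\in\bhtie_{\sigma\in(V,E(V))}P_\sigma$, copy the bits $f$ writes on $\Lambda_\sigma$, and once $f$ abandons $\sigma$ (so ${\tt pr}_\sigma(f)=\rho$ is frozen) keep extending $\rho$ inside the computable binary tree $T_{P_\sigma}$ by always taking the left-most available child. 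This descent either reaches a leaf of $T_{P_\sigma}$ — whereupon the output emits the branching bit dictated by $f$'s move (legitimate, since $f$'s walk lies in $V$) and the simulation recurses at the next vertex — or it never gets stuck, producing an infinite branch of $T_{P_\sigma}$, i.e.\ a member of $P_\sigma$, in which case the output simply stays on that tape forever. In either case the resulting $\alpha$ lies in $[S]=\bhtie_{\sigma\in V}P_\sigma$: in the first it follows $f$'s $V$-walk with all cuts being leaves, in the second it witnesses $P_\sigma$ at a vertex $\sigma\in V$ (and the remaining case, where $f$ eventually settles at one vertex and writes a full branch there, is handled by copying that branch verbatim). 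All steps are plainly computable, which finishes the argument; the coincidence of this ${\tt walk}$ with the one of Definition \ref{def:1-2c:hyperconcat_f} needs no proof, as both just read off the sequence of distinct consecutive ``vertices'' of a record.
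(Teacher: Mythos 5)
Your proof is correct and follows the same overall architecture as the paper's: recover the block decomposition to see that the defining tree is computable (hence the set is $\Pi^0_1$), unfold via ${\tt walk}$ and ${\tt cut}$ for the easy reduction, and refold by completing each frozen cut to a leaf for the hard one. The one genuine difference is in the refolding step. The paper's reduction $\Xi$, upon a move of the walk from $\sigma$ to a child, jumps in one shot to ``the least leaf of $T_{P_\sigma}$ extending the current content''; this presupposes that such a leaf exists, which can fail (the node written on tape $\sigma$ may have only infinite extensions in $T_{P_\sigma}$ and no dead end above it; already $P_\sigma=2^\nn$ gives a leafless tree), and in that case the paper's output stalls at a finite string. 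Your greedy leftmost descent replaces the one-shot search by a stepwise procedure that is total regardless: it either terminates at a leaf, whereupon you branch exactly as the paper does, or it runs forever, in which case the output is an infinite branch of $T_{P_\sigma}$ and hence already a member of $\bhtie_{\sigma\in V}P_\sigma$ witnessed at the vertex $\sigma$. So your version yields a reduction defined on all of $\bhtie_{\sigma\in(V,E(V))}P_\sigma$, quietly repairing a small gap in the paper's argument; the only price is the extra case analysis you already carry out (descent terminates, descent diverges, or the walk settles), which also covers the case of an infinite walk through $V$ correctly.
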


\begin{proof}\upshape
Note that ${\tt walk}$, ${\tt cut}$, and ${\tt tail}^{\tt cut}$ are total computable on $\nn^{<\nn}$.
Therefore, it is $\Pi^0_1$.
Then, 
\[\Phi(\alpha)=\concat_{i\leq|{\tt walk}(\alpha)|}{\tt write}({\tt walk}(\alpha)\res i,{\tt cut}(\alpha;i))\]
 witnesses $\bhtie_{\sigma\in V}P_\sigma\geq^1_1\bhtie_{\sigma\in (V,E(V))}P_\sigma$.

Conversely, to see $\bhtie_{\sigma\in V}P_\sigma\leq^1_1\bhtie_{\sigma\in (V,E(V))}P_\sigma$, we inductively define a computable function $\Xi$.
Set $\Phi(\lrangle{})$.
Fix $\alpha=\alpha^{--}\fr\lrangle{\pair{\sigma,m},\pair{\tau,n}}\in(V\times 2)^{<\nn}$, and assume that $\Phi(\alpha^-)$ has been already defined.
If $\sigma=\tau$, then set $\Xi(\alpha)=\Xi(\alpha^-)\fr\lrangle{n}$.
If $\sigma\not=\tau$, say $\tau=\sigma\fr\lrangle{i}$, then we first calculate the least leaf ${\tt leaf}(\Xi(\alpha^-))$ of $T_{P_\sigma}$ extending $\Xi(\alpha^-)$.
Then we set $\Xi(\alpha)={\tt leaf}(\Xi(\alpha^-)\fr\lrangle{i,n}$.
Note that, for each $\alpha=\alpha^-\fr\lrangle{\pair{\tau,n}}\in(V\times 2)^{<\nn}$, we have ${\tt walk}(\Xi(\alpha))=(\alpha(|\alpha|-1))_0=\tau$, and ${\tt tail}^{\tt cut}(\Xi(\alpha))={\tt pr}_{{\tt walk}(\alpha)}(\alpha)$.
Thus, $\Xi$ witnesses $\bhtie_{\sigma\in V}P_\sigma\leq^1_1\bhtie_{\sigma\in (V,E(V))}P_\sigma$.
\end{proof}

\begin{definition}[Hyperconcatenation]\label{def:1-2d:hyperconcat}
\index{concatenation!hyper-}\index{hyperconcatenation}\index{$Q\htie P$}%
For $\Pi^0_1$ sets $P,Q\subseteq 2^\nn$, the {\em hyperconcatenation of $P$ and $Q$} is defined by
\[Q\htie P=\bhtie_{\sigma\in T_Q}P_\sigma=\{g\in 2^\nn:(\forall n)\;{\tt walk}(g\res n)\in T_Q\;\&\;(\forall n\leq |{\tt walk}(g)|)\;{\tt cut}(g;n)\in T_P\},\]
where $T_Q$ denotes the corresponding tree for $Q$, and $P_\sigma=P$ for any $\sigma\in T_Q$.
\end{definition}

\begin{remark}
For every $g\in Q\htie P$, if ${\tt walk}(g)$ is total, then ${\tt walk}(g)\in Q$, or otherwise ${\tt tail}^{\tt cut}(g)\in P$.
Therefore, the hyperconcatenation $Q\htie P$ in the sense of Definition \ref{def:1-2d:hyperconcat} can be seen as a consistent version of the hyperconcatenation $\bhk{Q\vee P}_{\Sigma^0_2}^\htie$ in the sense of Definition \ref{def:1-2c:hyperconcat_f}.
\end{remark}

To see the learnability feature of hyperconcatenation, we introduce new learnability notions.

\begin{definition}
\index{learner!confident}\index{learner!eventually-Popperian}\index{learner!eventually-Lipschitz}%
Let $\Psi$ be a learner.
\begin{enumerate}
\item $\Psi$ is {\em confident} (see also \cite{JORS}) if $\lim_s\Psi(f\res s)$ converges for every $f\in\nn^\nn$.
\item $\Psi$ is {\em eventually-Popperian} if, for every $f\in\nn^\nn$, $\Phi_{\lim_s\Psi(f\res s)}(f)$ is total whenever $\lim_s\Psi(f\res s)$ converges.
\item $\Psi$ is {\em eventually-Lipschitz} if there is a constant $c\in\nn$ such that, for every $f\in\nn^\nn$, $|\Phi_{\lim_s\Psi(f\res s)}(f\res l+c)|\geq l$ for any $l\in\nn$, whenever $\lim_s\Psi(f\res s)$ converges.
\end{enumerate}
\end{definition}

\begin{prop}~
\begin{enumerate}
\item For any set $X,Y\subseteq\nn^\nn$, if $X\leq^{<\omega}_{tt,\omega}Y$, then $X\leq^{<\omega}_{\omega}Y$ via a team of eventually-Popperian learners.
\item For any $\Sigma^0_2$ set $S\subseteq 2^\nn$ and any set $R\subseteq\nn^\nn$, if $R\leq^1_\omega S$, then it can be witnessed by an eventually-Popperian learner.
Moreover, if $S$ is $\Pi^0_1$, then it can be witnessed by a confident eventually-Popperian learner.
\item For any $\Pi^0_1$ set $P\subseteq 2^\nn$ and any set $Q\subseteq\nn^\nn$, if $P\leq^{<\omega}_{1}Q$ then $P\leq^{<\omega}_{\omega}Q$ by a team of confident learners.
\end{enumerate}
\end{prop}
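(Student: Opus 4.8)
The plan is to handle the three items separately, each by a routine variation of the learner-manipulation arguments already used for Theorem~\ref{thm:5:red-eq-dis} and Proposition~\ref{prop:1-2:collapse-tt}. Item (1) needs almost nothing: by definition of $\leq^{<\omega}_{tt,\omega}$ there is a finite team $\{\Psi_e\}_{e<n}$ of learners witnessing a $(<\omega,\omega)$-truth-table function $\Gamma:Y\to X$, and the Popperian condition says exactly that every value $\Psi_e(\sigma)$ is an index of a \emph{total} function. Hence each $\Psi_e$ is a Popperian learner, and a Popperian learner is trivially eventually-Popperian (if $\lim_s\Psi_e(f\res s)=p$ then $p=\Psi_e(\sigma)$ for some $\sigma$, so $\Phi_p$ is total). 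Since $[\mathfrak{C}_{tt}]^{<\omega}_\omega\subseteq[\mathfrak{C}_T]^{<\omega}_\omega$, the very same team witnesses $X\leq^{<\omega}_\omega Y$.

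For (2), fix a learner $\Psi_0$ witnessing $R\leq^1_\omega S$, so $\lim_s\Psi_0(g\res s)$ converges and $\Phi_{\lim_s\Psi_0(g\res s)}(g)\in R$ for every $g\in S$. Write $S=\bigcup_{m}C_m$ with $\{C_m\}_m$ a uniformly computable sequence of $\Pi^0_1$ subsets of $2^\nn$, with uniformly computable corresponding trees $T_m$. Put $m^*(\sigma)=$ the least $m\leq|\sigma|$ with $\sigma\in T_m$ (and $m^*(\sigma)=0$ if none), and set $\Psi^*(\sigma)=p(\Psi_0(\sigma),m^*(\sigma))$ where $p$ is an injective computable pairing with $\Phi_{p(e,m)}=\Phi_e$. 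Then $\Psi^*$ is a learner, it still witnesses $R\leq^1_\omega S$ (its limit program on $g\in S$ is a copy of $\Psi_0$'s), and crucially $\lim_s\Psi^*(g\res s)$ converges \emph{exactly} when $g\in S$: for $g\in S$ the guess $m^*(g\res s)$ stabilises to the least $m$ with $g\in C_m$ (each smaller index is eventually eliminated), whereas for $g\notin S$ every index is eventually eliminated, so $m^*(g\res s)\to\infty$. Therefore, whenever $\Psi^*$ converges on some $f$ we have $f\in S$, hence $\Phi_{\lim_s\Psi^*(f\res s)}(f)=\Phi_{\lim_s\Psi_0(f\res s)}(f)\in R$ is total, i.e.\ $\Psi^*$ is eventually-Popperian. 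If in addition $S$ is $\Pi^0_1$, say $S=[T_S]$ with $T_S$ computable, redefine $\Psi^*(\sigma)=p(\Psi_0(\sigma))$ if $\sigma\in T_S$ and $\Psi^*(\sigma)=e_0$ otherwise, where $e_0$ is a fixed index of a total function (e.g.\ $f\mapsto 0^\nn$) outside the range of $p$. On $f\in S$ the learner copies $\Psi_0$ and converges to a total program; on $f\notin S$ it converges to $e_0$ once $f$ leaves $T_S$. Thus $\Psi^*$ converges on every $f$, i.e.\ it is confident, and at each convergence the limit program is total.

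For (3), let $\{\Phi_{e_i}\}_{i<n}$ be partial computable functions witnessing $P\leq^{<\omega}_1Q$, and let $T_P\subseteq 2^{<\nn}$ be the (computable) corresponding tree of $P$. For each $i<n$ fix an index $\widehat e_i$ with $\Phi_{\widehat e_i}=\Phi_{e_i}$, and a fixed index $e_0$ of a total function. Define the learner $\Psi_i$ by $\Psi_i(\sigma)=e_0$ if $\Phi_{e_i}(\sigma)\notin T_P$ and $\Psi_i(\sigma)=\widehat e_i$ otherwise, where $\Phi_{e_i}(\sigma)$ denotes the finite output string produced in $|\sigma|$ steps. Since the output $\Phi_{e_i}(g\res s)$ grows monotonically in $s$ and $T_P$ is closed under initial segments, along any $g$ the learner $\Psi_i$ outputs $\widehat e_i$ up to some stage and then $e_0$ forever; in particular it makes at most one mind change and so converges on \emph{every} $g$, i.e.\ it is confident. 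If $g\in Q$, then by assumption $\Phi_{e_i}(g)\in P$ for some $i<n$; for that $i$ the output $\Phi_{e_i}(g\res s)$ stays inside $T_P$ at all stages, so $\lim_s\Psi_i(g\res s)=\widehat e_i$ and $\Phi_{\widehat e_i}(g)=\Phi_{e_i}(g)\in P$. Hence the confident team $\{\Psi_i\}_{i<n}$ witnesses $P\leq^{<\omega}_\omega Q$.

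The only genuinely delicate point is in (2): one must arrange that the modified learner stabilises on \emph{no} input outside $S$, since such a ``false'' stabilisation could be to a program that is merely partial on that input, breaking eventual-Popperian-ity. This is exactly what the $\Sigma^0_2$-witness counter $m^*$ achieves, by forcing a mind change of $\Psi^*$ with every change of the current guess for the $\Sigma^0_2$-witness of membership in $S$. A minor technical nuisance is to present $S$ as a uniformly computable union of $\Pi^0_1$ sets with uniformly computable corresponding trees and to handle the bound $m\leq|\sigma|$ in the definition of $m^*$; neither causes real difficulty. Items (1) and (3) are essentially immediate once the relevant definitions are unwound.
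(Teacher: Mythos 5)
Items (1) and (3) are correct and essentially the paper's argument (for (3) the paper simply uses the constant learners $\sigma\mapsto e_i$, which never change their mind and are therefore trivially confident; your extra refutation mechanism via $T_P$ is harmless but unnecessary). Item (2) follows the paper's strategy as well --- tag the guess of $\Psi_0$ with the current guess at a $\Sigma^0_2$-witness for membership in $S$, so that a false stabilisation outside $S$ becomes impossible --- but your implementation of the tag has a concrete bug at precisely the point you yourself flagged as the crux.

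You set $m^*(\sigma)$ to be the least $m\le|\sigma|$ with $\sigma\in T_m$, \emph{and $m^*(\sigma)=0$ if there is no such $m$}. With that default, the claim that ``for $g\notin S$ every index is eventually eliminated, so $m^*(g\res s)\to\infty$'' is false: if for all large $s$ the set $\{m\le s:g\res s\in T_m\}$ is empty, then $m^*(g\res s)$ is eventually the constant $0$. For a concrete failure take $T_m=\{0^k:k\in\nn\}$ for every $m$, so that $S=\{0^\nn\}$, and let $g=\lrangle{1}\fr 0^\nn\notin S$: then $m^*(g\res s)=0$ for all $s\ge 1$, so if $\Psi_0$ happens to converge on $g$ to an index $e$ with $\Phi_e(g)$ partial, your $\Psi^*$ converges to $p(e,0)$ and $\Phi_{p(e,0)}(g)=\Phi_e(g)$ is partial --- eventual Popperianness fails. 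The repair is one line: default $m^*(\sigma)$ to $|\sigma|$ rather than to $0$ (equivalently, normalise the trees so that $2^{\le m}\subseteq T_m$, which does not change $[T_m]$). This is exactly what the paper does, setting $q(\sigma)=\min(\{i<|\sigma|:\sigma\in T_i\}\cup\{|\sigma|\})$, so that when no tree currently contains $\sigma$ the tag is forced upward and the learner diverges on every $g\notin S$. With that change the rest of your argument for (2), including the confident variant for $\Pi^0_1$ sets, goes through.
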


\begin{proof}\upshape
(1)
Straightforward from the definition.

(2)
Fix a computable increasing sequence $\{T_i\}_{i\in\omega}$ of infinite computable trees such that $S=\bigcup_i[T_i]$.
By padding, there is a computable function $p:\nn^2\to\nn$ such that $\Phi_{p(e,n)}$ corresponds exactly to $\Phi_e$, and $p(e,n+1)>p(e,n)$ for any index $e$ and $n$.
Assume that $R\leq^1_\omega S$ via a learner $\Psi$.
We need to construct a eventually-Popperian learner $\Delta$ witnessing $R\leq^1_\omega S$.
At each stage $s$, we define a value of $\Delta(\sigma)$ for each $\sigma\in 2^s$.
For a given $\sigma\in 2^s$, we compute $q(\sigma)=\min(\{i<s:(\forall \tau\in 2^s)\;\tau\supseteq\sigma\;\rightarrow\;\tau\in T_i\}\cup\{s\})$, and put $\Delta(\sigma)=p(\Psi(\sigma),q(\sigma))$.
If $f\not\in S$, then $\lim_nq(f\res n)$ diverges.
Therefore, $\lim_n\Delta(f\res n)$ diverges.
On the other hand, if $f\in S$, then $\lim_nq(f\res n)$ converges to some $q$.
Then $\Phi_{\lim_n\Delta(f\res n)}(f)=\Phi_{p(\lim_n\Psi(f\res n),q)}(f)=\Phi_{\lim_n\Psi(f\res n)}(f)\in R$.
Consequently, $\Delta$ is eventually-Popperian, and witnesses $R\leq^1_\omega S$.
If $S$ is $\Pi^0_1$, then we modify $\Delta$ by setting $\Delta(\sigma)$ to be a fixed index of a total computable function $g\mapsto 0^\omega$, whenever $\sigma$ extends a leaf of $T_S$.
Then, $\Delta$ is also confident.

(3)
If $P\leq^{<\omega}_1Q$ via $n$ many computable functions $\{\Phi_i\}_{i<n}$, then each learner $\Psi_i$ for each $i<n$ guesses an index of $\Phi_i$.
Note that $\Psi_i$ does not change his mind.
In particular, $\Psi_i$ is confident.
\end{proof}

\begin{prop}\label{prop:3:confevePop}
Let $V$ be a computable subtree of $\nn^{<\nn}$, and $\{P_\sigma\}_{\sigma\in V}$ be a computable collection of $\Pi^0_1$ subsets of $\nn^\nn$.
Then $[(V,E)]\oplus\bigoplus_{\sigma\in\nn^{<\nn}}P_\sigma\leq^{<\omega}_{\omega}\bhtie_{\sigma\in V}P_\sigma$ by a team of a confident learner and an eventually-Popperian learner.
\end{prop}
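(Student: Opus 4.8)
The plan is to build a team of two learners whose behaviour mirrors the two ways an element of $\bhtie_{\sigma\in V}P_\sigma$ can look under the decomposition of Definition~\ref{def:1-2d:concatetree}. First I would record the facts about that decomposition that do the work: the maps $n\mapsto{\tt walk}(g\res n)$, $n\mapsto{\tt cut}(g\res n)$, $n\mapsto{\tt tail}^{\tt cut}(g\res n)$ are total computable and $n\mapsto{\tt walk}(g\res n)$ is $\subseteq$-monotone, so $\widehat{{\tt walk}}(g):=\bigcup_n{\tt walk}(g\res n)$ is well-defined. The key dichotomy is: if $g\in\bhtie_{\sigma\in V}P_\sigma$ then either $\widehat{{\tt walk}}(g)$ is infinite, hence an infinite path through $V$, i.e.\ an element of $[(V,E)]$ (with $E=E(V)$) in the sense of Definition~\ref{def:dynamic}; or $\widehat{{\tt walk}}(g)$ stabilises at some $\tau\in V$, in which case every symbol of $g$ appearing after the stabilisation is routed into the last cut, so ${\tt tail}^{\tt cut}(g)$ is a total sequence, and the defining clause of $\bhtie_{\sigma\in V}P_\sigma$ forces ${\tt tail}^{\tt cut}(g)\in T_\tau$, hence ${\tt tail}^{\tt cut}(g)\in P_\tau$.

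Next I would define the two learners. The \emph{confident} one, $\Psi_{\rm w}$, is the constant learner that always outputs a fixed index of the partial computable map $h\mapsto\lrangle{0}\fr\widehat{{\tt walk}}(h)$, with $\widehat{{\tt walk}}(h)$ recoded as an element of $[(V,E)]$; it never changes its mind, so it is confident, and on any $g$ with $\widehat{{\tt walk}}(g)$ infinite it identifies a computable function mapping $g$ into the first summand $[(V,E)]$. The \emph{eventually-Popperian} one, $\Psi_{\rm c}$, outputs on $g\res s$ the value $e({\tt walk}(g\res s))$, where $e:\nn^{<\nn}\to\nn$ is a fixed \emph{injective} computable function with $\Phi_{e(\tau)}=\bigl(h\mapsto\lrangle{1}\fr\lrangle{\tau}\fr{\tt cut}(h;|\tau|)\bigr)$ and $\lrangle{\tau}$ denotes the code of $\tau$ underlying $\bigoplus_{\sigma\in\nn^{<\nn}}P_\sigma$. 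By $\subseteq$-monotonicity of ${\tt walk}$ together with injectivity of $e$, the limit $\lim_s\Psi_{\rm c}(g\res s)$ converges exactly when $\widehat{{\tt walk}}(g)$ is finite, say $\widehat{{\tt walk}}(g)=\tau$; then the limiting algorithm maps $g$ to $\lrangle{1}\fr\lrangle{\tau}\fr{\tt tail}^{\tt cut}(g)$, which is total by the dichotomy, so $\Psi_{\rm c}$ is eventually-Popperian, and this output lies in $\bigoplus_{\sigma\in\nn^{<\nn}}P_\sigma$ because ${\tt tail}^{\tt cut}(g)\in P_\tau$. The dichotomy then shows that for every $g\in\bhtie_{\sigma\in V}P_\sigma$ at least one of $\Psi_{\rm w},\Psi_{\rm c}$ correctly learns a computable function mapping $g$ into $[(V,E)]\oplus\bigoplus_{\sigma\in\nn^{<\nn}}P_\sigma$, which is exactly the reduction $[(V,E)]\oplus\bigoplus_{\sigma\in\nn^{<\nn}}P_\sigma\leq^{<\omega}_\omega\bhtie_{\sigma\in V}P_\sigma$ realised by a team of a confident learner and an eventually-Popperian learner.

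I expect the only real difficulty to be the totality and uniformity bookkeeping concealed in Definition~\ref{def:1-2d:concatetree}: checking that ${\tt walk}$, ${\tt cut}$, ${\tt tail}^{\tt cut}$ are everywhere-defined computable functions (so $e$ exists and both learners are total, as permitted by Proposition~\ref{prop:1-2:learn-trick}); checking that injectivity of $e$ really does force $\lim_s\Psi_{\rm c}(g\res s)$ to diverge when $\widehat{{\tt walk}}(g)$ is infinite (otherwise a spurious limit could produce a finite, non-total output and spoil the eventually-Popperian property on all of $\nn^\nn$); and, most importantly, making precise the claim that once $\widehat{{\tt walk}}(g)$ stabilises at $\tau$ every later symbol of $g$ flows into the tail cut, so that ${\tt tail}^{\tt cut}(g)$ is a genuine infinite sequence lying in $P_\tau$. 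Once these points are settled, the reduction and the prescribed properties of the two learners are immediate; the argument may also be cross-checked against the weaker upper bound $\widehat{\rm Deg}\bigl([(V,E)]\oplus\bigoplus_{v\in V}P_v\bigr)\leq^1_1\bhtie_{v\in(V,E)}P_v$ of Proposition~\ref{prop:3:uplow}(2), of which it is a team-learning refinement.
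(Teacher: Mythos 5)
Your proposal is correct and follows essentially the same route as the paper: a constant (hence confident) learner guessing an index for $\alpha\mapsto\lrangle{0}\fr{\tt walk}(\alpha)$, and a second learner that changes its mind exactly when ${\tt walk}(g\res n)$ properly extends, whose convergence forces ${\tt walk}(g)$ to be finite and hence ${\tt tail}^{\tt cut}(g)$ to be total, giving the eventually-Popperian property. The dichotomy you isolate (infinite walk versus stabilised walk with total tail cut) is precisely the one the paper's proof rests on.
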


\begin{proof}\upshape
We consider two learners: a learner $\Psi_0$ who guesses an index of $\alpha\mapsto 0\fr{\tt walk}(\alpha)$, and a learner $\Psi_1$ who guesses an index of $\alpha\mapsto\lrangle{1,{\tt walk}(\alpha)}\fr{\tt tail}^{\tt cut}(\alpha)$.
As $f\mapsto 0\fr{\tt walk}(f)$ is partial computable, $\Psi_0$ does not change his mind.
In particular, $\Psi_0$ is confident.
On $f\in\nn^\nn$, the learner $\Psi_1$ changes his mind whenever ${\tt walk}(f\res n+1)$ properly extends ${\tt walk}(f\res n)$. 
If $\lim_{n\in\nn}\Psi_1(f\res n)$ converges, then ${\tt walk}(f)$ must be partial.
Thus, ${\tt tail}^{\tt cut}(f)$ must be total.
Then, $\lrangle{1,{\tt walk}(f)}\fr{\tt tail}^{\tt cut}(f)$ is total.
Therefore, $\Psi_1$ is eventually-Popperian.
\end{proof}

\begin{prop}
Let $P_0,P_1,Q_0,Q_1$ be $\Pi^0_1$ subsets of $2^\nn$ such that $Q_0\leq^1_\omega Q_1$ via an eventually Lipschitz learner and that $P_0\leq^1_1 P_1$.
Then, $Q_0\htie P_0\leq^1_\omega Q_1\htie P_1$.
\end{prop}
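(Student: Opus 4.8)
The plan is to construct directly a learner (i.e.\ a $(1,\omega)$-computable function) $\Gamma$ reducing $Q_0\htie P_0$ to $Q_1\htie P_1$. I would first pass, via Proposition~\ref{prop:3b:dyn1}, to the $(1,1)$-equivalent dynamic-tape presentations $\bhtie_{\sigma\in(T_{Q_i},E(T_{Q_i}))}P_\sigma$, so that it suffices to learn a reduction between these; the advantage is that in the dynamic formulation the tape carried at a non-final walk node need only be a node of $T_{P_i}$ rather than a leaf, which avoids the awkward book-keeping of the tree presentation. Fix the eventually-Lipschitz learner $\Psi$ (with constant $c$) witnessing $Q_0\leq^1_\omega Q_1$ and the computable $\gamma$ witnessing $P_0\leq^1_1 Q_1$ wait, $P_0\leq^1_1P_1$. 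I would then perform two routine normalizations: replace $\gamma$ by its truncation at the first place its output leaves $T_{P_0}$, so that $\gamma$ maps $T_{P_1}$ monotonically into $T_{P_0}$ while still mapping $P_1$ into $P_0$; and likewise replace each hypothesis $\Phi_{\Psi(\sigma)}$ by its truncation at the first exit from $T_{Q_0}$, which (since for $f\in Q_1$ the limiting hypothesis already stays inside $T_{Q_0}$) preserves both correctness on $Q_1$ and eventual-Lipschitzness, and now guarantees $\Phi_{\lim_s\Psi(\sigma\res s)}(\sigma)\in T_{Q_0}\cup[T_{Q_0}]$ for \emph{every} string $\sigma$.

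Given $g$ in the dynamic presentation of $Q_1\htie P_1$, decode its walk $w={\tt walk}(g)$ (either an infinite branch of $T_{Q_1}$, so $w\in Q_1$, or a node of $T_{Q_1}$ at which $g$ stabilizes) together with the tape contents ${\tt pr}_\sigma(g)$. At stage $m$ the learner reads $g\res m$, computes the decoded walk $w_m\in T_{Q_1}$, runs $\Psi$ on it, and outputs an index of the functional $\Phi_{e_m}$ which on oracle $g$ produces $h$ as follows. It drives $h$'s declaration along $\Phi_{\Psi(w_m)}({\tt walk}(g))$ with a lag of $c$: whenever the decoded walk of $g$ reaches length $j+1+c$, eventual-Lipschitzness ensures $\Phi_{\Psi(w_m)}(w_m)$ has length $\geq j+1$, so its $j$-th bit is available (and, once $\Psi$ has converged, stable), and $h$ advances one step along that $T_{Q_0}$-branch; meanwhile onto the tape of $h$'s current node it copies the $\gamma$-image of the content currently carried by $g$ on its corresponding $c$-lagged node, closing off a $T_{P_1}$-node into its $\gamma$-image in $T_{P_0}$ before moving on (padding with a fixed child of $T_{P_0}$ if necessary so that at least one symbol is written per step). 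Then, when $w$ is infinite, the walk of $h$ is $\Phi_{\lim_s\Psi(w\res s)}(w)\in Q_0$ and every tape is a node of $T_{P_0}$; when $w$ is finite, $h$'s walk stabilizes at a $T_{Q_0}$-node and the tape there is $\gamma({\tt pr}_w(g))\in P_0$. In either case $h$ lies in the dynamic presentation of $Q_0\htie P_0$, and no case split on ``finite versus infinite walk'' — which $\Gamma$ cannot decide — is ever needed.

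Convergence of the learner is then immediate: the guessed functional depends on $g$ only through $\Psi(w_m)$, and $\Psi$ makes only finitely many mind changes on $w$ (it converges on all of $Q_1$, and trivially on a finite $w$), so $\Gamma$ changes its mind finitely often and its limit functional uses $\Phi_{\lim_s\Psi(w\res s)}$, which by the correctness and eventual-Lipschitz properties produces a legal element of $Q_0\htie P_0$ as above.

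The main obstacle is exactly this interleaving/synchronization step: the walks of $g$ and of $h$ have no a priori length relationship, yet $h$ must be emitted bit by bit from $g$ bit by bit without stalling while $g$ is still supplying walk moves. Eventual-Lipschitzness is what makes it work — it supplies the uniform lag $c$ — and it is used in an essential way that only the \emph{limiting} hypothesis of $\Psi$, not the intermediate ones, need satisfy the modulus bound; with mere pointwise (non-Lipschitz) learnability of $Q_0\leq^1_\omega Q_1$ the output tapes of $h$ could stall indefinitely. A secondary, purely technical obstacle is the treatment of leaves of $T_{P_0}$ and $T_{Q_0}$; this is handled by the passage to the dynamic presentation and by the two truncation normalizations above, which together ensure that every tape content and every walk prefix the construction writes down is automatically a legal node of the relevant tree.
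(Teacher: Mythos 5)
Your proposal is correct and follows essentially the same route as the paper's proof: both construct a family of functionals indexed by the learner's current hypothesis $\Psi({\tt walk}(\sigma))$ on the decoded walk, use the eventual-Lipschitz constant $c$ as a uniform lag to synchronize the output walk with the input walk, translate tape contents through the $P_0\leq^1_1P_1$ reduction, and let the new learner change its mind exactly when $\Psi$ does on the walk (plus finitely many edge cases such as $|{\tt walk}(\sigma)|<c$, which the paper handles by extra mind changes and you should too). The only real difference is cosmetic: you pass to the dynamic-tape presentation via Proposition \ref{prop:3b:dyn1} to avoid the leaf bookkeeping, whereas the paper works directly with the tree presentation and inserts ${\tt leaf}\circ{\tt tail}^{\tt cut}$ extensions when the walk advances.
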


\begin{proof}\upshape
For any partial computable function $\Phi$, without loss of generality, we may assume $|\Phi(\sigma)|\leq|\Phi(\sigma^-)|+1$ for any string $\sigma\in\nn^{<\nn}$.
For given indices $i$ and $j$, we effectively construct a computable function $\Phi_{{\tt hyp}(i,j)}$ as follows.
Put $\Phi_{{\tt hyp}(i,j)}(\lrangle{})=\lrangle{}$, and assume that $\Phi_{{\tt hyp}(i,j)}(\sigma^-)$ has been already defined.
Note that, either $|{\tt walk}(\sigma)|=|{\tt walk}(\sigma^-)|+1$ or $|{\tt tail}^{\tt cut}(\sigma)|=|{\tt tail}^{\tt cut}(\sigma^-)|+1$ is satisfied.
Here, the notation ${\tt tail}^{\tt cut}$ is used in referring to decomposing $Q_1\htie P_1$.
If the former is the case (i.e., $|{\tt walk}(\sigma)|=|{\tt walk}(\sigma^-)|+1$), then we extend ${\tt tail}^{\tt cut}(\Phi_{{\tt hyp}(i,j)}(\sigma^-))$ to ${\tt leaf}\circ{\tt tail}^{\tt cut}(\Phi_{{\tt hyp}(i,j)}(\sigma^-))$, the least leaf of $T_{P_0}$ extending it, and then, concatenate the bit $\Phi_i({\tt walk}(\sigma);|{\tt walk}(\sigma)|-c)$ to it.
Formally, for a string $\tau\in\nn^{<\nn}$ with $\Phi_{{\tt hyp}(i,j)}(\sigma^-)=\tau\fr{\tt tail}^{\tt cut}(\Phi_{{\tt hyp}(i,j)}(\sigma^-))$, we define
\[\Phi_{{\tt hyp}(i,j)}(\sigma)=\tau\fr{\tt leaf}\circ{\tt tail}^{\tt cut}(\Phi_{{\tt hyp}(i,j)}(\sigma^-))\fr\lrangle{\Phi_i({\tt walk}(\sigma);|{\tt walk}(\sigma)|-c)}.\]
Here, we fix some string $\rho\in T_{Q_0}$ of length $c$, and we set $\Phi_i(\sigma;k-c)=\sigma(k)$ for each $k<c$.
If $\Phi_i({\tt walk}(\sigma);|{\tt walk}(\sigma)|-c)$ is undefined, then $\Phi_{{\tt hyp}(i,j)}(\tau)$ is undefined for any $\tau\supseteq\sigma$.
If the former is not the case (then, $|{\tt tail}^{\tt cut}(\sigma)|=|{\tt tail}^{\tt cut}(\sigma^-)|+1$), then we concatenate the new values of $\Phi_j({\tt tail}^{\tt cut}(\sigma))$ to $\Phi_{{\tt hyp}(i,j)}(\sigma^-)$ if it belongs to $T_{P_0}$.
Formally, if $\Phi_j({\tt tail}^{\tt cut}(\sigma^-))\subsetneq\Phi_j({\tt tail}^{\tt cut}(\sigma))\in T_{P_0}$, say $\Phi_j({\tt tail}^{\tt cut}(\sigma))=\Phi_j({\tt tail}^{\tt cut}(\sigma^-))\fr\rho$, then we define $\Phi_{{\tt hyp}(i,j)}(\sigma)=\Phi_{{\tt hyp}(i,j)}(\sigma^-)\fr\rho$.
Otherwise, we set $\Phi_{{\tt hyp}(i,j)}(\sigma)=\Phi_{{\tt hyp}(i,j)}(\sigma^-)$.

Now assume that $P_0\leq^1_\omega P_1$ via a computable function $\Phi_e$, and $Q_0\leq^\omega Q_1$ via an eventually Lipschitz learner $\Psi$ with a constant $c$.
We construct a learner $\Delta$ witnessing $Q_0\htie P_0\leq^1_\omega Q_1\htie P_1$.
At first the learner $\Delta$ guesses the index $\Delta(\lrangle{})={\tt hyp}(\Psi(\lrangle{}),e)$.
Fix $\sigma\in\nn^{<\nn}$, and assume that $\Delta(\sigma^-)$ has been already defined.
If $\Psi({\tt walk}(\sigma))\not=\Psi({\tt walk}(\sigma^-))$, then $\Delta$ also changes his mind as $\Delta(\lrangle)={\tt hyp}(\Psi({\tt walk}(\sigma)),e)$.
Assume not.
In the case $|{\tt walk}(\sigma)|>|{\tt walk}(\sigma^-)|$, if either $|{\tt walk}(\sigma)|<c$ or ${\tt walk}(\sigma)\not\in T_{Q_1}^{ext}$ is witnessed, the learner $\Delta$ changes his mind (this situation occurs only finitely often).
Otherwise, the learner $\Delta$ keeps his previous guess, i.e., $\Delta(\sigma)=\Delta(\sigma^-)$.
In this way, it is not hard to see that we may construct a learner $\Delta$ witnessing $Q_0\htie P_0\leq^1_\omega Q_1\htie P_1$.
\end{proof}

\subsection{Nested Infinitary Disjunctions along ill-Founded Trees}

In Part II, we employ finite iterations of the hyperconcatenation $\htie$ to show that some (local) degree structures are not Brouwerian.
Beyond this, we just note that one can iterate the hyperconcatenation $\htie$ along any directed graph $(V,E)$.
However, the iteration of $\htie$ does not represented by our dynamic proof model.
We may introduce another new model called {\em the nested disjunction model}.

\medskip

\noindent
{\bf The nested tape model}: 
\index{model!nested tape}%
As an example, first we consider the nested disjunction $T^*=\bhtie_{\sigma\in T^0}\bhtie_{\tau\in T^1_\sigma}[T^2_{\sigma,\tau}]$ along the graph $G=(\{0,1,2\},\{(0,1),(1,2)\})$ with the initial vertex $\varepsilon=0$, where $T=\{T^0\}\cup\{T^1_\sigma\}_{\sigma\in\nn^{<\nn}}\cup\{T^2_{\sigma,\tau}\}_{\lrangle{\sigma,\tau}\in(\nn^{<\nn})^2}$ is a given collection of subtrees of $\nn^{<\nn}$.
The nested tape model for $T^*$ consists of a collection $\{\Lambda_\square\}\cup\{\Lambda^0\}\cup\{\Lambda^1_\sigma\}_{\sigma\in\nn^{<\nn}}\cup\{\Lambda^2_{\sigma,\tau}\}_{\lrangle{\sigma,\tau}\in(\nn^{<\nn})^2}$ of infinite tapes.

\begin{figure}[t]\centering
\begin{center}
\input{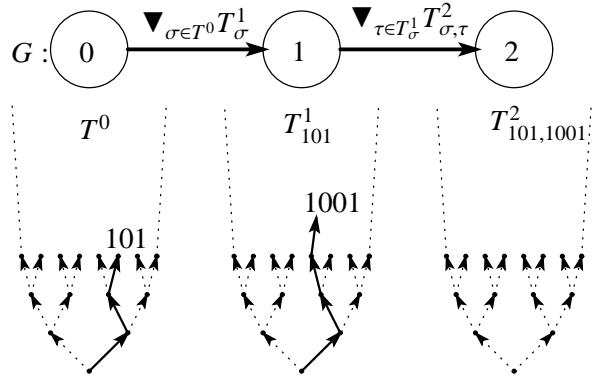}
\end{center}
 \vspace{-0.5em}
\caption{An example nested tape model when $G$ is a linear order of length $3$: $\lrangle{012}$ is written on $\Lambda_\square$; $\lrangle{101}$ is written on $\Lambda^0$; $\lrangle{1001}$ is written on $\Lambda^1_{101}$; then $\Lambda_\square$, $\Lambda^0$, $\Lambda^1_{101}$, and $\Lambda^2_{101,1001}$ are available.}
  \label{fig:arrow2}
\end{figure}

Generally, a {\em nested system $(G,T,\Lambda)$} consists of a graph $G=(V,E)$ with the initial vertex $\varepsilon$, a collection $T=\{T^v_\sigma\}_{v\in V,\sigma\in(\nn^{<\nn})^{<\nn}}$ of (ill-founded) trees, and a collection $\Lambda=\{\Lambda_\square\}\cup\{\Lambda^v_\sigma\}_{v\in V,\sigma\in(\nn^{<\nn})^{<\nn}}$ of infinite tapes.
A verifier $\Psi$ is only allowed to write a letter on tapes which are {\em available}.
Assume that a word ${\tt pr}[v,\sigma]$ is written on $\Lambda^v_\sigma$ for each $v\in V$ and $\sigma\in(\nn^{<\nn})^{<\nn}$.
Then, the availability conditions are given as follows.
\begin{itemize}
\item $\Lambda_\square$ and $\Lambda^\varepsilon_{\lrangle{}}$ are available at each stage.
\item If a finite word $v=\lrangle{v[0],v[1],\dots,v[l]}$ is written on the tape $\Lambda_\square$, then the following tapes are available.
\[\Lambda^{v[1]}_{{\tt pr}[v[0],\lrangle{}]}, \Lambda^{v[2]}_{{\tt pr}[v[1],{\tt pr}[v[0],\lrangle{}]]},\dots, \Lambda^{v[i]}_{{\tt pr}[v[i-1],{\tt pr}[v[i-2],\dots,{\tt pr}[v[1],{\tt pr}[v[0],\lrangle{}]]]]}.\]
\end{itemize}

Here, on the tape $\Lambda_\square$, the verifier $\Psi$ is only allowed to write a path starting from the initial vertex $\varepsilon$ within the graph $G=(V,E)$.

\begin{example}
On the nested tape model for $T^*$, let $\alpha\in((I\cup\{\square\})\times\nn)^{<\nn}$ be the record of a proof process of $\Psi$ by some stage, i.e., ${\tt pr}_\square(\alpha)$ and ${\tt pr}_{(v,\sigma)}(\alpha)$, for each $(v,\sigma)\in I^{<\nn}$, represent the words written on $\Lambda_\square$ and $\Lambda^v_\sigma$, respectively.
Here, $I$ denotes $V\times(\nn^{<\nn})^{<\nn}$.
If the letter $1$ representing the vertex $1\in V$ has been written on $\Lambda_\square$ (i.e., ${\tt pr}_\square(\alpha)\supseteq\lrangle{01}$), then the three tapes $\Lambda_\square$, $\Lambda^0$, and $\Lambda^1_{p}$ are available, where $p={\tt pr}_0(\alpha)$.
\end{example}

The verifier $\Psi$ {\em succeeds} if he eventually writes a correct solution on some tape from $\Lambda$ (i.e., some solution $f\in[T^v_\sigma]$ is eventually written on $\Lambda^v_\sigma$ for some $(v,\sigma)\in V\times(\nn^{<\nn})^{<\nn}$, or otherwise, some infinite path though $G$ is written on $\Lambda_{\square}$).
For each $u,v\in V$ and $(v,\sigma)\in V\times(\nn^{<\nn})^{<\nn}$, the tuple  $\lrangle{T^v_\sigma,\Lambda^v_\sigma,T^{u}_{\sigma,\tau},\Lambda^{u}_{\sigma,\tau}}_{\tau\in T^v_{\sigma}}$ is called {\em the $(\sigma,v,u)$-component of $(G,T,\Lambda)$}.
The $(\sigma,v,u)$-component of our nested system consists of an infinite disjunction along an ill-founded tree, $\bhtie_{\tau\in T^v_\sigma}[T^{u}_{\sigma,\tau}]$.
In other words, on the $(\sigma,v,u)$-component of the system $(I,\Lambda,T,G)$, the set $\Lambda^v_\sigma$ plays the role of the declaration $\square$, and $\Lambda^{u}_{\sigma,\tau}$ plays the role of the working tape for each $\tau\in T^v_\sigma$, as in the dynamic tape model.

\begin{definition}
Fix a directed graph $G=(V,E)$, and we denotes $V\times(\nn^{<\nn})^{<\nn}$ by $I$.
Assume that a collection $\{T_{(v,\sigma)}\}_{(v,\sigma)\in I}$ of subtrees of $\nn^{<\nn}$ are given.
For $\alpha\in((I\cup\{\square\})\times \nn)^{<\nn}$, we inductively define {\em the $n$-th available index along $\alpha$}, $p(\alpha,n)\in I$, for each $n\leq |{\tt pr}_{\square}(\alpha)|$, as follows.
\index{N-th available index@$n$-th available index}\index{$p(\alpha,i)$}%
\[p(\alpha,0)=(\varepsilon,\lrangle{}),\quad p(\alpha,i+1)=({\tt pr}_{\square}(\alpha)(i),(p(\alpha,i))_1\fr\lrangle{{\tt pr}_{p(\alpha,i)}(\alpha)}).\]
Then we define the set of all indices of {\em available tapes along $\alpha$} by $A(\alpha)=\{p(\alpha,n):n\leq |{\tt pr}_{\square}(\alpha)|\}$.
The set $S(\alpha)$ of {\em successors of $\alpha$} is defined as follows:
\begin{align*}
S(\alpha)=\{(p,n)\in(I\cup\{\square\})\times \nn&:p\in A(\alpha)\;\&\;{\tt pr}_p(\alpha)\fr n\in T_p\}\\
&\cup\{(\square,v):({\tt pr}_{\square}(\alpha)(|{\tt pr}_{\square}(\alpha)|-1),v)\in E\}.
\end{align*}
Then {\em the nested infinitary disjunction $\mathbf{W}_{\sigma\in I}[T_\sigma]\subseteq((I\cup\{\square\})\times \nn)^\nn$ of $\{T^v_{\sigma}\}_{(v,\sigma)\in I}$} is defined by
\index{disjunction!infinitary!nested}\index{$\mathbf{W}_{\sigma\in I}[T_\sigma]$}%
\[\mathbf{W}_{\sigma\in I}[T_\sigma]=\{f\in((I\cup\{\square\})\times \nn)^\nn:(\forall n\in\nn)\;f(n)\in S(f\res n)\}.\]
We can also define $\mathbb{W}_{\sigma\in I}[T_\sigma]=\{f\in\mathbf{W}_{\sigma\in I}[T_\sigma]:|{\tt pr}_{\square}(f)|<\infty\}$.
\index{$\mathbb{W}_{\sigma\in I}[T_\sigma]$}%
\end{definition}

\begin{prop}\label{prop:3b:pi01eq}
Assume that $G=(V,E)$ is a computable directed graph, and $\{T_\sigma\}_{\sigma\in I}$ is a computable collection of computable subtrees of $\nn^{<\nn}$, where $I=V\times(\nn^{<\nn})^{<\nn}$.
Then, $\mathbf{W}_{\sigma\in I}[T_\sigma]$ is $\Pi^0_1$.
Moreover, if $G$ and $T_\sigma$ are subtrees of $2^{<\nn}$ for each $\sigma\in I$, then $\mathbf{W}_{\sigma\in I}[T_\sigma]$ is $(1,1)$-equivalent to a $\Pi^0_1$ subset of $2^\nn$.
\end{prop}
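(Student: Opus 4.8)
The plan is to produce $\mathbf{W}_{\sigma\in I}[T_\sigma]$ directly as the set of infinite paths through a computable tree. Fix once and for all a computable coding of the countable index set $(I\cup\{\square\})\times\nn$ by $\nn$, so that $((I\cup\{\square\})\times\nn)^\nn$ is identified with $\nn^\nn$ (here $I=V\times(\nn^{<\nn})^{<\nn}$, and $V$ computable makes this coding computable). Unwinding the definitions, for a finite record $\alpha$ one first computes the finite list of available indices $A(\alpha)=\{p(\alpha,n):n\le|{\tt pr}_\square(\alpha)|\}$; this is uniformly computable in $\alpha$, since ${\tt pr}_\square$ and each ${\tt pr}_p$ are total computable on $\nn^{<\nn}$ and $p(\alpha,i+1)$ is obtained from $p(\alpha,i)$ and these projections by the displayed recursion. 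Given $A(\alpha)$, the successor set $S(\alpha)$ is \emph{decidable}: membership of a pair $(p,k)$ via the first clause is tested by checking $p\in A(\alpha)$ together with ${\tt pr}_p(\alpha)\fr k\in T_p$ (using that $\{T_\sigma\}_{\sigma\in I}$ is a computable family of computable trees), and membership of $(\square,v)$ via the second clause is tested by checking $(({\tt pr}_\square(\alpha))(|{\tt pr}_\square(\alpha)|-1),v)\in E$ (using computability of $G$). Hence $\mathcal{T}=\{\alpha\in((I\cup\{\square\})\times\nn)^{<\nn}:(\forall n<|\alpha|)\;\alpha(n)\in S(\alpha\res n)\}$ is a computable tree, and by definition $\mathbf{W}_{\sigma\in I}[T_\sigma]=[\mathcal{T}]$, which gives the first assertion.

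For the \textbf{moreover} part I would show that, when $G$ and every $T_\sigma$ are subtrees of $2^{<\nn}$, the tree $\mathcal{T}$ has computably bounded branching. Along any valid $\alpha$ with $|\alpha|\le n$ the declaration tape $\square$ only ever records a path in $G$ issued from the initial vertex, so each entry of ${\tt pr}_\square(\alpha)$ is a node of $2^{\le n}$, and ${\tt pr}_\square(\alpha)$ itself has a code bounded by a computable function of $n$. Consequently each available index $p(\alpha,i)$, which by the recursion is built from at most $n$ projections ${\tt pr}_p(\alpha)$ — each a binary word of length $\le n$, or a $\le n$-long string of vertices from $2^{\le n}$ when $p=\square$ — has code bounded by a computable function of $n$; the working-tape symbols $k$ occurring in the first clause of $S(\alpha)$ satisfy $k<2$ since $T_p\subseteq 2^{<\nn}$; and the vertices $v$ in the second clause again lie in $2^{\le n}$. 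Thus there is a computable $b\colon\nn\to\nn$ with $\alpha(n)<b(n)$ (under the coding) for every $\alpha\in\mathcal{T}$.

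Finally I would invoke the standard re-encoding of a computably bounded $\Pi^0_1$ subset of $\nn^\nn$ as a $\Pi^0_1$ subset of $2^\nn$: replace the symbol $k<b(n)$ in position $n$ by a fixed computable injective code in $2^{b(n)}$ and concatenate blockwise; the prefixes of such codes generate a computable binary tree $\mathcal{T}'$ with $[\mathcal{T}']$ the image of $[\mathcal{T}]$, and the blockwise decoding and encoding maps are computable mutual inverses, witnessing $[\mathcal{T}]\equiv^1_1[\mathcal{T}']$. This is precisely the device already used for the recursive meet $\cmeet$ and in Proposition \ref{prop:3b:dyn1}, so it can be cited rather than redone. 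Combining the three steps yields the proposition.

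I expect the only genuine work to be the bookkeeping of the second paragraph — bounding the codes of the available indices $p(\alpha,i)$ along valid records, i.e. checking that the nesting of projections cannot inflate the tape names faster than a computable rate, which is where the hypothesis that $G$ and the $T_\sigma$ sit inside $2^{<\nn}$ is actually used. The first and third paragraphs are routine once the coding conventions are pinned down.
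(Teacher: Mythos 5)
Your first assertion is proved exactly as in the paper: both arguments observe that $\alpha\mapsto A(\alpha)$, hence $\alpha\mapsto S(\alpha)$, is computable, so $\mathbf{W}_{\sigma\in I}[T_\sigma]$ is the set of paths through a computable tree. For the ``moreover'' part your route is genuinely different from, and more modular than, the paper's. The paper builds by hand a $\Pi^0_1$ subset $P$ of $(\{+,-\}\cup 2)^\nn$ — using a head-position counter ${\rm head}$, ``overflow'' rules, and reconstructed projections $\tilde{\tt pr}_\square$, $\tilde{p}$, $\tilde{\tt pr}_\sigma$ — and then exhibits explicit computable reductions in both directions between $P$ and $\mathbf{W}_{\sigma\in I}[T_\sigma]$. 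You instead show that the tree of legal records is computably bounded and invoke the classical fact that a computably bounded $\Pi^0_1$ class is $(1,1)$-equivalent (indeed computably homeomorphic) to a $\Pi^0_1$ subset of $2^\nn$ via blockwise re-encoding. Your bookkeeping in the second paragraph is correct and is where the hypothesis is really used, with one caveat you should make explicit: the boundedness of the vertices $v$ appearing in the second clause of $S(\alpha)$, and of the entries of ${\tt pr}_\square(\alpha)$, relies on reading ``$G$ is a subtree of $2^{<\nn}$'' as $G=(V,E(V))$ with the immediate-successor edge relation — which is how the paper's own proof reads it — since for an arbitrary computable edge relation on a subset of $2^{<\nn}$ a vertex could have infinitely many neighbours of unbounded length and $S(\alpha)$ would not be finitely bounded. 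Your citation of the recursive meet as a precedent for the re-encoding device is slightly off target (that construction concatenates along leaves of ${\sf CPA}$ rather than re-encoding a bounded tree), but the standard fact you actually need is folklore and your appeal to it is sound. What the paper's hands-on encoding buys is a concrete, self-contained binary representation; what yours buys is a shorter proof that cleanly separates the combinatorial content (boundedness of the legal records) from the routine coding.
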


\begin{proof}\upshape
Note that $\alpha\mapsto A(\alpha)$ is computable.
Therefore, $\alpha\mapsto S(\alpha)$ is also computable.
Thus, $\mathbf{W}_{\sigma\in I}[T_\sigma]$ is $\Pi^0_1$.

Assume that $G=(V,E(V))$ and $T_\sigma$ are subtrees of $2^{<\nn}$ for each $\sigma\in I$.
Fix new symbols $+,-$ which does not belong to $\nn$.
To construct a $\Pi^0_1$ subset of $(\{+,-\}\cup 2)^\nn$ which is $(1,1)$-equivalent to $\mathbf{W}_{\sigma\in I}[T_\sigma]$, we inductively define a computable function ${\rm head}:(\{+,-\}\cup 2)^{<\nn}\to\mathbb{Z}$.
Fix $\alpha=\alpha^-\fr\lrangle w\in(\{+,-\}\cup 2)^{<\nn}$.
Put ${\rm head}(\lrangle{})=0$,
Put ${\rm head}(\alpha)={\rm head}(\alpha^-)+1$ if $w=+$; put ${\rm head}(\alpha)={\rm head}(\alpha^-)$ if $w\not\in\{+.-\}$; and put ${\rm head}(\alpha)={\rm head}(\alpha^-)-1$ if $w=-$.
If $\alpha=\alpha^{--}\fr\lrangle{+,+}$ and ${\rm head}(\alpha)=\max\{{\rm head}(\beta):\beta\subsetneq\alpha\}+2$, or if ${\rm head}(\alpha)=-1$, then we say that $\alpha$ is {\em overflowing}.
If $\alpha$ has an overflowing initial segment $\beta\subseteq\alpha$, then we also say that $\alpha$ is overflowing.
Let ${\rm Rule}$ denote the set of all non-overflowing strings $\alpha\in(\{+,-\}\cup 2)^{<\nn}$ which has neither $\lrangle{+,-}$ nor $\lrangle{-,+}$ as substrings.
Note that ${\rm Rule}$ is computable.

We now inductively define $\tilde{\tt pr}_\square$, $\tilde{p}$, and $\tilde{\tt pr}_\sigma$ for each $\sigma\in V$.
Put $\tilde{\tt pr}_\square(\lrangle{})$, and $\tilde{p}=\lrangle{\lrangle{}}$.
Fix $\alpha=\alpha^-\fr w\in{\rm Rule}$.
Assume that $\tilde{\tt pr}_\square(\alpha^-)$, and $\tilde{p}(\alpha^-)$ have been already defined.
If $w\in\{+,-\}$, then $\tilde{\tt pr}_\square(\alpha)=\tilde{\tt pr}_\square(\alpha^-)$ and $\tilde{p}(\alpha)=\tilde{p}(\alpha^-)$.
Assume $w\not\in\{+,-\}$.
Then, if ${\rm head}(\alpha)>\max\{{\rm head}(\beta):\beta\subsetneq\alpha\}$, then we define $\tilde{\tt pr}_\square(\alpha)=\tilde{\tt pr}_\square(\alpha^-)\fr w$.
Otherwise, set $\tilde{\tt pr}_\square(\alpha)=\tilde{\tt pr}_\square(\alpha^-)$.
If $\tilde{\tt pr}_\square(\alpha)\not=\tilde{\tt pr}_\square(\alpha^-)$, then $\tilde{p}(\alpha)=\tilde{p}(\alpha^-)\fr\lrangle{\lrangle{}}$.
Otherwise, define $\tilde{p}(\alpha)\in(2^{<\nn})^{|V(\alpha)|}$ as follows.
\[
(\tilde{p}(\alpha))(n)=
\begin{cases}
(\tilde{p}(\alpha^-))(n), & \mbox{ if }n<{\rm head}(\alpha);\\
(\tilde{p}(\alpha^-))(n)\fr w, & \mbox{ if }n={\rm head}(\alpha);\\
\lrangle{}, & \mbox{ if }h(\alpha)<n\leq|\tilde{\tt pr}_\square(\alpha)|.
\end{cases}
\]
Then, for each $\sigma\in V$, we define $\tilde{\tt pr}_\sigma(\alpha)=(\tilde{p}(\beta))(|\sigma|)$ for the greatest $\beta\subseteq\alpha$ such that $\sigma\subseteq\tilde{p}(\beta)$.
Set ${\rm Rule}_{\forall}=\{f\in(\{+,-\}\cup 2)^\nn:(\forall n\in\nn)\;f\res n\in{\rm Rule}\}$.
Note that any $g\in{\rm Rule}_{\forall}$ has no infinite $\{+,-\}$-sequence; otherwise $g\res s$ for some $s\in\nn$ is overflowing or has a substring $\lrangle{+,-}$ or $\lrangle{-,+}$, and hence $g\res s$ must go against ${\rm Rule}$.
Then $P$ is defined as follows.
\[P=\{f\in{\rm Rule}_{\forall}:(\forall n\in\nn)\;(\tilde{\tt pr}_\square(f\res n)\in V\;\&\;(\forall\sigma\in I)\;\tilde{\tt pr}_\sigma(f\res n)\in T_\sigma)\}.\]
Clearly, $P$ is computably bounded, and $\Pi^0_1$.
It remains to show that $P\equiv^1_1\mathbf{W}_{\sigma\in I}[T_\sigma]$.
We first inductively define a computable function $\Phi$ witnessing $P\geq^1_1\mathbf{W}_{\sigma\in I}[T_\sigma]$.
Set $\Phi(\lrangle{})=\lrangle{}$, fix $\alpha=\alpha^-\fr w\in{\rm Rule}$, and assume that $\Phi(\alpha^-)$ has been already defined.
If $w\in\{+.-\}$, then set $\Phi(\alpha)=\Phi(\alpha^-)$.
Assume $w\not\in\{+,-\}$.
If ${\rm head}(\alpha)>\max\{{\rm head}(\beta):\beta\subsetneq\alpha\}$, then we set $\Phi(\alpha)=\Phi(\alpha^-)\fr\lrangle{(\square,w)}$.
Otherwise, we set $\Phi(\alpha)=\Phi(\alpha^-)\fr\lrangle{((\tilde{\tt pr}_\square(\alpha),\tilde{p}(\alpha)\res h(\alpha)),w)}$.
It is not hard to check $P\geq^1_1\mathbf{W}_{\sigma\in I}[T_\sigma]$ via $\Phi$.

To prove $P\geq^1_1\mathbf{W}_{\sigma\in I}[T_\sigma]$, we first define a computable function ${\rm head}^*$.
Firstly put ${\rm head}^*(\lrangle{})=0$.
Fix $\alpha=\alpha^-\fr\lrangle{(\sigma,w)}\in((I\cup\{\square\})\cup\nn)^{<\nn}$.
If $\sigma=\square$, then we set ${\rm head}^*(\alpha)=|{\tt pr}_\square(\alpha)|$.
If $\sigma\in I$, then we set ${\rm head}^*(\alpha)=|(\sigma)_1|$.
Set $\Phi(\lrangle{})=\lrangle{}$, and assume that $\Phi(\alpha^-)$ has already been defined.
Put $d={\rm head}^*(\alpha)-{\rm head}^*(\alpha^-)$.
If $d\geq 0$, then $\Phi(\alpha)=\Phi(\alpha^-)\fr +^d\fr w$.
If $d<0$, then $\Phi(\alpha)=\Phi(\alpha^-)\fr -^{-d}\fr w$.
It is not hard to check $P\leq^1_1\mathbf{W}_{\sigma\in I}[T_\sigma]$ via $\Phi$.
\end{proof}

If $T^v_\sigma$ only depends on $v\in V$, i.e., $T^{v}_\sigma=T_v$, then the nested system $(I,\Lambda,T,G)$ can be viewed as the iteration of the hyperconcatenation $\htie$ along the graph $G$.
In this case, we write $\mathbf{W}_{v\in(V,E)}P_v$ for this notion.

\begin{prop}
Let $(V,E)$ be a computable directed graph, and $\{P_v\}_{v\in V}$ be a computable collection of $\Pi^0_1$ subsets of $\nn^\nn$.
Then, $\mathbf{W}_{v\in(V,E)}P_v\leq^1_1\bhtie_{v\in(V,E)}P_v$.
\end{prop}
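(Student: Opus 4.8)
The plan is to exhibit a single computable function $\Phi$ witnessing the reduction, i.e.\ one that sends every $f\in\bhtie_{v\in(V,E)}P_v$ to a legitimate play of the nested tape model underlying $\mathbf{W}_{v\in(V,E)}P_v$ (recall that here $T^v_\sigma=T_{P_v}$ does not depend on $\sigma$). The guiding idea is that the nested verifier should \emph{replay the walk recorded by} $f$: each time $f$ moves along an edge $(u,w)\in E$ (that is, $(f(n))_0=u\neq w=(f(n+1))_0$) we append $w$ to the path currently written on $\Lambda_\square$, which activates a fresh working tape at a new level; each time $f$ writes a symbol while staying at vertex $u$ (that is, $(f(n+1))_0=(f(n))_0$) we append one symbol to the current topmost working tape. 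Because every vertex change of $f$ uses an edge of $E$, and (taking the walks in both notions to begin at the designated initial vertex $\varepsilon$) the very first vertex is $\varepsilon$, the sequence of vertices we write on $\Lambda_\square$ is exactly ${\tt walk}(f)$, which is a path in $(V,E)$.

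The one substantive point is the tree constraint. When a new level whose vertex is $w$ is opened at stage $n$, the corresponding working tape is $\Lambda^w_\sigma$ for an index $\sigma$ computed from the contents of the strictly lower tapes, and its content must lie in $T^w_\sigma=T_{P_w}$. By stage $n$ the string ${\tt pr}_w(f\res n)$ may already be a proper extension of everything $f$ wrote on its $w$-tape during earlier visits to $w$, and a ``suffix'' of it need not lie in $T_{P_w}$, so one cannot simply resume writing from where one left off. Instead, when the level for $w$ is opened the verifier first \emph{copies} the whole string ${\tt pr}_w(f\res n)$ onto $\Lambda^w_\sigma$ symbol by symbol, and thereafter keeps the content of $\Lambda^w_\sigma$ equal to ${\tt pr}_w(f\res m)$ as $f$ proceeds. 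This is legitimate: since $f\in{\rm Con}(T_{P_v})_{v\in V}$ we have ${\tt pr}_w(f\res m)\in T_{P_w}$ for every $m$, and trees are closed under initial segments, so at each moment the content of $\Lambda^w_\sigma$ is an initial segment of an element of $T_{P_w}$, hence in $T_{P_w}=T^w_\sigma$.

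Carrying this out, $\Phi$ is defined by recursion on $f\res n$, carrying as state the current level $\ell$, the current vertex $v=(f(n))_0$, the index of the current working tape (computed from the already-written strictly lower tapes), and the as-yet-uncopied tail of ${\tt pr}_v(f\res n)$; all of this is computable from $f\res n$. Each step of $f$ causes $\Phi$ to emit at least one symbol (a writing step emits one symbol on the current tape; an edge step emits the pair $(\square,w)$ followed by the finitely many symbols copying ${\tt pr}_w(f\res n)$), so $\Phi(f)$ is an infinite element of $((I\cup\{\square\})\times\nn)^\nn$. One then checks $\Phi(f)(n)\in S(\Phi(f)\res n)$ for every $n$: the $\Lambda_\square$-letters form a path in $(V,E)$ as noted; $\Phi$ only ever writes to the topmost working tape, so the contents of all strictly lower tapes, and with them all the derived indices $p(\cdot,i)$ and the availability sets $A(\cdot)$, are frozen once a level is left, whence every tape $\Phi$ writes to is available at the stage it is used; and by the previous paragraph the content of each working tape always lies in the appropriate $T_{P_v}=T^v_\sigma$. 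Hence $\Phi(f)\in\mathbf{W}_{v\in(V,E)}P_v$, which is what $\mathbf{W}_{v\in(V,E)}P_v\leq^1_1\bhtie_{v\in(V,E)}P_v$ asserts.

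The main obstacle is precisely this bookkeeping of the nested indices: one has to see that freezing all strictly lower tapes keeps each activated working tape available for as long as it is written to, so that a freshly opened level is never deactivated out from under the verifier — and, the point that actually dictates the shape of the construction, that consistency of $f$ forces the verifier to re-transcribe the \emph{entire} accumulated content of $f$'s $w$-tape onto every newly opened $w$-level rather than only the part produced since the last visit to $w$. Once these two observations are in place the remaining verification is a routine induction, in the spirit of the consistency/availability manipulations already carried out in the proofs of Propositions~\ref{prop:1-2:consistency}, \ref{prop:3b:dyn1} and~\ref{prop:3b:pi01eq}.
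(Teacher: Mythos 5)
Your proof is correct and follows the same overall route as the paper's: both define a computable $\Phi$ by recursion on $f\res n$ that replays ${\tt walk}(f)$ on the declaration tape $\Lambda_\square$, transcribes onto the current topmost working tape, and settles availability by the observation that all strictly lower tapes --- hence the derived indices $p(\cdot,i)$ and the sets $A(\cdot)$ --- are frozen once a level is left. The one substantive divergence is exactly the point you single out. When a level for a vertex $w$ is (re)opened, you first copy the entire accumulated string ${\tt pr}_w(f\res n)$ onto the fresh tape, so that its content is at every moment an initial segment of some ${\tt pr}_w(f\res m)\in T_{P_w}$ and the tree constraint follows from consistency of $f$. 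The paper's construction instead appends only the newly produced symbols to the fresh tape and rests on the inductive identity ${\tt pr}_{\sigma[n]}(\Phi(g\res n+1))={\tt pr}_{v[n]}(g\res n+1)$; that identity is immediate when the walk never revisits a vertex (e.g.\ for the linear graphs $(n,S)$ giving $P^{\hjump{n}}$), but for a general directed graph with cycles the fresh tape receives only a suffix of ${\tt pr}_w(f)$, which ${\rm Con}(T_{P_v})_{v\in V}$ does not place in $T_{P_w}$. Your copying step closes exactly this gap at no cost, so your version of the construction is the more robust of the two; the remaining verification is the same in both.
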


\begin{proof}\upshape
We inductively define a computable function $\Phi$ which witnesses the condition $\mathbf{W}_{v\in(V,E)}P_v\leq^1_1\bhtie_{v\in(V,E)}P_v$.
Set $\Phi(\lrangle{})=\lrangle{}$.
Fix $\alpha=\alpha^{--}\fr\lrangle{(u,i),(v,j)}\in(V\times\nn)^{<\nn}$.
Assume that $\Phi(\alpha^-)$ has already been defined, and $\Phi(\alpha^-)$ is of the form $\Phi(\alpha^-)=\beta\fr\lrangle{(\sigma,k)}$ for some $\beta\in((I\cup\{\square\})\times\nn)^{<\nn}$, $\sigma\in I\cup\{\square\}$, and $k\in\nn$.
If $v=u$, then we set $\Phi(\alpha)=\Phi(\alpha^-)\fr\lrangle{(\sigma,j)}$.
If $v\not=u$, then we set $\Phi(\alpha)=\Phi(\alpha^-)\fr\lrangle{(\square,v),((v,(\sigma)_1\fr{\tt pr}_u(\alpha)),j)}$.
Fix $g\in\bhtie_{v\in(V,E)}P_v$.
By induction, we can show ${\tt pr}_{v[n]}(g\res n+1)={\tt pr}_{\sigma[n]}(\Phi(g\res n+1))$, where $g(n)=(v[n],j)$ and $\Phi(g\res n+1)=\beta\fr\lrangle{(\sigma[n],j)}$.
Then, $(\sigma[n])_1=(\sigma[n]^-)_1\fr{\tt pr}_{\sigma[n]^-}(\Phi(g\res n+1))$, by our definition of $\Phi$.
Therefore, $\sigma[n]$ is available whenever $\sigma[n]^-$ is available.
By induction, $\sigma[n]$ is available at $g\res n$, for any $n\in\nn$.
Moreover, ${\tt pr}_{\sigma[n]}(\Phi(g))={\tt pr}_{v[n]}(g)\in T_{v[n]}=T_{\sigma[n]}$, and ${\tt pr}_\square(\Phi(g))={\rm walk}(g)$.
Here ${\rm walk}(g)$ is inductively defined as follows.
Set ${\rm walk}(g\res 1)=(g(0))_0$.
If $(g(n+1))_0=(g(n))_0$, then ${\rm walk}(g\res n+1)={\rm walk}(g\res n)$.
If $(g(n+1))_0\not=(g(n))_0$, then ${\rm walk}(g\res n+1)={\rm walk}(g\res n)\fr(g(n+1))_0$.
Note that $\lrangle{{\rm walk}(g;n),{\rm walk(g;n+1)}}\in E$ for each $n<|{\rm walk}(g)|-1$.
Thus, $\Phi(g;s)\in S(\Phi(g)\res s)$ for any $s\in\nn$.
Consequently, $\Phi(g)\in\mathbf{W}_{v\in(V,E)}P_v$.
\end{proof}

If $G=(V,E)$ is linearly ordered, then we have no choice of the next vertex at each stage.
In this case, to simplify our argument, we assume that only $\{\Lambda_\sigma\}_{\lrangle{v,\sigma}\in I}$ is given, i.e., the $(v,\sigma)$-th tape $\Lambda^v_\sigma$ does not depend on the vertex $v\in V$, and.
Moreover, if $T_\sigma=T_\tau$ for any $\sigma,\tau\in I$, then we only require $\{\Lambda_{|\sigma|}\}_{\lrangle{v,\sigma}\in I}$.
We will use the simplest depth $n$ nested system.
The system $(G,T,\Lambda)$ is {\em an $\nn^{<n}$-nested system} if $G=(n,S)$ and $T_\sigma=T_\tau$ for any $\sigma,\tau\in I$.
\index{N-nested system@$\nn^{<n}$-nested system}%
This system is equivalent to the $n$-th iteration of $\bhtie$.
Let $P^{\hjump{0}}=P$, and $P^{\hjump{n+1}}=P\htie P^{\hjump{n}}$.
\index{$P^{\hjump{n}}$}%
We also write $\bhtie P$ for $\bigcup_{n\in\nn}P^{\hjump{n}}$.
\index{$\bhtie P$}%

\begin{prop}
Let $G=(n+2,S)$, where $n+2=\{m\in\nn:m<n+2\}$ and $S=\{(m,m+1):m\leq n\}$, and $\{P^v_\sigma\}_{\lrangle{v,\sigma}\in I}$ be a computable collection of $\Pi^0_1$ subsets of $\nn^\nn$.
Let $T^v_\sigma$ denote the corresponding tree of $P^v_\sigma$ for each $\lrangle{v,\sigma}\in I$.
Then $\mathbf{W}_{\lrangle{v,\sigma}\in I}P^v_\sigma$ is $(1,1)$-equivalent to the following set:
\[Q=\bhtie_{\sigma(0)\in T^0_{\lrangle{}}}\left(\bhtie_{\sigma(1)\in T^1_{\sigma(0)}}\left(\dots\left(\bhtie_{\sigma(n)\in T^n_{\sigma(0),\dots,\sigma(n-1)}}P^{n+1}_{\sigma(0),\dots,\sigma(n)}\right)\dots\right)\right).\]
In particular, $\mathbf{W}_{v\in(n,S)}P=P^{\hjump{n}}$ for any $\Pi^0_1$ subset of $\nn^\nn$.
\end{prop}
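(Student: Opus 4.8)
The plan is to establish the $(1,1)$-equivalence by induction on $n$, peeling off at each step the hyperconcatenation carried by the first edge $0\to 1$ of the path $G=(n+2,S)$. Before starting the induction I would record that both sides are, up to $(1,1)$-equivalence, computably bounded $\Pi^0_1$ sets: the left-hand side by Proposition~\ref{prop:3b:pi01eq}, and $Q$ by iterating Proposition~\ref{prop:3b:dyn1}, since a hyperconcatenation of two $\Pi^0_1$ sets is again $\Pi^0_1$. (This is reassuring but not strictly needed: it suffices to exhibit a $(1,1)$-reduction in each direction, built by the stagewise construction of computable functionals used throughout this section.)

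For the base case $n=0$ the graph is $G=(\{0,1\},\{(0,1)\})$, and unfolding the availability conditions in the definition of $\mathbf{W}_{\sigma\in I}[T_\sigma]$ shows that a verifier has exactly two live tapes at any stage: the declaration tape $\Lambda_\square$, on which only the words $\lrangle{0}$ and $\lrangle{0,1}$ can ever appear, and --- according to whether the declaration has been advanced to $1$ --- either $\Lambda^0_{\lrangle{}}$ or $\Lambda^1_{\rho}$, where $\rho$ is the current word on $\Lambda^0_{\lrangle{}}$. This is precisely the dynamic-tape picture of one hyperconcatenation, so the ${\tt walk}/{\tt cut}$ characterisation of Definition~\ref{def:1-2d:concatetree} together with Proposition~\ref{prop:3b:dyn1} give $\mathbf{W}_{\lrangle{v,\sigma}\in I}P^v_\sigma\equiv^1_1\bhtie_{\sigma(0)\in T^0_{\lrangle{}}}P^1_{\sigma(0)}=Q$.

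For the inductive step I would split a run of the nested system over the path $0\to 1\to\dots\to(n+1)$ into: (i) the joint behaviour on $\Lambda_\square$ and $\Lambda^0_{\lrangle{}}$, which just builds a node of $T^0_{\lrangle{}}$ on $\Lambda^0_{\lrangle{}}$ --- if that node is extended infinitely, the run has already succeeded on the outermost level and lands in $[T^0_{\lrangle{}}]$; and (ii) whenever that node stabilises at some finite $\sigma(0)$, a run of the nested system over the shorter path $1\to\dots\to(n+1)$ with the tree family $\{T^{v}_{\sigma(0)\fr\rho}\}$ relativised to $\sigma(0)$. The induction hypothesis, applied to that shorter system, gives a $(1,1)$-equivalence --- uniform in $\sigma(0)$ --- between (ii) and the $n$-fold iterated hyperconcatenation occurring inside $Q$. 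Substituting this into the outermost operator $\bhtie_{\sigma(0)\in T^0_{\lrangle{}}}$ and using that $\bhtie$ is monotone and well-behaved under uniform $(1,1)$-reductions of its data (the same bookkeeping as in Proposition~\ref{prop:1-2:wellbehaved} and in the proof of Proposition~\ref{prop:3b:dyn1}) yields $\mathbf{W}_{\lrangle{v,\sigma}\in I}P^v_\sigma\equiv^1_1 Q$. Concretely, the two functionals are glued together from the base-case and inductive reductions by re-indexing tapes: in one direction an advance of the declaration, or an extension of the node on some $\Lambda^v_{\cdot}$, is translated into a leaf-cut of the corresponding tree, and in the other a leaf-cut is translated back, exactly as in the proofs of Propositions~\ref{prop:3b:dyn1} and~\ref{prop:3b:pi01eq}.

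The step I expect to be the main obstacle is organisational rather than conceptual. Because $\mathbf{W}_{\sigma\in I}[T_\sigma]$ lives in the tuple space $((I\cup\{\square\})\times\nn)^\nn$ while the iterated hyperconcatenation $Q$ lives in a plain sequence space, the induction must be carried with reductions that are genuinely uniform in the relativising node $\sigma(0)$ and that faithfully mirror the two distinct ways in which a run can fail to stabilise --- an infinite branch of $T^0_{\lrangle{}}$ versus an infinite branch of some $T^{v}_{\sigma(0)\fr\rho}$ deep inside the nest. Keeping this bookkeeping coherent across the peeling is the only delicate point. The ``in particular'' clause then follows at once: taking $P^v_\sigma=P$ for all $\lrangle{v,\sigma}\in I$ makes every relativised tree equal to $T_P$, so $Q$ degenerates to $P\htie(P\htie(\cdots\htie P))$, which is $P^{\hjump{n}}$ by the recursion $P^{\hjump{0}}=P$, $P^{\hjump{n+1}}=P\htie P^{\hjump{n}}$.
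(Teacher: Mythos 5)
Your plan is correct, but it is organized differently from what the paper intends. The paper dismisses this proposition with the single word ``Straightforward,'' and the implicit argument is a direct, non-inductive translation: each position of an element of $Q$ determines, by repeated application of the ${\tt walk}/{\tt cut}$ decomposition, a full address $\lrangle{\sigma(0),\dots,\sigma(k)}$ together with a letter, and one computable functional converts such a position into the corresponding write instruction $\lrangle{(v,\sigma),w}$ of the nested system (and conversely), handling all $n+2$ levels simultaneously in one stagewise construction. Your induction on $n$, peeling off the outermost $\bhtie$, reaches the same conclusion but buys you the ability to reuse the one-level machinery (Propositions~\ref{prop:3b:dyn1} and~\ref{prop:1-2:cut-leaf}) as a black box; the price is exactly the overhead you identify, namely that the induction hypothesis must be applied uniformly in the relativizing node $\sigma(0)$ and that the shared declaration tape $\Lambda_\square$ has to be re-indexed when passing to the sub-system on $1\to\dots\to(n+1)$. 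Two small points to tighten: in the base case the verifier has \emph{three} available tapes once the declaration reaches $\lrangle{0,1}$ (namely $\Lambda_\square$, $\Lambda^0_{\lrangle{}}$, and $\Lambda^1_\rho$), since $\Lambda^\varepsilon_{\lrangle{}}$ is always available --- this does not affect the correspondence with the dynamic-tape model, where $\Lambda^0_{\lrangle{}}$ plays the role of the declaration, but the phrase ``exactly two live tapes'' is inaccurate; and your reductions must of course be total functionals that do not wait to see whether $\sigma(0)$ stabilizes, which is handled by the leaf-cut padding you allude to. With those caveats the argument goes through.
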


\begin{proof}\upshape
Straightforward.
\end{proof}

\begin{remark}
We may introduce a transfinite iteration $P^{\hjump{a}}$ of hyperconcatenation as in Definition \ref{def:div:trans-mc}, or equivalently, as a nested infinitary disjunction $\mathbf{W}_{\sigma\in (T_a,E(T_a))}P$ along the well-founded tree $T_a$.
Recall from Corollary \ref{cor:1-2c:computable-along} that the hyperconcatenation $\htie$ induces ${\rm dec}^{<\omega}_{\rm d}[\Pi^0_2]{\rm dec}^\omega_{\rm p}[\Pi^0_1]$.
The induced piecewise computability concept becomes the $a$-indexed version of ${\rm dec}^{<\omega}_{\rm d}[\Pi^0_2]{\rm dec}^\omega_{\rm p}[\Pi^0_1]$.
\end{remark}

\begin{remark}
We may introduce the ``{\em nested nested}'' model, the ``{\em nested nested nested}'' model, and so on.
Let $Q\mathbf{w}P$ be $\mathbf{W}_{v\in(T_Q,E(T_Q))}P_v$, where $P_v=P$ for each $v\in T_Q$.
\index{$Q\mathbf{w}P$}%
Then, for example, the nested nested model can be introduced as the iteration of $\mathbf{w}$ along any directed graph $(V,E)$.
Therefore, inside the $(\omega,1)$-degree of any $\Pi^0_1$ set $P\subseteq 2^\nn$, one may iterate this procedure as ``{\em nested nested nested $\dots$ nested nested $\dots$}''
Actually one may iterate ``{\em nested nested nested $\dots$ nested nested $\dots$}'' along any directed graph, for example, along the corresponding tree of $P$.
If we call it a ``{\em hypernested}'' model, then, of course, we may introduce models which are ``{\em hypernested hypernested}'', and ``{\em hypernested hypernested hypernested}'', and so on.
By iterating this notion along the corresponding tree of $P$, we obtain a ``{\em hyperhypernested}'' model.
Iterating this procedure, of course, we have the iteration of ``{\em hyper}'' along the correspoding tree of $P$.
This observation reveals to us that there are a fine structure, a deep hierarchy, and a morass inside each $(\omega,1)$-degree (or equivalently, each Turing upward closure) of a $\Pi^0_1$ subset of $2^\nn$.
\end{remark}
\section{Weihrauch Degrees and Wadge Games}

\subsection{Weihrauch Degrees and Constructive Principles}

\subsubsection{Basic Notation}

We can also give a characterization of our nonuniformly computable functions in the context of the Weihrauch degrees which is a generalization of the Medvedev degrees.
Then, our results could be translated into the results on the Weihrauch degrees.
A partial function $P:\subseteq\nn^\nn\to\mathcal{P}(\nn^\nn)$ is called {\em a multi-valued function}.
\index{multi-valued function}%
Then $P$ is also written as $P:\nn^\nn\rightrightarrows\nn^\nn$.
One can think of each multi-valued function $P$ as a collection $\{P(x)\}_{x\in{\rm dom}(P)}$ of mass problems $P(x)\subseteq\nn^\nn$, or a {\em $\Pi_2$-theorem} $(\forall x\in{\rm dom}(P))(\exists y)\;y\in P(x)$.

\begin{definition}[\cite{BMP,BGa,BG,BGM}]
Let $P:\subseteq\nn^\nn\rightrightarrows\nn^\nn$ and $Q:\subseteq\nn^\nn\rightrightarrows\nn^\nn$ be multi-valued partial functions.
\index{realizer}\index{Weihrauch reducibility}\index{$P\leq_WQ$}%
\begin{enumerate}
\item A single-valued function $q:\subseteq\nn^\nn\to\nn^\nn$ is said to be {\em a realizer of $Q$} if $q(x)\in Q(x)$ for any $x\in{\rm dom}(Q)$.
\item We say that $P$ {\em is Weihrauch reducible to} $Q$ (written $P\leq_WQ$) if there are partial computable functions $H,K$ such that $K(x,q\circ H(x))$ for any $x\in{\rm dom}(P)$ and any realizer $q$ of $Q$.
\end{enumerate}
\end{definition}

\begin{remark}
If we think of the values $P(x)$ and $Q(x)$ as {\em relativized mass problems} $P^x$ and $Q^x$, then $P\leq_WQ$ can be represented as the existence of partial computable functions $\Phi,\Delta:\subseteq\nn^\nn\to\nn^\nn$ satisfying $\Phi^x:Q^{\Delta(x)}\to P^x$ for any $x\in{\rm dom}(Q)$, where $\Phi^x$ is the $x$-computable function mapping $y\in\nn^\nn$ to $\Phi(x\oplus y)$.
For any subset $P$ of Baire space $\nn^\nn$, we define $\iota(P):\nn^\nn\rightrightarrows\nn^\nn$ by $\iota(P)(x)=P$ for any $x\in\nn^\nn$.
Then, the map $\iota$ provides an embedding of the Medvedev degrees into the Weihrauch degrees, i.e., $P\leq^1_1Q$ if and only if $\iota(P)\leq_W\iota(Q)$.
\end{remark}

\begin{definition}[\cite{BMP,BGa,BG,BGM}]
Let $P,Q:\subseteq\nn^\nn\rightrightarrows\nn^\nn$ be partial multi-valued functions.
\index{$\lrangle{P,Q}$}\index{$P\times Q$}%
\index{$P\coprod Q$}\index{$P\circ Q$}%
\index{parallelization}\index{$\widehat{P}$}%
\begin{enumerate}
\item (Pairing) $\lrangle{P,Q}(x)=P(x)\times Q(x)$.
\item (Product) $(P\times Q)(\lrangle{x,y})=P(x)\times Q(y)$.
\item (Coproduct) $(P\coprod Q)(0,x)=\{0\}\times P(x)$; and $(P\coprod Q)(1,x)=\{1\}\times Q(x)$.
\item (Composition) $(P\circ Q)(x)=\bigcup\{P(y):y\in Q(x)\}$, where $x\in{\rm dom}(P\circ Q)$ if $x\in{\rm dom}(Q)$ and $Q(x)\subseteq{\rm dom}(P)$.
\item (Parallelization) $\widehat{P}(\lrangle{x_i:i\in\nn})=\prod_{i\in\nn}P(x_i)$.
\end{enumerate}
\end{definition}

Let $X$ be a computable metric space (for definition, see Weihrauch \cite{Wei}).
Then, $\mathcal{A}_-(X)$ denotes the hyperspace of closed subsets of $X$ with the upper Fell representation $\psi_-$ (see \cite{BMP}).
\index{$\mathcal{A}_-(X)$}%
For example, $P$ is a computable point in the hyperspace $\mathcal{A}_-(\nn^\nn)$ (resp.\ $\mathcal{A}_-(2^\nn)$) if and only if $P$ is a $\Pi^0_1$ subset of Baire space $\nn^\nn$ (resp.\ of Cantor space $2^\nn$).

\begin{definition}[Closed Choice \cite{BMP,BGa,BG,BGM}]
Let $X$ be a computable metric space.
Then, the {\em closed choice} operation of $X$ is defined as the following partial function.
\index{closed choice}\index{${\sf C}_X$}%
\[{\sf C}_X:\subseteq\mathcal{A}_-(X)\rightrightarrows X,\quad A\mapsto A\]
Here, ${\rm dom}({\sf C}_X)=\{A\in\mathcal{A}_-(X):A\not=\emptyset\}$.
\end{definition}

\subsubsection{Principles of Omniscience}

\begin{definition}
\index{tame formula}\index{${\rm TameForm}$}%
A formula is {\em tame} if it is well-formed formula constructed from symbols $\{\top,\bot,\wedge,\vee,\neg,\forall n,\exists n\}_{n\in\nn}$ and one variable symbol $\mathbf{V}(n)$ with a number parameter $n\in\nn$.
For any tame formula $A$ and $p\in\nn^\nn$, let $A[\mathbf{V}/p]$ denote the new formula obtained from $A$ by replacing $\mathbf{V}(n)$ with $\top$ if $p(n)=0$ and $\mathbf{V}(n)$ with $\bot$ if $p(n)\not=0$.
Then, let ${\rm TameForm}$ denote the class of formulas of the form $A\longrightarrow B$ for some tame formulas $A$ and $B$.
\end{definition}

\begin{example}\label{exa:5:intui}
The following formulas are contained in ${\rm TameForm}$.
\index{$\Sigma^0_1\text{-}{\sf LEM}$}\index{$\Sigma^0_2\text{-}{\sf LEM}$}%
\index{$\Sigma^0_2\text{-}{\sf DNE}$}\index{$\Sigma^0_1\text{-}{\sf LLPO}$}%
\index{$\Sigma^0_3\text{-}{\sf DNE}$}\index{$\Sigma^0_2\text{-}{\sf LLPO}$}%
\begin{enumerate}
\item $\Sigma^0_1\text{-}{\sf LEM}:\;\top\longrightarrow\exists n\mathbf{V}(n)\vee\neg\exists n\mathbf{V}(n)$.
\item $\Sigma^0_2\text{-}{\sf LEM}:\;\top\longrightarrow\exists m\forall n\mathbf{V}(\lrangle{m,n})\vee\neg\exists m\forall n\mathbf{V}(\lrangle{m,n})$.
\item $\Sigma^0_2\text{-}{\sf DNE}:\;\neg\neg\exists m\forall n\mathbf{V}(\lrangle{m,n})\longrightarrow\exists m\forall n\mathbf{V}(\lrangle{m,n})$.
\item $\Sigma^0_3\text{-}{\sf DNE}:\;\neg\neg\exists k\forall m\exists n\mathbf{V}(\lrangle{k,m,n})\longrightarrow\exists k\forall m\exists n\mathbf{V}(\lrangle{k,m,n})$.
\item $\Sigma^0_1\text{-}{\sf LLPO}:\;\neg(\exists n\mathbf{V}(\lrangle{0,n})\wedge\exists n\mathbf{V}(\lrangle{1,n}))\longrightarrow\neg\exists n\mathbf{V}(\lrangle{0,n})\vee\neg\exists n\mathbf{V}(\lrangle{1,n})$.
\item $\Sigma^0_2\text{-}{\sf LLPO}:\;\neg(\exists m\forall n\mathbf{V}(\lrangle{0,m,n})\wedge\exists m\forall n\mathbf{V}(\lrangle{1,m,n}))\longrightarrow\neg\exists m\forall n\mathbf{V}(\lrangle{0,m,n})\vee\neg\exists m\forall n\mathbf{V}(\lrangle{1,m,n})$.
\end{enumerate}
\end{example}

\begin{remark}
\index{excluded middle!law of}\index{double negation elimination}%
\index{principle of omniscience!lessor limited}\index{de Morgan's law}%
The symbols ${\sf LEM}$, ${\sf DNE}$, ${\sf LLPO}$ express the {\em law of excluded middle}, the {\em double negation elimination}, and the {\em lessor limited principle of omniscience} (i.e., {\em de Morgan's law}), respectively.
\end{remark}

\begin{definition}
\index{$\mathbb{F}_{A\longrightarrow B}$}%
Given any $A\longrightarrow B\in{\rm TameForm}$, we define a partial multivalued function $\mathbb{F}_{A\longrightarrow B}:\subseteq\nn^\nn\rightrightarrows\nn^\nn$ as follows:
\begin{align*}
{\rm dom}(\mathbb{F}_{A\longrightarrow B})&=\{p\oplus q\in\nn^\nn:q\in\bhk{A[\mathbf{V}/p]}\},\\
\mathbb{F}_{A\longrightarrow B}(p\oplus q)&=\bhk{B[\mathbf{V}/p]},
\end{align*}
where $\bhk{\cdot}:{\sf Form}\to\mathcal{P}(\nn^\nn)$ is a unique Medvedev interpretation in Definition \ref{def:1-2:Med-interpre} with $\bhk{\top}=\nn^\nn$.
\end{definition}

One can easily see that either $\bhk{\neg\varphi}=\nn^\nn$ or $\bhk{\neg\varphi}=\emptyset$ holds for every arithmetical sentence $\varphi$ in any Medvedev interpretation.
Therefore, for every principle $A\longrightarrow B$ in Example \ref{exa:5:intui}, its domain is $\{p\oplus q\in\nn^\nn:\bhk{A[\mathbf{V}/p]}\not=\emptyset\}$, that is, we need not to use the information on $q$.
This observation immediately implies the following proposition.

\begin{prop}\label{def:5:intpris}
The induced function $\mathbb{F}_{A\longrightarrow B}$ from a principle $A\longrightarrow B$ in Example \ref{exa:5:intui} is Weihrauch equivalent to the following associated partial multi-valued function $A\longrightarrow B$ on Baire space.
\index{$\Sigma^0_1\text{-}{\sf LEM}$}\index{$\Sigma^0_2\mbox{-}{\sf LEM}$}%
\index{$\Sigma^0_2\mbox{-}{\sf DNE}$}\index{$\Sigma^0_3\mbox{-}{\sf DNE}$}%
\index{$\Sigma^0_1\text{-}{\sf LLPO}$}\index{$\Sigma^0_2\mbox{-}{\sf LLPO}$}%
\begin{align*}
\Sigma^0_1\text{-}{\sf LEM}&:\nn^\nn\to 2, &
\Sigma^0_1\text{-}{\sf LEM}(p)&=
\begin{cases}
0, \mbox{ if }(\exists n\in\nn)\;p(n)=0,\\
1, \mbox{ otherwise.}
\end{cases}
\\
\Sigma^0_2\mbox{-}{\sf LEM}&:\nn^\nn\rightrightarrows 2\times\nn,&
\Sigma^0_2\mbox{-}{\sf LEM}(p)&\ni
\begin{cases}
(0,s), \mbox{ if } (\forall m\in\nn)(\exists n>m)\;p(n)=0,\\
(1,s), \mbox{ if } (\forall n>s)\;p(n)\not=0.
\end{cases}
\\
\Sigma^0_2\mbox{-}{\sf DNE}&:\subseteq\nn^\nn\rightrightarrows\nn,&
\Sigma^0_2\mbox{-}{\sf DNE}(p)&=\{m\in\nn:(\forall n>m)\;p(n)\not=0\}.
\\
\Sigma^0_3\mbox{-}{\sf DNE}&:\subseteq\nn^\nn\rightrightarrows\nn,&
\Sigma^0_3\mbox{-}{\sf DNE}(p)&=\{k:(\forall m\in\nn)(\exists n\geq m)\;p(\lrangle{k,n})=0\}.
\\
\Sigma^0_1\text{-}{\sf LLPO}&:\subseteq(\nn^\nn)^2\rightrightarrows 2,&
\Sigma^0_1\mbox{-}{\sf LLPO}(p_0,p_1)&\ni
\begin{cases}
0, \mbox{ if } (\forall n\in\nn)\;p_0(n)=0,\\
1, \mbox{ if } (\forall n\in\nn)\;p_1(n)=0.
\end{cases}
\\
\Sigma^0_2\mbox{-}{\sf LLPO}&:\subseteq(\nn^\nn)^2\rightrightarrows 2,&
\Sigma^0_2\mbox{-}{\sf LLPO}(p_0,p_1)&\ni
\begin{cases}
0, \mbox{ if } (\forall m)(\exists n>m)\;p_0(n)=0,\\
1, \mbox{ if } (\forall m)(\exists n>m)\;p_1(n)=0.
\end{cases}
\end{align*}
Here, their domains are given as follows.
\begin{align*}
{\rm dom}(\Sigma^0_2\mbox{-}{\sf DNE})&=\{p\in\nn^\nn:(\exists m\in\nn)(\forall n>m)\;p(n)\not=0\}.\\
{\rm dom}(\Sigma^0_3\mbox{-}{\sf DNE})&=\{p\in \nn^\nn:(\exists k\in\nn)(\forall m\in\nn)(\exists n\geq m)\;p(\lrangle{k,n})=0\}.\\
{\rm dom}(\Sigma^0_1\text{-}{\sf LLPO})&=\{(p_0,p_1)\in(\nn^\nn)^2:(\exists i<2)(\forall n\in\nn)\;p_i(n)=0\}.\\
{\rm dom}(\Sigma^0_2\mbox{-}{\sf LLPO})&=\{(p_0,p_1)\in(\nn^\nn)^2:(\exists i<2)(\forall m)(\exists n>m)\;p_i(n)=0\}.
\end{align*}
\end{prop}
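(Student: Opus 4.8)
The plan is to reduce everything to the single observation recalled just above the statement. Fix a principle $A\longrightarrow B$ from Example~\ref{exa:5:intui}. In every Medvedev interpretation $\bhk{\neg\varphi}$ is $\nn^\nn$ or $\emptyset$ for an arithmetical sentence $\varphi$, and in each of our six principles $A$ is $\top$, a doubly negated sentence, or a negated conjunction; hence $\bhk{A[\mathbf{V}/p]}$ is $\emptyset$ or $\nn^\nn$ and depends only on $p$, and when it equals $\nn^\nn$ it contains a computable point (here the interpretation is normalised so that $\bhk{\top}=\nn^\nn$). Therefore ${\rm dom}(\mathbb{F}_{A\longrightarrow B})=\{p\oplus q:\bhk{A[\mathbf{V}/p]}\neq\emptyset\}$ imposes no constraint on $q$, and feeding in any fixed computable $q$ (in the $A=\top$ case the computable point of $\bhk{\top}$, otherwise any computable point of $\nn^\nn$) together with the identity back-translation witnesses $\mathbb{F}_{A\longrightarrow B}\equiv_W G_{A\longrightarrow B}$, where $G_{A\longrightarrow B}$ is the auxiliary partial multivalued function $p\mapsto\bhk{B[\mathbf{V}/p]}$ with domain $\{p:\bhk{A[\mathbf{V}/p]}\neq\emptyset\}$. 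So it remains to identify $G_{A\longrightarrow B}$, up to $\equiv_W$, with the function listed in the Proposition.

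Next I would unfold $\bhk{B[\mathbf{V}/p]}$ using the clauses of Definition~\ref{def:1-2:Med-interpre}: a disjunction becomes a coproduct $\linf$, a conjunction a product $\lsup$, and the quantifiers become $\binf$ and $\bsup$, while the atom $\mathbf{V}(k)[\mathbf{V}/p]$ becomes $\nn^\nn$ if $p(k)=0$ and $\emptyset$ if $p(k)\neq 0$. In each case one gets a coproduct of two mass problems, at least one of which is $\emptyset$. For the LEM-principles, depending on whether the relevant $\Sigma^0_n$ fact about $p$ holds, $\bhk{B[\mathbf{V}/p]}$ is either $\lrangle{0}\fr W$, where $W$ is a mass problem whose solutions encode a witness for that $\Sigma^0_n$ fact, or else $\lrangle{1}\fr\nn^\nn$. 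For the DNE-principles the domain restriction already forces the $\Sigma^0_n$ fact, and since $B$ carries no leading disjunction, $\bhk{B[\mathbf{V}/p]}$ is exactly the witness mass problem. For the LLPO-principles $\bhk{B[\mathbf{V}/p]}=(\lrangle{0}\fr S_0)\cup(\lrangle{1}\fr S_1)$ with $S_i=\nn^\nn$ precisely when side $i$ is ``valid''. In all of these the inner $\binf$'s and $\bsup$'s applied to the quantifier-free atoms reduce, under the relevant promise, to copies of $\nn^\nn$ carrying a computable point (uniformly), prefixed by the corresponding quantifier witnesses; thus the only computational content of $G_{A\longrightarrow B}$ is to locate the outermost witnesses, or to decide which coproduct branch applies — which is exactly what the functions in the Proposition output (a bit in $2$, a pair in $2\times\nn$, or a number in $\nn$).

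Then I would record the two computable reductions establishing $G_{A\longrightarrow B}\equiv_W(\text{stated function})$ in each case. In one direction $H$ turns an input of the stated function into the input coding $\mathbf{V}$ demanded by the example (for instance turning a sequence $p'$ into $p$ with $p(\lrangle{m,n})=0\iff p'(k)\neq 0$ for all $k$ with $m<k\leq m+1+n$, so that ``column $m$ of $p$ is all-zero'' means ``$p'$ is nonzero past $m$''), and $K$ reads off the leading coproduct bit and the quantifier witnesses from a solution of $G_{A\longrightarrow B}$ and rewrites them as the required output; in the other direction one performs the symmetric re-coding. Because $\bhk{\top}$ contains a computable element, the ``filler'' coordinates produced by the product and interleaving constructions, as well as the discarded $q$, never cause trouble: $K$ simply ignores them and $H$ supplies computable dummies.

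The main obstacle is the bookkeeping in this last step: one must verify that the witness mass problems coming from the literal substitution $\mathbf{V}/p$ into the prenex forms of Example~\ref{exa:5:intui} are Weihrauch-equivalent, via computable re-codings that carry witnesses to witnesses, to the streamlined presentations of the Proposition (a $\Sigma^0_2$ fact rewritten as ``$p$ is eventually nonzero'', a $\Sigma^0_3$ fact rewritten as ``some column has infinitely many zeros'', and so on). This is the only place where genuine rather than literal Weihrauch equivalence is needed, and it is where the six cases must be handled separately, although each computation is elementary once the pattern above is in place. The $\Sigma^0_2\text{-}{\sf DNE}$, $\Sigma^0_3\text{-}{\sf DNE}$ and $\Sigma^0_2\text{-}{\sf LEM}$ cases require care because a witness must be transported across the re-coding, whereas for $\Sigma^0_1\text{-}{\sf LEM}$ and the two ${\sf LLPO}$ principles there is no witness to track (only a bit or an index), so those cases are essentially immediate.
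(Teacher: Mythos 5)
Your proposal follows exactly the route the paper intends: the paper's entire justification is the observation, stated immediately before the Proposition, that $\bhk{\neg\varphi}$ is always $\nn^\nn$ or $\emptyset$, so the domain condition ignores $q$ and the rest is an unwinding of the Medvedev clauses plus routine computable re-codings, which the paper declares "immediate." Your write-up is a correct and somewhat more explicit elaboration of that same argument, including the witness-transport bookkeeping the paper leaves implicit.
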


\begin{remark}
\begin{enumerate}
\item
\index{principle of omniscience!limited}\index{{\sf LPO}}%
\index{computability!with finitely many mind changes}%
The single-valued function $\Sigma^0_1$-${\sf LEM}$ is usually called {\em the limited principle of omniscience} ({\sf LPO}).
Brattka-de Brecht-Pauly \cite{BMP} showed that a single-valued partial function $f:\subseteq\nn^\nn\to\nn^\nn$ is $(1,\omega)$-computable if and only if $f$ is Weihrauch reducible to the closed choice ${\sf C}_\nn$ for the discrete space $\nn$.
Here, in their term, the $(1,\omega)$-computability is called {\em the computability with finitely many mind changes}.
\item
$\Sigma^0_2$-{\sf LLPO} is Weihrauch equivalent to the {\em jump ${\sf LLPO}'$ of} {\sf LLPO} in the sense of Brattka-Gherardi-Marcone \cite{BGM}.
They also showed that ${\sf LLPO}'$ is Weihrauch equivalent to the Borzano-Weierstrass Theorem ${\sf BWT}_2$ for the discrete space $\{0,1\}$.
Brattka-Gherardi-Marcone \cite{BGM} also pointed out that the $n$-th jump of ${\sf LLPO}$ and ${\sf LPO}$ correspond to $\Sigma^0_{n+1}$-${\sf LLPO}$ (that is, the lessor limited principle of omniscience for $\Sigma^0_{n+1}$-formulas) and $\Sigma^0_{n+1}$-${\sf LEM}$ (the law of excluded middle for $\Sigma^0_{n+1}$-formulas), respectively.
\item
The study of arithmetical hierarchy of semiclassical principles such as $\Sigma^0_n$-{\sf LEM}, $\Sigma^0_n$-{\sf LLPO}, and $\Sigma^0_n$-{\sf DNE} was initiated by Akama-Berardi-Hayashi-Kohlenbach \cite{ABHK}.
In particular, on the study of the second level of arithmetical hierarhcy for semiclassical principles, see also Berardi \cite{Ber06} and Toftdal \cite{Toftdal04}.
The relationship between the learnability and $\Sigma^0_2$-{\sf DNE} has been also studied by Nakata-Hayashi \cite{Nakata-Hayashi} in the context of a realizability interpretation of limit computable mathematics.
\end{enumerate}
\end{remark}

\begin{definition}[Unique variant \cite{BGM}]
\index{unique variant}\index{${\sf Unique}P$}%
Let $P:X\rightrightarrows Y$ be a multi-valued function.
Then ${\sf Unique}P:X\rightrightarrows Y$ is defined as the restriction of $P$ up to ${\rm dom}({\sf Unique}P)=\{x\in{\rm dom}(P):\# P(x)=1\}$.
\end{definition}

\begin{definition}
\index{$\Delta^0_2\text{-}{\sf LEM}$}%
We define the partial multi-valued function $\Delta^0_2\text{-}{\sf LEM}$ as follows.
\begin{align*}
\Delta^0_2\text{-}{\sf LEM}:\subseteq\nn^2\times\nn^\nn\to 2, & & 
\Delta^0_2\text{-}{\sf LEM}(i,j,p)=
\begin{cases}
0, \mbox{ if }p\in{\rm Tot}_i,\\
1, \mbox{ otherwise.}
\end{cases}
\end{align*}
Here, ${\rm dom}(\Delta^0_2\mbox{-}{\sf LEM})=\{(i,j,p)\in\nn^2\times\nn^\nn:{\rm Tot}_i=\nn^\nn\setminus{\rm Tot}_j\}$, where ${\rm Tot}_e$ denotes the set of all oracles $\alpha\in\nn^\nn$ such that $\Phi_e(\alpha;n)$ converges for all inputs $n\in\nn$.
\end{definition}

\begin{prop}\label{prop:5:del-uniq}
$\Delta^0_2$-{\sf LEM} is Weihrauch reducible to ${\sf Unique}\Sigma^0_2\text{-}{\sf LLPO}$.
\end{prop}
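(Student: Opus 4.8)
The plan is to exhibit explicit computable maps $H$ and $K$ witnessing $\Delta^0_2\text{-}{\sf LEM}\leq_W{\sf Unique}\Sigma^0_2\text{-}{\sf LLPO}$. Recall that for an input $(i,j,p)\in{\rm dom}(\Delta^0_2\text{-}{\sf LEM})$ the promise ${\rm Tot}_i=\nn^\nn\setminus{\rm Tot}_j$ guarantees that exactly one of the two $\Pi^0_2$ statements ``$(\forall n)\;\Phi_i(p;n)\!\downarrow$'' and ``$(\forall n)\;\Phi_j(p;n)\!\downarrow$'' is true, and the task is to decide which. The key observation is that each of these statements can be coded, by a uniform dovetailing procedure, as ``a (uniformly) computable real has infinitely many zeros'', which is precisely the kind of instance recognized by $\Sigma^0_2\text{-}{\sf LLPO}$.

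Concretely, given $(i,j,p)$ I would define $p_0\in\nn^\nn$ as follows: maintain a counter $n$ initialised to $0$; at stage $s$, run $\Phi_i(p;n)$ for $s$ steps; if it has halted, output the symbol $0$ and increment $n$, and otherwise output the symbol $1$. If $p\in{\rm Tot}_i$ then for every $n$ the computation $\Phi_i(p;n)$ eventually halts, so the counter grows without bound and $p_0$ contains infinitely many zeros; if $p\notin{\rm Tot}_i$ then the counter gets stuck at the least $n$ with $\Phi_i(p;n)\!\uparrow$ and $p_0$ contains only finitely many zeros. Define $p_1$ analogously from $\Phi_j$, and set $H(i,j,p)=p_0\oplus p_1$ (coded as an element of the input space of $\Sigma^0_2\text{-}{\sf LLPO}$). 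Clearly $H$ is computable.

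Now by the promise exactly one of $p_0,p_1$ has infinitely many zeros, so $H(i,j,p)$ lies in ${\rm dom}({\sf Unique}\,\Sigma^0_2\text{-}{\sf LLPO})$, and on this restricted domain the function is single-valued: its unique value is $0$ if $p_0$ has infinitely many zeros, equivalently $p\in{\rm Tot}_i$, and $1$ if $p_1$ has infinitely many zeros, equivalently $p\in{\rm Tot}_j$, equivalently (again by the promise) $p\notin{\rm Tot}_i$. Hence for any realizer $q$ of ${\sf Unique}\,\Sigma^0_2\text{-}{\sf LLPO}$ the bit $b=q(H(i,j,p))$ already equals $\Delta^0_2\text{-}{\sf LEM}(i,j,p)$, so one may take $K((i,j,p),b)=b$, the projection onto the second coordinate.

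The argument is essentially routine; the only point requiring a little care is the verification that the dovetailing faithfully transports the $\Pi^0_2$ predicate ``$p\in{\rm Tot}_i$'' to the $\Pi^0_2$ predicate ``$p_0$ has infinitely many zeros'' in both directions, together with the observation that the complementarity hypothesis ${\rm Tot}_i=\nn^\nn\setminus{\rm Tot}_j$ is exactly what is needed to land inside the domain of the ${\sf Unique}$-variant (so that the realizer's answer is forced). No genuine obstacle is expected.
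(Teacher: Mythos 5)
Your proposal is correct and follows essentially the same route as the paper: a computable dovetailing $H$ that converts each totality predicate ``$p\in{\rm Tot}_i$'' into a sequence with infinitely many zeros, followed by the observation that the complementarity promise places $H(i,j,p)$ in the domain of the ${\sf Unique}$ variant so that the realizer's bit is already the answer and $K$ is a projection. The only (immaterial) difference is that the paper marks a zero whenever the finite output string $\Phi_{e_i,s}(p)$ properly grows, whereas you mark one whenever the next sequential halting check succeeds.
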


\begin{proof}
To see $\Delta^0_2\text{-}{\sf LEM}\leq_W{\sf Unique}\Sigma^0_2\text{-}{\sf LLPO}$, given $(e_0,e_1,p)\in\nn^2\times\nn^\nn$, define $H(e_0,e_1,p)$ to be a pair $(x_0,x_1)$, where $x_i(s)=0$ if and only if the computation $\Phi_{e_i,s+1}(p)$ at stage $s+1$ properly extends $\Phi_{e_i,s}(p)$ at the previous stage.
Then $x_i$ contains infinitely many $0$'s if and only if $p$ is contained in ${\rm Tot}_{e_i}$.
Note that, whenever $(e_0,e_1,p)$ is contained in the domain of $\Delta^0_2$-{\sf LEM}, $H(e_0,e_1,p)$ is also contained in the domain of ${\sf Unique}\Sigma^0_2\text{-}{\sf LLPO}$, since ${\rm Tot}_{e_0}=\nn^\nn\setminus{\rm Tot}_{e_1}$.
Therefore, ${\sf Unique}\Sigma^0_2\text{-}{\sf LLPO}\circ H(e_0,e_1,p)=\Delta^0_2\text{-}{\sf LEM}(e_0,e_1,p)$.
%
\end{proof}

\begin{theorem}\label{learn-conpri}
Let $f:\subseteq\nn^\nn\to\nn^\nn$ be a single-valued partial function.
\begin{enumerate}
\item $f$ is $(1,2)$-computable if and only if $f\leq_W\Sigma^0_1$-${\sf LEM}$.
\item $f$ is $(1,\omega|2)$-computable if and only if $f\leq_W\Delta^0_2\mbox{-}{\sf LEM}$.
\item $f$ is $(1,\omega)$-computable if and only if $f\leq_W\Sigma^0_2\mbox{-}{\sf DNE}$.
\end{enumerate}
\end{theorem}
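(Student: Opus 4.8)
The plan is to prove each of the three equivalences by treating the two directions separately, moving freely between the learnability notions and their piecewise-computability reformulations via Proposition~\ref{prop:1-2:characterization} and Theorem~\ref{thm:5:red-eq-dis}, and working on the Weihrauch side with the concrete multi-valued functions furnished by Proposition~\ref{def:5:intpris} rather than the raw $\mathbb{F}_{A\longrightarrow B}$. In each case the forward map $H$ will extract from $g$ a numerical/combinatorial datum (the retraction pattern of a string-valued learner, or a halting race), the principle will resolve the crucial $\Sigma^0_1$/$\Delta^0_2$/$\Sigma^0_2$ question hidden in that datum, and the backward map $K$ will finish the computation of $f(g)$; conversely, a learner will be built that runs a uniform approximation to the answer the principle would give.

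For (1): if $f$ is $(1,2)$-computable, Proposition~\ref{prop:1-2:characterization} gives a total computable $\psi:\nn^{<\nn}\to\nn^{<\nn}$ with $\lim_n\psi(g\res n)=f(g)$ and at most one retraction; let $H(g)$ be the $0$--$1$ sequence whose $n$-th bit is $0$ exactly when $\psi(g\res n+1)\not\supseteq\psi(g\res n)$. Then $\Sigma^0_1\text{-}{\sf LEM}(H(g))$ tells us whether a retraction occurs, and $K(g,1)=\bigcup_n\psi(g\res n)$, $K(g,0)=\bigcup_{n\ge n_0}\psi(g\res n)$ (for the located retraction stage $n_0$) both equal $f(g)$. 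Conversely, given $f\le_W\Sigma^0_1\text{-}{\sf LEM}$ via $H,K$, a learner first guesses an index of $g\mapsto K(g,1)$ and switches once, to an index of $g\mapsto K(g,0)$, as soon as $H(g)$ is seen to contain a zero; this witnesses $(1,2)$-computability.

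For (3): the cleanest route is to observe that $\Sigma^0_2\text{-}{\sf DNE}$ on input $p$ is precisely closed choice ${\sf C}_\nn$ applied to the nonempty $\Pi^0_1$ set $\{m:(\forall n>m)\,p(n)\neq 0\}$, and every nonempty $\Pi^0_1$ subset of $\nn$ arises this way, so (3) follows from the Brattka--de Brecht--Pauly characterization of $(1,\omega)$-computability by ${\sf C}_\nn$ recalled in the Remark above. A self-contained argument runs as in (1): from a learner and Proposition~\ref{prop:1-2:characterization}, $H(g)$ records the retraction stages of $\psi$, which has only finitely many zeros on ${\rm dom}(f)$, and $K(g,m)=\bigcup_{n>m}\psi(g\res n)$; for the converse, a learner keeps the position of the last observed zero of $H(g)$ as its guess $m$ and outputs an index of $g\mapsto K(g,m)$, revising only when a fresh zero appears, so it converges and is $(1,\omega)$-computable (it may use infinitely many indices, which is allowed).

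For (2) — the step I expect to be the main obstacle — the direction $f\le_W\Delta^0_2\text{-}{\sf LEM}\Rightarrow f\ (1,\omega|2)$-computable is routine: run the limit-lemma (Shoenfield) approximation $c_s\in\{0,1\}$ to $\Delta^0_2\text{-}{\sf LEM}(H(g))$, which is legitimate since on ${\rm dom}(f)$ the pair $(i,j)$ produced by $H$ satisfies ${\rm Tot}_i=\nn^\nn\setminus{\rm Tot}_j$ so the ``which of $\Phi_i,\Phi_j$ stays ahead'' race on $p$ converges; the learner outputs at stage $s$ an index of $g\mapsto K(g,c_s)$, using at most two indices and finitely many mind changes. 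The delicate direction is $f\ (1,\omega|2)$-computable $\Rightarrow f\le_W\Delta^0_2\text{-}{\sf LEM}$, because the domain of $\Delta^0_2\text{-}{\sf LEM}$ demands indices with ${\rm Tot}_i=\nn^\nn\setminus{\rm Tot}_j$ \emph{everywhere}, not merely on the relevant input. The trick I would use: by Theorem~\ref{thm:5:red-eq-dis}(2) write $f=\Phi_{e_0}\res Q_0\,\cup\,\Phi_{e_1}\res Q_1$ for a uniform $\Delta^0_2$ partition $\{Q_0,Q_1\}$ of ${\rm dom}(f)$; take an \emph{absolute} $\Delta^0_2$ set $D_0$ with $D_0\cap{\rm dom}(f)=Q_0$ (so $\nn^\nn\setminus D_0$ is $\Delta^0_2$ and meets ${\rm dom}(f)$ in $Q_1$); fix $\Sigma^0_2$ presentations $D_0=\{g:\exists m\forall k\,R_0(g\res k,m)\}$ and $\nn^\nn\setminus D_0=\{g:\exists m\forall k\,R_1(g\res k,m)\}$; and let $\Phi_i(g;n)$ halt iff $\forall m\le n\,\exists k\,\neg R_1(g\res k,m)$ and $\Phi_j(g;n)$ halt iff $\forall m\le n\,\exists k\,\neg R_0(g\res k,m)$. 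Then ${\rm Tot}_i=D_0$ and ${\rm Tot}_j=\nn^\nn\setminus D_0$ are genuinely complementary, so $H(g)=(i,j,g)$ lies in the domain, $\Delta^0_2\text{-}{\sf LEM}(H(g))=b\iff g\in Q_b$, and $K(g,b)=\Phi_{e_b}(g)$ completes the reduction.
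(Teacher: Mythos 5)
Your proof is correct, and for parts (1) and (3) it is essentially the paper's argument in different clothing: where the paper passes through the piecewise decompositions ${\rm dec}^2_{\rm d}[\Pi^0_1]$ and the $\Sigma^0_1$-completeness of $\{p:\exists n\,p(n)=0\}$, you work directly with the string-valued approximation of Proposition~\ref{prop:1-2:characterization} and encode ``does a retraction occur / where is the last retraction'' as the $\Sigma^0_1$ (resp.\ $\Sigma^0_2$) question; for (3) the paper likewise just cites the ${\sf C}_\nn$ characterization of Brattka--de~Brecht--Pauly, so your self-contained argument is a bonus. The genuine divergence is in part (2). For the converse direction the paper first proves $\Delta^0_2\text{-}{\sf LEM}\leq_W{\sf Unique}\Sigma^0_2\text{-}{\sf LLPO}$ (Proposition~\ref{prop:5:del-uniq}) and then extracts a learner from a reduction to the latter by counting zeros in the two components of $H$; you instead run a Shoenfield-style ``which of $\Phi_i,\Phi_j$ stays ahead'' approximation directly on $\Delta^0_2\text{-}{\sf LEM}(H(g))$, which is a mild simplification that avoids the auxiliary lemma. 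For the forward direction of (2) you correctly identify, and explicitly resolve, the point the paper treats in a single sentence (``we can find indices $i,j$ such that $P_0={\rm Tot}_i$ and $P_1={\rm Tot}_j$''): the domain of $\Delta^0_2\text{-}{\sf LEM}$ demands ${\rm Tot}_i=\nn^\nn\setminus{\rm Tot}_j$ globally, and your construction of the indices from the absolute $\Delta^0_2$ set $Q_0^*$ and its complement supplies exactly the detail needed. The only blemishes are cosmetic off-by-one issues (e.g.\ $\bigcup_{n\geq n_0}$ should be $\bigcup_{n>n_0}$ so as not to include the pre-retraction string), which do not affect the argument.
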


\begin{proof}\upshape
(1)
Let $f$ be a $(1,2)$-computable function.
By Theorem \ref{thm:5:red-eq-dis} (1), we have $f\in{\rm dec}^2_{\rm d}[\Pi^0_1]$.
Then, there is a $\Sigma^0_1$ set $S\subseteq\nn^\nn$ such that $f_0=f\res S$ and $f_1=f\res\nn^\nn\setminus S$ is computable.
Put $U=\{p\in\nn^\nn:(\exists n)\;p(n)=0\}$.
Note that $\Sigma^0_1\mbox{-}{\sf LEM}$ is the characteristic function $\mathbf{1}_U$ of $U$.
By $\Sigma^0_1$ completeness of $U$, we can find a Wadge reduction (indeed, a computable function) $H$ such that $\mathbf{1}_S=\mathbf{1}_U\circ H$.
Put $K(x,i)=f_i(x)$ for every $i<2$ and $x\in\nn^\nn$.
Then, for every $x\in{\rm dom}(f)$,
\[
K(x,\mathbf{1}_U\circ H(x))=K(x,\mathbf{1}_S(x))=
\begin{cases}
K(x,0)=f_0(x)&\mbox{ if }x\in S,\\
K(x,1)=f_1(x)&\mbox{ if }x\not\in S.
\end{cases}
\]

Conversely, we have $\Sigma^0_1\mbox{-}{\sf LEM}=\mathbf{1}_U\in{\rm dec}^2_{\rm d}[\Pi^0_1]$ since $U$ is $\Sigma^0_1$.
This implies that $H\circ\lrangle{id,\mathbf{1}_U\circ H}\in{\rm dec}^2_{\rm d}[\Pi^0_1]$ for every partial computable functions $H$ and $K$.

\medskip

(2)
Let $f$ be a $(1,\omega|2)$-computable function.
By Theorem \ref{thm:5:red-eq-dis} (2), we have $f\in{\rm dec}^2_{\rm d}[\Delta^0_2]$.
Then, there are $\Pi^0_2$ sets $P_0,P_1\subseteq\nn^\nn$ with $P_0=\nn^\nn\setminus P_1$ such that $f\res P_0$ and $f\res P_1$ are computable.
Then, we can find indices $i,j$ such that $P_0={\rm Tot}_i$ and $P_1={\rm Tot}_j$.
Let $H$ be the function sending $p\in\nn^\nn$ to $(i,j,p)$.
Put $K(x,i)=f_i(x)$ for every $i<2$ and $x\in\nn^{\nn}$.
It is not hard to see that $K(x,\Delta^0_2\mbox{-}{\sf LEM}\circ H(x))=f(x)$ for every $x\in{\rm dom}(f)$.

We show the converse implication.
By Proposition \ref{prop:5:del-uniq}, we have $f\leq_W\Delta^0_2\mbox{-}{\sf LEM}\leq_W{\sf Unique}\Sigma^0_2\text{-}{\sf LLPO}$.
Assume that $f\leq_W{\sf Unique}\Sigma^0_2\text{-}{\sf LLPO}$ via partial computable functions $K:\subseteq\nn^\nn\times 2\to\nn^\nn$ and $H:\subseteq\nn^\nn\to(\nn^\nn)^2$.
Let $e_i$ be an index of $\lambda x.K(x,i)$ for each $i<2$.
We first compute $h(\sigma,i)=\#\{n<|H_i(\sigma)|:H_i(\sigma;n)=0\}$, where $H(\sigma)=\lrangle{H_i(\sigma)}_{i<2}$.
Then let $c(\sigma)$ be the least $i<2$ such that $h(\sigma,k)\leq h(\sigma,i)$ for any $k<2$.
Let us consider a learner $\Psi:\nn^{<\nn}\to\{e_i\}_{i<m}$ defined by $\Psi(\sigma)=e_{c(\sigma)}$.
For any $x\in{\rm dom}(f)$, we have $H(x)\in{\rm dom}({\sf Unique}\Sigma^0_2\text{-}{\sf LLPO})$, and then $\lim_nh(x\res n,i)=\infty$ for just one $i<2$.
Then, $\lim_nc(x\res n)$ also converges to such $i<2$.
Moreover, for any $x\in{\rm dom}(f)$, ${\sf Unique}\Sigma^0_2\text{-}{\sf LLPO}(H(x))=\{i\}$ if and only if $\lim_nh(x\res n,i)=\infty$.
We fix a realizer $U$ of ${\sf Unique}\Sigma^0_2\text{-}{\sf LLPO}$, i.e., $U(x)\in{\sf Unique}\Sigma^0_2\text{-}{\sf LLPO}(x)$ for any $x\in{\rm dom}({\sf Unique}\Sigma^0_2\text{-}{\sf LLPO})$.
Then, $\lim_nc(x\res n)=U\circ H(x)$ for any $x\in{\rm dom}(f)$
Therefore, the limit $\lim_n\Psi(x\res n)$ converges to $e_{U\circ H(x)}$, and $\#{\tt indx}_\Psi(x)\leq\#\{e_i:i<2\}\leq 2$.
Thus, $\Phi_{\lim_n\Psi(x\res n)}(x)=\Phi_{e_{U\circ H(x)}}(x)=K(x,U\circ H(x))=f(x)$ for any $x\in{\rm dom}(f)$.
Hence, $f$ is $(1,\omega|2)$-computable.

\medskip

(3)
Clearly, $\Sigma^0_2$-{\sf DNE} is Weihrauch equivalent to the closed choice ${\sf C}_\nn$ for discrete space $\nn$.
Therefore, the desired condition follows from Brattka-Brecht-Pauly \cite{BMP}.
\end{proof}

\begin{definition}\label{def:5:nonconst}
\index{computable!relative to a nonconstructive principle $\mathsf{\Theta}$}\index{$\mathfrak{C}_{\mathsf{\Theta}}$}%
\index{$P\leq_{\mathsf{\Theta}}Q$}%
Let $\mathsf{\Theta}:\subseteq\nn^\nn\rightrightarrows\nn^\nn$ be a partial multi-valued function.
A partial {\em single}-valued function $f:\subseteq\nn^\nn\to\nn^\nn$ is {\em $\mathsf{\Theta}$-computable} if $f$ is Weihrauch reducible to $\mathsf{\Theta}$.
By $\mathfrak{C}_{\mathsf{\Theta}}$, we denote the least class containing all ${\mathsf{\Theta}}$-computable functions and closed under composition.
Then, for subsets $P,Q$ of $\nn^\nn$, we write $P\leq_{\mathsf{\Theta}}Q$ if $f:Q\to P$ for some $f\in\mathfrak{C}_{\mathsf{\Theta}}$.
\end{definition}

\begin{theorem}\label{learn-conpri2}
Let $P$ be a $\Pi^0_2$ subset of $\nn^\nn$, and $Q$ be any subset of $\nn^\nn$.
\begin{enumerate}
\item $P\leq^{<\omega}_1Q$ if and only if $P\leq_{\Sigma^0_2\mbox{-}{\sf LLPO}}Q$.
\item $P\leq^{\omega}_1Q$ if and only if $P\leq_{\Sigma^0_3\mbox{-}{\sf DNE}}Q$.
\end{enumerate}
\end{theorem}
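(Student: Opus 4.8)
The plan is to prove Theorem~\ref{learn-conpri2} by combining the piecewise-computability characterizations from Theorem~\ref{thm:5:red-eq-dis} and Proposition~\ref{prop:5:refref1} with the Weihrauch-degree characterizations of the relevant constructive principles. For item (1), recall that by Proposition~\ref{prop:5:refref1} we have $P\leq^{<\omega}_1 Q$ if and only if there is $\Gamma:Q\to P$ with $\Gamma\in{\rm dec}^{<\omega}_{\rm d}[\Pi^0_2]$, i.e.\ $\Gamma$ is a finite $(\Pi^0_2)_2$-piecewise computable function. So it suffices to show that, for $\Pi^0_2$ targets $P$, the class $\mathfrak{C}_{\Sigma^0_2\text{-}{\sf LLPO}}$ of functions built from $\Sigma^0_2\text{-}{\sf LLPO}$-computable functions under composition coincides (as far as reductions $Q\to P$ are concerned) with the smallest monoid containing ${\rm dec}^2_{\rm d}[\Pi^0_2]$. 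The key fact I would establish first is that a single-valued $f:\subseteq\nn^\nn\to\nn^\nn$ lies in ${\rm dec}^2_{\rm d}[\Pi^0_2]$ if and only if $f\leq_W\Delta^0_2\text{-}{\sf LEM}$; one direction is essentially the argument in the proof of Theorem~\ref{learn-conpri}(2) (write the $\Pi^0_2$ $d$-layer $\{D_0,D_1\}$ as $\{{\rm Tot}_i,{\rm Tot}_j\}$ with ${\rm Tot}_i=\nn^\nn\setminus{\rm Tot}_j$ and feed $(i,j,p)$ to $\Delta^0_2\text{-}{\sf LEM}$), and the converse is immediate since $\Delta^0_2\text{-}{\sf LEM}$ itself is in ${\rm dec}^2_{\rm d}[\Pi^0_2]$ by its definition via ${\rm Tot}_i$.

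Next I would pass from $\Delta^0_2\text{-}{\sf LEM}$ to $\Sigma^0_2\text{-}{\sf LLPO}$. By Proposition~\ref{prop:5:del-uniq} we have $\Delta^0_2\text{-}{\sf LEM}\leq_W{\sf Unique}\Sigma^0_2\text{-}{\sf LLPO}\leq_W\Sigma^0_2\text{-}{\sf LLPO}$, giving ${\rm dec}^2_{\rm d}[\Pi^0_2]\subseteq\mathfrak{C}_{\Sigma^0_2\text{-}{\sf LLPO}}$, hence $\mathfrak{C}_{\Delta^0_2\text{-}{\sf LEM}}\subseteq\mathfrak{C}_{\Sigma^0_2\text{-}{\sf LLPO}}$. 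For the reverse inclusion I would observe that a $\Sigma^0_2\text{-}{\sf LLPO}$-computable single-valued function is $(2,1)$-computable: given the two $\Sigma^0_2$ inputs $(p_0,p_1)$ to $\Sigma^0_2\text{-}{\sf LLPO}$, one can build a learner of the kind used in the proof of Theorem~\ref{learn-conpri}(2) that guesses, for each of the two possible outputs $0,1$, a fixed computable continuation of the outer reduction; on the domain of $\Sigma^0_2\text{-}{\sf LLPO}$ at least one guess is eventually correct, so the composite reduction $H,K$ is realized by a $(2,1)$-computable function (finitely piecewise computable). Closing $\Sigma^0_2\text{-}{\sf LLPO}$-computable functions under composition then stays inside $[\mathfrak{C}_T]^{<\omega}_1={\rm dec}^{<\omega}_{\rm p}[-]$, and when the target $P$ is $\Pi^0_2$ one upgrades, via Proposition~\ref{prop:5:refref1}, to ${\rm dec}^{<\omega}_{\rm d}[\Pi^0_2]$, i.e.\ to $P\leq^{<\omega}_1 Q$. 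Putting the inclusions together gives item (1).

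For item (2), the route is parallel but one level up. By Proposition~\ref{prop:5:refref1}(3), $P\leq^\omega_1 Q$ iff there is $\Gamma:Q\to P$ with $\Gamma\in{\rm dec}^\omega_{\rm d}[\Pi^0_2]$. The main ingredient is: a single-valued $f:\subseteq\nn^\nn\to\nn^\nn$ is $\Sigma^0_3\text{-}{\sf DNE}$-computable iff it is decomposable into countably many computable functions along a $\Sigma^0_3$ (equivalently, $\Pi^0_2$-$d$-layered) decomposition — this is the ``$\omega$'' analogue of the $\Sigma^0_2\text{-}{\sf DNE}$/$(1,\omega)$-computability equivalence of Theorem~\ref{learn-conpri}(3), obtained by noting that $\Sigma^0_3\text{-}{\sf DNE}$ picks out an index $k$ witnessing a $\Sigma^0_3$ fact, which is exactly what is needed to select the correct piece of a countable $\Pi^0_2$ $d$-layer and then run the associated computable function. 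More precisely: given $P\leq^\omega_1 Q$, take the $\Pi^0_2$ $d$-layer $\{Q_e\}_{e\in\nn}$ from the proof of Proposition~\ref{prop:5:refref1}(3) (with $Q_e=\bigcup_{d\le e}(D_d\cap\Phi_d^{-1}[P])$, $\Pi^0_2$, and $\Phi_e$ mapping $Q_e\setminus Q_{e-1}$ into $P$); the statement ``$g\in Q_e$ for some $e$ with all smaller $Q_d$ failing'' is arranged as a $\Sigma^0_3$ predicate of $g$, whose witness $\Sigma^0_3\text{-}{\sf DNE}$ supplies, and then $K$ applies $\Phi_e$. Conversely, $\Sigma^0_3\text{-}{\sf DNE}$ is itself nonuniformly computable (its value is arithmetically definable from the oracle), so it is $(\omega,1)$-computable, and $[\mathfrak{C}_T]^\omega_1$ is closed under composition; restricting to a $\Pi^0_2$ target $P$ and invoking Proposition~\ref{prop:5:refref1}(3) again gives $P\leq^\omega_1 Q$.

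I expect the main obstacle to be the careful bookkeeping in the two ``converse'' directions — namely verifying that composing $\Sigma^0_2\text{-}{\sf LLPO}$-computable (resp.\ $\Sigma^0_3\text{-}{\sf DNE}$-computable) functions, each of which comes wrapped in its own outer computable pre/post-processing $H,K$, still yields a function in $[\mathfrak{C}_T]^{<\omega}_1$ (resp.\ $[\mathfrak{C}_T]^\omega_1$), and in particular that the finiteness/countability of pieces is preserved under composition with the correct complexity of the decomposition. For the $\Sigma^0_2\text{-}{\sf LLPO}$ case the bound on the number of pieces after composing $n$ reductions should be something like $2^n$ (finite), matching Proposition~\ref{prop:1-4:monoid2}'s generation of $[\mathfrak{C}_T]^{<\omega}_1$ from $[\mathfrak{C}_T]^2_1$; for the $\Sigma^0_3\text{-}{\sf DNE}$ case the decomposition stays countable automatically. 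The only genuinely delicate point is ensuring that after composition the $d$-layer can still be taken $\Pi^0_2$ when the final target is $\Pi^0_2$ — but this is exactly what Proposition~\ref{prop:5:refref1} is built to deliver, so the argument closes.
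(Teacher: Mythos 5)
The forward direction of your item (1) rests on a false lemma. You propose to establish that a single-valued $f$ lies in ${\rm dec}^2_{\rm d}[\Pi^0_2]$ if and only if $f\leq_W\Delta^0_2\text{-}{\sf LEM}$, by writing the $\Pi^0_2$ $d$-layer $\{D_0,D_1\}$ as a complementary pair $\{{\rm Tot}_i,{\rm Tot}_j\}$. But in a $\Pi^0_2$ $d$-layer the first piece $D_0$ is merely $\Pi^0_2$; representing it as ${\rm Tot}_i$ with $\nn^\nn\setminus D_0={\rm Tot}_j$ forces $D_0\in\Sigma^0_2\cap\Pi^0_2$, which is impossible when $D_0$ is $\Pi^0_2$-complete. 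Concretely, the function sending $x$ to $0^\nn$ if $x\in A$ and to $1^\nn$ otherwise, for $A$ a $\Pi^0_2$-complete set, belongs to ${\rm dec}^2_{\rm d}[\Pi^0_2]$ (take $Q^*_0=A$, $Q^*_1=\nn^\nn$) but is not Weihrauch reducible to $\Delta^0_2\text{-}{\sf LEM}$: by Theorem~\ref{learn-conpri}(2) and Theorem~\ref{thm:5:red-eq-dis}(2) that would place it in ${\rm dec}^2_{\rm p}[\Delta^0_2]$, which forces both $A$ and its complement to be $\Pi^0_2$, contradicting completeness. So the chain ${\rm dec}^2_{\rm d}[\Pi^0_2]\subseteq\{f:f\leq_W\Delta^0_2\text{-}{\sf LEM}\}\subseteq\mathfrak{C}_{\Sigma^0_2\text{-}{\sf LLPO}}$ breaks at its first link. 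A telltale symptom is that this part of your argument never uses the hypothesis that $P$ is $\Pi^0_2$, whereas that hypothesis is exactly what makes the direction true: given $f_0=f\res Q_0$ and $f_1=f\res(\nn^\nn\setminus Q_0)$ computable with $Q_0\in\Pi^0_2$, one replaces the $d$-layer by the two-element $\Pi^0_2$ \emph{cover} $\{Q_0,\;Q^+\cap f_1^{-1}[P]\}$ of $Q$ (where $Q^+$ is a $\Pi^0_2$ set containing $\nn^\nn\setminus Q_0$ on which $f_1$ is defined, and $f_1^{-1}[P]$ is $\Pi^0_2$ precisely because $P$ is), maps both sets into the $\Pi^0_2$-complete set $\{x:\exists^\infty n\;x(n)=0\}$ by computable $H_0,H_1$, and feeds the pair to $\Sigma^0_2\text{-}{\sf LLPO}$. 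This is the paper's route, and it bypasses $\Delta^0_2\text{-}{\sf LEM}$ entirely.

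The remainder of your plan is essentially sound and close to the paper. For item (2) the predicate ``$\Phi_e(x)$ is total and lies in $P$'' is uniformly $\Pi^0_2$, so $\Sigma^0_3\text{-}{\sf DNE}$ supplies a witness $e$ directly; you should, however, drop the clause ``with all smaller $Q_d$ failing'', since conjoining that $\Sigma^0_2$ minimality condition destroys the $\Pi^0_2$ shape of the matrix that $\Sigma^0_3\text{-}{\sf DNE}$ requires, and it is not needed. Your converse directions are correct, and your insistence on checking closure under composition — each $\Sigma^0_2\text{-}{\sf LLPO}$-computable single-valued function is $(2,1)$-computable via $x\mapsto K(x,0)$ and $x\mapsto K(x,1)$, and $[\mathfrak{C}_T]^{<\omega}_1$, $[\mathfrak{C}_T]^{\omega}_1$ are monoids — addresses a point that the paper's proof, which only treats a single application of the principle, leaves implicit.
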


\begin{proof}\upshape
(1)
If $P\leq^{<\omega}_1Q$ via two algorithms, we have a function $f:Q\to P$ with $f\in{\rm dec}^2_{\rm d}[\Pi^0_2]$ by Proposition \ref{prop:5:refref1} (3).
Then, $f_0=f\res Q_0$ and $f_1=f\res\nn^\nn\setminus Q_0$ are computable for some $\Pi^0_2$ set $Q_0\subseteq\nn^\nn$.
Since $f_1$ is computable, we can extend the domain of $f_0$ to a $\Pi^0_2$ set $Q^+$ including $\nn^\nn\setminus Q_0$.
Then $Q_1=Q^+\cap f_1^{-1}[P]$ is $\Pi^0_2$ since $P$ is $\Pi^0_2$ and $f_1$ is computable.
It is easy to see that $Q_0\cup Q_1$ includes $Q$.
Since $Q_0$ and $Q_1$ are $\Pi^0_2$, they are (computably) Wadge reducible to the $\Pi^0_2$ complete set $U=\{x\in\nn^\nn:(\exists^\infty n)\;x(n)=0\}$.
That is, for every $i<2$, there is a computable functions $H_i$ such that $\mathbf{1}_{Q_i}=\mathbf{1}_U\circ H_i$.
Let $H$ be a computable function sending $x\in\nn^\nn$ to the pair $(H_0(x),H_1(x))$, and put $K(x,i)=f_i(x)$.
We can easily see that
\[x\in Q_i\;\leftrightarrow\;\mathbf{1}_{Q_i}(x)=1\;\leftrightarrow\;\mathbf{1}_U(H_i(x))=1\;\leftrightarrow\;i\in\Sigma^0_2\mbox{-}{\sf LLPO}(H(x)).\]

Thus, for every realizer $G:\subseteq\nn^\nn\to 2$ of $\Sigma^0_2\mbox{-}{\sf LLPO}$, we have $K(x,G\circ H(x))=f_{G\circ H(x)}(x)\in P$.

If $f:Q\to P$ for a single-valued function $f\leq_W\Sigma^0_2\mbox{-}{\sf LLPO}$, then there are computable functions $H:\nn^\nn\to(\nn^\nn)^2$ and $K:\nn^\nn\times 2\to\nn^\nn$ such that $K(x,G\circ H(x))\in P$ for any realizer $G$ of $\Sigma^0_2\mbox{-}{\sf LLPO}$ and any element $x\in Q$.
Then $K(x,i)\in P$ for some $i<2$, since $G\circ H(x)<2$.
Set $\Phi_{e(i)}(x)=K(x,i)$ for each $i<2$.
Then $P\leq^{<\omega}_1Q$ via $\{\Phi_{e(i)}\}_{i<2}$.

\medskip

(2)
Assume that $P\leq^{\omega}_1Q$.
It suffices to show that $f:Q\to P$ for some $f\leq_W\Sigma^0_3\mbox{-}{\sf DNE}$.
Note that the condition $\Phi_e(x)$ is total and belongs to $P$ is $\Pi^0_2$, uniformly in $e\in\nn$ and $x\in\nn^\nn$.
Thus, there is a computable function $H:\nn^\nn\to\nn^\nn$ satisfying that $H(x;e,n)=0$ for infinitely many $n\in\nn$ if and only if $\Phi_e(x)$ is total and belongs to $P$.
By our assumption, there is $e\in\nn$ such that $H(x;e,n)=0$ for infinitely many $n\in\nn$, for any $x\in Q$.
Therefore, $H(x)\in{\rm dom}(\Sigma^0_3\mbox{-}{\sf DNE})$ for any $x\in Q$, and, for any realizer $G$ of $\Sigma^0_3\mbox{-}{\sf DNE}$, $G\circ H(x)$ chooses $e<b$ such that $\Phi_e(x)\in P$.
Then, for a computable function $K:\nn^\nn\times\nn\to\nn^\nn$ mapping $(x,e)$ to $\Phi_e(x)$, we have $K(x,G\circ H(x))=\Phi_e(x)\in P$.

If $f:Q\to P$ for a single-valued function $f\leq_W\Sigma^0_3\mbox{-}{\sf DNE}$, then there are computable functions $H:\nn^\nn\to\nn^\nn$ and $K:\nn^\nn\times\nn\to\nn^\nn$ such that $K(x,G\circ H(x))\in P$ for any realizer $G$ of $\Sigma^0_3\mbox{-}{\sf DNE}$ and any element $x\in Q$.
Then $K(x,i)\in P$ for some $i\in\nn$, since $G\circ H(x)<m$.
Set $\Phi_{e(i)}(x)=K(x,i)$ for each $i\in\nn$.
Then $P\leq^{\omega}_1Q$ via $\{\Phi_{e(i)}\}_{i\in\nn}$.
\end{proof}

\begin{theorem}
Let $P$ and $Q$ be $\Pi^0_1$ subsets of $\nn^\nn$.
Then, $P\leq^{<\omega}_{tt,1}Q$ if and only if $P\leq_{\Sigma^0_1\mbox{-}{\sf LLPO}}Q$.
\end{theorem}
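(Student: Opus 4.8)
The plan is to follow the pattern of the proof of Theorem~\ref{learn-conpri2}(1), but one arithmetical level down: the hypothesis that $P$ and $Q$ are $\Pi^0_1$ plays the role of ``$P\in\Pi^0_2$'', the relevant tests ``$\Phi_e(g)\in P$'' become $\Pi^0_1$ (rather than $\Pi^0_2$) conditions on $g$, and accordingly $\Sigma^0_1\text{-}{\sf LLPO}$ replaces $\Sigma^0_2\text{-}{\sf LLPO}$. The point that makes the level drop work is that the queries associated with ${\sf LLPO}$ are negative (co-c.e.), so the pieces arising in the decompositions stay genuinely $\Pi^0_1$, and the Popperian totalization trick from the proof of Proposition~\ref{prop:1-2:collapse-tt}(2,3) becomes available precisely because $P$ itself is $\Pi^0_1$.

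For the forward implication, suppose $P\leq^{<\omega}_{tt,1}Q$, witnessed by truth-table functionals $\Psi_0,\dots,\Psi_{n-1}$ (total on $\nn^\nn$) such that for every $g\in Q$ there is $i<n$ with $\Psi_i(g)\in P$. Since each $\Psi_i$ is total and $T_P$ is computable, the finite set $A_g=\{i<n:\Psi_i(g)\in P\}$ is nonempty and, uniformly in $g$, its complement in $\{0,\dots,n-1\}$ is c.e.; thus $g\mapsto A_g$ is a computable map from $Q$ to the closed-choice instances of the discrete space $\{0,\dots,n-1\}$. The plan is then to note that ${\sf C}_n$ for this space is obtained by $\lceil\log_2 n\rceil$ nested applications of binary $\Sigma^0_1\text{-}{\sf LLPO}$ (at each round split the surviving block of indices in half and feed $\Sigma^0_1\text{-}{\sf LLPO}$ the pair of $\Pi^0_1$-in-$g$ sequences asserting ``the left / right half still meets $A_g$'', keeping the half returned), which remains inside $\mathfrak{C}_{\Sigma^0_1\text{-}{\sf LLPO}}$ since that class is closed under composition; composing the result with $g\mapsto\Psi_i(g)$ sends $Q$ into $P$, so $P\leq_{\Sigma^0_1\text{-}{\sf LLPO}}Q$. (Alternatively one may first use the truth-table version of Proposition~\ref{prop:1-4:monoid2} to reduce to the two-piece case and invoke a single $\Sigma^0_1\text{-}{\sf LLPO}$, after arranging the intermediate mass problems to be $\Pi^0_1$ by passing to graphs of truth-table functionals over $\Pi^0_1$ sets.) As in the proof of Theorem~\ref{learn-conpri2}(1), the construction a priori yields a procedure valid for every realizer; fixing the partition induced by the $\Psi_i$'s makes it a single-valued member of $\mathfrak{C}_{\Sigma^0_1\text{-}{\sf LLPO}}$.

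For the converse, suppose $P\leq_{\Sigma^0_1\text{-}{\sf LLPO}}Q$, say $f:Q\to P$ with $f=g_m\circ\cdots\circ g_1$ and each $g_j$ a single-valued function Weihrauch-reducible to $\Sigma^0_1\text{-}{\sf LLPO}$ via some $(H_j,K_j)$. First I would observe that $g_j$ splits its domain into the two sets $D^j_0,D^j_1$ on which the corresponding component of $H_j$ equals $0^\nn$; these are $\Pi^0_1$-in-domain, they cover the domain, and $g_j$ agrees on $D^j_i$ with the partial computable map $x\mapsto K_j(x,i)$ (single-valuedness of $g_j$ forcing $K_j(x,0)=K_j(x,1)$ on $D^j_0\cap D^j_1$). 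Pulling this decomposition back through the composition --- using that the preimage of a $\Pi^0_1$-in-domain set under a partial computable function is again $\Pi^0_1$-in-domain --- partitions $Q$ (which is $\Pi^0_1$) into at most $2^m$ pieces, each $\Pi^0_1$, on each of which $f$ agrees with a partial computable function and hence maps that piece into $P$. On such a $\Pi^0_1$ piece $R$ with partial computable $h$ satisfying $h[R]\subseteq P$, I would totalize $h$ as in Proposition~\ref{prop:1-2:collapse-tt}(2,3): run $h$, and as soon as the oracle leaves the computable tree $T_R$ of $R$ output $0^\nn$; the result is total (for $x\notin R$ the oracle must leave $T_R$, while for $x\in R$ the value $h(x)\in P$ is infinite and computed in the limit) and agrees with $h$ on $R$. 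These totalizations are truth-table functionals witnessing that the resulting $f:Q\to P$ is $(<\omega,1)$-truth-table, i.e.\ $P\leq^{<\omega}_{tt,1}Q$.

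I expect the main obstacle to be the converse direction: the bookkeeping needed to check that the pieces produced by a composition of $\Sigma^0_1\text{-}{\sf LLPO}$-computable functions remain $\Pi^0_1$ and finite in number, together with the totalization step. This is exactly where both hypotheses are essential --- the $\Sigma^0_1$ tests of ${\sf LLPO}$ (as opposed to the $\Sigma^0_1$ facts \emph{decided} by ${\sf LEM}$, which would produce differences of $\Pi^0_1$ sets) keep the pieces $\Pi^0_1$, and $P\in\Pi^0_1$ is what legitimizes the Popperian totalization. A secondary, purely bureaucratic point is the single-valuedness issue in the forward construction, handled as in the proof of Theorem~\ref{learn-conpri2}(1).
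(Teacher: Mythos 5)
Your proof is correct and follows essentially the same route as the paper: the forward direction rests on $Q\cap\Psi_i^{-1}(P)$ being $\Pi^0_1$ (hence computably reducible to the $\Pi^0_1$-complete set $U=\{x:(\forall n)\,x(n)\neq 0\}$), and the converse on $D_i=H_i^{-1}[U]$ being $\Pi^0_1$ together with the Popperian totalization of $K(\cdot,i)$ over a computable tree. The only differences are matters of thoroughness: the paper treats two truth-table functionals and a single application of $\Sigma^0_1$-${\sf LLPO}$, leaving the reduction from $n$ functionals and from compositions in $\mathfrak{C}_{\Sigma^0_1\text{-}{\sf LLPO}}$ implicit, whereas you spell out the binary-search selection among $n$ indices and the pullback of the piecewise decomposition through $g_m\circ\cdots\circ g_1$ (which is indeed the right way to keep the pieces $\Pi^0_1$ subsets of $Q$, since the intermediate mass problems need not be $\Pi^0_1$).
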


\begin{proof}
We assume that $P\leq^{<\omega}_{tt,1}Q$ via two truth-table functionals $f_0$ and $f_1$.
Note that $f^{-1}(P)$ is $\Pi^0_1$ whenever $f$ is total computable, and $P$ is $\Pi^0_1$.
Then, for $Q_i=Q\cap\Theta_i^{-1}(P)$, the domain $Q$ is covered by $Q_0\cup Q_1$.
By $\Pi^0_1$ completeness of $U=\{x:(\forall n)\;x(n)\not=0\}$, for every $i<2$, we have a computable function $H_i$ such that $\mathbf{1}_{Q_i}=\mathbf{1}_U\circ H_i$.
As in the proof of Theorem \ref{learn-conpri2} (2), we set $H:x\mapsto(H_0(x),H_1(x))$ and $K:(x,i)\mapsto f_i(x)$.
Then, it is not hard to see that the condition $P\leq_{\Sigma^0_1\mbox{-}{\sf LLPO}}Q$ is witnessed by $H$ and $K$

If $f:Q\to P$ for a single-valued function $f\leq_W\Sigma^0_1\mbox{-}{\sf LLPO}$, then there are computable functions $H:\nn^\nn\to(\nn^\nn)^2$ and $K:\nn^\nn\times 2\to\nn^\nn$ such that $K(x,G\circ H(x))\in P$ for any realizer $G$ of $\Sigma^0_1\mbox{-}{\sf LLPO}$ and any element $x\in Q$.
Then $K(x,i)\in P$ for some $i<2$, since $G\circ H(x)<2$.
For $U=\{x:(\forall n)\;x(n)\not=0\}$, define $D_i=H_i^{-1}[U]$, where $H(x)=(H_0(x),H_1(x))$.
The computability of $H_i$ implies that $D_i$ is $\Pi^0_1$.
Define $f_i:D_i\to\nn^\nn$ by $f_i(x)=K(x,i)$ on $D_i$.
Since $D_i$ is $\Pi^0_1$, $f_i$ has a total computable extension $\Phi_{e(i)}$.
Therefore, $P\leq^{<\omega}_{tt,1}Q$ via $\{\Phi_{e(i)}\}_{i<2}$.
\end{proof}

Recall from Remark after Theorem \ref{theorem:12c:charact} that $\leq_{\Sigma^0_2}$ is the reducibility relation induced by the disjunction operation $\bhk{\cdot\vee\cdot}_{\Sigma^0_2}$.

\begin{theorem}\label{learn-conpri3}
Let $P$ and $Q$ be any subsets of $\nn^\nn$.
Then, $P\leq_{\Sigma^0_2}Q$ if and only if $P\leq_{\Sigma^0_2\mbox{-}{\sf LEM}}Q$.
\end{theorem}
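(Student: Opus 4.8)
The plan is to prove both directions by unwinding the definitions, exploiting that $\Sigma^0_2\text{-}{\sf LEM}$ decides a $\Delta^0_2$ (equivalently $\Sigma^0_2$/$\Pi^0_2$) alternative while $\bhk{Q\vee P}_{\Sigma^0_2}$ is precisely the mass problem whose ``left branch'' witnesses $Q$ and whose ``right branch'' witnesses $P$, with the branch chosen along a $\Sigma^0_2$ formula. Recall from the remark after Theorem \ref{theorem:12c:charact} that $P\leq_{\Sigma^0_2}Q$ means $O^{(n)}(P)\leq^1_1Q$ for some $n$, where $O(A,B)=\bhk{A\vee B}_{\Sigma^0_2}$ is the non-Lipschitz hyperconcatenation; and that $P\leq_{\Sigma^0_2\text{-}{\sf LEM}}Q$ means there is $f\in\mathfrak{C}_{\Sigma^0_2\text{-}{\sf LEM}}$ with $f:Q\to P$. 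Both sides are thus generated by iteration/composition, so it suffices to match one ``basic step'' of each and then observe the closure properties line up.

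First I would handle the direction $P\leq_{\Sigma^0_2}Q\Rightarrow P\leq_{\Sigma^0_2\text{-}{\sf LEM}}Q$. The key step is to show $\bhk{Q\vee P}_{\Sigma^0_2}\leq_{\Sigma^0_2\text{-}{\sf LEM}}Q\oplus P$ — equivalently, that a single call to $\Sigma^0_2\text{-}{\sf LEM}$ plus a computable post-processing suffices to extract, from a point $g\in\bhk{Q\vee P}_{\Sigma^0_2}$, either a point of $Q$ or a point of $P$, while remembering which. Given $g$, the predicate ``${\tt tail}(g)$ exists'', i.e. ``$g$ contains only finitely many $\sharp$'s'', is $\Sigma^0_2$ in $g$; feeding (a computable transform of) $g$ into $\Sigma^0_2\text{-}{\sf LEM}$ returns a bit together with — in the false case — a stage bound, which is exactly what is needed to compute ${\tt content}\circ{\tt walk}(g)\in Q$ when infinitely many $\sharp$'s occur, or to compute ${\tt tail}(g)^{\shft 1}\in P$ otherwise. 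This is essentially the content of the implication $(a^-)\to(b^-)$ in Theorem \ref{thm:5:lemiddle} read one level up; I would cite that theorem to get the $\Sigma^0_2$ sentence $\varphi\equiv\exists v\theta(v)$ and the uniform families $\{\Gamma_i\},\Delta$, and then package $\varphi$ as an instance of $\Sigma^0_2\text{-}{\sf LEM}$ (using that $\bhk{\cdot}$ for arithmetical sentences is $\emptyset$ or $\nn^\nn$, so the hypothesis $q$ of $\mathbb{F}_{A\to B}$ carries no information, as noted before Proposition \ref{def:5:intpris}). Since $\mathfrak{C}_{\Sigma^0_2\text{-}{\sf LEM}}$ is closed under composition and contains the identity and all computable functions, iterating the basic step $n$ times gives $O^{(n)}(P)\leq_{\Sigma^0_2\text{-}{\sf LEM}}P$, and composing with the computable reduction $O^{(n)}(P)\leq^1_1Q$ yields $P\leq_{\Sigma^0_2\text{-}{\sf LEM}}Q$.

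For the converse $P\leq_{\Sigma^0_2\text{-}{\sf LEM}}Q\Rightarrow P\leq_{\Sigma^0_2}Q$, I would argue that $\Sigma^0_2\text{-}{\sf LEM}$ itself, viewed as a reduction between mass problems, is realized by the hyperconcatenation: more precisely, for any $R$ and any instance of $\Sigma^0_2\text{-}{\sf LEM}$ with $\Pi^0_2$-witness predicate pulled back through a computable $H$ to depend on $x\in R$, the ``answer set'' $\{0\}\times\bhk{\text{left}}\cup\{1\}\times\bhk{\text{right}}$ is $\leq^1_1$-reducible to $\bhk{Q_{\rm left}\vee Q_{\rm right}}_{\Sigma^0_2}$ for appropriate $\Pi^0_1$-layered components — this is the implication $(b^+)\to(a)$ (or $(b^-)\to(a^-)$) of Theorem \ref{thm:5:lemiddle}, again read at the right level. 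Since a general $f\in\mathfrak{C}_{\Sigma^0_2\text{-}{\sf LEM}}$ is a finite composition of $\Sigma^0_2\text{-}{\sf LEM}$-computable functions, and each such composition step corresponds to one application of the operation $O$, a straightforward induction on the length of the composition shows that $f:Q\to P$ forces $O^{(n)}(P)\leq^1_1Q$ for $n$ the number of $\Sigma^0_2\text{-}{\sf LEM}$ calls; hence $P\leq_{\Sigma^0_2}Q$.

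\textbf{Main obstacle.} The delicate point is bookkeeping the \emph{iteration/nesting}: Theorem \ref{thm:5:lemiddle} is stated for a single hyperconcatenation, whereas $\leq_{\Sigma^0_2}$ and $\mathfrak{C}_{\Sigma^0_2\text{-}{\sf LEM}}$ both involve arbitrary finite depth, and the non-commutative, non-associative nature of $\bhk{\cdot\vee\cdot}_{\Sigma^0_2}$ means one must be careful that $O^{(n)}$ on one side really does correspond to $n$-fold composition of single-$\Sigma^0_2\text{-}{\sf LEM}$ steps on the other, with the ``which branch'' information threaded correctly through the ${\tt walk}/{\tt content}/{\tt tail}$ machinery at each level (including the $\lrceil{\cdot}$-renaming of fresh $\sharp$ symbols across levels, per the remark after Definition \ref{def:12b:backtrack}). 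I expect the cleanest route is to prove the single-step equivalence $\bhk{Q\vee P}_{\Sigma^0_2}\equiv_{\Sigma^0_2\text{-}{\sf LEM}} Q\oplus P$ once and for all, check that composition on the $\mathfrak{C}_{\Sigma^0_2\text{-}{\sf LEM}}$ side is mirrored by the recursion $O^{(n+1)}(P)=O(P,O^{(n)}(P))$, and otherwise lean entirely on Theorem \ref{thm:5:lemiddle} and Theorem \ref{learn-conpri2} for the substantive combinatorics rather than redoing the $\Pi^0_1$-layering by hand.
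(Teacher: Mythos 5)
Your proposal follows the paper's own route: both directions are obtained by packaging Theorem \ref{thm:5:lemiddle} --- using (a$^-$)$\rightarrow$(b$^-$) to turn $\bhk{P\vee P}_{\Sigma^0_2}\leq^1_1 Q$ into a single $\Sigma^0_2\text{-}{\sf LEM}$ call with computable pre/post-processing, and (b$^-$)$\rightarrow$(a$^-$) for the converse --- so this is essentially the same argument. Two small remarks: in a couple of places you write the reducibility backwards relative to the paper's convention ($A\leq_{\mathcal{F}}B$ means there is a map in $\mathcal{F}$ \emph{from $B$ into $A$}, so the single-step claim you describe is $Q\oplus P\leq_{\Sigma^0_2\text{-}{\sf LEM}}\bhk{Q\vee P}_{\Sigma^0_2}$, though your prose makes the intended direction clear); and the ``main obstacle'' you flag --- matching $n$-fold composition of ${\sf LEM}$ calls against the specific right-nested iteration $O^{(n+1)}(P)=\bhk{P\vee O^{(n)}(P)}_{\Sigma^0_2}$ --- is real but is also left implicit in the paper, whose written proof treats only the single-application case in each direction, so your explicit attention to that bookkeeping is a point in your favor rather than a deviation.
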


\begin{proof}\upshape
Assume that there are two computable functions $H:\nn^\nn\to\nn^\nn$ and $K:\nn^\nn\times 2\times\nn\to\nn^\nn$ such that $K(x,G\circ H(x))\in P$ for any $x\in Q$ and any realizer $G:\nn^\nn\to 2\times\nn$ of $\Sigma^0_2\mbox{-}{\sf LEM}$.
Then the $\Sigma^0_2$ sentence $(\exists v)\theta(v,x)$ is given by $(\exists v)(\forall n>v)H(x;n)\not=0$.
We also define $\Delta(x)=K(x,\lrangle{0,0})$, and $\Gamma_v(x)=K(x,\lrangle{1,v})$, for any $x\in\nn^\nn$.
Fix $x\in Q$.
If $\theta(v,x)$ is true, then there is a realizer $G$ of $\Sigma^0_2\mbox{-}{\sf LEM}$ mapping $H(x)$ to $(1,v)$.
Therefore, $\Gamma_v(x)=K(x,\lrangle{1,v})=K(x,G\circ H(x))\in P$.
If $(\forall v)\neg\theta(v,x)$ is true, then there is a realizer $G$ of $\Sigma^0_2\mbox{-}{\sf LEM}$ mapping $H(x)$ to $(0,0)$.
Therefore, $\Delta(x)=K(x,\lrangle{0,0})=K(x,G\circ H(x))\in P$.
Hence, by Theorem \ref{thm:5:lemiddle}, we obtain $\bhk{P\vee P}_{\Sigma^0_2}\leq^1_1Q$.

Conversely, we assume that $\bhk{P\vee P}_{\Sigma^0_2}\leq^1_1Q$.
Then, there are computable collection $\Delta$, $\{\Gamma_v\}_{v\in}$ of computable functions, and a $\Sigma^0_2$ sentence $\exists v\theta(v,x)$, as in Theorem \ref{thm:5:lemiddle}.
By analyzing the proof of Theorem \ref{thm:5:lemiddle}, we may assume that this $\Sigma^0_2$ sentence has an additional property that, if $\theta(v,x)$ is true and $v\leq u$, then $\theta(u,x)$ is also true.
For any $x\in\nn^\nn$, put $K(x,\lrangle{0,n})=\Delta(x)$ for each $n\in\nn$, and $K(x,\lrangle{1,v})=\Gamma_v(x)$.
From the $\Sigma^0_2$ sentence $\exists v\theta(v,x)$, we can easily construct a computable function $H:\nn^\nn\to\nn^\nn$ satisfying that $\theta(v,x)$ is true if and only if $H(x;n)\not=0$ for any $n>v$.
Fix $x\in Q$
If $\exists v\theta(v,x)$ is true, then any realizer $G$ of $\Sigma^0_2\mbox{-}{\sf LEM}$ mapping $H(x)$ to some $(1,v)$ witnessing $\theta(v,x)$.
Then, $K(x,G\circ H(x))=\Gamma_v(x)\in P$.
If $\forall v\neg\theta(v,x)$ is true, then any realizer $G$ of $\Sigma^0_2\mbox{-}{\sf LEM}$ mapping $H(x)$ to $(0,s)$ for some $s\in\nn$.
Then, $K(x,G\circ H(x))=\Delta(x)\in P$.
\end{proof}

\begin{cor}
Let $P$ and $Q$ be subsets of $\nn^\nn$, where $P$ is $\Pi^0_2$.
Then, $P\leq^{<\omega}_\omega Q$ if and only if $P\leq_{\Sigma^0_2\mbox{-}{\sf LEM}}Q$.
\end{cor}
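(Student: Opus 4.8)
The plan is to combine the previous characterization results with the hyperconcatenation machinery of Theorem~\ref{thm:5:lemiddle}. First I would recall from Proposition~\ref{prop:5:refref1} (2) that for a $\Pi^0_2$ set $P$, the relation $P\leq^{<\omega}_\omega Q$ holds if and only if there is a function $\Gamma:Q\to P$ with $\Gamma\in{\rm dec}^{<\omega}_{\rm d}[\Pi^0_2]{\rm dec}^\omega_{\rm p}[\Pi^0_1]$; equivalently, by Corollary~\ref{cor:1-2c:computable-along}, $P$ is reducible to $Q$ via a function computable along a sequence of $\Pi^0_1$ sets, which is exactly the situation realized by the non-Lipschitz hyperconcatenation $\bhk{Q\vee P}_{\Sigma^0_2}$. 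So the strategy reduces to showing that, for $\Pi^0_2$ sets $P$, the reducibility $\leq_{\Sigma^0_2}$ (i.e.\ $\bhk{P\vee P}_{\Sigma^0_2}\leq^1_1Q$, or iterations thereof) coincides with $\leq^{<\omega}_\omega$, and then invoking Theorem~\ref{learn-conpri3} to pass from $\leq_{\Sigma^0_2}$ to $\leq_{\Sigma^0_2\mbox{-}{\sf LEM}}$.

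The main steps I would carry out, in order, are as follows. Step one: assume $P\leq^{<\omega}_\omega Q$. By Proposition~\ref{prop:5:refref1} (2) and the remark following Theorem~\ref{thm:5:lemiddle}, there is a function $\Gamma:Q\to P\oplus Q$ computable along a sequence of $\Pi^0_1$ sets; unwinding the decomposition $\Gamma\in{\rm dec}^{<\omega}_{\rm d}[\Pi^0_2]{\rm dec}^\omega_{\rm p}[\Pi^0_1]$ and using that $P$ is $\Pi^0_2$, I would verify condition (b$^-$) of Theorem~\ref{thm:5:lemiddle} with a suitable $\Sigma^0_2$ sentence $\varphi\equiv\exists v\theta(v)$: the finitely many $\Pi^0_2$ layers provide the existential witness $v$, and on each layer the $\Pi^0_1$-piecewise computable part supplies the computable functions $\Gamma_v$ (mapping into $P$) and $\Delta$ (mapping into $Q$, hence harmlessly into $P$ when $P=Q$ in the self-application $\bhk{P\vee P}$). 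Hence $\bhk{P\vee P}_{\Sigma^0_2}\leq^1_1Q$, i.e.\ $P\leq_{\Sigma^0_2}Q$. Step two: conversely, assume $\bhk{P\vee P}_{\Sigma^0_2}\leq^1_1Q$. The remark after Theorem~\ref{thm:5:lemiddle} gives a function $\Gamma:\bhk{P\vee P}_{\Sigma^0_2}\to P\oplus P$ computable along a sequence of $\Pi^0_1$ sets, so composing reductions yields a map $Q\to P\oplus P\equiv^1_1 P$ lying in ${\rm dec}^{<\omega}_{\rm d}[\Pi^0_2]{\rm dec}^\omega_{\rm p}[\Pi^0_1]$, whence $P\leq^{<\omega}_\omega Q$ by Proposition~\ref{prop:5:refref1} (2). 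Step three: apply Theorem~\ref{learn-conpri3}, which already establishes $P\leq_{\Sigma^0_2}Q \iff P\leq_{\Sigma^0_2\mbox{-}{\sf LEM}}Q$ for arbitrary $P,Q$. Chaining the equivalences gives $P\leq^{<\omega}_\omega Q \iff P\leq_{\Sigma^0_2\mbox{-}{\sf LEM}}Q$.

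I expect the main obstacle to be Step one: precisely matching the two-tape/backtrack bookkeeping of $\bhk{P\vee P}_{\Sigma^0_2}$ (with its ${\tt walk}$, ${\tt tail}$, and ${\tt pass}$ symbols) against the $d$-layer structure ${\rm dec}^{<\omega}_{\rm d}[\Pi^0_2]$ composed with a countable $\Pi^0_1$ partition. The subtlety is that the hyperconcatenation only "remembers" finitely many mind changes on the $\Pi^0_2$ side while allowing the inner $\Pi^0_1$-piecewise data to grow in the tail; one must check that the finitely many $\Pi^0_2$ layers from Proposition~\ref{prop:5:refref1} (2) really do translate into a $\Sigma^0_2$ sentence $\theta$ with the monotonicity property ($\theta(v)\wedge v\le u\Rightarrow\theta(u)$) that Theorem~\ref{learn-conpri3}'s proof exploits, and that the witness-free structure of $\bhk{A[\mathbf V/p]}$ for $\neg$-formulas is respected. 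Once the correspondence between "computable along a sequence of $\Pi^0_1$ sets" and condition (b$^-$) is made precise — which is essentially the content already flagged in the remark after Theorem~\ref{thm:5:lemiddle} — the rest is a routine assembly of citations.
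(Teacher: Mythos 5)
Your proposal follows the paper's own route exactly: the paper proves this corollary simply ``by Proposition \ref{prop:5:refref1} (2) and Theorem \ref{learn-conpri3}'', and your Steps one and two merely spell out the bridge between ${\rm dec}^{<\omega}_{\rm d}[\Pi^0_2]{\rm dec}^\omega_{\rm p}[\Pi^0_1]$ and $\leq_{\Sigma^0_2}$ that the paper leaves implicit (via Theorem \ref{thm:5:lemiddle}, its following remark, and Corollary \ref{cor:1-2c:computable-along}). The one point to keep straight is that a decomposition with more than two $\Pi^0_2$ layers corresponds to an \emph{iterate} of the hyperconcatenation rather than the single set $\bhk{P\vee P}_{\Sigma^0_2}$, but since $\leq_{\Sigma^0_2}$ is defined via such iterations and $\mathfrak{C}_{\Sigma^0_2\text{-}{\sf LEM}}$ is closed under composition, this is absorbed exactly as you anticipate.
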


\begin{proof}
By Proposition \ref{prop:5:refref1} (2) and Theorem \ref{learn-conpri3}.
\end{proof}

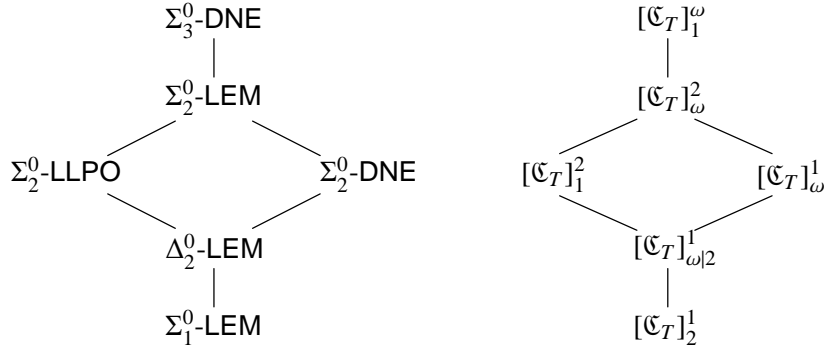
\begin{figure}[t]\centering
\begin{center}
\unitlength 0.1in
\begin{picture}( 39.4000, 16.1000)(  8.0000,-28.3000)
\put(16.0000,-14.0000){\makebox(0,0)[lb]{$\Sigma^0_3$-{\sf DNE}}}%
\put(16.0000,-18.0000){\makebox(0,0)[lb]{$\Sigma^0_2$-{\sf LEM}}}%
\put(8.0000,-22.0000){\makebox(0,0)[lb]{$\Sigma^0_2$-{\sf LLPO}}}%
\put(24.0000,-22.0000){\makebox(0,0)[lb]{$\Sigma^0_2$-{\sf DNE}}}%
\put(16.0000,-26.0000){\makebox(0,0)[lb]{$\Delta^0_2$-{\sf LEM}}}%
\put(16.0000,-30.0000){\makebox(0,0)[lb]{$\Sigma^0_1$-{\sf LEM}}}%
\put(40.4000,-13.9000){\makebox(0,0)[lb]{$[\mathfrak{C}_T]^\omega_1$}}%
\put(40.4000,-17.9000){\makebox(0,0)[lb]{$[\mathfrak{C}_T]^2_\omega$}}%
\put(34.4000,-22.0000){\makebox(0,0)[lb]{$[\mathfrak{C}_T]^2_1$}}%
\put(46.6000,-22.0000){\makebox(0,0)[lb]{$[\mathfrak{C}_T]^1_\omega$}}%
\put(40.2000,-26.0000){\makebox(0,0)[lb]{$[\mathfrak{C}_T]^1_{\omega|2}$}}%
\put(40.2000,-30.0000){\makebox(0,0)[lb]{$[\mathfrak{C}_T]^1_2$}}%
%
\special{pn 8}%
\special{pa 1300 2210}%
\special{pa 1650 2390}%
\special{fp}%
\special{pa 2050 2390}%
\special{pa 2400 2210}%
\special{fp}%
%
\special{pn 8}%
\special{pa 1300 2010}%
\special{pa 1650 1830}%
\special{fp}%
\special{pa 2050 1830}%
\special{pa 2400 2010}%
\special{fp}%
%
\special{pn 8}%
\special{pa 4040 2400}%
\special{pa 3640 2220}%
\special{fp}%
%
\special{pn 8}%
\special{pa 4340 2400}%
\special{pa 4740 2220}%
\special{fp}%
%
\special{pn 8}%
\special{pa 4340 1820}%
\special{pa 4740 2000}%
\special{fp}%
%
\special{pn 8}%
\special{pa 4040 1820}%
\special{pa 3640 2000}%
\special{fp}%
%
\special{pn 8}%
\special{pa 4200 2800}%
\special{pa 4200 2600}%
\special{fp}%
\special{pa 4200 1600}%
\special{pa 4200 1400}%
\special{fp}%
%
\special{pn 8}%
\special{pa 1850 2800}%
\special{pa 1850 2600}%
\special{fp}%
\special{pa 1850 1600}%
\special{pa 1850 1400}%
\special{fp}%
\end{picture}%
\end{center}
\caption{Constructive principles, and nonuniform computability.}
  \label{fig:LEM}
\end{figure}

\subsection{Duality between Dynamic Operations and Nonconstructive Principles}

We now interpret our results in Section 4 in context of the Weihrauch degrees.

\begin{definition}[Le Roux-Pauly \cite{RP13}]
\index{$F\star G$}%
Let $F,G:\subseteq\nn^\nn\rightrightarrows\nn^\nn$ be any multi-valued functions.
Then, $F\star G=\max_{\leq_W}\{F^*\circ G^*:F^*\leq_WF\;\&\;G^*\leq_WG\}$.\qed
\end{definition}

If multi-valued functions $C,D:\subseteq\nn^\nn\rightrightarrows\nn^\nn$ satisfy the condition 
\[D\circ E\leq_WF\;\Longleftrightarrow\;E\leq_WC\star F\]
for any multi-valued functions $E,F:\subseteq\nn^\nn\rightrightarrows\nn^\nn$, then we may think of $D$ as the {\em inverse} of $C$.
One could think of our disjunction operators as inverse operators of various constructive principles.

\begin{definition}
Fix $x\in\nn^\nn$.
\index{$\tie(x)$}\index{$\tie_\omega(x)$}\index{$\tie_\infty(x)$}\index{$\widehat{\deg}_T(x)$}%
\begin{enumerate}
\item $\tie(x)=\{y\in(\nn\cup\{\sharp\})^\nn:\#\{n\in\nn:y(n)=\sharp\}\leq 1\;\&\;{\tt tail}(y)=x\}$.
\item $\tie_\omega(x)=\{y\in(2\times\nn)^\nn:(\exists i<2)\;{\tt pr}_i(y)=x\;\&\;{\tt mc}(y)<\infty\}$.
\item $\tie_\infty(x)=\{y\in(2\times\nn)^\nn:(\exists i<2)\;{\tt pr}_i(y)=x\}$.
\item $\widehat{\deg}_T(x)=\{y\in\nn^\nn:x\leq_Ty\}$.
\end{enumerate}
\end{definition}

\index{$\tie^{(n)}$}\index{$\tie_\omega^{(n)}$}\index{$\tie_\infty^{(n)}$}%
The $n$-th iteration of $\tie$ ($\tie_\omega$ and $\tie_\infty$) is denoted by $\tie^{(n)}$ ($\tie_\omega^{(n)}$ and $\tie_\infty^{(n)}$).
Here, recall from Remark below Definition \ref{def:12b:backtrack} that the symbol $\sharp$ is supposed to be updated each time.
For instance, $\tie^{(2)}$ refers to two special symbols $\sharp_0$ and $\sharp_1$, and then $\tie^{(n)}(x)$ can be identified with the set of all sequences $y$ such that $y$ contains at most $n$ many $\sharp$'s and ${\tt tail}(y)=x$.
More precisely, given a partial multi-valued function $E$, every element of $\tie^{(n)}\circ E(x)$ is of the form $\sigma_1\sharp\sigma_2\sharp\dots\sharp\sigma_n\sharp y$ with $y\in E(x)$.
Thus, $\tie^{(n)}\circ \Sigma^0_1\text{-}{\sf LEM}^n(x)$ has a computable realizer, and indeed, $\tie^{(n)}\circ E$ has a computable realizer for every $E\leq_W\Sigma^0_1\text{-}{\sf LEM}^n(x)$.
We will see more general results in Proposition \ref{prop:5:inverse-dis}.

A multi-valued function $P:\subseteq\nn^\nn\rightrightarrows\nn^\nn$ is {\em Popperian} if there is a computable function $r:\subseteq\nn^\nn\times\nn^\nn\to\nn^\nn$ satisfying $\Sigma^0_1\text{-}{\sf LEM}\circ r(x,y)=\mathbf{1}_{P(x)}(y)$, for any $x\in{\rm dom}(P)$ and $y\in\nn^\nn$, where $\mathbf{1}_{P(x)}$ denotes the characteristic function of $P(x)$.
In other words, $P$ is Popperian if and only if the condition $y\in P(x)$ is $\Pi^0_1$, uniformly in $x\in{\rm dom}(P)$ and $y\in\nn^\nn$.
Every Popperian multi-valued function is clearly Weihrauch reducible to the closed choice ${\sf C}_{\nn^\nn}$ of Baire space $\nn^\nn$.
\index{multi-valued function!Popperian}%

\begin{prop}\label{prop:5:inverse-dis}
Let $E,F:\nn^\nn\rightrightarrows\nn^\nn$ be any multi-valued functions.
\begin{enumerate}
\item $\tie^{(n)}\circ E\leq_WF$ if and only if $E\leq_W\Sigma^0_1\text{-}{\sf LEM}^n\star F$.
\item $\tie_{\omega}^{(n)}\circ E\leq_WF$ if and only if $E\leq_W{\sf Unique}\Sigma^0_2\mbox{-}{\sf LLPO}_{n}\star F$.
\item $\tie\circ E\leq_WF$ if and only if $E\leq_W\Sigma^0_2\text{-}{\sf DNE}\star F$, where $\tie=\bigcup_{n\in\nn}\tie^{(n)}$.
\end{enumerate}
Moreover, if $E$ is Popperian, then we also have the following conditions.
\begin{enumerate}
\item[4.] $\tie^{(n)}_\infty\circ E\leq_WF$ if and only if $E\leq_W\Sigma^0_2\mbox{-}{\sf LLPO}_n\star F$.
\item[5.] $\tie^{(n)}_\infty\circ\tie\circ E\leq_WF$ if and only if $E\leq_W(\Sigma^0_2)_2\mbox{-}{\sf LLPO}_n\star F$.
\item[6.] $\widehat{\deg}_T\circ E\leq_WF$ if and only if $E\leq_W\Sigma^0_3\mbox{-}{\sf DNE}\star F$.
\end{enumerate}
\end{prop}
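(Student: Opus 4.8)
The plan is to prove each equivalence by a direct ``unfolding'' argument relating the iterated disjunction operator on the left to the corresponding iterated constructive principle on the right, with the principal tool being the characterization theorems (Theorem~\ref{thm:5:red-eq-dis}, Theorem~\ref{theorem:12c:charact}) together with the basic properties of the compositional product $\star$ of Le Roux--Pauly. The key observation that makes all six items uniform is the following: each operator $\tie,\tie_\omega,\tie_\infty,\widehat{\deg}_T$ on a \emph{point} $x$ is exactly the ``pointwise'' version of the disjunction operations $\bhk{\cdot\vee\cdot}^3_{\sf LCM},\bhk{\cdot\vee\cdot}^2_{\sf LCM},\bhk{\cdot\vee\cdot}^2_{\sf CL}$ and the Turing upward closure, so that $\tie^{(n)}(x)\equiv^1_1 \bhk{\bigvee^{(n)}\{x\}}^3_{\sf LCM}$ and similarly for the others (compare Proposition~\ref{prop:2:inf-disjunc-equiv} and the table in Figure~\ref{fig:arrow2b}). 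One side of each biconditional should then be read off from Proposition~\ref{prop:1-5:inf} (``untangling'') applied relative to an oracle, and the other side from the ``only if'' constructions in the proof of Theorem~\ref{theorem:12c:charact}, again relativized.

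For the forward direction of item (1), I would start from a Weihrauch reduction $\tie^{(n)}\circ E\leq_W F$ witnessed by computable $H,K$: on input $x$, a realizer of $F$ on $H(x)$ produces some $y\in F(H(x))$, and $K(x,y)$ lands in $\tie^{(n)}\circ E(x)$, i.e., $K(x,y)$ has the form $\sigma_1\sharp\sigma_2\sharp\cdots\sharp\sigma_n\sharp z$ with $z\in E(x)$. To recover $z$ from $K(x,y)$ one needs to locate the last occurrence of $\sharp$, which is precisely an instance of $\Sigma^0_1\text{-}{\sf LEM}$ (``is there a further $\sharp$?'') applied $n$ times in sequence; this is the content of $\Sigma^0_1\text{-}{\sf LEM}^n\star F$, using that $C\star F$ is the maximum of $C^*\circ F^*$ over $C^*\leq_W C$, $F^*\leq_W F$. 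Conversely, given $E\leq_W\Sigma^0_1\text{-}{\sf LEM}^n\star F$, one prepends the successive ``backtrack'' markers recording the answers of the $n$ copies of $\Sigma^0_1\text{-}{\sf LEM}$; since $\tie^{(n)}$ precisely tolerates $n$ many $\sharp$'s and forgets everything before the last one, the output is a correct element of $\tie^{(n)}\circ E(x)$. This same template handles (2) via ${\sf Unique}\Sigma^0_2\text{-}{\sf LLPO}$ (the bounded-error, finitely-many-mind-changes version, exactly as in the proof of Theorem~\ref{learn-conpri}(2)), (3) via $\Sigma^0_2\text{-}{\sf DNE}\equiv_W{\sf C}_\nn$ (Theorem~\ref{learn-conpri}(3) and Brattka--de Brecht--Pauly), and (6) via $\Sigma^0_3\text{-}{\sf DNE}$ (as in Theorem~\ref{learn-conpri2}(2)); for (6) one also uses that $\widehat{\deg}_T\circ E(x)$ collects all $y$ computing \emph{some} element of $E(x)$, so a realizer must search for a correct index, which is a $\Pi^0_2$ ``totality-and-membership'' condition, hence a $\Sigma^0_3\text{-}{\sf DNE}$ instance.

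Items (4) and (5) are where the Popperian hypothesis on $E$ enters, and I expect this to be the main obstacle. The point is that $\tie_\infty$ (unbounded mind changes, classical two-tape disjunction) does \emph{not} come with a halting/limit phenomenon that lets a learner detect which tape is correct; instead one must use that ``$y\in E(x)$'' is $\Pi^0_1$ uniformly, i.e.\ $E$ is Popperian, so that the verifier can monitor the consistency of each candidate and the whole $\tie_\infty$-record stays $\Pi^0_1$ (this is the consistency-set trick, cf.\ ${\rm Con}(T_i)_{i\in I}$ and Proposition~\ref{prop:1-2:consistency}). Under this assumption, recovering the correct tape from an element of $\tie^{(n)}_\infty\circ E(x)$ is exactly an $n$-fold $\Sigma^0_2\text{-}{\sf LLPO}$ decision (``of the two (or finitely many) tapes, at least one carries an infinite consistent word''), matching the bounded form $\Sigma^0_2\text{-}{\sf LLPO}_n$ used in Theorem~\ref{learn-conpri2}(1); and composing $\tie_\infty^{(n)}$ after $\tie$ produces a difference-of-$\Sigma^0_2$ condition, hence $(\Sigma^0_2)_2\text{-}{\sf LLPO}_n$, giving (5). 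The delicate bookkeeping will be to check that the relativized versions of the ``untangling'' functions of Proposition~\ref{prop:1-5:inf} and the ``only if'' constructions of Theorem~\ref{theorem:12c:charact} interact correctly with the $\star$-product — in particular that the choice made by the intermediate principle can always be postponed and absorbed into $F^*$, which is exactly what the maximality in the definition of $\star$ is designed to allow. Once that is verified for one representative case, the remaining cases follow by the same pattern, so I would write the argument carefully for (1) and (4) and then indicate the routine modifications for (2), (3), (5), (6).
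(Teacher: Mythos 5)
Your proposal is correct and follows essentially the same route as the paper's proof: one direction via the relativized untangling maps of Proposition \ref{prop:1-5:inf} converted into Weihrauch reductions to the principles through Theorems \ref{learn-conpri} and \ref{learn-conpri2} (with the factorization $F^*=K\circ({\rm id}\times F)$, $H^*=\lrangle{{\rm id},H}$ absorbing the outer reduction into the $\star$-product), and the converse by re-encoding the successive answers of the principle as $\sharp$-markers exactly as in the ``only if'' constructions of Theorem \ref{theorem:12c:charact}. Your identification of where and why the Popperian hypothesis is needed in items (4)--(6) — so that $E(x)$ is uniformly $\Pi^0_1(x)$ and the consistency-monitoring/$\Sigma^0_2$-{\sf LLPO} argument of Theorem \ref{learn-conpri2} relativizes — matches the paper's argument.
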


\begin{proof}\upshape
(1) Assume that there are partial computable functions $H:\subseteq\nn^\nn\to\nn^\nn$ and $K:\subseteq\nn^\nn\times\nn^\nn\to\nn^\nn$ such that $K(x,f\circ H(x))\in\tie^{(n)}\circ E(x)$ for any $x\in{\rm dom}(\tie^{(n)}\circ E)$ and any realizer $f$ of $F$.
Then, for any realizer $f$ of $F$, we have the following condition for any $x\in{\rm dom}(E)$.
\[
K\circ({\rm id}\times f)\circ\lrangle{{\rm id},H}(x)=K(x,f\circ H(x))\in \tie^{(n)}\circ E(x)=\mbox{$\btie^1_n$} E(x).
\]

Note that $H^*=\lrangle{{\rm id},H}:\nn^\nn\to\nn^\nn\times\nn^\nn$ is computable, $F^*=K\circ({\rm id}\times F):\nn^\nn\times\nn^\nn\rightrightarrows\nn^\nn$ is Weihrauch reducible to $F$.
As in the proof of Theorem \ref{thm:5:red-eq-dis}, we can construct an $(1,n)$-computable function $\gamma:\btie^1_nE(x)\to E(x)$, uniformly in $x\in{\rm dom}(E)$.
Therefore, by Theorem \ref{learn-conpri}, we have a function $\gamma\leq_W\Sigma^0_1\text{-}{\sf LCM}^n$ satisfying $\gamma\circ f^*\circ H^*(x)\in E(x)$ for any $x\in{\rm dom}(E)$ and any realizer $f^*$ of $F^*$.
Consequently, $E\leq_W\Sigma^0_1\text{-}{\sf LEM}^n\star F$.

Conversely, we assume that $E\leq_WS^*\circ F^*$ for some $S^*\leq_W\Sigma^0_1\text{-}{\sf LEM}^n$ and $F^*\leq_WF$.
Then there are computable functions $H^*,K^*$ such that $K^*(x,H^*\circ f(x))\in F^*(x)$ for any realizer $f$ of $F$.
From any single valued function $f:\nn^\nn\to\nn^\nn$, we can effectively obtain $f^*(x)=K^*(x,H^*\circ f(x))$.
Assume that $S^*\leq_W\Sigma^0_1\text{-}{\sf LEM}^n$ via $\tilde{H}$ and $\tilde{K}$, and $E\leq_WS^*\circ F^*$ via $H$ and $K$.
We consider $H_f(x)=\tilde{H}\circ f^*\circ H(x)$ and $K_f(x,i)=K(x,\tilde{K}(f^*\circ H(x),i))$.
Then, we have the following condition for any $x\in{\rm dom}(E)$.
\[K_f(x,\Sigma^0_1\text{-}{\sf LCM}^n\circ H_f(x))\in E(x).\]

By calculating $H_f(x)=\tilde{H}\circ f^*\circ H(x)$, we can approximate $i(f;x)=\Sigma^0_1\text{-}{\sf LEM}^n\circ H_f(x)$ uniformly in $f$.
Therefore, we can construct $F^+_f$ to show $\tie^{(n)}\circ E\leq_WF$ by the following way.
Set $F^+_f(\lrangle{})=\lrangle{}$, fix $\sigma\in\nn^{<\nn}$, and assume that $F^+_f(\sigma^-)$ has been already defined.
If $i(f;\sigma)\not=i(f;\sigma^-)$, we put $F^+_f(\sigma)=F^+_f(\sigma^-)\fr\sharp\fr K_f(\sigma,i(f;\sigma))$.
Otherwise, $F^+_f$ continues the approximation of $K_f(\sigma,i(f;\sigma))$.
It is not hard to see that $F^+_f(x)\in\tie^{(n)}\circ E(x)$ for any $x\in{\rm dom}(E)$ and any realizer $f$ of $F$.
Then, $F^+_f$ is Weihrauch reducible to $\lrangle{K_f,H_f}$, and $\lrangle{K_f,H_f}$ is Weihrauch reducible to $f$.
Moreover, these reduction are not depend on $f$.
Hence, $\tie^{(n)}\circ E\leq_WF$.

(2,3) By the same argument as in the proof of the item (1).

(4)
Assume that $E:\subseteq\nn^\nn\rightrightarrows\nn^\nn$ is Popperian, and there are partial computable functions $H:\subseteq\nn^\nn\to\nn^\nn$ and $K:\subseteq\nn^\nn\times\nn^\nn\to\nn^\nn$ such that $K(x,f\circ H(x))\in\tie_\infty^{(n)}\circ E(x)$ for any $x\in{\rm dom}(\tie_\infty^{(n)}\circ E)$ and any realizer $f$ of $F$.
Then, for any realizer $f$ of $F$, we have the following condition for any $x\in{\rm dom}(E)$.
\[
K\circ({\rm id}\times f)\circ\lrangle{{\rm id},H}(x)=K(x,f\circ H(x))\in \tie_\infty^{(n)}\circ E(x)=\mbox{$\left[\btie_\infty\right]^1_n$}E(x).
\]

As in the proof of Theorem \ref{thm:5:red-eq-dis}, we can construct an $(n,1)$-computable function $\gamma:\left[\btie\right]^1_nE(x)\to E(x)$, uniformly in $x\in{\rm dom}(E)$.
Here, note that $E(x)$ is a $\Pi^0_1(x)$ subset of Cantor space, uniformly in $x$.
Therefore, by relativizing Theorem \ref{learn-conpri2}, we have a function $\gamma\leq_W\Sigma^0_2\text{-}{\sf LLPO}^n$ satisfying $\gamma\circ({\rm id}\times f)\circ\lrangle{{\rm id},H}(x)\in E(x)$ for any $x\in{\rm dom}(E)$ and any realizer $f$ of $F$.
Consequently, $E\leq_W\Sigma^0_1\text{-}{\sf LLPO}^n\star F$.

(5,6) By the same argument as in the proof of the item (4).
\end{proof}

\subsection{Borel Measurability, and Backtrack Games}

Berardi-Coquand-Hayashi \cite{BCH} showed that a {\em $1$-backtrack Tarski game} provides a semantics of positive arithmetical fragment of Limit Computable Mathematics (i.e., $\Delta^0_2$-mathematics, in the sense of Kleene realizability).
A positive arithmetical formula $A$ is true in the Limit Realizability Interpretation if and only if the $\exists$-player has a computable winning strategy in the {\em $1$-backtracking game} ${\sf bck}(\mathcal{G}(A))$ associated with the Tarski game for $A$ (for notations, see \cite{BCH}).
Meanwhile, Van Wesep \cite{VW} introduced {\em backtrack game} to study Wadge degrees, and Andretta \cite{And} used this game to characterize the $\mathbf{\Delta}^0_2$-measurable functions (also called the first level Borel functions) on Baire space $\nn^\nn$.
Motto Ros \cite{MR2} and Semmes \cite{Sem} studied more general games to study the Baire hierarchy of Borel measurable functions.
The hierarchy of Borel measurable functions are deeply studied in descriptive set theory \cite{Kec}.
We consider the following notions for a function $f$ on Baire space $\nn^\nn$ and a countable ordinal $\xi<\omega_1$.
\index{Borel function at level $\xi$}\index{S-function@$\mathbf{\Sigma}^0_{\xi+1,\xi+1}$ function}%
\index{S-measurable@$\mathbf{\Sigma}^0_{\xi+1}$-measurable}\index{Baire class $\xi$}%
\begin{enumerate}
\item $f$ is a {\em Borel function at level $\xi$} (or a $\mathbf{\Sigma}^0_{\xi+1,\xi+1}$ function; see \cite{Jay74,Jay79,MRpre,Sem}) if the preimage $f^{-1}(A)$ is $\mathbf{\Sigma}^0_{\xi+1}$ for every $\mathbf{\Sigma}^0_{\xi+1}$ set $A\subseteq\nn^\nn$.
\item $f$ is {\em $\mathbf{\Sigma}^0_{\xi+1}$-measurable} (or equivalently, of {\em Baire class $\xi$}; see for instance, Kechris \cite{Kec}) if the preimage $f^{-1}(A)$ is $\mathbf{\Sigma}^0_{\xi+1}$ for every open set $A\subseteq\nn^\nn$.
\end{enumerate}
Clearly, every level $\xi$ Borel function on Baire space $\nn^\nn$ is $\mathbf{\Sigma}^0_{\xi+1}$-measurable. 
The effective hierarchy of Borel measurable functions is studied by Brattka \cite{Bra1} and developed by many researchers (see \cite{deBre13,Kihta}).
An effective $\Sigma^0_\xi$ measurable function maps a computable point to a point of Turing degree $\dg{0}^{(n)}$.
Additionally, the class of (effectively) $\Sigma^0_\xi$-measurable functions does not closed under composition, in general, whereas the class of the level $\xi$ Borel functions must be closed under composition.
Our results (Theorem \ref{thm:5:red-eq-dis}) suggests that our notions of learnability is not like the effective $\Sigma^0_\xi$-measurability but more like effective versions of the level $\xi$ Borel functions, because of some results from descriptive set theory.

Recall from Definition \ref{def:1-2a:piecewise-computability} that ${\rm dec}_{\rm p}^\omega[\Gamma]\mathcal{F}$ denotes the class of $\Gamma$-piecewise $\mathcal{F}$ functions.
If $\mathcal{F}$ is the class of all partial continuous functions on Baire space, we abbreviate it as ${\bf dec}_{\rm p}^\omega[\Gamma]$.
\index{${\bf dec}_{\rm p}^\omega[\Gamma]$}%
Jayne-Rogers \cite{JR} proved that ${\bf dec}_{\rm p}^\omega[\mathbf{\Pi}^0_1]$ is exactly the class of the first level Borel functions, and Semmes \cite{Sem} showed that $f$ is ${\bf dec}_{\rm p}^\omega[\mathbf{\Pi}^0_2]$ is exactly the class of the second level Borel functions.

As shown in Theorem \ref{thm:5:red-eq-dis} and Proposition \ref{prop:5:refref1}, ${\rm dec}_{\rm p}^\omega[\Pi^0_1]$ is exactly the class of the learnable functions, and the degree structure $\mathcal{P}/{\rm dec}_{\rm p}^\omega[\Pi^0_2]$ is exactly the degree structure $\mathcal{P}^\omega_1$ induced from nonuniform computability.
Actually, our dynamic models directly fit into the backtrack and multitape game characterization of subclasses of Borel measurable functions.
We now introduce various games based on {\em the Wadge game}, {\em the backtrack game}, and {\em the multitape game}, 

\begin{definition}[see also Motto Ros \cite{MR2} and Semmes \cite{Sem}]
\index{$G(f,X)$}%
Fix a partial function $f$ on $\nn^\nn$, and a set $X$ which has no intersection with $\nn$.
The set $X$ may contain {\tt pass}, {\tt back}$\sharp$, $({\tt move},i)$ for each $i\in\nn$.
Then, we introduce various two-players games on $f$ as follows.
At every round $n\in\nn$, Player I chooses an element $x_n\in\nn$, and Player II chooses an element $y_n\in\nn\cup X$.

\begin{center}
\begin{tabular}{lccccccc}
I: & $x_0$ & & $x_1$ & & $x_2$ & & $\dots$ \\
II: &  & $y_0$ & & $y_1$ & & $y_2$ & $\dots$ \\
\end{tabular}
\end{center}

A pair of infinite sequences $\lrangle{x,y}\in\nn^\nn\times(\nn\cup X)^\nn$ is called {\em a play}.
Fix a play $\lrangle{x,y}$, where $x=\lrangle{x_n}_{n\in\nn}$ and $y=\lrangle{y_n}_{n\in\nn}$.
Player I constructs an input $x\in{\rm dom}(f)$ step by step, and Player II try to write a collect output $f(x)$ on some tape, where there may be infinitely many tapes $\{\Lambda_i\}_{i\in\nn}$.
Here, Player II can select a special symbol contained in $X$ at each step.
\begin{itemize}
\item $({\tt move},i)$ indicates the instruction to move the head on the $i$-th tape $\Lambda_i$.
\item {\tt pass} indicates that Player II writes no letter at this step.
\item {\tt back}$\sharp$ indicates the instruction to delete all words on the tape under the head.
\end{itemize}

Formally, we define the following notions.
For each $i\in\nn$, {\em the $i$-th content} of the play $y$ of Player II is a function ${\tt content}_i:(\nn\cup X)^\nn\to\nn^\nn$ which is inductively defined as follows.
Set ${\tt content}_i(\lrangle{})=\lrangle{}$ and ${\tt tape}(\lrangle{})=0$.
Assume that ${\tt content}_i(y\res n)$ and ${\tt tape}(y\res n)$ have been already defined for each $i\in\nn$.
\index{${\tt content}_i(y\res n)$}\index{${\tt tape}(y\res n)$}%
\begin{align*}
{\tt content}_i(y\res n+1)&=
\begin{cases}
{\tt content}_i(y\res n)\fr\lrangle{y_n} & \mbox{ if } y_n\in\nn\;\&\;i={\tt tape}(y\res n),\\
\lrangle{} & \mbox{ if } y_n={\tt back}\sharp\;\&\;i={\tt tape}(y\res n),\\
{\tt content}_i(y\res n) & \mbox{ otherwise.}
\end{cases}
\\
{\tt tape}(y\res n+1)&=
\begin{cases}
i & \mbox{ if } y_n=({\tt move},i),\\
{\tt tape}(y\res n) & \mbox{ otherwise.}
\end{cases}
\end{align*}
Then, for each $i\in\nn$, we define ${\tt content}_i(y)=\lim_{n\in\nn}{\tt content}_i(y\res n)$ for any $y\in(\nn\cup X)^\nn$.
\index{${\tt content}_i(y)$}%
We consider the following special {\em rules} for this game.
\index{rule}\index{rule!basic}\index{rule!rule $m$}\index{rule!rule $*$}%
\begin{itemize}
\item Player I {\em violates the basic rule} if $x\not\in{\rm dom}(f)$.
\item Player II {\em violates the basic rule} if either $y_n\in\{{\tt pass},({\tt move},i):i\in\nn\}$ for almost all $n\in\nn$, or $y_n={\tt back}\sharp$ for infinitely many $n\in\nn$.
\item Player II {\em violates the rule $m$} if $y$ contains at least $m$ many ${\tt back}\sharp$'s.
\item Player II {\em violate the rule $*$} if $y_n\in\{({\tt move},i): i\in\nn\}$ for infinitely many $n\in\nn$.
\end{itemize}
We say that Player II {\em wins} (resp.\ {\em is winnable}) on the play $\lrangle{x,y}\in\nn^\nn\times(\nn\cup X)^\nn$ of the game $G(f,X)$ if either Player II does not violate the basic rule, and $f(x)={\tt content}_i(y)$ for the least $i\in\nn$ with ${\tt content}_i(y)$ being total (resp.\ for some $i\in\nn$), or Player I violates the basic rule.
We also say that Player II {\em wins} (resp.\ {\em is winnable}) on the play $\lrangle{x,y}$ of the game $G_m(f,X)$ if Player II wins (resp.\ is winnable) the game $G(f,X)$ and does not violate the rule $m$, and that Player II {\em wins} (resp.\ {\em is winnable}) the game $G_*(f,X)$ if Player II wins (resp.\ is winnable) the game $G(f,X)$ and does not violate the rule $*$.

\index{strategy}\index{strategy!winning}%
\index{strategy!winnable}%
A {\em strategy} of Player II is a function $\psi:\nn^{<\nn}\to(\nn\cup X)^{<\nn}$ such that $|\psi(\sigma)|=|\sigma|$ for each $\sigma\in\omega^{<\omega}$, and $\psi(\sigma)\subseteq\psi(\tau)$ whenever $\sigma\subseteq\tau$.
A strategy $\psi$ of Player II is {\em winning} (resp.\ {\em winnable}) in the game $G$ if Player II wins (resp.\ is winnable) the game $G$ on the play $\lrangle{x,\bigcup_{n\in\nn}\psi(x\res n)}$ for any $x\in\nn^\nn$.

We write ${\sf P}$, ${\sf B}$, and ${\sf M}_\alpha$ for $\{{\tt pass}\}$, $\{{\tt back}\sharp\}$, and $\{({\tt move},i):i<\alpha\}$, respectively, for each $\alpha\leq\omega$.
Then, for ${\sf S},{\sf T},{\sf U}\in\{{\sf P},{\sf B},{\sf M}_\alpha\}_{\alpha\leq\omega}$, the union ${\sf S}\cup{\sf T}\cup{\sf U}$ is denoted by ${\sf STU}$.
\end{definition}

\begin{remark}
\index{game!Wadge}\index{game!backtrack}\index{game!multitape}%
The games $G(f,{\sf P})$, $G(f,{\sf PB})$, and $G(f,{\sf PM}_\omega)$ are essentially same as {\em the Wadge game}, {\em the backtrack game}, and {\em the multitape game}, respectively.
See also Motto Ros \cite{MR2} and Semmes \cite{Sem}.
\end{remark}

Let $f$ be a partial function on Baire space $\nn^\nn$.
\begin{enumerate}
\item {\rm (Wadge \cite{WWW})} $f$ is continuous if and only if Player II has a winning strategy in the game $G(f,{\sf P})$.
\item {\rm (Andretta \cite{And})} $f$ is $\mathbf{\Delta}^0_2$ if and only if Player II has a winning strategy in the game $G(f,{\sf PB})$.
\item {\rm (Andretta, Semmes \cite{Sem1})} $f$ is $\mathbf{\Pi}^0_2$-piecewise continuous if and only if Player II has a winning strategy in the game $G(f,{\sf PM}_\omega)$.
\end{enumerate}

\begin{theorem}[Game representation]\label{thm:5:game}
Let $f$ be a partial function on Baire space $\nn^\nn$.
\begin{enumerate}
\item $f$ is $(1,1)$-computable if and only if Player II has a computable winning strategy in the game $G(f,{\sf P})$.
\item $f$ is $(1,m)$-computable if and only if Player II has a computable winning strategy in the game $G_m(f,{\sf PB})$.
\item $f$ is $(1,\omega|m)$-computable if and only if Player II has a computable winning strategy in the game $G_*(f,{\sf PM}_m)$.
\item $f$ is $(1,\omega)$-computable if and only if Player II has a computable winning strategy in the game $G(f,{\sf PB})$.
\item $f$ is $(m,1)$-computable if and only if Player II has a computable winnable strategy in the game $G(f,{\sf PM}_m)$.
\item $f$ is $(m,\omega)$-computable if and only if Player II has a computable winnable strategy in the game $G(f,{\sf PBM}_m)$.
\item $f$ is $(\omega,1)$-computable if and only if Player II has a computable winnable strategy in the game $G(f,{\sf PM}_\omega)$.
\end{enumerate}
\end{theorem}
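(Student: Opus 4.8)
The plan is to prove each equivalence by unwinding the relevant game into the dynamic proof models of Section 3 and then invoking the characterizations in Theorem \ref{thm:5:red-eq-dis}. The key observation is that a strategy $\psi:\nn^{<\nn}\to(\nn\cup X)^{<\nn}$ for Player II, when restricted to the choices of special symbols, is exactly a ``record'' in the sense of the one/two-tape or backtrack-tape models, while the ${\tt content}_i$ operators are precisely the projection operators ${\tt pr}_i$ and the ${\tt back}\sharp$-handling is exactly the ${\tt tail}$ operator. So for item (1): a winning strategy in $G(f,{\sf P})$ is a computable $\psi$ with $|\psi(\sigma)|=|\sigma|$ and $\psi$ monotone, which is literally a total computable function $\psi:\nn^{<\nn}\to\nn^{<\nn}$ with $\psi(\sigma)\subseteq\psi(\tau)$ for $\sigma\subseteq\tau$ and $\bigcup_n\psi(x\res n)=f(x)$; by Proposition \ref{prop:1-2:characterization} with $\alpha=1$ this is exactly $(1,1)$-computability, i.e. computability.

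For items (2) and (4): in the game $G(f,{\sf PB})$, the symbol ${\tt back}\sharp$ deletes the current (only) tape, and the winning condition asks $f(x)$ to equal the unique total ${\tt content}_i(y)$; since only one tape is in use, the record $y$ of Player~II is an element of $(\nn\cup\{\sharp\})^\nn$, and the ``not violating the basic rule'' clause forces $\sharp$ to occur only finitely often, so ${\tt content}_0(y)={\tt tail}(y)$. Hence a winning strategy is precisely a computable map producing, for each input stream, a finite-$\sharp$ record whose tail is $f(x)$, i.e. a decomposition of $\mathrm{dom}(f)$ into $\Pi^0_1$ pieces (the pieces where the $k$-th backtrack has not yet happened) with computable restrictions — this is ${\rm dec}^\omega_d[\Pi^0_1]=[\mathfrak{C}_T]^1_\omega$ by Theorem \ref{thm:5:red-eq-dis}(3). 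Imposing the rule $m$ (at most $m-1$ occurrences of ${\tt back}\sharp$; I should double-check the off-by-one against Definition \ref{def:1-2:nonunif_bas}) cuts this down to ${\rm dec}^m_d[\Pi^0_1]=[\mathfrak{C}_T]^1_m$, giving (2). Concretely, one builds the learner from $\psi$ by declaring a mind-change exactly when $\psi$ outputs ${\tt back}\sharp$, and conversely builds $\psi$ from a learner by emitting ${\tt back}\sharp$ at each mind-change location and otherwise copying the $\psi$-output of Proposition \ref{prop:1-2:characterization}.

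For items (3), (5), (6), (7): these use the multi-tape symbols ${\sf M}_\alpha=\{({\tt move},i):i<\alpha\}$. A strategy that never moves infinitely often (``does not violate the rule $*$'') but may switch tapes finitely often among $m$ tapes is exactly a record in the two-tape (or $m$-tape) ${\sf LCM}$ model with error bound $m$: the winning clause ``$f(x)={\tt content}_i(y)$ for the least total $i$'' matches the definition of $\bhk{\bigvee^{(m)}P}^2_{\sf LCM}$ and, via the ``Untangling'' Proposition \ref{prop:1-5:inf} and Theorem \ref{theorem:12c:charact}(2), corresponds to $[\mathfrak{C}_T]^1_{\omega|m}$ — this is (3). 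Dropping the rule $*$ but asking only for a \emph{winnable} strategy (some $i$, not necessarily the least total one, satisfies ${\tt content}_i(y)=f(x)$) and allowing infinitely many moves among $m$ tapes is the classical $m$-tape model $\bhk{\bigvee^{(m)}P}^2_{\sf CL}$, which by Theorem \ref{theorem:12c:charact}(4) gives $[\mathfrak{C}_T]^m_1$, i.e. (5); with $\omega$ tapes and infinitely many moves it is $\bigcup_m\bhk{\bigvee^{(m)}P}^2_{\sf CL}$, hence $[\mathfrak{C}_T]^\omega_1$, i.e. (7). Item (6) combines backtracking with $m$ tapes: a winnable strategy in $G(f,{\sf PBM}_m)$ is an $m$-tape record each of whose tapes independently carries out an ${\sf LCM}$-backtrack proof, which matches the team-learning model and $[\mathfrak{C}_T]^m_\omega={\rm dec}^m_p{\rm dec}^\omega_p[\Pi^0_1]$ of Theorem \ref{thm:5:red-eq-dis}(5). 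In each direction the translation is a straightforward simulation: from a strategy, read off the record and apply the appropriate $\Gamma$ from Proposition \ref{prop:1-5:inf}; from a decomposition/learner, emit ${\tt move}$/${\tt back}\sharp$ symbols to encode the piece index or mind-change.

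The main obstacle I expect is bookkeeping the precise correspondence between ``rule $m$'' / ``rule $*$'' and the bounds $\beta,\gamma$ in Definition \ref{def:1-2:nonunif_bas} — in particular the off-by-one conventions (``$<\beta$'' mind changes versus ``$\leq m$'' backtracks, and ``$\leq\gamma$'' indices versus number of tapes), and the subtlety in the winning condition of $G(f,X)$ that picks the \emph{least} total ${\tt content}_i$ (relevant when $f(x)$ could coincide with several tapes' contents). Handling the ``winnable'' versus ``winning'' distinction also requires care: ``winnable'' is what lets a strategy get away with never producing a \emph{single} distinguished total output, which is exactly why the $(m,1)$- and $(m,\omega)$-cases need it while the $(1,\cdot)$-cases do not. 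Once these conventions are pinned down, each of the seven equivalences reduces to quoting the matching clause of Theorem \ref{thm:5:red-eq-dis}, Proposition \ref{prop:1-5:inf}, and Theorem \ref{theorem:12c:charact}, together with the routine two-way simulation between strategies and records.
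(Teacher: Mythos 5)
Your proposal is correct and follows essentially the same route as the paper's proof: both rest on the dictionary ${\tt back}\sharp\leftrightarrow$ mind-change, $({\tt move},i)\leftrightarrow$ switch of predicted index, ${\tt content}_i\leftrightarrow{\tt pr}_i$/output tape, with the ``least total tape'' clause of a winning strategy matching the unique convergent hypothesis and a backlog/{\tt pass} device pacing the learner's output to one game symbol per round. The one caution is that Proposition \ref{prop:1-5:inf} and Theorem \ref{theorem:12c:charact} are statements about reducibility between \emph{subsets} of Baire space rather than about a single function lying in $[\mathfrak{C}_T]^{\alpha}_{\beta|\gamma}$, so for items (3) and (5)--(7) you cannot quote them as black boxes but must rerun their constructions at the level of strategies and learners (exactly what the paper does, reusing e.g.\ the ${\tt new}^*\Phi$ notation from the proof of Theorem \ref{theorem:12c:charact}); with that substitution your argument goes through.
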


\begin{proof}\upshape
(2,4) We need to construct a winning strategy $\psi:\nn^{<\nn}\to(\nn\cup\{{\tt pass},{\tt back}\sharp\})^{<\nn}$ from a given partial $(1,\omega)$-computable function $f:\subseteq\nn^\nn\to\nn^\nn$.
Assume that $f$ is $(1,\omega)$-computable via a learner $\Psi$.
We inductively define a strategy $\psi:\nn^{<\nn}\to(\nn\cup\{{\tt pass},{\tt back}\sharp\})^{<\nn}$ and an auxiliary parameter ${\tt backlog}:\nn^{<\nn}\to(\nn\cup\{{\tt back}\sharp\})^{<\nn}$.
Set $\psi(\lrangle{})={\tt backlog}(\lrangle{})=\lrangle{}$, and assume that $\psi(\sigma^-)$ and ${\tt backlog}(\sigma^-)$ have been already defined.
Then, define $\psi(\sigma)$ and ${\tt backlog}(\sigma)$ as follows:
\begin{align*}
\psi(\sigma)&=
\begin{cases}
\psi(\sigma^-)\fr{\tt pass}&\mbox{ if }{\tt backlog}(\sigma^-)=\lrangle{},\\
\psi(\sigma^-)\fr({\tt backlog}(\sigma^-)(0))&\mbox{ if }{\tt backlog}(\sigma^-)\not=\lrangle{},
\end{cases}
\\
{\tt backlog}(\sigma)&=
\begin{cases}
{\tt backlog}(\sigma^-)^{\shft 1}\fr{\tt new}\Phi_{\Psi(\sigma)}(\sigma)&\mbox{ if }\Psi(\sigma)=\Psi(\sigma^-),\\
{\tt backlog}(\sigma^-)^{\shft 1}\fr{\tt back}\sharp\fr\Phi_{\Psi(\sigma)}(\sigma)&\mbox{ if }\Psi(\sigma)\not=\Psi(\sigma^-).
\end{cases}
\end{align*}

Here, recall the notation ${\tt new}\Phi_{\Psi(\sigma)}(\sigma)$ defined before Theorem \ref{theorem:12c:charact}.
Note that $\{n\in\nn:(\bigcup_k\psi(x\res k))(n)={\tt back}\sharp\}={\tt mcl}_\Psi(x)$ for any $x\in{\rm dom}(f)$.
It is easy to see that $\psi$ is a computable winning strategy in the game $G(f,{\sf PB})$.

Assume that a computable winning strategy $\psi^*$ in the game $G(f,{\sf PB})$ is given.
We consider the computable function $\psi(\sigma)={\tt content}_0(\psi^*(\sigma))$.
Then $\{n\in\nn:\psi(x\res n+1)\not\supseteq\psi(x\res n)\}$ is finite, for any $x\in{\rm dom}(f)$, since $\bigcup_{n\in\nn}\psi(x\res n)$ contains finitely many ${\tt back}\sharp$'s.
Moreover, $f(x)=\lim_n\psi(x\res n)$.
Thus, by Proposition \ref{prop:1-2:characterization}, $f$ is $(1,\omega)$-computable.

\medskip

(3)
Assume that $f$ is $(1,\omega|<\omega)$-computable via a learner $\Psi$.
We inductively define a strategy $\psi:\nn^{<\nn}\to(\nn\cup\{{\tt pass},{\tt back}\sharp\})^{<\nn}$ and an auxiliary parameter ${\tt backlog}:\nn^{<\nn}\to(\nn\cup\{{\tt back}\sharp\})^{<\nn}$.
Set $\psi(\lrangle{})={\tt backlog}(\lrangle{})=\lrangle{}$, and assume that $\psi(\sigma^-)$ and ${\tt backlog}(\sigma^-)$ have been already defined.
Then, define $\psi(\sigma)$ and ${\tt backlog}(\sigma)$ as follows:
\begin{align*}
\psi(\sigma)&=
\begin{cases}
\psi(\sigma^-)\fr{\tt pass}&\mbox{ if }{\tt backlog}(\sigma^-)=\lrangle{},\\
\psi(\sigma^-)\fr({\tt backlog}(\sigma^-)(0))&\mbox{ if }{\tt backlog}(\sigma^-)\not=\lrangle{},
\end{cases}
\\
{\tt backlog}(\sigma)&={\tt backlog}(\sigma^-)^{\shft 1}\fr({\tt move},\Psi(\sigma))\fr{\tt new}^*\Phi_{\Psi(\sigma)}(\sigma)
\end{align*}

Here, recall the notation ${\tt new}^*\Phi_{\Psi(\sigma)}(\sigma)$ defined in the proof of Theorem \ref{theorem:12c:charact} (2).
Note that $\{n\in\nn:(\bigcup_k\psi(x\res k))(n)={\tt back}\sharp\}=\{n\in\nn:\Psi(x\res n+1)\not=\Psi(x\res n)\}$ for any $x\in{\rm dom}(f)$.
It is easy to see that $\psi$ is a computable winning strategy in the game $G(f,{\sf PM}_m)$.
Moreover, since $\#{\tt indx}_\Psi(x)$ is finite, $\psi(x)=\bigcup_n\psi(x\res n)$ contains $({\tt move},i)$ for only finitely many different $i$'s.
Therefore, $\psi$ does not violate the rule $*$.
Hence, $\psi$ is a winning strategy in the game $G_*(f,{\sf PM}_m)$.

Assume that a computable winning strategy $\psi^*$ in the game $G_*(f,{\sf PM}_m)$ is given.
Let $e(i)$ be an index of a partial computable function $x\mapsto{\tt content}_i\circ\psi^*(x)$ for each $i<m$.
Since $\psi^*$ does not violate the rule $\ast$, there is a unique $i<m$ such that $\Phi_{e(i)}={\tt content}_i\circ\psi^*(x)$ is total, for any $x\in{\rm dom}(f)$.
We inductively define a learner $\Psi$.
The learner $\Psi$ first guesses $\Psi(\lrangle{})=e(0)$.
Set $\Psi(\sigma)=\Psi(\sigma^-)$ when there is no $i<m$ such that $|\Phi_{e(i)}(\sigma)|>|\Phi_{e(i)}(\sigma^-)|$.
Otherwise, for the least such $i<m$, the learner guesses $\Psi(\sigma)=e(i)$.
Clearly, $\#\{\Psi(x\res n):n\in\nn\}<m$ for any $x\in\nn^\nn$.
It is easy to check that, for any $x\in{\rm dom}(f)$, $\lim_n\Psi(x\res n)$ converges to $e(i)$ for the unique $i<m$ ensuring the totality of ${\tt content}_i\circ\psi^*(x)$, and, for such $i<m$, we have $\Phi_{\lim_n\Psi(x\res n)}(x)={\tt content}_i\circ\psi^*(x)=f(x)$.
Consequently, $f$ is $(1,\omega|m)$-computable.

\medskip

(5,7)
For a given collection $\{\Phi_i\}_{i\in I}$ of partial computable functions, we can easily construct a strategy $\psi:\nn^{<\nn}\to(\nn\cup\{{\tt pass},({\tt move},i):i\in I\})$ ensuring ${\tt content}_i\circ\psi(x)=\Phi_i(x)$ for any $x\in\nn^\nn$.
Therefore, $f$ is nonuniformly computable via $\{\Phi_i\}_{i\in I}$, then $\psi$ is winnable in $G(f,{\sf PM}_I)$.
Conversely, if a winnable strategy $\psi:\nn^{<\nn}\to(\nn\cup\{{\tt pass},({\tt move},i):i\in I\})$ of the game $G(f,{\sf PM}_I)$ is given.
Then we consider the partial computable function $\Gamma_i$ computing $\Gamma_i(x)={\tt content}_i\circ\psi(x)$ for any $x\in\nn^\nn$.
It is easy to see that $f$ is nonuniformly computable via $\{\Gamma_i\}_{i\in I}$.

(6) By combining the proofs of the items (3) and (4), it is not hard to see the equivalence of the $(m,\omega)$-computability of $f$ and the computable winnability in the game $G(f,{\sf PBM}_m)$.
\end{proof}

\begin{remark}
We may introduce more general multitape games based on our dynamic tape models, and nested (nested nested, nested nested nested, etc.) tape models.
\end{remark}

\begin{ack}\upshape
The authors were partially supported by Grant-in-Aid for JSPS fellows.
The second author (Kihara) would like to thank Douglas Cenzer, Hajime Ishihara, Dick de Jongh, Arno Pauly, and Albert Visser, for valuable comments and helpful discussion, and the second author also would like to thank Makoto Tatsuta and Yoriyuki Yamagata for introducing him to the syntactical study on Limit Computable Mathematics.
The second author is also grateful to Sam Sanders who helped his English writing.
Finally, the authors would like to thank the anonymous referees for their valuable comments and suggestions.
\end{ack}

\addcontentsline{toc}{section}{Bibliography}
\bibliographystyle{plain}
\bibliography{IMref}

\printindex

\end{document}